\theoremstyle{definition}
\newtheorem{theorem}{Theorem}[section]
\newtheorem{definition}[theorem]{Definition}
\newtheorem{lemma}[theorem]{Lemma}
\newtheorem{example}[theorem]{Example}
\newtheorem{remark}[theorem]{Remark}
\newtheorem{cor}[theorem]{Corollary}
\let\c@algorithm\relax
\let\c@figure\relax
\let\c@table\relax
\let\c@theorem\relax
\newaliascnt{algorithm}{common}
\newaliascnt{figure}{common}
\newaliascnt{table}{common}
\newaliascnt{theorem}{common}
\newcommand{\arc}[2]{\path{} (#1) edge [->,thick] node {} (#2);}
\newcommand{\arcWavy}[2]{\path{} (#1) edge [->,style={decorate, decoration=snake}] node {} (#2);}
\newcommand{\arcThick}[2]{\path{} (#1) edge [->,line width=0.75mm] node {} (#2);}
\newcommand{\arcSym}[2]{\path{} (#1) edge [<->,thick] node {} (#2);}
\newcommand{\arcDash}[2]{\path{} (#1) edge [->,dashed,thick] node {} (#2);}
\newcommand{\arcSymDash}[2]{\path{} (#1) edge [<->,dashed,thick] node {} (#2);}
\newcommand{\arcSymDot}[2]{\path{} (#1) edge [<->,dotted,thick] node {} (#2);}
\newcommand{\looparcL}[1]{\path{} (#1) edge [loop left,thick] node {} (#1);}
\newcommand{\looparcR}[1]{\path{} (#1) edge [loop right,thick] node {} (#1);}
\def\<{\langle}
\def\>{\rangle}
\newcommand{\idOmega}{\textrm{id}_{\Omega}}
\newcommand{\set}[2]{\{#1\,:\,#2\}}
\newcommand{\N}{\mathbb{N}}
\newcommand{\n}{\{1,\ldots,n\}}
\newcommand{\Fail}{\textsc{Fail}}
\newcommand{\labelSet}{\mathfrak{L}}
\newcommand{\labelFunc}{\textsc{Label}}
\newcommand{\exLabel}[1]{\textit{#1}}
\newcommand{\stackS}{S}
\newcommand{\stackT}{T}
\newcommand{\stackU}{U}
\newcommand{\stackV}{V}
\newcommand{\nauty}{\textsc{nauty}}
\newcommand{\bliss}{\textsc{bliss}}
\newcommand{\approxFunc}{\textsc{Approx}}
\newcommand{\splitFunc}{\textsc{Split}}
\newcommand{\fixedFunc}{\textsc{Fixed}}
\newcommand{\canonFunc}{\textsc{Canon}}
\newcommand{\equitableFunc}{\textsc{Equitable}}
\newcommand{\Sn}[1]{\mathcal{S}_{#1}}
\newcommand\Approx[2]{\ifthenelse{\equal{#1}{#2}}
                       {\textsc{Approx} (#1)}
                       {\ifthenelse{\equal{#2}{}}
                         {\textsc{Approx} (#1)}
                         {\Call{Approx}{#1,#2}}}}
\newcommand{\Split}[2]{\ifthenelse{\equal{#2}{}}
                        {\Call{Split}{#1}}
                        {\Call{Split}{#1,#2}}}
\newcommand{\Fixed}[1]{\Call{Fixed}{#1}}
\newcommand{\Canon}[1]{\Call{Canon}{#1}}
\newcommand{\Equitable}[1]{\Call{Equitable}{#1}}
\newcommand{\Sym}[1]{\operatorname{Sym}\!\left(#1\right)}
\newcommand{\Auto}[1]{\operatorname{Aut}\!\left(#1\right)}
\newcommand{\Iso}[2]{\ifthenelse{\equal{#2}{}}
                      {\operatorname{Iso}\!\left(#1\right)}
                      {\operatorname{Iso}\!\left(#1,#2\right)}}
\newcommand{\Stacks}[1]{\operatorname{\textsc{DigraphStacks}}\!\left(#1\right)}
\newcommand{\Squash}[1]{\Call{Squash}{#1}}
\newcommand{\LabelledDigraphs}[2]{\operatorname{\textsc{LabelledDigraphs}}
                                  \!\left(#1,#2\right)}
\newcommand{\EmptyStack}[1]{\operatorname{\textsc{EmptyStack}}\!\left(#1\right)}
\algnewcommand\algorithmicswitch{\textbf{switch}}
\algnewcommand\algorithmiccase{\textbf{case}}
\begin{document}

\begin{frontmatter}

\title{Permutation group algorithms based on directed graphs\\(extended version)}

\author{Christopher Jefferson}
\ead{caj21@st-andrews.ac.uk}
\ead[url]{https://caj.host.cs.st-andrews.ac.uk}

\author{Markus Pfeiffer}
\ead{markus.pfeiffer@st-andrews.ac.uk}
\ead[url]{https://www.morphism.de/~markusp}

\author{Wilf A.\ Wilson}
\address{University of St~Andrews\\School of Computer Science\\North Haugh\\St Andrews\\KY16 9SX\\Scotland}
\ead{waw7@st-andrews.ac.uk}
\ead[url]{https://wilf.me}

\author{Rebecca Waldecker}
\address{Martin-Luther-Universit\"at Halle-Wittenberg\\Institut f\"ur Mathematik\\06099 Halle\\Germany}
\ead{rebecca.waldecker@mathematik.uni-halle.de}
\ead[url]{https://www2.mathematik.uni-halle.de/waldecker/index-english.html}

\journal{Journal of Algebra}

\begin{abstract}
  We introduce a new framework for solving an important class of computational
  problems involving finite permutation groups, which includes calculating set
  stabilisers, intersections of subgroups, and isomorphisms of combinatorial
  structures.
  Our techniques generalise `partition backtrack', which is the
  current state-of-the-art algorithm introduced by Jeffrey Leon in 1991, and
  which has inspired our work.
  Our backtrack search algorithms are organised around vertex- and arc-labelled
  directed graphs, which allow us to represent many problems more richly than
  do ordered partitions.
  We present the theory underpinning our framework,
  and we include the results of experiments showing that our techniques often
  result in smaller search spaces than does partition backtrack.
  An implementation of our algorithms is available as free software in the
  \textsc{GraphBacktracking} package for \textsc{GAP}.
\end{abstract}

\end{frontmatter}

\noindent\textbf{Note:}
This is an extended version of \emph{Permutation group algorithms
based on directed graphs}~\cite{shorterGBpaper}.  The shorter article was
derived from this one, according to the comments of referees;
it includes some improved exposition, and
omits some proofs, examples, and other details.
We recommend that the reader begins with the shorter article.

\section{Introduction}\label{sec-intro}

In~\cite{leon1991}, Jeffrey Leon introduced the \emph{partition backtrack
algorithm} for intersecting subgroups of finite
symmetric groups, or their cosets, in which membership of an individual
permutation can be easily decided.
Many of the most important problems in computational permutation group theory
can be formulated in this way, and thus can be solved with partition
backtrack.
These include the computation of
point and set stabilisers and transporters;
normalisers and centralisers of subsets and subgroups;
automorphisms and isomorphisms of a wide range of combinatorial structures;
element and subgroup conjugacy;
and any conjunction of such problems.
These problems have differing time complexities (see~\cite[Chapter~3]{seress},
for example),
but for many of them, partition backtrack currently solves the problem in the
fastest known way.

Leon's algorithm performs a backtrack search through the
elements of the symmetric group, which it organises around a collection of
ordered partitions.
Partition backtrack builds upon the `individualisation-refinement' technique of
McKay, which he most recently described in~\cite{practical2}, and which
is used to compute automorphism groups and canonical labellings
of finite graphs.
Leon's algorithm encodes information about the given problem into the ordered
partitions, which it then uses to cleverly prune (i.e., omit superfluous parts of) the
search space.
Despite its excellent performance in many instances, this technique has
exponential worst-case complexity, and there remain many important
examples of problems that are beyond its reach. There is, therefore, still scope
for improvement.

Several extensions to partition backtrack have taken further inspiration from
the graph-based ideas of {M}c{K}ay.
Thei{\ss}en, for instance, used orbital graphs in~\cite{theissen} to significantly
improve the computation of normalisers.  This theme was taken up
in~\cite{newrefiners}, by the first three authors of the present paper,
for intersections and set stabilisers.
The techniques described in~\cite{newrefiners,theissen} encode some information
about certain orbital graphs into the ordered partitions of the search, thereby
enabling better pruning of the search space.
This suggests that even more powerful pruning, and ultimately better
performance, could be obtained by using graphs directly, at the expense of the
increased computation required at each node of the remaining search.
In the present paper, we investigate precisely this idea. More specifically, we
demonstrate the possibility and feasibility of placing graphs (in fact, vertex-
and arc-labelled directed graphs) at the heart of backtrack search algorithms in
the symmetric group, thereby generalising partition backtrack.

The purpose of this paper is to give the \emph{theoretical} basis for our ideas,
along with some initial experimental data.  In particular, at this point, we do
not concern ourselves with the time complexity or speed of our algorithms, and
we do not discuss their implementation details.  However, we do intend for
our algorithms to be practical, and we expect that with sufficient further
development into their implementations, our algorithms should perform
competitively against, and even beat, partition backtrack for many classes of
problems.

Note that although this paper is heavily influenced by the work of
Leon~\cite{leon1997, leon1991}, we intend for it to be understandable
without prior knowledge of his work.

This paper is organised as follows.
In Section~\ref{sec-defs-prelims}, we present our notation,
introduce and refer to standard concepts in graph theory and group
theory, and discuss labelled digraphs.
In Section~\ref{sec-stacks}, we introduce stacks of labelled
digraphs, which are the fundamental structures around which we organise our
search algorithms.  The remaining tools that are crucial for our algorithms are
isomorphism approximators and fixed-point approximators
(Section~\ref{sec-approximations}), refiners (Section~\ref{sec-refiners}), and
splitters (Section~\ref{sec-splitter}).
We present our algorithms and prove their correctness in
Section~\ref{sec-search}, and in Section~\ref{sec-experiments}  we give details
of various experiments that compare our algorithms with the
current state-of-the-art techniques.
We conclude, in Section~\ref{sec-end}, with brief comments on the results of
this paper and the directions that they suggest for further investigation.

\subsection*{Acknowledgements}

The authors would like to thank the DFG (\textbf{Grant no.~WA 3089/6-1}) and the
Volkswagenstiftung (\textbf{Grant no.~93764}) for financially supporting this work and projects leading up to it.
The first author gratefully acknowledges funding from the Royal Society
(\textbf{Grant code URF\textbackslash R\textbackslash 180015}).
Special thanks go to Paula H\"ahndel for frequent
discussions on topics related to this work, and for suggestions on how to improve this paper.
We also thank Ruth Hoffmann for suggesting further helpful improvements.

\section{Preliminaries}\label{sec-defs-prelims}

Throughout this paper, $\Omega$ denotes some finite totally-ordered set on which
we define all of our groups, digraphs, and related objects.  For example, every
group in this paper is a finite permutation group on $\Omega$, i.e.\ a subgroup
of $\Sym{\Omega}$, the symmetric group on $\Omega$.
We follow the standard
group-theoretic notation and terminology from the literature, such as that used
in~\cite{dixonmortimer}, and write $\cdot$ for the composition of maps in
$\Sym{\Omega}$, or we omit a symbol for this binary operation altogether. We
write $\N$ for the set $\{1, 2, 3, \ldots\}$ of all natural numbers, and $\N_{0}
\coloneqq \N \cup \{0\}$.
If $n \in \N$, then $\Sn{n} \coloneqq \Sym{\{1,\ldots,n\}}$.

For many types of objects that we define on $\Omega$, we give a way of applying
elements of $\Sym{\Omega}$ to them (denoted by exponentiation) in a way that is
structure-preserving.  For example, if we have a graph with vertex set $\Omega$,
then we can apply the same element of $\Sym{\Omega}$ to every vertex,
and obtain a new graph with the same vertex set, $\Omega$.
This principle is used throughout this article, mainly for graphs or digraphs with
vertex set $\Omega$, but also for sets or lists of elements in $\Omega$,
and for sets or lists of subsets of $\Omega$ (such as partitions of $\Omega$).

Let $\mathcal{O}$ and $\mathcal{Q}$ be digraphs with vertex set $\Omega$
(or partitions, lists etc., as mentioned above).
Then we say that a permutation $g \in \Sym{\Omega}$ \emph{induces an
isomorphism from $\mathcal{O}$ to $\mathcal{Q}$} if and only if it defines a
map from $\mathcal{O}$ to $\mathcal{Q}$, i.e.\ $\mathcal{O}^{g} = \mathcal{Q}$, and if it is structure-preserving. For digraphs this means that arcs are preserved, for partitions it means that the number and sizes of cells are preserved.

We write $\Iso{\mathcal{O}}{\mathcal{Q}}$ for the set of
isomorphisms from $\mathcal{O}$ to $\mathcal{Q}$ that are induced by elements of
$\Sym{\Omega}$.  If $\Iso{\mathcal{O}}{\mathcal{Q}}$ is non-empty, then we
call $\mathcal{O}$ and $\mathcal{Q}$ \emph{isomorphic}, sometimes
denoted by
$\mathcal{O} \cong \mathcal{Q}$.  Similarly, we consider $\Auto{\mathcal{O}}
\leq \Sym{\Omega}$ to be the subgroup of $\Sym{\Omega}$ consisting of all
elements that induce isomorphisms from $\mathcal{O}$ to itself, i.e.\
\emph{automorphisms}.  Note that, for all $g \in
\Iso{\mathcal{O}}{\mathcal{Q}}$,
$\Auto{\mathcal{O}}^{g} (= \set{g^{-1} h g}{h \in \Auto{\mathcal{O}}}) =
\Auto{\mathcal{Q}}$.  In particular, if $\mathcal{O} \cong \mathcal{Q}$, then
$\Iso{\mathcal{O}}{\mathcal{Q}}$ is a right coset of $\Auto{\mathcal{O}}$ and a
left coset of $\Auto{\mathcal{Q}}$ in $\Sym{\Omega}$.

\subsection{Ordered partitions}\label{sec-defn-ordered-partition}

An \emph{ordered partition} of $\Omega$ is a list of non-empty disjoint subsets
of $\Omega$, called \emph{cells}, whose union is $\Omega$. The `ordering' is
thus defined between cells, not within a cell.  For example, the list $[\{3,
7\}, \{1\}, \{2, 4, 5\}, \{6\}]$ is an ordered partition of $\{1,\ldots,7\}$.
The group $\Sym{\Omega}$ acts on the set of ordered partitions
of $\Omega$ by acting on its entries:
if $g \in \Sym{\Omega}$ and $\Pi \coloneqq [C_{1}, \ldots, C_{k}]$
is an ordered partition of $\Omega$ for some $k \in \N$,
then the action is defined via
$\Pi^{g} \coloneqq [C_{1}^{g}, \ldots, C_{k}^{g}]$.

If $k, l \in \N$
and $\Pi_{1} \coloneqq [C_{1}, \ldots, C_{k}]$
and $\Pi_{2} \coloneqq [D_{1}, \ldots, D_{l}]$
are ordered partitions of $\Omega$, then a permutation $g \in \Sym{\Omega}$
induces an isomorphism from $\Pi_{1}$ to $\Pi_{2}$ if and only if $C_{i}^{g} =
D_{i}$ for all $i \in \{1, \ldots, k\}$.
Since $\Sym{\Omega}$ acts $|\Omega|$-transitively on $\Omega$, it follows that
$\Pi_{1}$ and $\Pi_{2}$ are isomorphic
if and only if
$k = l$ and $|C_{i}| = |D_{i}|$ for all $i \in \{1, \ldots, k\}$.
In addition, the automorphism group of $\Pi_{1}$ induced by $\Sym{\Omega}$ is
isomorphic to $\Sym{C_{1}} \times \cdots \times \Sym{C_{k}}$ in a natural way.

\subsection{Labelled digraphs}

A \emph{graph} with vertex set $\Omega$ is a pair $(\Omega, E)$, where $E$ is a set of $2$-subsets of
$\Omega$. A \emph{directed graph} with vertex set $\Omega$, or \emph{digraph} for
short, is a pair $(\Omega, A)$, where $A \subseteq \Omega \times \Omega$ is a
set of pairs of elements in $\Omega$ called \emph{arcs}. The elements of
$\Omega$ are called \emph{vertices} in the context of graphs and digraphs.  Our
definition allows a digraph to have \emph{loops}, which are arcs of the form
$(\alpha, \alpha)$ for some vertex $\alpha \in \Omega$.

Our techniques for searching in $\Sym{\Omega}$ are built around digraphs in
which each vertex and arc is given a \textit{label} from a label set $\labelSet$.  We
define a \emph{vertex- and arc-labelled digraph}, or \emph{labelled digraph} for
short, to be a triple $(\Omega, A, \labelFunc)$, where $(\Omega, A)$ is a
digraph and $\labelFunc$ is a function from $\Omega \cup A$ to $\labelSet$.
More precisely, for any $\delta \in \Omega$ and $(\alpha, \beta) \in A$, the
label of the vertex $\delta$ is $\labelFunc(\delta) \in \labelSet$, and the
label of the arc $(\alpha, \beta)$ is $\labelFunc(\alpha, \beta) \in
  \labelSet$.  We call such a function a \emph{labelling function}.

In a theoretical sense, the properties of the labels themselves are unimportant,
since we only use them to distinguish certain vertices or arcs from others,
and thereby break symmetries.  For convenience, therefore, we fix
$\labelSet$ as some non-empty set that contains every label that we require, and
which serves as the codomain of every labelling function.  Thus two labelled
digraphs on $\Omega$ are equal if and only if their sets of arcs are equal, and
any vertex or arc has the same label in both labelled digraphs.
For the concepts in Section~\ref{sec-equitable}, we require
some arbitrary but fixed total ordering to be defined on $\labelSet$.

The symmetric group on $\Omega$ acts on the sets of graphs and digraphs with
vertex
set $\Omega$, respectively, and on their labelled variants, in a natural way.
We give more details about this for labelled digraphs; the forthcoming notions
are defined analogously for the other kinds of graphs and digraphs that we have
mentioned.  Let $\LabelledDigraphs{\Omega}{\labelSet}$ denote the class of all
labelled digraphs on $\Omega$ with labels in $\labelSet$, let
$\Gamma = (\Omega, A, \labelFunc) \in \LabelledDigraphs{\Omega}{\labelSet}$ and
$g \in \Sym{\Omega}$.  Then we define
$\Gamma^{g} = (\Omega, A^{g}, \labelFunc^{g}) \in
\LabelledDigraphs{\Omega}{\labelSet}$, where:
\begin{enumerate}[label=\textrm{(\roman*)}]
  \item
    $A^{g} \coloneqq \set{(\alpha^{g}, \beta^{g})}{(\alpha, \beta) \in A}$,

  \item
    $\labelFunc^{g}(\delta) \coloneqq
          \labelFunc(\delta^{g^{-1}})$ for all $\delta \in \Omega$, and

  \item
    $\labelFunc^{g}(\alpha, \beta) \coloneqq
     \labelFunc(\alpha^{g^{-1}}, \beta^{g^{-1}})$
    for all $(\alpha, \beta) \in A^{g}$.
\end{enumerate}
In other words, the arcs are mapped according to $g$, and the label of a vertex
or arc in $\Gamma^{g}$ is the label of its preimage in $\Gamma$.  This implies
that the labels that appear in $\Gamma^{g}$ are exactly those that appear in
$\Gamma$.  This gives rise to a group action of $\Sym{\Omega}$ on
$\LabelledDigraphs{\Omega}{\labelSet}$, since the identity permutation
$\idOmega$ fixes any labelled digraph $\Gamma$, and $\Gamma^{g h} =
{(\Gamma^{g})}^{h}$ for all $g, h \in \Sym{\Omega}$.

Let $\Gamma, \Delta \in \LabelledDigraphs{\Omega}{\labelSet}$.  A permutation
$g \in \Sym{\Omega}$ induces an isomorphism from $\Gamma$ to $\Delta$ if and
only if $\Gamma^{g} = \Delta$.  This means that $g$ maps each vertex to a vertex
with the same label, maps each arc to an arc with the same label, and maps pairs
of vertices in $\Omega$ that do not form arcs to pairs that do not form arcs.

The action of a permutation on a labelled digraph is illustrated in
Example~\ref{ex-digraph-action}.

\begin{example}\label{ex-digraph-action}

  Let $\Omega = \{1, \ldots, 5\}$,
  $A = \{ (2, 2), (2, 3), (3, 2), (3, 5), (5, 1), (5, 4) \} \subseteq
    \Omega \times \Omega$, and $\labelSet = \{
      \exLabel{black},\,
      \exLabel{white},\,
      \exLabel{solid},\,
      \exLabel{dashed}
    \}$.
  We define a labelling function $\labelFunc : \Omega \cup A \to \labelSet$
  as follows:
  for all $\delta \in \Omega$ and all $(\alpha, \beta) \in A$, let
  \[\labelFunc(\delta) =
    \begin{cases}
      \exLabel{black}   & \text{if}~\delta\ \text{is prime,}     \\
      \exLabel{white} & \text{otherwise,}
    \end{cases}
    \qquad
    \text{and}
    \qquad
    \labelFunc(\alpha, \beta) =
    \begin{cases}
      \exLabel{solid}  & \text{if}~\alpha \leq \beta, \\
      \exLabel{dashed} & \text{if}~\alpha > \beta.
    \end{cases}
  \]
  \begin{figure}[!ht]
  \centering
  \begin{tikzpicture}
    \tikzstyle{black}=[circle, draw=white, fill=black!100]
    \tikzstyle{white}=[circle, draw=black]

    \node[white] (1) at (2, 2) {$\mathbf{1}$};
    \node[black] (2) at (2, 0) {$\color{white}\mathbf{2}$};
    \node[black] (3) at (0, 0) {$\color{white}\mathbf{3}$};
    \node[white] (4) at (0, 2) {$\mathbf{4}$};
    \node[black] (5) at (1, 1) {$\color{white}\mathbf{5}$};

    \node[white] (11) at (7, 2) {$\mathbf{5}$};
    \node[black] (12) at (7, 0) {$\color{white}\mathbf{3}$};
    \node[black] (13) at (5, 0) {$\color{white}\mathbf{2}$};
    \node[white] (14) at (5, 2) {$\mathbf{4}$};
    \node[black] (15) at (6, 1) {$\color{white}\mathbf{1}$};

    \node (21) at (1, -1) {$\Gamma$};
    \node (31) at (6, -1) {$\Gamma^{(1\,5)(2\,3)}$};

    \path (2) edge [->, bend left=20, thick] node {} (3);
    \path (3) edge [->, bend left=20, dashed, thick] node {} (2);
    \looparcR{2}
    \arc{3}{5}
    \arcDash{5}{1}
    \arcDash{5}{4}

    \path (12) edge [->, bend left=20, thick] node {} (13);
    \path (13) edge [->, bend left=20, dashed, thick] node {} (12);
    \looparcR{12}
    \arc{13}{15}
    \arcDash{15}{11}
    \arcDash{15}{14}
  \end{tikzpicture}
  \caption{\label{fig-digraph-action}The labelled digraphs $\Gamma$ and
    $\Gamma^{(1\,5)(2\,3)}$ from Example~\ref{ex-digraph-action}.}
  \end{figure}
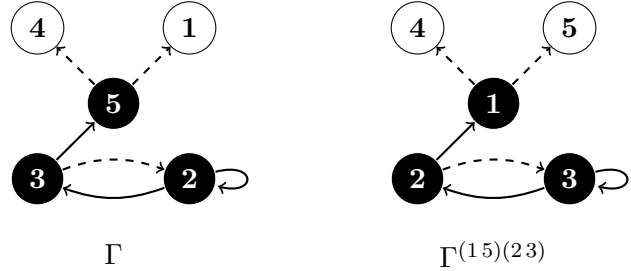

  The diagram on the left of Figure~\ref{fig-digraph-action} depicts the
  labelled digraph $\Gamma \coloneqq (\Omega, A, \labelFunc)$, and the diagram on
  the right of Figure~\ref{fig-digraph-action} depicts $\Gamma^{(1\,5)(2\,3)}$,
  where each vertex and arc has a style corresponding to its
  label.  Note that the diagrams look identical,
  except that the vertices are numbered differently,
  according to $(1\,5)(2\,3)$.  This permutation induces an isomorphism from
  $\Gamma$ to $\Gamma^{(1\,5)(2\,3)}$, by definition, but it does not induce an
  automorphism of $\Gamma$, since $\Gamma \neq \Gamma^{(1\,5)(2\,3)}$.
  This can be seen, for instance, by noting that there is a loop at $2$ in
  $\Gamma$, but not in $\Gamma^{(1\,5)(2\,3)}$, or by noting that the vertex $1$
  has the label \exLabel{white} in $\Gamma$, while it has the label
  \exLabel{black} in $\Gamma^{(1\,5)(2\,3)}$.

  The unique non-trivial automorphism of $\Gamma$ induced by $\Sym{\Omega}$ is
  the transposition $(1\,4)$.  Since the set
  $\Iso{\Gamma}{\Gamma^{(1\,5)(2\,3)}}$ of induced isomorphisms from $\Gamma$
  to $\Gamma^{(1\,5)(2\,3)}$ is the right coset of $\Auto{\Gamma}$ that contains
  $(1\,5)(2\,3)$, it follows that the second and final isomorphism from
  $\Gamma$ to $\Gamma^{(1\,5)(2\,3)}$ is the permutation
  $
  (1\,4\,5)(2\,3)
  =
  (1\,4) \cdot (1\,5)(2\,3)
  $.
  Indeed,
  \[
    \Gamma^{(1\,4) \cdot (1\,5)(2\,3)}
    = {\left(\Gamma^{(1\,4)}\right)}^{(1\,5)(2\,3)}
    =        \Gamma^{(1\,5)(2\,3)}.
  \]
\end{example}

We have chosen to build our techniques around labelled digraphs because then
they can be straightforwardly applied to a wide range of the graphs and digraphs
that occur in practice. This is because graphs, digraphs, and so-called
multigraphs and multidigraphs can be converted into labelled digraphs in such a way
that the sets of isomorphisms that we are interested in do not change.

\subsection{Orbital graphs}

Some previous work, such as that of Thei{\ss}en~\cite{theissen} and an
article~\cite{newrefiners} by the first three authors of this paper, shows that
orbital graphs can be useful for representing properties of groups and cosets
when performing a partition backtrack search in $\Sym{\Omega}$.

\begin{definition}[Orbital graph]
  Let $G \leq \Sym{\Omega}$, and let $\alpha, \beta \in \Omega$ be such that
  $\alpha \neq \beta$.  Then the \emph{orbital graph of $G$ with base-pair
    $(\alpha, \beta)$} is the digraph
  $\left(\Omega,\,\set{(\alpha^{g}, \beta^{g})}{g \in G}\right)$, which is
  denoted by $\Gamma(G, \Omega, (\alpha, \beta))$.
\end{definition}

Although an orbital graph is a \emph{digraph} rather than a \emph{graph}, we
retain the original name because it has become standard in the literature.
The next lemma is a well-known result about orbital graphs
(see for example~\cite[Section~3.2]{dixonmortimer} or~\cite[Lemma~17]{newrefiners}).

\begin{lemma}\label{lem-orbgraph}
  Let $G \leq \Sym{\Omega}$.  Then $G$ acts on each of its orbital graphs
  as an arc-transitive group of digraph automorphisms.
  (This means that, given any two arcs, there exists some $g \in G$
   mapping one to the other.)
\end{lemma}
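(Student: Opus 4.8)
The plan is to verify the two claims separately: first that every element of $G$ induces a digraph automorphism of a given orbital graph $\Gamma(G, \Omega, (\alpha, \beta))$, and then that $G$ acts transitively on the arc set of that digraph. Fix $G \leq \Sym{\Omega}$ and $\alpha, \beta \in \Omega$ with $\alpha \neq \beta$, and write $\Delta = \Gamma(G, \Omega, (\alpha, \beta)) = (\Omega, A)$, where $A = \set{(\alpha^{g}, \beta^{g})}{g \in G}$.

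For the first claim, let $h \in G$. I would show $A^{h} = A$, which by the definition of the action of $\Sym{\Omega}$ on digraphs is exactly what it means for $h$ to induce an automorphism of $\Delta$. Take an arc $(\alpha^{g}, \beta^{g}) \in A$ with $g \in G$; applying $h$ gives $(\alpha^{gh}, \beta^{gh})$, and since $gh \in G$ this pair lies in $A$, so $A^{h} \subseteq A$. Applying the same argument to $h^{-1} \in G$ gives $A^{h^{-1}} \subseteq A$, hence $A \subseteq A^{h}$, so $A^{h} = A$. Thus $G \leq \Auto{\Delta}$.

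For the second claim, let $(\alpha_1, \beta_1), (\alpha_2, \beta_2) \in A$ be arbitrary arcs. By definition of $A$ there exist $g_1, g_2 \in G$ with $(\alpha_1, \beta_1) = (\alpha^{g_1}, \beta^{g_1})$ and $(\alpha_2, \beta_2) = (\alpha^{g_2}, \beta^{g_2})$. Then $g := g_1^{-1} g_2 \in G$, and applying $g$ to the first arc sends $(\alpha^{g_1}, \beta^{g_1})$ to $(\alpha^{g_1 g_1^{-1} g_2}, \beta^{g_1 g_1^{-1} g_2}) = (\alpha^{g_2}, \beta^{g_2}) = (\alpha_2, \beta_2)$, which establishes arc-transitivity. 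Combined with the first claim, $G$ acts on $\Delta$ as an arc-transitive group of digraph automorphisms, as required.

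The argument is essentially a chain of routine verifications, so there is no serious obstacle; the only point requiring a little care is being explicit about what "induces an automorphism" unwinds to — namely the set equality $A^{h} = A$ under the action defined in the preliminaries — and handling both inclusions (using that $G$ is closed under inverses), rather than only checking $A^{h} \subseteq A$. Everything else follows from the group axioms applied to the parametrisation of $A$ by $G$.
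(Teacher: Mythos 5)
Your proof is correct. The paper itself gives no argument for this lemma --- it is stated as a well-known fact with references to Dixon--Mortimer and to \cite{newrefiners} --- and your verification (that $A^{h}=A$ for each $h\in G$, so $G\leq\Auto{\Gamma(G,\Omega,(\alpha,\beta))}$, and that $g_{1}^{-1}g_{2}$ carries the arc $(\alpha^{g_{1}},\beta^{g_{1}})$ to $(\alpha^{g_{2}},\beta^{g_{2}})$) is exactly the standard argument those sources supply, so nothing further is needed.
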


\begin{figure}[!ht]
  \small
  \centering
  \begin{tikzpicture}
    \foreach \x in {1,2,3,4,5,6} {
        \node[circle, draw=black] (\x) at (-\x*60+120:1.3cm) {$\mathbf{\x}$};};
    \foreach \x/\y in {1/2,2/3,3/4,4/5,5/6,6/1} {\arc{\x}{\y}};
    \node at (0, -2) {$\Gamma\left(C_{6}, \{1, \ldots, 6\}, (1, 2)\right)$};
  \end{tikzpicture}
  \qquad
  \begin{tikzpicture}
    \foreach \x in {1,2,3,4,5,6} {
        \node[circle, draw=black] (\x) at (-\x*60+120:1.3cm) {$\mathbf{\x}$};};
    \foreach \x/\y in {1/3,2/4,3/5,4/6,5/1,6/2} {\arc{\x}{\y}};
    \node at (0, -2) {$\Gamma\left(C_{6}, \{1, \ldots, 6\}, (1, 3)\right)$};
  \end{tikzpicture}
  \qquad
  \begin{tikzpicture}
    \foreach \x in {1,2,3,4,5,6} {
        \node[circle, draw=black] (\x) at (-\x*60+120:1.3cm) {$\mathbf{\x}$};};
    \foreach \x/\y in {1/4,2/5,3/6} {\arcSym{\x}{\y}};
    \node at (0, -2) {$\Gamma\left(C_{6}, \{1, \ldots, 6\}, (1, 4)\right)$};
  \end{tikzpicture}
  \caption{\label{fig-orbitals-cyclic}Diagrams of three orbital graphs of the
    group $C_{6} \coloneqq \< (1\,2\,3\,4\,5\,6) \> \le \Sn{6}$
    with, from left to right, base-pairs $(1, 2)$, $(1, 3)$, and $(1, 4)$.
    The automorphism group
    of $\Gamma\left(C_{6}, \{1, \ldots, 6\}, (1, 2)\right)$ induced by
    $\Sn{6}$ is $C_{6}$, whereas
    the automorphism groups of the other two orbital graphs properly contain
    $C_{6}$.}
\end{figure}
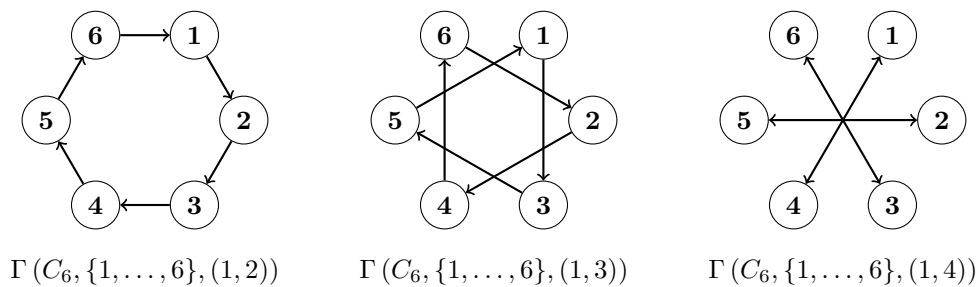

Lemma~\ref{lem-orbgraph} implies that the group of
digraph automorphisms (induced by $\Sym{\Omega}$) of an orbital graph of a group $G$ is
an overestimate for $G$.  Approximations obtained in this way can differ greatly
in their precision.
At one extreme, a group is \emph{absolutely orbital graph recognisable}, in
the terminology of~\cite{orbitalgraphs}, if it is equal to the induced automorphism
group of each of its orbital graphs.  The dihedral group $\< (1\,2\,3\,4),
(1\,3) \>$ of order $8$ in $\Sn{4}$ has this property, for
example.
At the other extreme, a group that acts $2$-transitively on $\Omega$ has a
unique orbital graph,
which contains every possible non-loop arc, and thus has automorphism group
$\Sym{\Omega}$.  It follows that $2$-transitive groups cannot be differentiated
by the automorphism groups of their orbital graphs.

There are many further kinds of behaviours between these extremes:
a group is called \emph{$2$-closed} if it is equal to the intersection of the
automorphism groups of its orbital graphs.  We can consider a $2$-closed group
to be one where the collection of its orbital graphs represents the group
exactly.  These groups are particularly well-suited to the techniques of our
paper, because they can be encoded in a stack of labelled digraphs capturing all relevant information. We will introduce this idea in Section~\ref{sec-stacks}.  Groups that are absolutely orbital graph
recognisable are $2$-closed, but there are many $2$-closed groups
that are not absolutely orbital graph recognisable.  The Klein four-group
$V \coloneqq \< (1\,2)(3\,4), (1\,3)(2\,4) \>$ is $2$-closed, for instance, even though
none of its orbital graphs has automorphism group equal to $V$.

\begin{figure}[!ht]
  \small
  \centering
  \begin{tikzpicture}
    \tikzstyle{white}=[circle, draw=black]

    \node[white] (11) at (0,   1.5) {$\mathbf{1}$};
    \node[white] (12) at (1.5, 1.5) {$\mathbf{2}$};
    \node[white] (13) at (0,     0) {$\mathbf{3}$};
    \node[white] (14) at (1.5,   0) {$\mathbf{4}$};
    \node (100) at (0.75, -0.75) {$\Gamma\left(V, \{1, 2, 3, 4\}, (1, 2)\right)$};
    \arcSym{11}{12}
    \arcSym{13}{14}

    \node[white] (21) at (4,   1.5) {$\mathbf{1}$};
    \node[white] (22) at (5.5, 1.5) {$\mathbf{2}$};
    \node[white] (23) at (4,     0) {$\mathbf{3}$};
    \node[white] (24) at (5.5,   0) {$\mathbf{4}$};
    \node (200) at (4.75, -0.75) {$\Gamma\left(V, \{1, 2, 3, 4\}, (1, 3)\right)$};
    \arcSym{21}{23}
    \arcSym{22}{24}

    \node[white] (31) at (8,   1.5) {$\mathbf{1}$};
    \node[white] (32) at (9.5, 1.5) {$\mathbf{2}$};
    \node[white] (33) at (8,     0) {$\mathbf{3}$};
    \node[white] (34) at (9.5,   0) {$\mathbf{4}$};
    \node (300) at (8.75, -0.75) {$\Gamma\left(V, \{1, 2, 3, 4\}, (1, 4)\right)$};
    \arcSym{31}{34}
    \arcSym{32}{33}
  \end{tikzpicture}
  \caption{\label{fig-orbitals-klein}The orbital graphs of the Klein four-group $V
  \coloneqq \< (1\,2)(3\,4), (1\,3)(2\,4) \>$.}\label{fig-klein-4}
\end{figure}

\begin{example}\label{ex-klein-4}
  The automorphism groups of the orbital graphs of
  the Klein four-group $V \coloneqq \< (1\,2)(3\,4), (1\,3)(2\,4) \>$
  are dihedral groups with $8$ elements.
  (See Figure~\ref{fig-klein-4}.)
  However, the intersection of any two such automorphism
  groups is $V$. Therefore $V$ is $2$-closed, but not absolutely orbital
  graph recognisable.
\end{example}

Any subgroup $G \leq \Sym{\Omega}$ leaves each of its orbits on $\Omega$
invariant. In other words, if $O_{1}, \ldots, O_{k} \subseteq \Omega$ are the
distinct orbits of $G$ on $\Omega$, then $G$ is contained in the
stabiliser
\(
  \set{g \in \Sym{\Omega}}{[O_{1}^{g}, \ldots, O_{k}^{g}] = [O_{1}, \ldots, O_{k}]}
\)
of $[O_{1}, \ldots, O_{k}]$ in $\Sym{\Omega}$, which is isomorphic to the direct
product $\Sym{O_{1}} \times \cdots \times \Sym{O_{k}}$.
As discussed later in Example~\ref{ex-perfect-list-of-sets},
stabilisers of this kind can be perfectly represented by labelled digraphs.  This means that,
for any non-transitive group $G$, we can use its orbits to produce a labelled
digraph whose automorphism group both contains $G$, and is \emph{properly}
contained in $\Sym{\Omega}$. In particular, this labelled digraph represents $G$
better than does any labelled digraph whose automorphism group is
$\Sym{\Omega}$, which is the worst possible case.

In~\cite{newrefiners}, the authors say that an orbital graph $\Gamma$ of a group
$G$ is \emph{futile} if and only if $\Auto{\Gamma}$ is the stabiliser of a list
of the orbits of $G$.  In essence, this means that the orbital graph is no
better at representing $G$ than the set of orbits of $G$.  Such an orbital graph
has little computational value, since the orbits of a group can be represented
by an ordered partition, which can be constructed, computed with, and stored
much more cheaply than can an orbital graph.

\section{Stacks of labelled digraphs}\label{sec-stacks}

In this section we introduce labelled digraph stacks.  We organise our search
algorithms around these stacks, much like how partition backtrack is organised
around stacks of ordered partitions.  The essential idea is to represent the
subsets of $\Sym{\Omega}$, for whose intersection we are searching, as the set
of isomorphisms from a suitable labelled digraph stack to another.  We explain
this in Section~\ref{sec-search}.

A \emph{labelled digraph stack} on $\Omega$ is a finite (possibly empty) list of
labelled digraphs on $\Omega$.  We denote the collection of all labelled digraph
stacks on $\Omega$ by $\Stacks{\Omega}$.  The \emph{length} of a labelled
digraph stack $\stackS$, written $|\stackS|$, is the number of entries that it
contains.  A labelled digraph stack of length $0$ is called \emph{empty}, and we
denote the empty labelled digraph stack on $\Omega$ by $\EmptyStack{\Omega}$.
We use a notation typical for lists, whereby if $i \in \{1, \ldots,
|\stackS|\}$, then $\stackS[i]$ denotes the $i^{\text{th}}$ labelled digraph in
the stack $\stackS$.

We allow any labelled digraph stack on $\Omega$ to be appended
onto the end of another.  If $\stackS, \stackT \in
\Stacks{\Omega}$ have lengths $k$ and $l$, respectively, then we define $\stackS
\Vert \stackT$ to be the labelled digraph stack
$\left[
    \stackS[1], \ldots, \stackS[k],
    \stackT[1], \ldots, \stackT[l]
    \right]$
of length $k + l$ formed by appending $\stackT$ to $\stackS$.

We define an action of $\Sym{\Omega}$ on $\Stacks{\Omega}$  via the action of
$\Sym{\Omega}$ on the set of all labelled digraphs on $\Omega$.  More
specifically, for all $\stackS \in \Stacks{\Omega}$ and $g \in \Sym{\Omega}$, we
define $\stackS^{g}$ to be the labelled digraph stack of length $|\stackS|$ with
$\stackS^{g}[i] = {\stackS[i]}^{g}$ for all $i \in \{1, \ldots, |\stackS|\}$.
In other words, $\stackS^{g}$ is the labelled digraph stack obtained from
$\stackS$ by applying $g$ to each of its entries.  An isomorphism from $\stackS$
to another labelled digraph stack $\stackT$ (induced by $\Sym{\Omega}$) is
therefore a permutation $g \in \Sym{\Omega}$ such that $\stackS^{g} = \stackT$.
In particular, only digraph stacks of equal lengths can be isomorphic, which
means that results concerning isomorphisms of labelled digraph stacks only need
to consider those with equal lengths.  Note that every permutation in
$\Sym{\Omega}$ induces an automorphism of $\EmptyStack{\Omega}$.

\begin{remark}\label{rmk-stack-iso-auto}
Let $\stackS, \stackT, \stackU, \stackV \in
  \Stacks{\Omega}$. It follows from the definitions that
\[
  \Iso{\stackS}{\stackT} =
  \begin{cases}
    \varnothing
     & \text{if\ } |\stackS| \neq |\stackT|, \\
    \bigcap_{i = 1}^{|\stackS|} \Iso{\stackS[i]}{\stackT[i]}
     & \text{if\ } |\stackS| = |\stackT|,
  \end{cases}
  \quad\text{and that\ }
  \Auto{\stackS} = \bigcap_{i = 1}^{|\stackS|}
  \Auto{\stackS[i]}.
\]
In addition $\Auto{\stackS \Vert \stackU} \leq \Auto{\stackS}$,
and if $|\stackS| = |\stackT|$, then
$\Iso{\stackS \Vert \stackU}{\stackT \Vert \stackV}
  \subseteq
  \Iso{\stackS}{\stackT}$.
Roughly speaking, the automorphism group of a labelled digraph stack, and the
set of isomorphisms from one labelled digraph stack to another one of equal
length, potentially become smaller as new entries are added to the stacks.
\end{remark}

We illustrate some of the foregoing concepts in Example~\ref{ex-stack}.

\begin{example}\label{ex-stack}
  Let $\Omega = \{1, \ldots, 6\}$ and $\labelSet = \{\exLabel{black},\,
  \exLabel{white},\,
  \exLabel{solid},\, \exLabel{dashed}\,\}$.  Here we define a
  labelled digraph stack $\stackS$ on $\Omega$ that has length $3$, by
  describing each of its members.

  We define the first entry of $\stackS$ via the orbital graph of $K \coloneqq
  \<(1\,2)(3\,4)(5\,6), (2\,4\,6) \>$ with base-pair $(1, 3)$. The automorphism
  group of this orbital graph (as always, induced by $\Sym{\Omega}$) is $K$
  itself; in other words, this orbital graph perfectly represents $K$ by its
  automorphism group.
  In order to define $\stackS[1]$, we convert this orbital graph into a labelled
  digraph by assigning the label \exLabel{white} to each vertex and
  assigning the label \exLabel{solid} to each arc.
  This does not change the automorphism group of the digraph.

  We define the second entry of $\stackS$ to be the labelled digraph on $\Omega$
  without arcs, whose vertices $1$ and $2$ are labelled \exLabel{black}, and
  whose remaining vertices are labelled \exLabel{white}.  The automorphism group
  of this labelled digraph is the setwise stabiliser of $\{1, 2\}$ in
  $\Sym{\Omega}$.

  We define the third entry of $\stackS$ to be the labelled digraph $\stackS[3]$
  shown in Figure~\ref{fig-ex-stack}, with arcs and labels as depicted there;
  its automorphism group is $\< (1\,2), (3\,4)(5\,6) \>$.

  Given the automorphism groups of the individual entries of $\stackS$, as
  described above, it follows that the automorphism group of $\stackS$  consists
  of precisely those elements of $K$ that stabilise the set $\{1, 2\}$, and that
  are automorphisms of the labelled digraph $\stackS[3]$. Hence this group is
  $\< (1\,2)(3\,4)(5\,6) \>$.
  Since $(1\,2)$ is an automorphism of $\stackS[2]$ and $\stackS[3]$, but not of
  $\stackS[1]$, it follows that $\stackS^{(1\,2)} = [{\stackS[1]}^{(1\,2)},
  \stackS[2], \stackS[3]] \neq \stackS$.
  We also note that $\Iso{\stackS}{\stackS^{(1\,2)}}$ is the right coset
  $\Auto{\stackS} \cdot (1\,2) = \{ (1\,2), (3\,4)(5\,6) \}$ of $\Auto{\stackS}$
  in $\Sym{\Omega}$.

  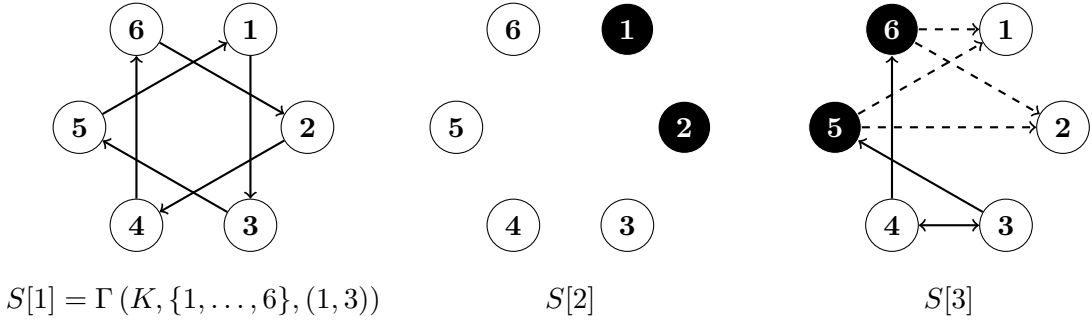
\begin{figure}[!ht]
    \centering
    \begin{tikzpicture}
      \foreach \x in {1,2,3,4,5,6} {
          \tikzstyle{white}=[circle, draw=black]
          \node[white] (\x) at (-\x*60+120:1.5cm) {$\mathbf{\x}$};};
      \foreach \x/\y in {1/3,2/4,3/5,4/6,5/1,6/2} {\arc{\x}{\y}};
      \node at (0, -2.3)
      {$\stackS[1] = \Gamma\left(K, \{1, \ldots, 6\}, (1, 3)\right)$};
    \end{tikzpicture}
    \quad
    \begin{tikzpicture}
      \tikzstyle{white}=[circle, draw=black]
      \tikzstyle{black}=[circle, draw=white, fill=black!100]
      \foreach \x in {1,2} {
          \node[black] (\x) at (-\x*60+120:1.5cm) {$\color{white}\mathbf{\x}$};};
      \foreach \x in {3,4,5,6} {
          \node[white] (\x) at (-\x*60+120:1.5cm) {$\mathbf{\x}$};};
      \node at (0, -2.3) {$\stackS[2]$};
    \end{tikzpicture}
    \qquad\quad
    \begin{tikzpicture}
      \tikzstyle{white}=[circle, draw=black]
      \tikzstyle{black}=[circle, draw=white, fill=black!100]
      \foreach \x in {1,2,3,4} {
          \node[white] (\x) at (-\x*60+120:1.5cm) {$\mathbf{\x}$};};
      \foreach \x in {5,6} {
          \node[black] (\x) at (-\x*60+120:1.5cm) {$\color{white}\mathbf{\x}$};};
      \arcDash{5}{1}
      \arcDash{6}{1}
      \arcDash{5}{2}
      \arcDash{6}{2}
      \arc{3}{5}
      \arc{4}{6}
      \arcSym{3}{4}
      \node at (0, -2.3) {$\stackS[3]$};
    \end{tikzpicture}
    \caption{\label{fig-ex-stack}Diagrams of the labelled digraphs in the
      labelled digraph stack $\stackS$ from Example~\ref{ex-stack}. The
      vertices and arcs of these labelled digraphs are styled according to their
      labels, which are chosen from the set $\{\exLabel{black},\,\exLabel{white},\,
        \exLabel{solid},\, \exLabel{dashed}\,\}$.}
  \end{figure}
\end{example}

As well as the obvious difference of being defined in terms of labelled digraphs
rather than ordered partitions, there are further conceptual differences between
labelled digraph stacks and the ordered partitions stacks that Leon uses
in~\cite{leon1997, leon1991} for his search algorithms.
For example, the entries of a labelled digraph stack on $\Omega$ can be any
labelled digraphs on $\Omega$, whereas each subsequent ordered partition in one
of Leon's ordered partition stacks is required to be finer than the previous
entry (for some definition of `finer').
We explore this further in Section~\ref{sec-squashed-stack}.
Furthermore, one can simply write down the automorphisms and isomorphisms of
ordered partition stacks induced by $\Sym{\Omega}$ with trivial
calculation, but this is computationally expensive
for labelled digraph stacks, in general.  This reflects the fact
that a far greater range of sets of permutations can be represented by labelled
digraph stacks than can be represented by ordered partition stacks.

\subsection{The squashed labelled digraph of a
  stack}\label{sec-squashed-stack}

As mentioned previously, in the definition of a labelled digraph stack, we have
not included any requirement of entries of a stack becoming `finer'.
This is because it can be computationally expensive to find out
the automorphism groups of labelled digraphs and their stacks,
and we therefore do not wish to require that the automorphism groups of a labelled
digraph stack and its entries are always known.

Moreover, without a requirement of entries becoming `finer', it is much easier to
append new
labelled digraphs to a stack, which is the primary topic of
Section~\ref{sec-refiners}.
The computational purpose
of extending a stack is simply to add new information about the current part of
the search space; there is no need to duplicate old information.  The
automorphism groups of the pre-existing entries of a stack can always be
obtained from the entries themselves, and so from this perspective, it is not
necessary for each new entry to contain old information about the previous
entries.

On the other hand, having a labelled digraph whose automorphism group is equal
to that of a given labelled digraph stack (analogous to the final entry of an
ordered partition stack) proves to be convenient for our exposition, especially
for Section~\ref{sec-approximations}, even though it is not fundamentally
required for the correctness of our algorithms. However, we define this special
labelled digraph to be a new object that is defined from the stack, rather than
being part of the stack itself.  More specifically, in the remainder of
Section~\ref{sec-squashed-stack}, we introduce a way of converting labelled
digraph stacks into labelled digraphs in a way that preserves isomorphisms.
This is a short way of saying that the sets of isomorphisms that we are interested in do not change in the process.

In order to make the following definition, we first fix a special symbol $\#
\not\in \labelSet$ that is never
to be used as the label of a vertex or an arc
in any labelled digraph.

\begin{definition}\label{defn-squashed}
  Let $\stackS$ be a labelled digraph stack on $\Omega$, with $\stackS[i]
  \coloneqq (\Omega, A_{i}, \labelFunc_{i})$ being some labelled digraph on
  $\Omega$ for each $i \in \{1, \ldots, |\stackS|\}$.  Then the \emph{squashed
  labelled digraph} of $\stackS$, denoted by $\Squash{\stackS}$, is the labelled
  digraph $(\Omega, A, \labelFunc)$, where
  \begin{itemize}
    \item

      $A = \bigcup_{i = 1}^{|\stackS|} A_{i}$,

    \item

      $\labelFunc(\delta) =
        [\labelFunc_{1}(\delta), \ldots,
        \labelFunc_{|\stackS|}(\delta)]$ for all $\delta \in \Omega$,
      and

    \item

        $\labelFunc(\alpha, \beta)$ is the list of length $|\stackS|$
        for all $(\alpha, \beta) \in \bigcup_{i = 1}^{|\stackS|} A_{i}$,
        where
        \[\labelFunc(\alpha, \beta)[i] =
          \begin{cases}
            \labelFunc_{i}(\alpha, \beta)
             & \text{\ if\ } (\alpha, \beta) \in A_{i}, \\
            \#
             & \text{\ if\ } (\alpha, \beta) \not\in A_{i},
          \end{cases}
          \quad\text{for all}\ i \in \{1, \ldots, |\stackS|\}.
        \]
  \end{itemize}
\end{definition}

Note that the labelling function of the squashed labelled digraph of a stack can
be used to
reconstruct all information about the stack from which it was created.  We
also point out that $\Squash{\stackS}^{g} = \Squash{\stackS^{g}}$ for all
$\stackS \in \Stacks{\Omega}$ and $g \in \Sym{\Omega}$.

In the following lemma, we prove that the set of isomorphisms induced by
$\Sym{\Omega}$ from one labelled digraph stack $\stackS$ to another $\stackT$
consists of exactly those elements of $\Sym{\Omega}$ that induce
isomorphisms from $\Squash{\stackS}$ to $\Squash{\stackT}$.

\begin{lemma}\label{lem-squash-same-iso}
  Let $\stackS, \stackT \in \Stacks{\Omega}$.  Then
  \[\Iso{\stackS}{\stackT}
  =
  \Iso{\Squash{\stackS}}{\Squash{\stackT}}.\]
\end{lemma}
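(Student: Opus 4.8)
The plan is to reduce the lemma to two elementary properties of the squashing operation, both of which are in fact already remarked upon in the text surrounding Definition~\ref{defn-squashed}: that it is $\Sym{\Omega}$-equivariant, i.e.\ $\Squash{\stackS}^{g} = \Squash{\stackS^{g}}$ for all $\stackS \in \Stacks{\Omega}$ and $g \in \Sym{\Omega}$, and that it is injective, i.e.\ $\Squash{\stackU} = \Squash{\stackV}$ implies $\stackU = \stackV$. Granting these, the lemma in the case $|\stackS| = |\stackT|$ follows by a chain of equivalences: for $g \in \Sym{\Omega}$ we have $g \in \Iso{\stackS}{\stackT}$ iff $\stackS^{g} = \stackT$, iff $\Squash{\stackS^{g}} = \Squash{\stackT}$ (the forward implication is trivial, the reverse is injectivity), iff $\Squash{\stackS}^{g} = \Squash{\stackT}$ (equivariance), iff $g \in \Iso{\Squash{\stackS}}{\Squash{\stackT}}$. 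What remains is to justify equivariance and injectivity, and to deal with the case $|\stackS| \neq |\stackT|$, where $\Iso{\stackS}{\stackT} = \varnothing$ by Remark~\ref{rmk-stack-iso-auto} and we must check the same of $\Iso{\Squash{\stackS}}{\Squash{\stackT}}$.

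For equivariance I would simply unwind Definition~\ref{defn-squashed} together with the action of $\Sym{\Omega}$ on labelled digraphs: writing $\stackS[i] = (\Omega, A_{i}, \labelFunc_{i})$, the arc set of both $\Squash{\stackS}^{g}$ and $\Squash{\stackS^{g}}$ is $\bigcup_{i} A_{i}^{g} = \bigl(\bigcup_{i} A_{i}\bigr)^{g}$, and for any vertex $\delta$ or arc $(\alpha,\beta)$ of the result the $i$-th coordinate of its label in either digraph is obtained from $\labelFunc_{i}$ evaluated at $\delta^{g^{-1}}$, respectively $(\alpha^{g^{-1}}, \beta^{g^{-1}})$, with the symbol $\#$ inserted in the arc case exactly when $(\alpha^{g^{-1}},\beta^{g^{-1}}) \notin A_{i}$; these agree coordinatewise. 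This is the routine verification behind the remark made just after Definition~\ref{defn-squashed}, and in the write-up I would state it concisely rather than belabour it.

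The one point that genuinely uses the design of Definition~\ref{defn-squashed} is injectivity, so this is the step I would treat most carefully, though it remains short. Given $\Squash{\stackS} = (\Omega, A, \labelFunc)$, one recovers each entry $\stackS[i] = (\Omega, A_{i}, \labelFunc_{i})$ as follows: since $\# \notin \labelSet$, the value $\labelFunc(\alpha,\beta)[i]$ equals $\#$ precisely when $(\alpha,\beta) \notin A_{i}$, so $A_{i} = \set{(\alpha,\beta) \in A}{\labelFunc(\alpha,\beta)[i] \neq \#}$; then $\labelFunc_{i}(\delta) = \labelFunc(\delta)[i]$ for $\delta \in \Omega$ and $\labelFunc_{i}(\alpha,\beta) = \labelFunc(\alpha,\beta)[i]$ for $(\alpha,\beta) \in A_{i}$, and the stack length is the common length of the vertex labels (assuming $\Omega \neq \varnothing$). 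Hence $\Squash{\stackS}$ determines $\stackS$, which is injectivity. For the length-mismatch case: if $\Omega \neq \varnothing$, any $\delta \in \Omega$ has a label of length $|\stackS|$ in $\Squash{\stackS}$ and of length $|\stackT| \neq |\stackS|$ in $\Squash{\stackT}$, so no permutation can map the former to the latter and $\Iso{\Squash{\stackS}}{\Squash{\stackT}} = \varnothing$ as required; the degenerate case $\Omega = \varnothing$ can be handled separately or excluded at the outset. The only real subtlety, such as it is, is keeping track of the fact that the $\#$ marker allows one to distinguish "no arc in $A_{i}$" from "an arc of $A_{i}$ bearing some label of $\labelSet$", which is exactly what makes the reconstruction, and hence injectivity, go through.
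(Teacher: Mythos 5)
Your proof is correct, and it is organised differently from the paper's. The paper proves the lemma in the equal-length case by a single direct chain of biconditionals that unwinds both definitions componentwise (arc sets $A_i^g=B_i$, label functions $\sigma_i^g=\tau_i$, then repackaged in terms of the squashed labelling functions $\mu,\nu$), and it dispatches the length-mismatch case in one line by observing that the squashed labels have different lengths. You instead factor the statement through two structural properties of $\operatorname{\textsc{Squash}}$ — equivariance $\Squash{\stackS}^{g}=\Squash{\stackS^{g}}$ and injectivity via reconstruction of each $A_i$ and $\labelFunc_i$ from the $\#$-padded labels — which are precisely the two facts the paper states without proof in the remarks after Definition~\ref{defn-squashed}, and which are implicitly what make the backward implications in the paper's chain work. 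Your decomposition is more modular and arguably cleaner: once injectivity is available, the case split on $|\stackS|\neq|\stackT|$ is actually redundant, since $\Squash{\stackS}^{g}=\Squash{\stackT}$ already forces $\stackS^{g}=\stackT$ and hence equal lengths, whereas the paper needs its separate label-length observation. The only caveat, which you flag honestly, is the degenerate case $\Omega=\varnothing$, where vertex labels cannot be used to recover the stack length (and where the statement itself degenerates for stacks of unequal length); the paper's own proof makes the same tacit assumption that $\Omega$ is non-empty, so this is not a gap relative to the paper.
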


\begin{proof}
  If $\stackS$ and $\stackT$ have different lengths, then they are
  non-isomorphic by definition, and $\Squash{\stackS}$ and
  $\Squash{\stackT}$ are non-isomorphic because their labels have
  different lengths.

  For the remainder of the proof, we suppose that $\stackS$ and
  $\stackT$ have some common length $k \in \N_{0}$.
  Let $\mu$ and $\nu$ denote the labelling functions of $\Squash{\stackS}$
  and $\Squash{\stackT}$, respectively, and for each $i \in \{1, \ldots,
    k\}$, let
  $\stackS[i] = (\Omega, A_{i}, \sigma_{i})$ and
  $\stackT[i] = (\Omega, B_{i}, \tau_{i})$.

  The sets whose equality we wish to prove are subsets of
  $\Sym{\Omega}$, so let $g \in \Sym{\Omega}$ be arbitrary. We prove that $g
    \in \Iso{\stackS}{\stackT}$ if and only if $g \in
    \Iso{\Squash{\stackS}}{\Squash{\stackT}}$ by just following the
  relevant definitions closely.
  \begin{align*}
    g \in \Iso{\stackS}{\stackT}
     & \Leftrightarrow\ {\stackS[i]}^{g} = \stackT[i]
    \text{\ for all\ } i \in \{1, \ldots, |\stackS|\}
    \\
     & \Leftrightarrow\
    A_{i}^{g} = B_{i}
    \text{\ and\ }
    \sigma_{i}^{g} = \tau_{i}
    \text{\ for each\ } i \in \{1, \ldots, k\}
    \\
     & \Leftrightarrow\
    A_{i}^{g} = B_{i},\
    \sigma_{i}(\delta) = \tau_{i}(\delta^{g}),
    \text{\ and\ }
    {\sigma_{i}(\alpha, \beta)}
    = \tau_{i}(\alpha^{g}, \beta^{g})
    \\
     & \hspace{1.5cm}
    \text{\ for each\ }
    i \in \{1, \ldots, k\},\
    \delta \in \Omega,
    \text{\ and\ }
    (\alpha, \beta) \in A_{i}
    \\
     & \Leftrightarrow\
    A_{i}^{g} = B_{i},\
    \mu(\delta) = \nu(\delta^{g}),
    \text{\ and\ }
    {\mu(\alpha, \beta)} = \nu(\alpha^{g}, \beta^{g})
    \\
     & \hspace{1.5cm}
    \text{\ for each\ }
    i \in \{1, \ldots, k\},\
    \delta \in \Omega,
    \text{\ and\ }
    (\alpha, \beta) \in A_{1} \cup \cdots \cup A_{k}
    \\
     & \Leftrightarrow\
    {(A_{1} \cup \cdots \cup A_{k})}^{g} =
    B_{1} \cup \cdots \cup B_{k},\
    \mu(\delta) = \nu(\delta^{g}),
    \text{\ and\ }
    \\
     & \hspace{1.5cm}
    {\mu(\alpha, \beta)} = \nu(\alpha^{g}, \beta^{g})
    \text{\ for each\ } \delta \in \Omega
    \text{\ and\ }
    (\alpha, \beta) \in A_{1} \cup \cdots \cup A_{k}
    \\
     & \Leftrightarrow\
    {(A_{1} \cup \cdots \cup A_{k})}^{g} =
    B_{1} \cup \cdots \cup B_{k}
    \text{\ and\ }
    \mu^{g} = \nu
    \\
     & \Leftrightarrow\ g \in
    \Iso{\Squash{\stackS}}{\Squash{\stackT}}.
    \qedhere
  \end{align*}
\end{proof}

\begin{example}\label{ex-squashed}
  Let $\stackS$ be the labelled digraph stack from
  Example~\ref{ex-stack}. Since $|\stackS| = 3$, the labels of vertices and
  arcs in $\Squash{\stackS}$ are lists of length $3$. The vertex labels of
  $\Squash{\stackS}$ are:
  \begin{itemize}
    \item
          $\labelFunc(1) = \labelFunc(2) =
            [\exLabel{white}, \exLabel{black}, \exLabel{white}]$,
          shown as \exLabel{black} in Figure~\ref{fig-ex-squashed},
    \item
          $\labelFunc(3) = \labelFunc(4) =
            [\exLabel{white}, \exLabel{white}, \exLabel{white}]$,
          shown as \exLabel{white} in Figure~\ref{fig-ex-squashed},
          and
    \item
          $\labelFunc(5) = \labelFunc(6) =
            [\exLabel{white}, \exLabel{white}, \exLabel{black}]$,
          shown as \exLabel{grey} in Figure~\ref{fig-ex-squashed}.
  \end{itemize}
  There are ten arcs in $\Squash{\stackS}$, which in total have five
  different labels:
  \begin{itemize}
    \item
          $\labelFunc(1, 3) = \labelFunc(2, 4) =
            [\exLabel{solid}, \#, \#]$,
          shown as \exLabel{thin} in Figure~\ref{fig-ex-squashed},
    \item
          $\labelFunc(3, 4) = \labelFunc(4, 3) = [\#, \#,
            \exLabel{solid}]$,
          shown as \exLabel{dotted} in Figure~\ref{fig-ex-squashed},
    \item
          $\labelFunc(5, 2) = \labelFunc(6, 1) = [\#, \#,
            \exLabel{dashed}]$,
          shown as \exLabel{dashed} in Figure~\ref{fig-ex-squashed},
    \item
          $\labelFunc(3, 5) = \labelFunc(4, 6) =
            [\exLabel{solid}, \#, \exLabel{solid}]$,
          shown as \exLabel{thick} in Figure~\ref{fig-ex-squashed}, and
    \item
          $\labelFunc(5, 1) = \labelFunc(6, 2) =
            [\exLabel{solid}, \#, \exLabel{dashed}]$,
          shown as \exLabel{wavy} in Figure~\ref{fig-ex-squashed}.
  \end{itemize}
  \begin{figure}[!ht]
    \centering
    \begin{tikzpicture}
      \tikzstyle{white}=[circle, draw=black]
      \tikzstyle{black}=[circle, draw=white, fill=black!100]
      \tikzstyle{grey}=[circle, draw=black, fill=gray!25]
      \foreach \x in {1,2} {
          \node[black] (\x) at (-\x*60+120:1.6cm) {$\color{white}\mathbf{\x}$};};
      \foreach \x in {3,4} {
          \node[white] (\x) at (-\x*60+120:1.6cm) {$\mathbf{\x}$};};
      \foreach \x in {5,6} {
          \node[grey] (\x) at (-\x*60+120:1.6cm) {$\mathbf{\x}$};};
      \arc{1}{3}
      \arc{2}{4}
      \arcSymDot{3}{4}
      \arcDash{5}{2}
      \arcDash{6}{1}
      \arcThick{3}{5}
      \arcThick{4}{6}
      \arcWavy{5}{1}
      \arcWavy{6}{2}
    \end{tikzpicture}
    \caption{\label{fig-ex-squashed}
      A depiction of the squashed labelled digraph
      $\Squash{\stackS}$ from Example~\ref{ex-squashed}, which is
      constructed from the labelled digraph stack $\stackS$ from
      Example~\ref{ex-stack}.}
  \end{figure}
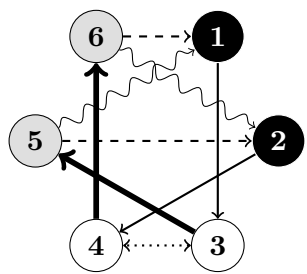
  Since automorphisms of labelled digraphs preserve the sets of vertices with
  any particular label, it is clear that $\Auto{\Squash{\stackS}} \leq \<
  (1\,2), (3\,4), (5\,6) \>$. This containment is proper, since
  $\Auto{\Squash{\stackS}} = \Auto{\stackS}$ by Lemma~\ref{lem-squash-same-iso},
  and $\Auto{\stackS} = \<(1\,2)(3\,4)(5\,6)\>$, as discussed in
  Example~\ref{ex-stack}.
  Indeed, inspection of the arc labels in $\Squash{\stackS}$ shows that any
  automorphism that interchanges the pair of points in any of $\{1, 2\}$, $\{3,
  4\}$, or $\{5, 6\}$ also interchanges the other pairs.
\end{example}

\section{Approximating isomorphisms and fixed points of
  stacks}\label{sec-approximations}

One might assume that organising a search around some kind of object (where the
set of elements we are searching for is overestimated by the set of isomorphisms from one such object to another)
requires knowing exactly what these isomorphisms are.  When searching with
labelled digraphs stacks, for instance, this would entail performing many
potentially-expensive labelled digraph isomorphism computations. However, as we
show in this paper, this is not necessary.  One may instead overestimate the
set of isomorphisms rather than compute them exactly.  Unsurprisingly, worse
approximations typically lead to larger searches, but since an overestimate of
an overestimate is again just an overestimate, doing this does
not significantly change the search technique.

There is therefore a trade-off
between the accuracy of such
overestimates, and the amount of effort spent in computing them. In
Definition~\ref{defn-approx-iso}, we introduce the concept of an isomorphism
approximator for pairs of labelled digraphs stacks, which is a vital
component of the algorithms described in Section~\ref{sec-search}. Later
we give several examples of such functions.

\begin{definition}\label{defn-approx-iso}
  An \emph{isomorphism approximator} for labelled digraph stacks is a function
  $\approxFunc$ that maps a pair of labelled digraph stacks on $\Omega$
  to either the empty set $\varnothing$, or a right coset of a subgroup of
  $\Sym{\Omega}$,
  such that the following statements hold for all $\stackS, \stackT \in
  \Stacks{\Omega}$
  (we write $\Approx{\stackS}{\stackS}$ as an abbreviation for
  $\textsc{Approx}(\stackS, \stackS)$):
  \begin{enumerate}[label=\textrm{(\roman*)}]
    \item\label{item-approx-true-overestimate}
      $\Iso{\stackS}{\stackT} \subseteq \Approx{\stackS}{\stackT}$.

    \item\label{item-approx-different-lengths}
      If $|\stackS| \neq |\stackT|$, then
      $\Approx{\stackS}{\stackT} = \varnothing$.

    \item\label{item-approx-right-coset-of-aut}
      If $\Approx{\stackS}{\stackT} \neq \varnothing$, then
      $\Approx{\stackS}{\stackT} = \Approx{\stackS}{\stackS} \cdot h$ for some
      $h \in \Sym{\Omega}$.
  \end{enumerate}
\end{definition}

Let $\approxFunc$ be an isomorphism approximator and let $\stackS, \stackT \in
\Stacks{\Omega}$.
As discussed previously, the set $\Iso{\stackS}{\stackT}$ of isomorphisms
induced by $\Sym{\Omega}$ from $\stackS$ to $\stackT$ is either empty, or it is
a right coset of $\Auto{\stackS}$.
Since $\idOmega \in \Iso{\stackS}{\stackS} = \Auto{\stackS}$, it follows by
definition that $\Approx{\stackS}{\stackS}$ is a subgroup of $\Sym{\Omega}$ that
contains $\Auto{\stackS}$, the automorphism group of $\stackS$ induced by
$\Sym{\Omega}$.  In other words, $\Approx{\stackS}{\stackS}$ is an overestimate
for $\Auto{\stackS}$.
The value of $\Approx{\stackS}{\stackT}$ should be interpreted as follows.  By
Definition~\ref{defn-approx-iso}\ref{item-approx-true-overestimate},
$\Approx{\stackS}{\stackT}$
gives a true overestimate for $\Iso{\stackS}{\stackT}$.  Therefore, if
$\Approx{\stackS}{\stackT} = \varnothing$, then the approximator has correctly
determined that $\stackS$ and $\stackT$ are non-isomorphic.  In particular,
by Definition~\ref{defn-approx-iso}\ref{item-approx-different-lengths},
an isomorphism approximator correctly determines that stacks of different
lengths are non-isomorphic. Otherwise, the approximator returns a right coset
in $\Sym{\Omega}$ of its overestimate for $\Auto{\stackS}$.

For practical purposes, it is most convenient for a computer
implementation of an isomorphism approximator to return a coset of the form
$\Approx{\stackS}{\stackS} \cdot h$ by explicitly giving the group
$\Approx{\stackS}{\stackS}$, typically by a list of generators,
along with a coset representative.

Any sensible isomorphism approximator returns $\varnothing$ for labelled
digraph stacks where the $i$\textsuperscript{th} entries contain different
numbers of arcs or vertices with any label.  However, for simplicity,
the definition only contains conditions that our techniques require.

In Section~\ref{sec-refiners-via-stack}, we require the ability to approximate
a set of fixed points of the automorphism group of any labelled digraph stack.
A point $\omega \in \Omega$ is \emph{a fixed point of a
subgroup} $G \leq \Sym{\Omega}$ if and only if $\omega^{g}=\omega$ for all $g
\in G$.
This is particularly useful when it comes to using orbits and orbital graphs in
our search techniques. For stacks of ordered partitions, it is possible to
simply read
off the fixed points, but once again, this is something that can be much more
computationally expensive for
stacks of labelled digraphs.  Therefore we introduce the following
definition.

\begin{definition}\label{defn-approx-fixed}
  A \emph{fixed-point approximator} for labelled digraph stacks is a function
  $\fixedFunc$ that maps each labelled digraph stack on $\Omega$ to
  a finite list in $\Omega$,
  such that for each $\stackS \in \Stacks{\Omega}$:
  \begin{enumerate}[label=\textrm{(\roman*)}]

    \item\label{item-fixed}
      Each entry in \(\Fixed{\stackS}\) is a fixed point of $\Auto{\stackS}$,
      and

    \item\label{item-fixed-invariant}
      \({\Fixed{\stackS}}^{g} = \Fixed{{\stackS}^{g}}\) for all \(g
      \in \Sym{\Omega}\).

  \end{enumerate}
\end{definition}

Definition~\ref{defn-approx-fixed}\ref{item-fixed-invariant}
ensures that a fixed-point approximator is compatible with the techniques that
we describe in Section~\ref{sec-refiners-via-stack}.  A fixed-point approximator
is permitted to return lists with duplicate entries, although duplicate entries
would seem to have no practical benefit.

\subsection{Computing automorphisms and isomorphisms
  exactly}\label{sec-exact-approx}

For Definition~\ref{defn-nauty-approx}, we require the concept of a canoniser of
labelled digraphs.

\begin{definition}\label{defn-canoniser}
  A \emph{canoniser} of labelled digraphs is a function $\canonFunc$ from
  the set of labelled digraphs on $\Omega$ to $\Sym{\Omega}$ such that, for all
  labelled digraphs $\Gamma$ and $\Delta$, $\Gamma^{\Canon{\Gamma}} =
    \Delta^{\Canon{\Delta}}$ if and only if $\Gamma$ and $\Delta$ are
  isomorphic.
\end{definition}

In essence, a canoniser assigns each object to a permutation that maps the
object to some canonically chosen member of its isomorphism class.
Canonisers are defined analogously for vertex-labelled digraphs
(i.e.\ digraphs where the labelling function is defined on the set of vertices
only).
There are several widely-used computational tools for canonising vertex-labelled
digraphs,
such as \bliss~\cite{bliss} and \nauty~\cite{practical2}.
These tools
compute the automorphism group of a vertex-labelled digraph at the same time as
they canonise it.
Since it is relatively easy to convert labelled digraphs into vertex-labelled
digraphs in a way that preserves isomorphisms, it is possible to use such tools
to canonise and compute automorphism groups of labelled digraphs.

\begin{definition}[Canonising and computing automorphisms
    exactly]\label{defn-nauty-approx}
  Let $\canonFunc$ be a canoniser of labelled digraphs.
  We define functions $\fixedFunc$ and $\approxFunc$ as follows:
  for all $\stackS, \stackT \in \Stacks{\Omega}$, let $g =
  \Canon{\Squash{\stackS}}$ and $h = \Canon{\Squash{\stackT}}$,
  let $L$ be the list
  $[i \in \Omega\,:\,i \text{\ is fixed by\,}\Auto{\Squash{\stackS}^{g}}]$,
  ordered as usual in $\Omega$,
  and define
  \begin{align*}
    \Fixed{\stackS}
     & = L^{g^{-1}}, \text{\ and} \\
    \Approx{\stackS}{\stackT}
     & =
    \begin{cases}
      \Auto{\Squash{\stackS}} \cdot g h^{-1}
       & \text{if}\ \Squash{\stackS}^{g} = \Squash{\stackT}^{h},    \\
      \varnothing
       & \text{otherwise.}
    \end{cases}
  \end{align*}
\end{definition}

\begin{lemma}\label{lem-nauty-approx}
  Let the functions $\approxFunc$ and $\fixedFunc$ be given as in
  Definition~\ref{defn-nauty-approx}.
  Then $\approxFunc$ is an isomorphism approximator, and
  $\fixedFunc$ is a fixed-point approximator.
  Moreover, for all $\stackS, \stackT \in \Stacks{\Omega}$,
  $\Approx{\stackS}{\stackT} = \Iso{\stackS}{\stackT}$.
\end{lemma}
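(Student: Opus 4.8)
The plan is to verify the three defining properties of an isomorphism approximator (Definition~\ref{defn-approx-iso}) and the two defining properties of a fixed-point approximator (Definition~\ref{defn-approx-fixed}) for the functions $\approxFunc$ and $\fixedFunc$ given in Definition~\ref{defn-nauty-approx}, and then to establish the stronger equality $\Approx{\stackS}{\stackT} = \Iso{\stackS}{\stackT}$, which in fact subsumes property~\ref{item-approx-true-overestimate}. Throughout, I would lean heavily on Lemma~\ref{lem-squash-same-iso}, which tells us that $\Iso{\stackS}{\stackT} = \Iso{\Squash{\stackS}}{\Squash{\stackT}}$ and (taking $\stackT = \stackS$) $\Auto{\stackS} = \Auto{\Squash{\stackS}}$, so that everything reduces to statements about the single squashed labelled digraphs and the canoniser.

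First I would prove the equality $\Approx{\stackS}{\stackT} = \Iso{\stackS}{\stackT}$ directly. Write $\Gamma = \Squash{\stackS}$, $\Delta = \Squash{\stackT}$, $g = \Canon{\Gamma}$, $h = \Canon{\Delta}$. By the defining property of a canoniser, $\Gamma^{g} = \Delta^{h}$ if and only if $\Gamma \cong \Delta$, i.e.\ if and only if $\Iso{\Gamma}{\Delta} \neq \varnothing$; so the two cases of the definition of $\Approx{\stackS}{\stackT}$ match the two cases of whether $\Iso{\Gamma}{\Delta}$ is empty, and by Lemma~\ref{lem-squash-same-iso} this is the same as whether $\Iso{\stackS}{\stackT}$ is empty. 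In the non-empty case, I claim $\Auto{\Gamma}\cdot gh^{-1} = \Iso{\Gamma}{\Delta}$. The inclusion $\supseteq$ (respectively $\subseteq$) follows because, for any $k \in \Sym{\Omega}$, $k \in \Iso{\Gamma}{\Delta}$ means $\Gamma^{k} = \Delta = (\Delta^{h})^{h^{-1}} = (\Gamma^{g})^{h^{-1}} = \Gamma^{gh^{-1}}$, which holds if and only if $gh^{-1}k^{-1}$ fixes $\Gamma$, i.e.\ $k \in \Auto{\Gamma}\cdot gh^{-1}$; here I use the group action axioms $\Gamma^{xy} = (\Gamma^{x})^{y}$ and that $\Auto{\Gamma}$ is a group. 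Combining with Lemma~\ref{lem-squash-same-iso} and $\Auto{\Gamma} = \Auto{\stackS}$ gives $\Approx{\stackS}{\stackT} = \Iso{\stackS}{\stackT}$.

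With the equality in hand, properties~\ref{item-approx-true-overestimate} and~\ref{item-approx-different-lengths} are immediate: $\Iso{\stackS}{\stackT} \subseteq \Iso{\stackS}{\stackT}$ trivially, and if $|\stackS| \neq |\stackT|$ then $\Iso{\stackS}{\stackT} = \varnothing$ by definition. For property~\ref{item-approx-right-coset-of-aut}, note $\Approx{\stackS}{\stackS} = \Iso{\stackS}{\stackS} = \Auto{\stackS}$, a subgroup; and in the non-empty case $\Approx{\stackS}{\stackT} = \Auto{\stackS}\cdot gh^{-1}$ has exactly the required form with $\stackdisplay{}h$ replaced by the representative $gh^{-1}$. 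I should also remember to check the codomain constraint in Definition~\ref{defn-approx-iso}, namely that the output is always either $\varnothing$ or a right coset of a subgroup: this is exactly what we have just shown, since $\Auto{\stackS}\cdot gh^{-1}$ is a right coset of $\Auto{\stackS}$.

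For $\fixedFunc$, property~\ref{item-fixed} asks that each entry of $\Fixed{\stackS} = L^{g^{-1}}$ be a fixed point of $\Auto{\stackS}$. Here $L$ lists the points fixed by $\Auto{\Gamma^{g}}$, which equals $\Auto{\Gamma}^{g} = g^{-1}\Auto{\Gamma}g$ since conjugation transports automorphism groups along the isomorphism $g$ (as noted in the Preliminaries, $\Auto{\mathcal O}^{g} = \Auto{\mathcal O^{g}}$); so a point $i \in L$ satisfies $i^{a} = i$ for all $a \in g^{-1}\Auto{\Gamma}g$, hence $(i^{g^{-1}})^{b} = i^{g^{-1}}$ for all $b \in \Auto{\Gamma} = \Auto{\stackS}$, which is exactly the claim after translating by $g^{-1}$. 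Property~\ref{item-fixed-invariant}, the equivariance $\Fixed{\stackS}^{k} = \Fixed{\stackS^{k}}$ for $k \in \Sym{\Omega}$, is the step I expect to require the most care, because the canoniser need not itself be equivariant: $\Canon{\Gamma^{k}}$ need not equal $k^{-1}\Canon{\Gamma}$. The way around this is that the canoniser still sends $\Gamma$ and $\Gamma^{k}$ to the \emph{same} canonical digraph, so with $g' = \Canon{\Squash{\stackS^{k}}} = \Canon{\Gamma^{k}}$ we have $\Gamma^{g} = (\Gamma^{k})^{g'} = \Gamma^{kg'}$, whence $kg'g^{-1} \in \Auto{\Gamma}$; one then checks that the set of points fixed by $\Auto{(\Gamma^{k})^{g'}} = \Auto{\Gamma^{g}}$ is literally the same list $L$, so $\Fixed{\stackS^{k}} = L^{(g')^{-1}}$, and since $g' = k^{-1} \cdot (kg'g^{-1}) \cdot g$ with the middle factor an automorphism of $\Gamma$ that fixes $L$ setwise, a short computation gives $L^{(g')^{-1}} = (L^{g^{-1}})^{k} = \Fixed{\stackS}^{k}$. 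The remaining well-definedness points — that $\Fixed{\stackS}$ is a finite list in $\Omega$, and that $L$ is well-defined independently of any choices — are routine. This completes the verification that $\approxFunc$ and $\fixedFunc$ satisfy their respective definitions and that $\approxFunc$ computes isomorphisms exactly.
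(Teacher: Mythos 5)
Your proposal follows essentially the same route as the paper: prove the exact equality $\Approx{\stackS}{\stackT} = \Iso{\stackS}{\stackT}$ first, using Lemma~\ref{lem-squash-same-iso} and the defining property of the canoniser, deduce conditions \ref{item-approx-true-overestimate}--\ref{item-approx-right-coset-of-aut} of Definition~\ref{defn-approx-iso}, obtain Definition~\ref{defn-approx-fixed}\ref{item-fixed} from $\Auto{\Squash{\stackS}}^{g} = \Auto{\Squash{\stackS}^{g}}$, and prove equivariance by inserting the automorphism $kg'g^{-1}$ of $\Gamma = \Squash{\stackS}$. All of this matches the paper's proof and is correct, except for the justification of the very last computation.

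There you factor $(g')^{-1} = g^{-1}\,(kg'g^{-1})^{-1}\,k$ and appeal to the claim that the middle factor is ``an automorphism of $\Gamma$ that fixes $L$ setwise''. Two objections: first, in this factorization the middle factor is applied to $L^{g^{-1}}$, not to $L$, and an element of $\Auto{\Gamma}$ need not preserve $L$ (the fixed-point set of $\Auto{\Gamma^{g}}$) even as a set -- e.g.\ if $\Gamma$ on $\{1,2,3\}$ has a single loop at $1$ and $g = (1\,2)$, then $L = [2]$ but $(2\,3) \in \Auto{\Gamma}$ moves $2$; second, setwise invariance would in any case be too weak, since $\fixedFunc$ returns ordered lists and Definition~\ref{defn-approx-fixed}\ref{item-fixed-invariant} is an equality of lists, so you need the relevant entries to be fixed pointwise. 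The repair is one line and already implicit in your argument for \ref{item-fixed}: every entry of $L^{g^{-1}}$ is a fixed point of $\Auto{\Gamma}$, and $(kg'g^{-1})^{-1} \in \Auto{\Gamma}$, so it fixes each entry of $L^{g^{-1}}$, giving $L^{(g')^{-1}} = \bigl((L^{g^{-1}})^{(kg'g^{-1})^{-1}}\bigr)^{k} = (L^{g^{-1}})^{k} = \Fixed{\stackS}^{k}$. Equivalently, as the paper argues, $g^{-1}kg' \in \Auto{\Gamma^{g}}$ fixes every entry of $L$, so $L^{g^{-1}k} = L^{(g^{-1}kg')(g')^{-1}} = L^{(g')^{-1}}$. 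With that correction the proof is complete and agrees with the paper's.
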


\begin{proof}
  Throughout the proof, we repeatedly use Lemma~\ref{lem-squash-same-iso} and
  Definition~\ref{defn-canoniser}. As in Definition~\ref{defn-canoniser}, let $g
  = \Canon{\Squash{\stackS}}$ and $h = \Canon{\Squash{\stackT}}$.

  First, we show that $\Approx{\stackS}{\stackT} = \Iso{\stackS}{\stackT}$,
  which implies that
  Definition~\ref{defn-approx-iso}\ref{item-approx-true-overestimate}
  and~\ref{item-approx-different-lengths} hold.
  If $\stackS \not\cong \stackT$, then $\Squash{\stackS}^{g} \neq
  \Squash{\stackT}^{h}$, and so $\Approx{\stackS}{\stackT} =
  \Iso{\stackS}{\stackT} = \varnothing$.
  Otherwise $\stackS \cong \stackT$, in which case $g h^{-1} \in
  \Iso{\Squash{\stackS}}{\Squash{\stackT}} = \Iso{\stackS}{\stackT}$.
  Therefore
  \[
  \Approx{\stackS}{\stackT}
  =
  \Auto{\Squash{\stackS}} \cdot gh^{-1}
  =
  \Auto{\stackS} \cdot gh^{-1}
  =
  \Iso{\stackS}{\stackT}.
  \]
  Definition~\ref{defn-approx-iso}\ref{item-approx-right-coset-of-aut} clearly
  holds.
  Therefore $\approxFunc$ is an isomorphism approximator.

  Define
  $L = {[i \in \Omega\,:\,i \text{\ is fixed by\,}
        \Auto{\Squash{\stackS}^{g}}]}$,
  ordered as usual in $\Omega$.
  Since
  $
  \Auto{\stackS}^{g}
  =
  \Auto{\Squash{\stackS}}^{g}
  =
  \Auto{\Squash{\stackS}^{g}}
  $,
  it follows that $L$ consists of
  fixed points of $\Auto{\stackS}^{g}$, and so $\Fixed{\stackS}$ (which equals
    $L^{g^{-1}}$) consists of fixed points of $\Auto{\stackS}$.
  Therefore Definition~\ref{defn-approx-fixed}\ref{item-fixed} holds.  To show
  that Definition~\ref{defn-approx-fixed}\ref{item-fixed-invariant} holds, let
  $x \in \Sym{\Omega}$ be arbitrary and define $r =
  \Canon{\Squash{\stackS^{x}}}$.  Since
  $\Squash{\stackS}$ and $\Squash{\stackS^{x}}$ are isomorphic, it
  follows that $\Squash{\stackS}^{g} = \Squash{\stackS^{x}}^{r}$.  In
  particular,
  $L = {[i \in \Omega\,:\,i \text{\ is fixed by\,}
        \Auto{\Squash{\stackS^{x}}^{r}}]}$,
  and $g^{-1} x r$ is an automorphism of $\Squash{\stackS}^{g}$,
  which means that $g^{-1} x r$ fixes every entry of $L$.  Thus
  \[
    {\Fixed{\stackS}}^{x}
    = L^{g^{-1} x}
    = L^{(g^{-1} x r) r^{-1}}
    = L^{r^{-1}}
    = \Fixed{\stackS^{x}}. \qedhere
  \]
\end{proof}

\subsection{Approximations via equitable vertex labellings}\label{sec-equitable}

In order to present the approximator functions of this section, we require the
notion of an equitable vertex labelling for a labelled digraph.
Here we use the term \emph{vertex labelling} as an abbreviation for the restriction
of a digraph labelling function to the set of vertices, $\Omega$.

\subsubsection{Equitable vertex labellings}

\begin{definition}\label{defn-equitable}
  The vertex labelling of a labelled digraph $(\Omega, A,
    \labelFunc)$ is \emph{equitable} if and only if, for all vertices $\alpha,
    \beta \in \Omega$ with the same label, and for all vertex labels $y$ and arc
  labels $z$:
  \begin{align*}
     & |\set{(\alpha, \delta) \in A}{\labelFunc(\delta) = y\ \text{and}\
      \labelFunc(\alpha, \delta) = z}|
    = \\ & \hspace{5cm}
    |\set{(\beta,  \delta) \in A}{\labelFunc(\delta) = y\ \text{and}\
      \labelFunc(\beta,  \delta) = z}|,\ \text{and} \\
     & |\set{(\delta, \alpha) \in A}{\labelFunc(\delta) = y\ \text{and}\
      \labelFunc(\delta, \alpha) = z}|
    = \\ & \hspace{5cm}
    |\set{(\delta, \beta) \in A}{\labelFunc(\delta) = y\ \text{and}\
      \labelFunc(\delta, \beta) = z}|.
  \end{align*}
  In other words, the vertex labelling is equitable if and only if, for all
  vertex labels $x$ and $y$ and arc labels $z$, every vertex with label $x$ has
  some common number of \emph{out-neighbours} with label $y$ via arcs with label
  $z$, and similarly, every vertex with label $x$ has some common number of
  \emph{in-neighbours} with label $y$ via arcs with label $z$.
\end{definition}

By including arc labels, Definition~\ref{defn-equitable} extends the well-known
concepts of equitable colourings~\cite[Section~3.1]{practical2} and
partitions~\cite[Defintion~29]{newrefiners} of vertex-labelled graphs and
digraphs, and enables us to estimate
automorphism groups and
sets of isomorphisms.

It is possible to define a procedure that takes a labelled digraph $\Gamma$, and
returns a new equitable vertex labelling for $\Gamma$, where vertices with the
same equitable label have the same original label in $\Gamma$. The approximation for
$\Auto{\Gamma}$ that can be obtained from such an equitable vertex labelling
procedure turns out to be a potentially better approximation for
$\Auto{\Gamma}$ than the one
derived from the original vertex labelling.
We present an example of such a procedure in Algorithm~\ref{alg-equitable},
which is an adaptation of existing algorithms for computing equitable partitions
of vertex-labelled digraphs, such as those
in~\cite[Algorithm~1]{practical2} and~\cite[Algorithm~2]{newrefiners}.

In the following lemma, we present several properties of the function defined by
Algorithm~\ref{alg-equitable}, and then we present and discuss the algorithm.
Note that~\ref{item-equitable-preserves} and~\ref{item-equitable-isos} follow
from~\ref{item-equitable-map}, which itself follows from the careful ordering of
the lists in Algorithm~\ref{alg-equitable}.
The proof is otherwise omitted, because it is
mathematically straightforward.

\begin{lemma}\label{lem-equitable}
  Let $\equitableFunc$ be the function defined by Algorithm~\ref{alg-equitable},
  and let $\Gamma$ and $\Delta$ be labelled digraphs on $\Omega$.
  Then there exist $k, l \in \N_{0}$, labels $x_{1}, \ldots, x_{k}, y_{1},
  \ldots, y_{l}$, and subsets $U_{1}, \ldots, U_{k}, V_{1}, \ldots, V_{l}
  \subseteq \Omega$ such that
  \[
    \Equitable{\Gamma} =
    [(x_{1}, U_{1}), \ldots, (x_{k}, U_{k})]
    \ \text{and}\
    \Equitable{\Delta} =
    [(y_{1}, V_{1}), \ldots, (y_{l}, V_{l})].
  \]
  Then the following hold:
  \begin{enumerate}[label=\textrm{(\roman*)}]
    \item
      $\Equitable{\Gamma}$ defines an equitable vertex-labelling for $\Gamma$.

    \item\label{item-equitable-map}
      $\Equitable{{\Gamma}^{g}} =
        [(x_{1}, U_{1}^{g}), \ldots, (x_{k}, U_{k}^{g})]$
        for all $g \in \Sym{\Omega}$.

    \item\label{item-equitable-preserves}
      $\Auto{\Gamma} \leq
       \set{g \in \Sym{\Omega}}
                     {[O_{1}^{g}, \ldots, O_{k}^{g}] = [O_{1}, \ldots, O_{k}]}$.

    \item\label{item-equitable-isos}
      $\Iso{\Gamma}{\Delta}
        \begin{cases}
          = \varnothing,
          \quad\text{if}\ k \neq l
          \ \text{or}\ x_{i} \neq y_{i}
          \ \text{for any}\ i,   \\
          \subseteq
            \set{g \in \Sym{\Omega}}
                {[U_{1}^{g}, \ldots, U_{k}^{g}] = [V_{1}, \ldots, V_{k}]},
           \quad\text{otherwise}.
        \end{cases}$
  \end{enumerate}
\end{lemma}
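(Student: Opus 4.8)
The plan is to prove part~(ii) — that $\equitableFunc$ commutes with the action of $\Sym{\Omega}$ — first, and then to read off parts~(iii) and~(iv) almost formally; part~(i) I would deduce directly from the stopping condition of Algorithm~\ref{alg-equitable}. That the output has the displayed form, with $U_{1}, \ldots, U_{k}$ a partition of $\Omega$ that refines the partition of $\Omega$ by the vertex labels of $\Gamma$, is immediate from the output format of the algorithm together with the fact that each of its iterations only subdivides cells.

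For part~(ii) I would fix $g \in \Sym{\Omega}$ and induct on the number $t$ of iterations, proving: if the state of Algorithm~\ref{alg-equitable} on input $\Gamma$ after $t$ iterations is $[(a_{1}, W_{1}), \ldots, (a_{m}, W_{m})]$, then its state on input $\Gamma^{g}$ after $t$ iterations is $[(a_{1}, W_{1}^{g}), \ldots, (a_{m}, W_{m}^{g})]$. The base case $t = 0$ holds because $\Gamma^{g}$ carries the same set of labels as $\Gamma$ and $\labelFunc^{g}(\delta^{g}) = \labelFunc(\delta)$ for all $\delta$, so the initial label-classes of $\Gamma^{g}$ are the $g$-images of those of $\Gamma$, listed in the same order. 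For the inductive step, the key point is that Algorithm~\ref{alg-equitable} refines and relabels a vertex $\alpha$ using only its current label together with the two multisets $\set{(c(\delta), \labelFunc(\alpha, \delta))}{(\alpha, \delta) \in A}$ and $\set{(c(\delta), \labelFunc(\delta, \alpha))}{(\delta, \alpha) \in A}$, where $c$ is the current labelling. By the inductive hypothesis $c(\delta^{g})$ in $\Gamma^{g}$ equals $c(\delta)$ in $\Gamma$, and by the definition of the action $A^{g} = \set{(\alpha^{g}, \beta^{g})}{(\alpha, \beta) \in A}$ with $\labelFunc^{g}(\alpha^{g}, \beta^{g}) = \labelFunc(\alpha, \beta)$; hence these multisets for $\alpha$ in $\Gamma$ coincide with the corresponding multisets for $\alpha^{g}$ in $\Gamma^{g}$. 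Since Algorithm~\ref{alg-equitable} determines both the new labels and the order of the refined cells as a function of these multisets and of the previous ordering only — never of the names of the vertices — $\alpha$ and $\alpha^{g}$ receive the same new label, and the refined list for $\Gamma^{g}$ is that for $\Gamma$ with each cell replaced by its $g$-image, in the same order. Taking $t$ to be the iteration at which the algorithm halts on $\Gamma$ — which, by the correspondence just established, is also where it halts on $\Gamma^{g}$ — yields part~(ii).

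Part~(i) follows because the algorithm halts exactly when a further refinement step would split no cell, and inspecting the refinement rule shows that this condition is precisely the pair of counting conditions of Definition~\ref{defn-equitable} holding for the current labelling; the subsidiary requirement that vertices sharing an equitable label share their label in $\Gamma$ holds since the initial cells group vertices by their label in $\Gamma$ and no iteration merges cells. Parts~(iii) and~(iv) are then pure bookkeeping given part~(ii). If $g \in \Auto{\Gamma}$ then $\Gamma^{g} = \Gamma$, so $[(x_{1}, U_{1}^{g}), \ldots, (x_{k}, U_{k}^{g})] = \Equitable{\Gamma^{g}} = \Equitable{\Gamma} = [(x_{1}, U_{1}), \ldots, (x_{k}, U_{k})]$, whence $U_{i}^{g} = U_{i}$ for all $i$, which is the containment in~(iii) on reading $U_{i}$ for $O_{i}$. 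If instead $g \in \Iso{\Gamma}{\Delta}$ then $\Gamma^{g} = \Delta$, so $\Equitable{\Delta} = \Equitable{\Gamma^{g}} = [(x_{1}, U_{1}^{g}), \ldots, (x_{k}, U_{k}^{g})]$ by part~(ii); comparing with $\Equitable{\Delta} = [(y_{1}, V_{1}), \ldots, (y_{l}, V_{l})]$ forces $k = l$, $x_{i} = y_{i}$ for all $i$, and $U_{i}^{g} = V_{i}$ for all $i$. Contrapositively this gives $\Iso{\Gamma}{\Delta} = \varnothing$ when $k \neq l$ or some $x_{i} \neq y_{i}$, and otherwise it gives $[U_{1}^{g}, \ldots, U_{k}^{g}] = [V_{1}, \ldots, V_{k}]$ for every $g \in \Iso{\Gamma}{\Delta}$.

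The only real obstacle is the inductive step of part~(ii): one must check carefully that every step of Algorithm~\ref{alg-equitable}, and in particular its ``careful ordering'' of the output cells, depends on its input only through label- and arc-data that relabelling by $g$ transports faithfully, so that the order in which the cells are returned is a function of the isomorphism type of $\Gamma$ rather than of how its vertices happen to be named. Once the algorithm is written out explicitly this is routine — which is why the details can reasonably be omitted — but it is where all the content of the lemma lies.
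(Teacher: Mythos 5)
Your argument is correct and follows the same route the paper indicates: the paper omits the proof, remarking only that (iii) and (iv) follow from (ii), which in turn follows from the careful (label-dependent, vertex-name-independent) ordering of the lists in Algorithm~\ref{alg-equitable} — exactly the induction-on-iterations argument you supply, with (i) read off from the halting condition and (iii)--(iv) as bookkeeping (and your reading of $O_{i}$ as $U_{i}$ in (iii) is the intended one).
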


\begin{algorithm}[!ht]
  \caption{$\equitableFunc$: Equitable vertex labelling for
     a labelled digraph.}\label{alg-equitable}
  \begin{algorithmic}[1]
    \item[\textbf{Input:}]
    A labelled digraph $\Gamma \coloneqq (\Omega, A, \labelFunc)$,
    with labels from a totally-ordered set.

    \item[\textbf{Output:}]
    A list that defines an equitable vertex labelling for $\Gamma$,
    such that:

    vertices with the same equitable label have the same original label, and

    vertices in the same orbit of $\Auto{\Gamma}$ have the same equitable label.

    \vspace{1mm}

    \State{$\textsc{NewLabels}\coloneqq
    \set{(x, \{\alpha\in\Omega  :  \labelFunc{(\alpha)}  =  x\})}
        {x \in \labelFunc(\Omega)}$, a set of pairs.}

    \State{Convert \textsc{NewLabels} into a list, ordered by first component.}

    \State{$\textsc{ToProcess} \coloneqq \textsc{NewLabels}$.}

    \While{$\textsc{ToProcess}$ is non-empty and
      $|\textsc{NewLabels}| < |\Omega|$}

    \State{Remove the first entry $(x, U)$ of $\textsc{ToProcess}$.}

    \State{$L \coloneqq
    \set{\labelFunc{(\alpha, \beta)}}
        {(\alpha,\beta)\in A,\,\text{and}\,\alpha\in U\,\text{or}\,\beta\in U}$.}

    \State{Convert $L$ into a list, ordered by the ordering of labels.}

    \For{$(y, V) \in \textsc{NewLabels}$}

    \For{$\alpha \in V$ and $i \in \{1, \ldots, |L|\}$}

      \State{$f(\alpha)[i] \coloneqq ( |\{\beta\in
      U\!:\!\labelFunc{(\alpha,\!\beta)}\!=\!L[i]\}|,\, |\{\beta\in
      U\!:\!\labelFunc{(\beta,\!\alpha)}\!=\!L[i]\}|)$.}

      {\hfill\emph{$\triangleright$ $f$ is a function, and
      $f(\alpha)$ is a list of $|L|$ elements of
      $\N_{0} \times \N_{0}$.}}

    \EndFor{}

    \State{Partition $V$ into $V_{1},\ldots,V_{k}$ according to, and
      ordered lexicographically by, $f$.}

    {\hfill\emph{$\triangleright$ for all $\alpha, \beta \in V$,
      there exist unique $i, j \in \{1,\ldots,k\}$
      with $\alpha \in V_{i}$ and $\beta \in V_{j}$;}}

    {\hfill\emph{$i < j$ if and only if
      $f(\alpha) < f(\beta)$.
      Note that f-values are totally ordered.}}

    \For{$i \in \{1, \ldots, k\}$}

      \State{\label{line-new-label}
      $y_{i} \coloneqq [y, x, L, f(\min(V_{i}))]$}
      \Comment{\emph{$y_{i}$ is the new label for the vertices in $V_{i}$.}}

    \EndFor{}

    \State{Replace $(y, V)$ in $\textsc{NewLabels}$ by
      $(y_{1}, V_{1}), \ldots, (y_{k}, V_{k})$, in this order.}

    \If{$k > 1$}

    \State{Remove $(y, V)$ from $\textsc{ToProcess}$, if present.}

    \State{Add $(y_{1}, V_{1})$, \ldots, $(y_{k}, V_{k})$
           to the end of $\textsc{ToProcess}$, in this order.}

    \EndIf{}
    \EndFor{}
    \EndWhile{}
    \State{\Return{\textsc{NewLabels}.}}
  \end{algorithmic}
\end{algorithm}

To summarise, given a labelled digraph, Algorithm~\ref{alg-equitable} repeatedly
tests whether each set of vertices with the same label satisfies the condition
in Definition~\ref{defn-equitable}.  For each such set and label, either the
condition is satisfied, and a new label for this set is devised that encodes
the old label and information about how the condition was satisfied, or the
condition is not satisfied, and the vertices are given new labels that
encode the old label and information about why the new labels were
created.

By choosing meaningful vertex labels this way, rather than retaining the
existing labels and defining new labels arbitrarily, we can distinguish
more pairs of labelled digraphs as non-isomorphic via
Lemma~\ref{lem-equitable}\ref{item-equitable-isos}. The next
example illustrates this
principle.

\begin{example}\label{ex-equitable-iso}
  Let $\Gamma$ be the labelled digraph on $\Omega$ with all possible
  arcs, and let $\Delta$ be the labelled digraph on $\Omega$ without arcs,
  where every vertex and arc in $\Gamma$ and $\Delta$ has the label $x$,
  for some arbitrary but fixed $x \in \labelSet$.
  Then Lemma~\ref{lem-equitable}\ref{item-equitable-isos} allows us to
  algorithmically deduce that $\Gamma$ and $\Delta$ are non-isomorphic, even
  though both are regular (i.e.\ every vertex has a common number of
  in-neighbours, and a common number of out-neighbours), and they even have the
  same induced automorphism group, namely $\Sym{\Omega}$.
  The $\equitableFunc$ procedure from Algorithm~\ref{alg-equitable} assigns the
  vertices in $\Gamma$ a label
  that encodes that each vertex has $|\Omega|$ in- and out-neighbours, and it
  assigns the vertices in $\Delta$ a label that encodes that each vertex has
  no in- or out-neighbours. Therefore, the labels given by $\Equitable{\Gamma}$ and
  $\Equitable{\Delta}$ are different, and so $\Gamma$ and $\Delta$ are
  non-isomorphic by Lemma~\ref{lem-equitable}\ref{item-equitable-isos}.
  \textit{A note of warning:} the choice of new labels plays a role!
  If new labels were instead, say, chosen to be incrementally increasing
  integers starting at $1$, then we would have $\Equitable{\Gamma} =
  \Equitable{\Delta}$, and the deduction that we explained above would not be
  possible.
\end{example}

In the previous example it is obvious to us the digraphs are non-isomorphic,
but for many more complicated
examples, Lemma~\ref{lem-equitable}\ref{item-equitable-isos} can still be used
to detect less obvious non-isomorphism.

\subsubsection{Strong and weak approximations via equitable vertex labelling}

\begin{definition}[Strong equitable labelling]\label{defn-strong-approx}
 Let $\equitableFunc$ be the function defined by
  Algorithm~\ref{alg-equitable}, and let $\stackS, \stackT \in
    \Stacks{\Omega}$.
  Then there exist $k, l \in \N_{0}$, labels $x_{1}, \ldots, x_{k}$, and
  $y_{1},
  \ldots, y_{l}$, and subsets $U_{1}, \ldots, U_{k}, V_{1}, \ldots, V_{l}
  \subseteq \Omega$ such that
  \begin{align*}
    \Equitable{\Squash{\stackS}} & =
    [(x_{1}, U_{1}), \ldots, (x_{k}, U_{k})],
    \ \text{and}                     \\
    \Equitable{\Squash{\stackT}} & =
    [(y_{1}, V_{1}), \ldots, (y_{l}, V_{l})].
  \end{align*}
  Let $G$ denote the stabiliser of the list $[U_{1}, \ldots, U_{k}]$ in
  $\Sym{\Omega}$,
  and define
  \[
    \Approx{\stackS}{\stackT} =
    \begin{cases}
      G \cdot h
                  & \text{if\ }
                    |\stackS| = |\stackT|,\
                    k = l,
                    \text{\ and for all}\ i,\,
                    x_{i} = y_{i} \ \text{and}\ |U_{i}| = |V_{i}|; \\
      \varnothing & \text{otherwise,}
    \end{cases}
  \]
  where $h \in \Sym{\Omega}$ is any permutation with the property that
  $U_{i}^{h} = V_{i}$ for each $i \in \{1,\ldots,k\}$.  Note that for all $g, h
  \in \Sym{\Omega}$, $U_{i}^{g} = U_{i}^{h}$ for all $i$ if and only if $g$ and
  $h$ represent the same right coset of $G$ in $\Sym{\Omega}$.
  Finally, we define
  \[\Fixed{\stackS} = [u_{i_{1}}, \ldots, u_{i_{m}}],\]
  where $i_{1} < \cdots < i_{m}$ and
  the sets $U_{i_{j}} = \{u_{i_{j}}\}$ for each $j \in \{1,\ldots,m\}$ are
  exactly the singletons amongst $U_{1}, \ldots, U_{k}$.
\end{definition}

\begin{definition}[Weak equitable labelling]\label{defn-weak-approx}
  Let $\equitableFunc$ be the function defined by Algorithm~\ref{alg-equitable},
  and let $\stackS, \stackT \in \Stacks{\Omega}$.  For each $i \in
    \{1, \ldots, |\stackS|\}$, $j \in \{1, \ldots, |\stackT|\}$,
  there exist $k_{i}, l_{j} \in \N_{0}$, labels $x_{i,1}, \ldots, x_{i,k_{i}},
  y_{j,1}, \ldots, y_{j,l_{j}}$, and subsets
  $U_{i,1}, \ldots, U_{i,k_{i}}, V_{j,1}, \ldots, V_{j,l_{j}} \subseteq \Omega$ such that
  \begin{align*}
    \Equitable{\stackS[i]} & =
    [(x_{i, 1}, U_{i, 1}), \ldots, (x_{i, k_{i}}, U_{i, k_{i}})],
    \ \text{and}               \\
    \Equitable{\stackT[j]} & =
    [(y_{j, 1}, V_{j, 1}), \ldots, (y_{j, l_{j}}, V_{j, l_{j}})].
  \end{align*}

  If either $|\stackS| \neq |\stackT|$,
  or else if $k_{i} \neq l_{i}$ for some $i \in \{1, \ldots, |\stackS|\}$,
  or else if $x_{i, j} \neq y_{i, j}$ for some $i \in \{1, \ldots, |\stackS|\}$
  and $j \in \{1, \ldots, k_{i}\}$, then we define
  $\Approx{\stackS}{\stackT} = \varnothing$.
  Otherwise, we proceed by `intersecting' the equitable vertex
  labellings for $\stackS$, and we do the same with those for $\stackT$.

  More specifically, we define functions $f$ and $g$ that map vertices to lists
  of finite length with entries in $\N$. For each $\alpha \in \Omega$, we define
  $f(\alpha)$ to be a list of
  length $|\stackS|$ where, for each $i \in \{1,\ldots,|S|\}$, $f(\alpha)[i]$ is the unique $j \in \{1, \ldots, k_{i}\}$ such that $\alpha \in
  U_{i, j}$.  Similarly, for each $\alpha \in \Omega$, we define $g(\alpha)$ to
  be a list of length $|\stackT|$ where, for each $i \in \{1,\ldots,|T|\}$, $g(\alpha)[i]$ is the unique $j \in \{1, \ldots, k_{i}\}$ such that
  $\alpha \in V_{i, j}$.  Therefore $f$ and $g$, respectively, encode the
  equitable label of a vertex at each level of $\stackS$ and $\stackT$.
  Then we define subsets $W_{1}, \ldots, W_{m}$ of $\Omega$ according to, and
  ordered lexicographically by, $f$-value, and similarly we define subsets
  $T_{1}, \ldots, T_{n}$ of $\Omega$ via $g$.

  Given all of this, we let $G$ denote the stabiliser of $[W_{1}, \ldots, W_{m}]$
  in $\Sym{\Omega}$ and define
  \[
    \Approx{\stackS}{\stackT} =
    \begin{cases}
      G \cdot h
                  & \text{if}\
                    |\stackS| = |\stackT|,\ m = n,\ \text{and for all}\ i, \\
                  & \quad |W_{i}| = |T_{i}|\
                    \text{and}\ f(\min(W_{i})) = g(\min(T_{i})), \\
      \varnothing & \text{otherwise,}
    \end{cases}
  \]
  where $h \in \Sym{\Omega}$ is any permutation with the property that
  $W_{i}^{h} = T_{i}$ and $\min(W_{i})$ is the minimum with respect to the
  ordering of $\Omega$. Finally, we define
  \[\Fixed{\stackS} = [w_{i_{1}}, \ldots, w_{i_{t}}],\]
  where $i_{1} < \cdots < i_{t}$ and the sets $W_{i_{j}} =
  \{w_{i_{j}}\}$ for each $j \in \{1,\ldots,t\}$ are exactly the singletons
  amongst $W_{1}, \ldots, W_{m}$.
\end{definition}

The following lemma holds by Lemma~\ref{lem-equitable}.

\begin{lemma}\label{lem-weak-strong-approx}
  The functions $\approxFunc$ from Definitions~\ref{defn-strong-approx}
  and~\ref{defn-weak-approx} are isomorphism approximators, and the functions
  $\fixedFunc$ are fixed-point approximators.
\end{lemma}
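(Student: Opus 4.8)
The plan is to check, for each of the two pairs $(\approxFunc, \fixedFunc)$ given by Definitions~\ref{defn-strong-approx} and~\ref{defn-weak-approx}, the three conditions of Definition~\ref{defn-approx-iso} and the two conditions of Definition~\ref{defn-approx-fixed}, deducing everything from Lemma~\ref{lem-equitable} together with Lemma~\ref{lem-squash-same-iso} and Remark~\ref{rmk-stack-iso-auto}. Take first the strong equitable labelling, and apply Lemma~\ref{lem-equitable} with $\Gamma = \Squash{\stackS}$ and $\Delta = \Squash{\stackT}$, recalling from Lemma~\ref{lem-squash-same-iso} that $\Iso{\stackS}{\stackT} = \Iso{\Squash{\stackS}}{\Squash{\stackT}}$. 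The sets $U_{1}, \ldots, U_{k}$ partition $\Omega$, as do the $V_{1}, \ldots, V_{k}$, so whenever $|U_{i}| = |V_{i}|$ for all $i$ there is a permutation $h$ with $U_{i}^{h} = V_{i}$ for all $i$, and one checks directly that $\set{\pi \in \Sym{\Omega}}{U_{i}^{\pi} = V_{i} \text{ for all } i}$ is precisely the right coset $G \cdot h$, where $G$ is the stabiliser of $[U_{1}, \ldots, U_{k}]$ in $\Sym{\Omega}$. Thus part~\ref{item-equitable-isos} of Lemma~\ref{lem-equitable} yields $\Iso{\stackS}{\stackT} \subseteq \Approx{\stackS}{\stackT}$ whenever the latter is non-empty; and in every situation that makes $\Approx{\stackS}{\stackT} = \varnothing$ --- namely $|\stackS| \neq |\stackT|$, or $k \neq l$, or some $x_{i} \neq y_{i}$, or (since permutations preserve cardinalities) some $|U_{i}| \neq |V_{i}|$ --- the same part of Lemma~\ref{lem-equitable} forces $\Iso{\stackS}{\stackT} = \varnothing$. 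This gives Definition~\ref{defn-approx-iso}\ref{item-approx-true-overestimate} and~\ref{item-approx-different-lengths}; for~\ref{item-approx-right-coset-of-aut}, put $\stackT = \stackS$, so that $h \in G$ and hence $\Approx{\stackS}{\stackS} = G$, whence $\Approx{\stackS}{\stackT} = G \cdot h = \Approx{\stackS}{\stackS} \cdot h$ whenever it is non-empty.

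For the fixed-point claim in the strong case, observe that $\Auto{\stackS} = \Auto{\Squash{\stackS}}$ by Lemma~\ref{lem-squash-same-iso}, and that this group stabilises each cell $U_{i}$ setwise by Lemma~\ref{lem-equitable}\ref{item-equitable-preserves}; hence it fixes every point that forms a singleton cell, which is Definition~\ref{defn-approx-fixed}\ref{item-fixed}. For Definition~\ref{defn-approx-fixed}\ref{item-fixed-invariant}, combine $\Squash{\stackS^{g}} = \Squash{\stackS}^{g}$ with Lemma~\ref{lem-equitable}\ref{item-equitable-map} to obtain $\Equitable{\Squash{\stackS^{g}}} = [(x_{1}, U_{1}^{g}), \ldots, (x_{k}, U_{k}^{g})]$; applying $g$ alters neither which indices give singletons nor the order of the list, so the singletons of $\Squash{\stackS^{g}}$ are exactly the sets $\{u_{i_{j}}^{g}\}$ in the same order, and therefore $\Fixed{\stackS^{g}} = \Fixed{\stackS}^{g}$.

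For the weak equitable labelling, I would instead apply Lemma~\ref{lem-equitable} to each entry $\stackS[i]$ and $\stackT[i]$ separately, and use the decompositions $\Iso{\stackS}{\stackT} = \bigcap_{i} \Iso{\stackS[i]}{\stackT[i]}$ and $\Auto{\stackS} = \bigcap_{i} \Auto{\stackS[i]}$ from Remark~\ref{rmk-stack-iso-auto}. The key observation is that any $\pi \in \Iso{\stackS}{\stackT}$ satisfies $U_{i,j}^{\pi} = V_{i,j}$ for all $i$ and $j$ by part~\ref{item-equitable-isos} of Lemma~\ref{lem-equitable}, hence $f(\alpha) = g(\alpha^{\pi})$ for every $\alpha \in \Omega$, where $f$ and $g$ are the vertex-list functions of Definition~\ref{defn-weak-approx}; consequently $\pi$ carries the cells $W_{1}, \ldots, W_{m}$ bijectively onto the cells $T_{1}, \ldots, T_{n}$, preserving the associated list-value, and since both families are ordered lexicographically by that value, $m = n$ and $W_{a}^{\pi} = T_{a}$ for every $a$. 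From this point the argument proceeds exactly as in the strong case: when the side conditions hold, $\Iso{\stackS}{\stackT} \subseteq G \cdot h = \Approx{\stackS}{\stackT}$, where $G$ is the stabiliser of $[W_{1}, \ldots, W_{m}]$; in each case forcing $\Approx{\stackS}{\stackT} = \varnothing$ (including $m \neq n$, some $|W_{i}| \neq |T_{i}|$, or a mismatch of the recorded minima) the observation above gives $\Iso{\stackS}{\stackT} = \varnothing$; and Definition~\ref{defn-approx-iso}\ref{item-approx-right-coset-of-aut} follows by setting $\stackT = \stackS$. The two fixed-point properties are then obtained as before, using Lemma~\ref{lem-equitable}\ref{item-equitable-preserves} and~\ref{item-equitable-map} entry by entry, together with the fact that the ordered cell list $[W_{1}, \ldots, W_{m}]$ depends $\Sym{\Omega}$-equivariantly on $\stackS$.

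The step I expect to require the most care is the weak case: establishing that an isomorphism of stacks must send the ``intersected'' cell list $[W_{1}, \ldots, W_{m}]$ to $[T_{1}, \ldots, T_{m}]$ in the right order, and dually that this list depends $\Sym{\Omega}$-equivariantly on the stack. Both points reduce to the fact that the cells $W_{a}$ and their ordering are determined purely by the $f$-values, which are themselves $\Sym{\Omega}$-equivariant by Lemma~\ref{lem-equitable}\ref{item-equitable-map}; once that is made precise, the rest is routine verification, which is presumably why the paper remarks that the result follows from Lemma~\ref{lem-equitable}.
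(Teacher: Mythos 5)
Your proposal is correct, and it follows exactly the route the paper intends: the paper dispenses with the proof by simply citing Lemma~\ref{lem-equitable}, and your argument is the straightforward elaboration of that citation, checking the conditions of Definitions~\ref{defn-approx-iso} and~\ref{defn-approx-fixed} via Lemma~\ref{lem-equitable}, Lemma~\ref{lem-squash-same-iso} and Remark~\ref{rmk-stack-iso-auto}. In particular your careful handling of the weak case (that an isomorphism of stacks preserves the lexicographically ordered cells $W_{1},\ldots,W_{m}$ because the $f$- and $g$-values correspond index-wise by Lemma~\ref{lem-equitable}\ref{item-equitable-isos}) is precisely the detail the paper leaves to the reader.
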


\subsection{Comparing approximators}\label{sec-approx-comparison}

In this section, we give an example that compares the isomorphism
approximators from Sections~\ref{sec-exact-approx} and~\ref{sec-equitable}.
In principle, approximations via weak equitable labellings should be the
cheapest to compute, and those via canonising should be
the most expensive. On the other hand, those via weak equitable labelling
should be the least accurate, and those via canonising the most accurate.
The reason that strong equitable labelling sometimes provides better approximations
than weak equitable labelling is that it considers all of the entries of the
stack simultaneously, whereas the weak version only considers each entry of
the stack individually.

\begin{example}\label{ex-approx}
  Let the labelled digraphs $\Gamma_{1},
  \Gamma_{2}, \Delta_{1}$, and $\Delta_{2}$
  be defined as in
  Figure~\ref{fig-ex-approx}.
  The label of every vertex in $\Gamma_{1},
  \Gamma_{2}, \Delta_{1}$, and $\Delta_{2}$ is \exLabel{white}, and each arc
  has the label \exLabel{solid} or
  \exLabel{dashed}, according to its depiction.
  Every vertex in $\Squash{[\Gamma_{1}, \Gamma_{2}]}$ and $\Squash{[\Delta_{1},
  \Delta_{2}]}$ has the same label $[\exLabel{white}, \exLabel{white}]$; arcs
  with label $[\exLabel{solid}, \#]$ are shown as \exLabel{solid}, arcs with
  label $[\#, \exLabel{dashed}]$ are shown as \exLabel{dashed}, and arcs with
  label $[\exLabel{solid}, \exLabel{dashed}]$ are shown as \exLabel{dotted}.
  We order labels via:
  \[
    \exLabel{dashed} < \exLabel{solid} < \exLabel{white} <
    [\exLabel{white}, \exLabel{white}] <
    [\#, \exLabel{dashed}] <
    [\exLabel{solid}, \#] <
    [\exLabel{solid}, \exLabel{dashed}].
  \]

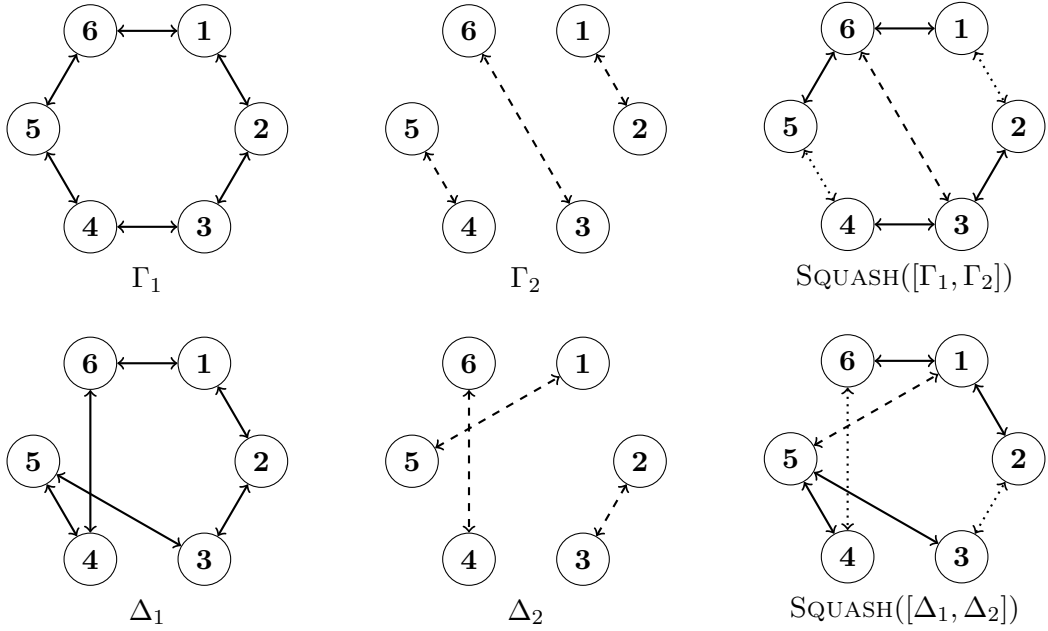
\begin{figure}[!ht]
  \centering
  \begin{tikzpicture}
    \tikzstyle{white}=[circle, draw=black]
    \foreach \x in {1,2,3,4,5,6} {
        \node[white] (\x) at (-\x*60+120:1.5cm) {$\mathbf{\x}$};};
    \foreach \x/\y in {1/2,2/3,3/4,4/5,5/6,6/1} {\arcSym{\x}{\y}};
    \node at (0, -2.0) {$\Gamma_{1}$};
  \end{tikzpicture}
  \qquad\quad
  \begin{tikzpicture}
    \tikzstyle{white}=[circle, draw=black]
    \foreach \x in {1,2,3,4,5,6} {
        \node[white] (\x) at (-\x*60+120:1.5cm) {$\mathbf{\x}$};};
    \foreach \x/\y in {1/2,3/6,4/5} {\arcSymDash{\x}{\y}};
    \node at (0, -2.0) {$\Gamma_{2}$};
  \end{tikzpicture}
  \qquad\quad
  \begin{tikzpicture}
    \tikzstyle{white}=[circle, draw=black]
    \foreach \x in {1,2,3,4,5,6} {
        \node[white] (\x) at (-\x*60+120:1.5cm) {$\mathbf{\x}$};};
    \foreach \x/\y in {2/3,3/4,5/6,6/1} {\arcSym{\x}{\y}};
    \foreach \x/\y in {3/6} {\arcSymDash{\x}{\y}};
    \foreach \x/\y in {1/2,4/5} {\arcSymDot{\x}{\y}};
    \node at (0, -2.0) {$\Squash{[\Gamma_{1}, \Gamma_{2}]}$};
  \end{tikzpicture}

  \vspace{4mm}

  \begin{tikzpicture}
    \tikzstyle{white}=[circle, draw=black]
    \foreach \x in {1,2,3,4,5,6} {
        \node[white] (\x) at (-\x*60+120:1.5cm) {$\mathbf{\x}$};};
    \foreach \x/\y in {6/4,4/5,5/3,3/2,2/1,1/6} {\arcSym{\x}{\y}};
    \node at (0, -2.0) {$\Delta_{1}$};
  \end{tikzpicture}
  \qquad\quad
  \begin{tikzpicture}
    \tikzstyle{white}=[circle, draw=black]
    \foreach \x in {1,2,3,4,5,6} {
        \node[white] (\x) at (-\x*60+120:1.5cm) {$\mathbf{\x}$};};
    \foreach \x/\y in {6/4,5/1,3/2} {\arcSymDash{\x}{\y}};
    \node at (0, -2.0) {$\Delta_{2}$};
  \end{tikzpicture}
  \qquad\quad
  \begin{tikzpicture}
    \tikzstyle{white}=[circle, draw=black]
    \foreach \x in {1,2,3,4,5,6} {
        \node[white] (\x) at (-\x*60+120:1.5cm) {$\mathbf{\x}$};};
    \foreach \x/\y in {4/5,5/3,2/1,1/6} {\arcSym{\x}{\y}};
    \foreach \x/\y in {5/1} {\arcSymDash{\x}{\y}};
    \foreach \x/\y in {6/4,3/2} {\arcSymDot{\x}{\y}};
    \node at (0, -2.0) {$\Squash{[\Delta_{1}, \Delta_{2}]}$};
  \end{tikzpicture}
  \caption[IsoApprox]{\label{fig-ex-approx}
  Pictures of the labelled digraphs on $\{1,\ldots,6\}$ from
  Example~\ref{ex-approx}.
  }
\end{figure}

  \begin{description}[leftmargin=0mm]
    \item[Weak equitable labelling:]

      Since $\Gamma_{1}$, $\Gamma_{2}$, $\Delta_{1}$, and $\Delta_{2}$ are
      regular (i.e.\ in each of them, all vertices have a common
      number of in-neighbours and a common number of out-neighbours),
      the equitable vertex labelling algorithm cannot
      make progress, as it only considers each labelled digraph individually.

      More specifically, Algorithm~\ref{alg-equitable} gives
      the label $[\exLabel{white}, \exLabel{white}, [\exLabel{solid}], [(2,2)]]$
      to every vertex in $\Gamma_{1}$ and $\Delta_{1}$
      (encoding that every \exLabel{white} vertex
      has two \exLabel{white} in-neighbours and two \exLabel{white}
      out-neighbours via \exLabel{solid} arcs), and it labels every vertex in
      $\Gamma_{2}$ and $\Delta_{2}$ with $[\exLabel{white}, \exLabel{white},
      [\exLabel{dashed}], [(1,1)]]$ (since every \exLabel{white} vertex has
      one \exLabel{white} out-neighbour and one \exLabel{white} in-neighbour via
      \exLabel{dashed} arcs).

      Therefore, weak equitable labelling gives the worst possible
      overestimation
      \[
        \Approx{[\Gamma_{1}, \Gamma_{2}]}{[\Delta_{1}, \Delta_{2}]} =
        \Sn{6}.
      \]

    \item[Strong equitable labelling:]

      Algorithm~\ref{alg-equitable} assigns
      the new label
      \[
        [[\exLabel{white}, \exLabel{white}],\
         [\exLabel{white}, \exLabel{white}],\
         [[\#, \exLabel{dashed}],\
         [\exLabel{solid}, \#]],\
         [(1,1), (2,2)]],
      \]
      to the vertices $3$ and $6$
      of $\Squash{[\Gamma_{1}, \Gamma_{2}]}$ and
      the vertices $1$ and $5$ of $\Squash{[\Delta_{1}, \Delta_{2}]}$.
      This encodes that these vertices (which previously had label
      $[\exLabel{white}, \exLabel{white}]$) each have one in- and one
      out-neighbour with label $[\exLabel{white}, \exLabel{white}]$ via $[\#,
      \exLabel{dashed}]$ arcs, and two such in-neighbours and two such
      out-neighbours via $[\exLabel{solid}, \#]$ arcs.
      In addition, the algorithm then labels the remaining
      vertices, namely $1, 2, 4$, and $5$ in $\Squash{[\Gamma_{1}, \Gamma_{2}]}$,
      and $2, 3, 4$, and $6$ in $\Squash{[\Delta_{1}, \Delta_{2}]}$, as
      \[
        [[\exLabel{white}, \exLabel{white}],\
         [\exLabel{white}, \exLabel{white}],\
         [[\exLabel{solid}, \#], [\exLabel{solid}, \exLabel{dashed}]],\
         [(1, 1), (1, 1)]].
      \]
      For each squashed labelled digraph, the algorithm updates these new labels
      with information about why these sets of vertices cannot be further
      subdivided.  Ultimately, strong equitable labelling gives
      \[
        \Approx{[\Gamma_{1}, \Gamma_{2}]}{[\Delta_{1}, \Delta_{2}]} =
        \< (3\,6), (1\,2), (1\,2\,4\,5) \> \cdot (1\,2\,3)(5\,6).
      \]
      Note that
      $|\Approx{[\Gamma_{1}, \Gamma_{2}]}{[\Delta_{1}, \Delta_{2}]}| =
      4! \cdot 2! = 48$, and so this is a much smaller overestimate.

      The coset representative $g \coloneqq (1\,2\,3)(5\,6)$ was chosen
      arbitrarily from $\Sn{6}$, subject to satisfying the
      property that ${\{1,2,4,5\}}^{g} = \{2,3,4,6\}$ and ${\{3,6\}}^{g} =
      \{1,5\}$.  Note that $g$ happens \emph{not} to be an
      isomorphism from $[\Gamma_{1}, \Gamma_{2}]$ to $[\Delta_{1}, \Delta_{2}]$.

    \item[Canonising and computing exactly:]

      We compute
      (using \textsc{Bliss}~\cite{bliss} via
      the \textsc{GAP}~\cite{GAP4} package \textsc{Digraphs}~\cite{digraphs})
      that
      $\Auto{\Squash{[\Gamma_{1}, \Gamma_{2}]}}
      =
      \< (1\,2)(3\,6)(4\,5), (1\,4)(2\,5)(3\,6) \>$
      and
      $(1\,2\,3\,5\,6)$ induces an isomorphism from
      $\Squash{[\Gamma_{1}, \Gamma_{2}]}$ to
      $\Squash{[\Delta_{1}, \Delta_{2}]}$.
      Thus
      \[
        \Iso{[\Gamma_{1}, \Gamma_{2}]}{[\Delta_{1}, \Delta_{2}]} =
        \< (1\,2)(3\,6)(4\,5), (1\,4)(2\,5)(3\,6) \> \cdot (1\,2\,3\,5\,6).
      \]
      In particular,
      $|\Iso{[\Gamma_{1}, \Gamma_{2}]}{[\Delta_{1}, \Delta_{2}]}| = 4$, which
      shows us how far away we still were from a perfect estimate with the
      other approximators.
  \end{description}

\end{example}

\section{Adding information to stacks with refiners}\label{sec-refiners}

In this section we introduce and discuss refiners for labelled digraph stacks.
We use refiners to encode information about a search problem into the stacks
around which the search is organised, in order to prune the search space.

\begin{definition}\label{defn-refiner}
  A \emph{refiner} for a set of permutations $U \subseteq \Sym{\Omega}$ is a
  pair of functions $(f_{L}, f_{R})$ from $\Stacks{\Omega}$ to itself, such
  that, for all $\stackS, \stackT \in \Stacks{\Omega}$ with
  $\stackS \cong \stackT$:
  \[
    U \cap \Iso{\stackS}{\stackT}
    \subseteq
    U \cap \Iso{f_{L}(\stackS)}{f_{R}(\stackT)}.
  \]
\end{definition}

While refiners depend on a subset of $\Sym{\Omega}$, we do not include this in
our notation in order to make it less complicated.
Note that the condition in Definition~\ref{defn-refiner} is satisfied for all non-isomorphic labelled
digraph stacks $\stackS$ and $\stackT$, and so the condition that
$\stackS \cong \stackT$ in Definition~\ref{defn-refiner} could
be removed without altering the notion of a refiner.

As a trivial example, every pair of functions from $\Stacks{\Omega}$ to itself
is a refiner for the empty set.  It is valid, and indeed common, to search for
the empty set:  for instance, one might wish to use the techniques in this paper
to search for the set of isomorphisms from one labelled digraph to another that, in the
end, prove to be non-isomorphic.  Thus it is important that
Definition~\ref{defn-refiner} accommodates the empty set.

The functions $f_{L}$ and $f_{R}$ of a refiner $(f_{L}, f_{R})$ are also
permitted to produce empty labelled digraph stacks.
If, for example, we set $f$ to be the constant function that maps every labelled
digraph stack on $\Omega$ to $\EmptyStack{\Omega}$, then $(f, f)$ is a refiner
for any set $U \subseteq \Sym{\Omega}$. This is because every permutation in
$\Sym{\Omega}$, by definition, induces an automorphism of $\EmptyStack{\Omega}$.
It follows that $U \cap
  \Iso{f(\stackS)}{f(\stackT)} = U \cap \Sym{\Omega} = U$ for all
$\stackS, \stackT \in \Stacks{\Omega}$ in this case.

In the following lemma, we formulate additional equivalent definitions of
refiners.

\begin{lemma}\label{lem-refiner-equiv-definitions}
  Let $(f_{L}, f_{R})$ be a pair of functions from $\Stacks{\Omega}$ to itself
  and let $U \subseteq \Sym{\Omega}$.  Then the following are equivalent:
  \begin{enumerate}[label=\textrm{(\roman*)}]
    \item\label{item-refiner-initial}
          $(f_{L}, f_{R})$ is a refiner for $U$.

    \item\label{item-refiner-long}
          For all isomorphic $\stackS, \stackT \in \Stacks{\Omega}$:
          \[U \cap \Iso{\stackS}{\stackT}
            =
            U \cap \Iso{\stackS \Vert f_{L}(\stackS)}
            {\stackT \Vert f_{R}(\stackT)}.\]

    \item\label{item-refiner-individual-perm}
          For all isomorphic $\stackS, \stackT \in \Stacks{\Omega}$ and
          \(g \in U\):
          \[\text{if } {\stackS}^{g} = \stackT,
            \text{then } {f_{L}(\stackS)}^{g} = f_{R}(\stackT).\]
  \end{enumerate}
\end{lemma}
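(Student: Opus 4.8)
The plan is to prove the three-way equivalence by running through the cycle of implications \ref{item-refiner-initial} $\Rightarrow$ \ref{item-refiner-individual-perm} $\Rightarrow$ \ref{item-refiner-long} $\Rightarrow$ \ref{item-refiner-initial}. The only ingredient needed beyond unwinding definitions is Remark~\ref{rmk-stack-iso-auto} in the following shape: if $\stackS, \stackT \in \Stacks{\Omega}$ have equal length and $\stackU, \stackV \in \Stacks{\Omega}$ are arbitrary, then a permutation $g \in \Sym{\Omega}$ lies in $\Iso{\stackS \Vert \stackU}{\stackT \Vert \stackV}$ exactly when $\stackS^{g} = \stackT$ and $\stackU^{g} = \stackV$; consequently $\Iso{\stackS \Vert \stackU}{\stackT \Vert \stackV}$ is contained in $\Iso{\stackS}{\stackT}$ and in $\Iso{\stackU}{\stackV}$. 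Throughout the argument the pair $\stackS, \stackT$ is isomorphic, so $|\stackS| = |\stackT|$, and the entries of $\stackS \Vert f_{L}(\stackS)$ and $\stackT \Vert f_{R}(\stackT)$ line up so that this splitting applies.

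For \ref{item-refiner-initial} $\Rightarrow$ \ref{item-refiner-individual-perm}, I would fix isomorphic $\stackS, \stackT$ and $g \in U$ with $\stackS^{g} = \stackT$, note that this puts $g$ in $U \cap \Iso{\stackS}{\stackT}$, apply the refiner inclusion of \ref{item-refiner-initial} to land $g$ in $U \cap \Iso{f_{L}(\stackS)}{f_{R}(\stackT)}$, and read off ${f_{L}(\stackS)}^{g} = f_{R}(\stackT)$. For \ref{item-refiner-individual-perm} $\Rightarrow$ \ref{item-refiner-long}, the inclusion $\supseteq$ between the two sides is free from the Remark, and for $\subseteq$ I would take $g \in U \cap \Iso{\stackS}{\stackT}$, use \ref{item-refiner-individual-perm} to obtain ${f_{L}(\stackS)}^{g} = f_{R}(\stackT)$, and recombine via the Remark to place $g$ in $U \cap \Iso{\stackS \Vert f_{L}(\stackS)}{\stackT \Vert f_{R}(\stackT)}$. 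For \ref{item-refiner-long} $\Rightarrow$ \ref{item-refiner-initial}, I would intersect the equality in \ref{item-refiner-long} with the inclusion $\Iso{\stackS \Vert f_{L}(\stackS)}{\stackT \Vert f_{R}(\stackT)} \subseteq \Iso{f_{L}(\stackS)}{f_{R}(\stackT)}$ supplied by the Remark, giving $U \cap \Iso{\stackS}{\stackT} = U \cap \Iso{\stackS \Vert f_{L}(\stackS)}{\stackT \Vert f_{R}(\stackT)} \subseteq U \cap \Iso{f_{L}(\stackS)}{f_{R}(\stackT)}$, which is the defining condition of a refiner.

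I do not anticipate a genuine obstacle: each implication is a short manipulation of sets of induced isomorphisms. The one place to be careful is the bookkeeping of stack lengths, since the clean decomposition of $\Iso{\stackS \Vert \stackU}{\stackT \Vert \stackV}$ into ``isomorphisms of the prefixes'' and ``isomorphisms of the suffixes'' relies on $|\stackS| = |\stackT|$; this is exactly what $\stackS \cong \stackT$ provides, and it is also why, as the surrounding text observes, the hypothesis $\stackS \cong \stackT$ may be dropped from \ref{item-refiner-initial}--\ref{item-refiner-individual-perm} without effect — when $\stackS \not\cong \stackT$ every $\Iso$-set in sight is empty and all three statements hold vacuously.
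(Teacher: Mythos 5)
Your proposal is correct and uses essentially the same ingredients as the paper's proof: Remark~\ref{rmk-stack-iso-auto} plus a direct unwinding of Definition~\ref{defn-refiner}. The only difference is that you traverse the cycle in the opposite direction, proving \ref{item-refiner-initial}$\Rightarrow$\ref{item-refiner-individual-perm}$\Rightarrow$\ref{item-refiner-long}$\Rightarrow$\ref{item-refiner-initial} instead of the paper's \ref{item-refiner-initial}$\Rightarrow$\ref{item-refiner-long}$\Rightarrow$\ref{item-refiner-individual-perm}$\Rightarrow$\ref{item-refiner-initial}, which merely redistributes the same intersection and entry-wise arguments among the implications.
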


\begin{proof}
  $\ref{item-refiner-initial}\Rightarrow\ref{item-refiner-long}$.
  Let $\stackS, \stackT \in \Stacks{\Omega}$, and suppose that
  $\stackS$ and $\stackT$ are isomorphic.
  Then
  \(
  U \cap \Iso{\stackS}{\stackT}
  \subseteq
  U \cap \Iso{f_{L}(\stackS)}{f_{R}(\stackT)}
  \)
  by assumption, and since $\stackS$ and $\stackT$ have equal lengths,
  it follows that
  \[
    \Iso{\stackS}{\stackT} \cap
    \Iso{f_{L}(\stackS)}{f_{R}(\stackT)}
    =
    \Iso{\stackS \Vert f_{L}(\stackS)}
    {\stackT \Vert f_{R}(\stackT)}
  \]
  by Remark~\ref{rmk-stack-iso-auto}.
  Hence
  \begin{align*}
    U \cap \Iso{\stackS}{\stackT}
     & =
    U \cap \Iso{\stackS}{\stackT}
    \cap \big(U \cap \Iso{f_{L}(\stackS)}{f_{R}(\stackT)}\big) \\
     & =
    U \cap \big(\Iso{\stackS}{\stackT}
    \cap \Iso{f_{L}(\stackS)}{f_{R}(\stackT)}\big)             \\
     & =
    U \cap \Iso{\stackS \Vert f_{L}(\stackS)}
    {\stackT \Vert f_{R}(\stackT)}.
  \end{align*}

  $\ref{item-refiner-long}\Rightarrow\ref{item-refiner-individual-perm}$.
  Let $\stackS, \stackT \in \Stacks{\Omega}$ be isomorphic,
  and let $u \in U$.  If
  $\stackS^{u} = \stackT$, then $u \in
    \Iso{\stackS}{\stackT}$ by definition, and so $u \in
    \Iso{\stackS \Vert f_{L}(\stackS)}{\stackT \Vert
      f_{R}(\stackT)}$ by assumption.
  Since $\stackS$ and $\stackT$ have equal lengths, and $\stackS
    \Vert f_{L}(\stackS)$ and $\stackT \Vert f_{R}(\stackT)$ have
  equal lengths, it follows that so too do $f_{L}(\stackS)$ and
  $f_{R}(\stackT)$.
  Then ${f_{L}(\stackS)}^{u} = f_{R}(\stackT)$,
  since for each
  $i \in \{1, \ldots, |f_{L}(\stackS)|\}$,
  \[
    {f_{L}(\stackS)[i]}^{u}
      =
      {{\left(\stackS \Vert f_{L}(\stackS)\right)}[|\stackS|+i]}^{u}
    =
    {\left(\stackT \Vert f_{R}(\stackT)\right)}[|\stackT|+i]
      =
      {f_{R}(\stackT)}[i].
  \]

  $\ref{item-refiner-individual-perm}\Rightarrow\ref{item-refiner-initial}$.
  This implication is immediate.
\end{proof}

Perhaps
Lemma~\ref{lem-refiner-equiv-definitions}\ref{item-refiner-long} most clearly
indicates the relevance of refiners to search.

Suppose that we wish to search for the intersection $U_{1} \cap \cdots \cap
U_{n}$ of some subsets of $\Sym{\Omega}$.  Let $i \in \{1,\ldots,n\}$, let $(f_{L}, f_{R})$ be a refiner for
$U_{i}$, and let $\stackS$ and $\stackT$ be isomorphic labelled digraph
stacks on $\Omega$, such that $\Iso{\stackS}{\stackT}$ overestimates (i.e.\
contains) $U_{1} \cap \cdots \cap U_{n}$.

We may use the refiner $(f_{L}, f_{R})$ to \emph{refine} the pair of stacks
$(\stackS, \stackT)$: we apply the functions $f_{L}$ and $f_{R}$, respectively,
to the stacks $\stackS$ and $\stackT$ and obtain an extended pair of stacks
$({\stackS \Vert f_{L}(\stackS)}, {\stackT \Vert f_{R}(\stackT)})$.  We call
this process \emph{refinement}.  Note that the refiner for $U_{i}$ need not
consider the other sets in the intersection.

By Lemma~\ref{lem-refiner-equiv-definitions}\ref{item-refiner-long}, the set of
induced isomorphisms $\Iso{\stackS \Vert f_{L}(\stackS)}{\stackT \Vert
f_{R}(\stackT)}$ contains the elements of $U_{i}$ that belonged to
$\Iso{\stackS}{\stackT}$. Since $U_{i}$ contains $U_{1} \cap
\cdots \cap U_{n}$, it follows that $\Iso{\stackS \Vert f_{L}(\stackS)}{\stackT
\Vert f_{R}(\stackT)}$ is again an overestimate for $U_{1} \cap \cdots \cap
U_{n}$; it is contained in the previous overestimate
by Remark~\ref{rmk-stack-iso-auto}.
Moreover, $\Iso{\stackS \Vert f_{L}(\stackS)}{\stackT \Vert f_{R}(\stackT)}$ may
lack some elements of $\Iso{\stackS}{\stackT} \setminus (U_{1} \cap \cdots \cap
U_{n})$, in which case we have produced a smaller overestimate for the result,
and thereby reduced the size of the remaining search space.
We may then repeat this process, perhaps with a different refiner,
in the hope of reducing the search space further still.

The condition in
Lemma~\ref{lem-refiner-equiv-definitions}\ref{item-refiner-individual-perm} is
often most convenient for verifying that a pair of functions is a refiner for
some set, as is done in Example~\ref{ex-perfect-set}.

\begin{example}[Refiner for set stabiliser and transporter in
    $\Sym{\Omega}$]\label{ex-perfect-set}
  Let $A, B \subseteq \Omega$ and let
  \[S_{A, B} = \set{g \in \Sym{\Omega}}{A^{g} = B}\]
  denote the set of permutations of $\Omega$ that map $A$ to $B$.  Note that
  $S_{A, A}$ is the set stabiliser $\Sym{\Omega}_{A}$ of $A$ in
  $\Sym{\Omega}$, and that in general, either $S_{A, B}$ is empty, or
  it is a right coset of $\Sym{\Omega}_{A}$ and a left coset of
  $\Sym{\Omega}_{B}$ in $\Sym{\Omega}$.

  Define a labelled digraph $\Gamma_{A}$ without arcs, where the vertices in $A$
  have the label $\exLabel{in}$, and the remaining vertices have the label
  $\exLabel{out}$.  Furthermore, let $\textsc{Stab}_{A}$ be the function that
  maps every labelled digraph stack on $\Omega$ to the stack $[\Gamma_{A}]$.
  Define $\Gamma_{B}$ and $\textsc{Stab}_{B}$ analogously.
  Then $(\textsc{Stab}_{A}, \textsc{Stab}_{B})$ is a refiner for the set $S_{A,
        B}$ by
  Lemma~\ref{lem-refiner-equiv-definitions}\ref{item-refiner-individual-perm},
  since
  \[
    {\textsc{Stab}_{A}(\stackS)}^{g}
    = {[\Gamma_{A}    ]}^{g}
    =  [\Gamma_{A}^{g}]
      =  [\Gamma_{B}    ] =
    \textsc{Stab}_{B}(\stackT)
  \]
  for all $\stackS, \stackT \in \Stacks{\Omega}$ and for all $g \in
    S_{A, B}$.
\end{example}

The refiner in Example~\ref{ex-perfect-set} is particularly straightforward: the
functions $\textsc{Stab}_{A}$ and $\textsc{Stab}_{B}$ are constant, and they
return stacks of length one containing labelled digraphs without arcs and only
two different vertex labels. Moreover, the isomorphisms between these stacks are
precisely the permutations in $\Sym{\Omega}$ that map $A$ to $B$ as sets.

Note that when Example~\ref{ex-perfect-set} gives a refiner $(f_{L}, f_{R})$ for a subgroup of
$\Sym{\Omega}$ rather than just a subset, for example when $A = B$ and the
subgroup is the setwise stabiliser of $A$ in $\Sym{\Omega}$, then $f_{L} = f_{R}$.
Lemma~\ref{lem-group-refiner-symmetric}
shows that this property is shared by every refiner for a set that contains the
identity map on $\Omega$.

\begin{lemma}[\mbox{cf.~\cite[Prop 2]{leon1997},~\cite[Lemma 6]{leon1991}}]
  \label{lem-group-refiner-symmetric}
  Let $(f_{L}, f_{R})$ be a refiner for a subset $U \subseteq \Sym{\Omega}$ that
  contains the identity map, $\idOmega$.  Then $f_{L} = f_{R}$.
\end{lemma}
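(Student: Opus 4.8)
The plan is to reduce the statement to a single application of Lemma~\ref{lem-refiner-equiv-definitions}\ref{item-refiner-individual-perm} with $g = \idOmega$. First I would fix an arbitrary labelled digraph stack $\stackS \in \Stacks{\Omega}$ and note three things: the identity permutation $\idOmega$ induces an isomorphism from $\stackS$ to itself, so $\stackS$ is isomorphic to $\stackS$; by hypothesis $\idOmega \in U$; and trivially $\stackS^{\idOmega} = \stackS$. These are exactly the hypotheses of Lemma~\ref{lem-refiner-equiv-definitions}\ref{item-refiner-individual-perm} applied to the pair $(\stackS, \stackS)$ and the permutation $\idOmega \in U$, so that lemma gives ${f_{L}(\stackS)}^{\idOmega} = f_{R}(\stackS)$, i.e.\ $f_{L}(\stackS) = f_{R}(\stackS)$. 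Since $\stackS$ was arbitrary, $f_{L} = f_{R}$.

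There is essentially no obstacle here: the only real step is to realise that one should instantiate the ``individual permutation'' reformulation of a refiner at $g = \idOmega$, after which everything is immediate. For completeness I would mention the equally short direct argument from Definition~\ref{defn-refiner}: since $\idOmega \in U$ and $\idOmega \in \Auto{\stackS} = \Iso{\stackS}{\stackS}$, the defining inclusion $U \cap \Iso{\stackS}{\stackS} \subseteq U \cap \Iso{f_{L}(\stackS)}{f_{R}(\stackS)}$ forces $\idOmega \in \Iso{f_{L}(\stackS)}{f_{R}(\stackS)}$, which again says precisely that ${f_{L}(\stackS)}^{\idOmega} = f_{R}(\stackS)$. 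I would present the proof via Lemma~\ref{lem-refiner-equiv-definitions} since it is marginally cleaner, but either route is a two-line argument.
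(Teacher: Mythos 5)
Your proposal is correct and follows essentially the same route as the paper: the paper's proof also fixes an arbitrary $\stackS \in \Stacks{\Omega}$ and applies Lemma~\ref{lem-refiner-equiv-definitions}\ref{item-refiner-individual-perm} with $g = \idOmega$ to conclude $f_{L}(\stackS) = {f_{L}(\stackS)}^{\idOmega} = f_{R}(\stackS)$. Your alternative direct argument from Definition~\ref{defn-refiner} is also valid, but the main argument matches the paper exactly.
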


\begin{proof}
  Let $\stackS \in \Stacks{\Omega}$ be arbitrary.  Since $\idOmega
  \in U$ and $(f_{L}, f_{R})$ is a refiner for $U$, it follows by
  Lemma~\ref{lem-refiner-equiv-definitions}\ref{item-refiner-individual-perm}
  that $f_{L}(\stackS) = {f_{L}(\stackS)}^{\idOmega} =
  f_{R}(\stackS)$.
\end{proof}

Lemma~\ref{lem-refiner-equiv-definitions}\ref{item-refiner-individual-perm}
implies the following lemma.

\begin{lemma}\label{lem-simple-refiner}
  Let $f$ be a function from $\Stacks{\Omega}$ to itself, and let $U$ be a
  subset of $\Sym{\Omega}$ containing $\idOmega$.
  Then $(f, f)$ is a refiner for $U$ if and only if
  \(f({\stackS}^{g}) = {f(\stackS)}^{g}\)
  for all $g \in U$ and $\stackS \in \Stacks{\Omega}$.
\end{lemma}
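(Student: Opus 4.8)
The plan is to obtain this immediately from Lemma~\ref{lem-refiner-equiv-definitions}, specialised to the situation $f_{L} = f_{R} = f$. That lemma tells us that $(f, f)$ is a refiner for $U$ if and only if condition~\ref{item-refiner-individual-perm} holds, i.e.\ for all isomorphic $\stackS, \stackT \in \Stacks{\Omega}$ and all $g \in U$, the equality $\stackS^{g} = \stackT$ implies $f(\stackS)^{g} = f(\stackT)$. So the whole task reduces to checking that this condition is equivalent to the one in the statement, namely that $f(\stackS^{g}) = f(\stackS)^{g}$ for all $\stackS \in \Stacks{\Omega}$ and all $g \in U$.

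For the forward implication, I would take an arbitrary $\stackS \in \Stacks{\Omega}$ and $g \in U$ and set $\stackT := \stackS^{g}$. By definition $g$ then induces an isomorphism from $\stackS$ to $\stackT$, so the pair $\stackS, \stackT$ is isomorphic and $\stackS^{g} = \stackT$; applying Lemma~\ref{lem-refiner-equiv-definitions}\ref{item-refiner-individual-perm} gives $f(\stackS)^{g} = f(\stackT) = f(\stackS^{g})$, as required. Conversely, assuming $f(\stackS^{g}) = f(\stackS)^{g}$ for all $\stackS$ and all $g \in U$, any isomorphic pair $\stackS, \stackT$ and any $g \in U$ with $\stackS^{g} = \stackT$ satisfy $f(\stackS)^{g} = f(\stackS^{g}) = f(\stackT)$, so condition~\ref{item-refiner-individual-perm} of Lemma~\ref{lem-refiner-equiv-definitions} holds and hence $(f, f)$ is a refiner for $U$.

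There is no real obstacle here; the single point worth noting is that the hypothesis ``$\stackS$ and $\stackT$ are isomorphic'' in Lemma~\ref{lem-refiner-equiv-definitions}\ref{item-refiner-individual-perm} is automatically met as soon as $\stackS^{g} = \stackT$, so it imposes no extra restriction and the two conditions match up verbatim. The assumption $\idOmega \in U$ plays no part in the argument above; it appears because, by Lemma~\ref{lem-group-refiner-symmetric}, every refiner for such a $U$ has the symmetric shape $(f, f)$, so in that case the lemma in fact characterises \emph{all} refiners for $U$, not merely the symmetric ones.
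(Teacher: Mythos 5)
Your proposal is correct and follows exactly the route the paper intends: the paper gives no written proof beyond remarking that the lemma is implied by Lemma~\ref{lem-refiner-equiv-definitions}\ref{item-refiner-individual-perm}, and your argument simply spells out that implication (in both directions) by taking $\stackT = \stackS^{g}$. Your observation that $\idOmega \in U$ is not actually used in the equivalence is accurate as well.
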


Next, we see that any refiner for a non-empty set can be
derived from a function $f$ that satisfies the condition in
Lemma~\ref{lem-simple-refiner}.

\begin{lemma}\label{lem-refiner-right-coset}
  Let $U$ be a non-empty subset of $\Sym{\Omega}$, fix $h \in U$
  arbitrarily, and let $f$ and $g$ be functions from $\Stacks{\Omega}$
  to itself.
  Then the following are equivalent:
  \begin{enumerate}[label=\textrm{(\roman*)}]
    \item\label{item-refiner-right-coset-fg}
      $(f, g)$ is a refiner for $U$.
    \item\label{item-refiner-right-coset-ff-Uh-1}
      $(f, f)$ is a refiner for $U h^{-1}$,
  and
  $g(\stackS) = f(\stackS^{h^{-1}}){}^{h}$ for all $\stackS \in \Stacks{\Omega}$.
  \end{enumerate}
  In particular,
  if $U$ is a right coset of a subgroup $G \leq \Sym{\Omega}$, then
  $(f, g)$ is a refiner for the coset $U = G h$
  if and only if
  $(f, f)$ is a refiner for the group $G$, and
  $g(\stackS) = f(\stackS^{h^{-1}}){}^{h}$ for all $\stackS \in \Stacks{\Omega}$.
\end{lemma}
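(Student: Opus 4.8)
The plan is to reduce everything to the pointwise criterion of Lemma~\ref{lem-refiner-equiv-definitions}\ref{item-refiner-individual-perm}. Since $\stackS^{w}$ is always isomorphic to $\stackS$ (via $w$), that criterion may be restated as follows: a pair $(f_{L}, f_{R})$ of functions from $\Stacks{\Omega}$ to itself is a refiner for a subset $W \subseteq \Sym{\Omega}$ if and only if $f_{L}(\stackS)^{w} = f_{R}(\stackS^{w})$ for all $\stackS \in \Stacks{\Omega}$ and all $w \in W$. I will apply this with $W = U$ for the pair $(f, g)$, and with $W = U h^{-1}$ for the pair $(f, f)$, and otherwise merely push permutations through the action using $(\stackS^{a})^{b} = \stackS^{ab}$ and $\stackS^{\idOmega} = \stackS$ throughout.

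For the implication $\ref{item-refiner-right-coset-fg} \Rightarrow \ref{item-refiner-right-coset-ff-Uh-1}$, I assume $f(\stackS)^{u} = g(\stackS^{u})$ for all $\stackS$ and all $u \in U$. Taking $u = h \in U$ and replacing $\stackS$ by $\stackS^{h^{-1}}$ immediately yields $g(\stackS) = g\big((\stackS^{h^{-1}})^{h}\big) = f(\stackS^{h^{-1}})^{h}$, which is the claimed formula for $g$. To see that $(f, f)$ is a refiner for $U h^{-1}$, take $v \in U h^{-1}$, say $v = u h^{-1}$ with $u \in U$; then
\[
  f(\stackS^{v}) = f\big((\stackS^{u})^{h^{-1}}\big) = g(\stackS^{u})^{h^{-1}} = \big(f(\stackS)^{u}\big)^{h^{-1}} = f(\stackS)^{u h^{-1}} = f(\stackS)^{v},
\]
where the second equality is the just-derived formula for $g$ applied to $\stackS^{u}$, and the third is the refiner hypothesis for $(f, g)$ and $U$.

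For $\ref{item-refiner-right-coset-ff-Uh-1} \Rightarrow \ref{item-refiner-right-coset-fg}$, I assume $g(\stackS) = f(\stackS^{h^{-1}})^{h}$ for all $\stackS$, and that $(f, f)$ is a refiner for $U h^{-1}$, i.e.\ $f(\stackT)^{v} = f(\stackT^{v})$ for all $\stackT$ and all $v \in U h^{-1}$. For any $u \in U$ we have $u h^{-1} \in U h^{-1}$, so
\[
  g(\stackS^{u}) = f\big((\stackS^{u})^{h^{-1}}\big)^{h} = f(\stackS^{u h^{-1}})^{h} = \big(f(\stackS)^{u h^{-1}}\big)^{h} = f(\stackS)^{u},
\]
using the formula for $g$ and then the refiner property of $(f, f)$ for $U h^{-1}$ with $v = u h^{-1}$; by the pointwise criterion, $(f, g)$ is a refiner for $U$. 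Finally, the ``in particular'' statement is obtained by specialising to $U = G h$: since $\idOmega \in G$ we have $h = \idOmega h \in G h = U$, so $h$ is a legitimate choice of base point, and $U h^{-1} = G h h^{-1} = G$, so the equivalence just proved gives exactly the stated claim about cosets.

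The computations are entirely routine; the only points needing care are that $h$ must genuinely lie in $U$ (this is what forces the formula $g(\stackS) = f(\stackS^{h^{-1}})^{h}$ in the forward direction) and that the exponent arithmetic, in particular $u h^{-1} h = u$, is applied consistently. I do not anticipate a genuine obstacle.
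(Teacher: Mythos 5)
Your proposal is correct and follows essentially the same route as the paper: both directions reduce to the pointwise criterion of Lemma~\ref{lem-refiner-equiv-definitions}\ref{item-refiner-individual-perm}, derive the formula $g(\stackS) = f(\stackS^{h^{-1}})^{h}$ by specialising to $u = h$, and carry out the same exponent computations. The only cosmetic difference is that the paper invokes Lemma~\ref{lem-simple-refiner} in the backward direction, whereas you use the rephrased pointwise criterion directly, which is equivalent.
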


\begin{proof}
  $\ref{item-refiner-right-coset-fg}
   \Rightarrow\ref{item-refiner-right-coset-ff-Uh-1}$.
  Let $\stackS \in \Stacks{\Omega}$.
  Since $(f, g)$ is a refiner for $U$, it follows by
  Lemma~\ref{lem-refiner-equiv-definitions}\ref{item-refiner-individual-perm}
  that
  $g(\stackS^{h}) = f(\stackS){}^{h}$.
  Moreover, $\stackS$ was chosen arbitrarily, and so
  $g(\stackS) = g((\stackS^{h^{-1}}){}^{h})
  = f(\stackS^{h^{-1}}){}^{h}$.
  Furthermore, if $y \in U h^{-1}$ is arbitrary, then
  \[
    {f(\stackS)}^{y} =
    ({f(\stackS)}^{y h}){}^{h^{-1}} =
      g(\stackS^{y h}){}^{h^{-1}} =
      f(\stackS^{y h h^{-1}}){}^{h h^{-1}} =
    f(\stackS^{y}).
  \]
  Therefore $(f, f)$ is a refiner for $U h^{-1}$ by
  Lemma~\ref{lem-refiner-equiv-definitions}\ref{item-refiner-individual-perm}.

  $\ref{item-refiner-right-coset-ff-Uh-1}
   \Rightarrow\ref{item-refiner-right-coset-fg}$.
  Let $\stackS, \stackT \in \Stacks{\Omega}$, suppose that $\stackS$ and
  $\stackT$ are isomorphic, and let $x \in U$.
  Since $(f, f)$ is a refiner for $U h^{-1}$ and $x h^{-1} \in U h^{-1}$, it
  follows by Lemma~\ref{lem-simple-refiner} that $f{(\stackS)}^{x h^{-1}} =
    f(\stackS^{x h^{-1}})$. Thus, if $\stackS^{x} = \stackT$, then
  \[
    {f(\stackS)}^{x}
    =
    {f(\stackS)}^{x h^{-1} h}
    =
    {f(\stackS^{x h^{-1}})}^{h}
    =
    f(\stackT^{h^{-1}}){}^{h}
    =
    g(\stackT),
  \]
  and so $(f, g)$ is a refiner for $U$ by
  Lemma~\ref{lem-refiner-equiv-definitions}\ref{item-refiner-individual-perm}.
\end{proof}

For some pairs of functions, such as those in Example~\ref{ex-perfect-set}
and the upcoming Example~\ref{ex-set-of-sets}, one may use the
following results to show that the pair gives a refiner.

\begin{lemma}\label{lem-refiner-nice-condition}
  Let $U \subseteq \Sym{\Omega}$, and let $f_{L}$, $f_{R}$ be
  functions from $\Stacks{\Omega}$ to itself such that
  \(
    U \subseteq \Iso{f_{L}(\stackS)}{f_{R}(\stackT)}
  \)
  for all isomorphic $\stackS, \stackT \in \Stacks{\Omega}$.
  Then $(f_{L}, f_{R})$ is a refiner for $U$.
\end{lemma}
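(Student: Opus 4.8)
The plan is to verify the defining inclusion of a refiner (Definition~\ref{defn-refiner}) directly, since the hypothesis is essentially a strengthening of exactly that condition. First I would fix arbitrary isomorphic stacks $\stackS, \stackT \in \Stacks{\Omega}$, so that the hypothesis applies to this pair and yields $U \subseteq \Iso{f_{L}(\stackS)}{f_{R}(\stackT)}$. Then, using the trivial inclusion $U \cap \Iso{\stackS}{\stackT} \subseteq U$ together with $U \subseteq \Iso{f_{L}(\stackS)}{f_{R}(\stackT)}$, I would conclude that $U \cap \Iso{\stackS}{\stackT} \subseteq U \cap \Iso{f_{L}(\stackS)}{f_{R}(\stackT)}$. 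Since $\stackS$ and $\stackT$ were arbitrary isomorphic stacks, this is precisely the defining property, so $(f_{L}, f_{R})$ is a refiner for $U$.

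An alternative route, if one prefers, is via Lemma~\ref{lem-refiner-equiv-definitions}\ref{item-refiner-individual-perm}: given isomorphic $\stackS, \stackT$ and $g \in U$ with $\stackS^{g} = \stackT$, the hypothesis gives $g \in \Iso{f_{L}(\stackS)}{f_{R}(\stackT)}$, i.e.\ $f_{L}(\stackS)^{g} = f_{R}(\stackT)$, which is the required implication. Either phrasing is equally short.

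The main point — there being no real obstacle — is simply that the hypothesis already asserts more than a refiner needs, namely that \emph{all} of $U$ (not merely the part of $U$ lying in $\Iso{\stackS}{\stackT}$) induces isomorphisms between the refined stacks. The only subtlety worth a sentence is that the hypothesis is imposed only on isomorphic pairs $\stackS \cong \stackT$, which is harmless because Definition~\ref{defn-refiner} also quantifies only over isomorphic pairs (and, as noted after that definition, the refiner condition holds automatically for non-isomorphic pairs).
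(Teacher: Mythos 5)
Your proposal is correct, and both of your routes are valid; however, neither is quite the paper's own argument, so a short comparison is in order. You verify Definition~\ref{defn-refiner} directly: since $U \cap \Iso{\stackS}{\stackT} \subseteq U$ and, by hypothesis, $U \subseteq \Iso{f_{L}(\stackS)}{f_{R}(\stackT)}$, the defining inclusion $U \cap \Iso{\stackS}{\stackT} \subseteq U \cap \Iso{f_{L}(\stackS)}{f_{R}(\stackT)}$ is immediate, and your alternative via Lemma~\ref{lem-refiner-equiv-definitions}\ref{item-refiner-individual-perm} is equally sound. The paper instead proves the stronger \emph{equality}
\[
U \cap \Iso{\stackS}{\stackT}
=
U \cap \Iso{\stackS \Vert f_{L}(\stackS)}{\stackT \Vert f_{R}(\stackT)},
\]
using Remark~\ref{rmk-stack-iso-auto} to rewrite $\Iso{\stackS}{\stackT} \cap \Iso{f_{L}(\stackS)}{f_{R}(\stackT)}$ as the isomorphisms of the appended stacks, and then invokes Lemma~\ref{lem-refiner-equiv-definitions}\ref{item-refiner-long}. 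Your argument is more elementary — it needs neither the remark nor the equivalence lemma — while the paper's route exhibits, along the way, the appended-stack formulation that is actually used when refiners are applied in the search algorithms. Your closing observation that the hypothesis (and the definition) only concern isomorphic pairs is accurate and matches the remark following Definition~\ref{defn-refiner}; there is no gap.
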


\begin{proof} Let $\stackS, \stackT \in \Stacks{\Omega}$ be isomorphic.
  By Remark~\ref{rmk-stack-iso-auto} and the assumption on $(f_{L}, f_{R})$ we
  have that
  \begin{align*} U \cap \Iso{\stackS}{\stackT} & = \left( U \cap
    \Iso{f_{L}(\stackS)}{f_{R}(\stackT)} \right) \cap \Iso{\stackS}{\stackT} \\
    & = U \cap \left( \Iso{\stackS}{\stackT} \cap
    \Iso{f_{L}(\stackS)}{f_{R}(\stackT)} \right) \\ & = U \cap \Iso{\stackS
    \Vert f_{L}(\stackS)} {\stackT \Vert f_{R}(\stackT)}.  \end{align*}
  Therefore, by
  Lemma~\ref{lem-refiner-equiv-definitions}\ref{item-refiner-long}, $(f_{L},
  f_{R})$ is a refiner for $U$.  \end{proof}

\begin{cor}\label{cor-refiner-nice-condition-subgroup}
  Let $G \leq \Sym{\Omega}$, and let $f$ be a function from $\Stacks{\Omega}$ to
  itself with constant value $\stackS \in \Stacks{\Omega}$, such that $G \leq \Auto{\stackS}$.  Then
  $(f, f)$ is a refiner for $G$.
\end{cor}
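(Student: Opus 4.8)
The plan is to deduce this immediately from Lemma~\ref{lem-refiner-nice-condition}, applied with $f_{L} = f_{R} = f$ and with the relevant set of permutations being $G$. Thus the only thing to verify is the hypothesis of that lemma, namely that $G \subseteq \Iso{f(\stackT)}{f(\stackU)}$ for all isomorphic $\stackT, \stackU \in \Stacks{\Omega}$.

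First I would observe that, since $f$ is constant with value $\stackS$, we have $f(\stackT) = f(\stackU) = \stackS$ for every pair $\stackT, \stackU \in \Stacks{\Omega}$ (whether or not they are isomorphic), and hence $\Iso{f(\stackT)}{f(\stackU)} = \Iso{\stackS}{\stackS} = \Auto{\stackS}$. Next, the hypothesis $G \leq \Auto{\stackS}$ gives $G \subseteq \Iso{f(\stackT)}{f(\stackU)}$ for all such $\stackT, \stackU$; in particular this holds whenever $\stackT \cong \stackU$. Applying Lemma~\ref{lem-refiner-nice-condition} with $U \coloneqq G$ then yields that $(f, f)$ is a refiner for $G$, as required.

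There is no genuine obstacle here: the corollary is a direct specialisation of Lemma~\ref{lem-refiner-nice-condition}, and its proof amounts to a two-line unwinding of the definition of a constant function, of $\Iso{\stackS}{\stackS} = \Auto{\stackS}$, and of the containment assumed on $G$. The only point meriting a moment's care is that Lemma~\ref{lem-refiner-nice-condition} requires its containment hypothesis only for isomorphic pairs of stacks, whereas in this setting it holds trivially for all pairs, so no case distinction is needed.
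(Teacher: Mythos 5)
Your proof is correct and is exactly the argument the paper intends: the corollary is stated without proof as an immediate specialisation of Lemma~\ref{lem-refiner-nice-condition}, obtained by taking $f_{L} = f_{R} = f$ and noting that constancy of $f$ gives $\Iso{f(\stackT)}{f(\stackU)} = \Auto{\stackS} \supseteq G$. Nothing further is needed.
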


\begin{example}[Refiner for set of subsets stabiliser and
  transporter]\label{ex-set-of-sets}

  Let $k \in \N_{0}$ and $U_{i} \subseteq \Omega$ for all $i \in \{1, \ldots
  k\}$, let $\mathcal{U} = \{ U_{1}, \ldots, U_{k} \}$, and let
  $\Gamma_{\mathcal{U}}$ be the labelled digraph
  on $\Omega$ whose set of arcs is
  \[
  \set{(\alpha, \beta) \in \Omega \times \Omega}{\alpha \neq \beta\ \text{and}\
  \{\alpha, \beta\} \subseteq U_{i}\ \text{for some}\ i};
  \]
  where the label of each vertex $\alpha \in \Omega$ is
  a list of length $\max\set{|U_{i}|}{i \in \{1, \ldots, k\}}$,
  with $i\textsuperscript{th}$ entry
  \[(|\set{j \in \{1, \ldots, k\}}{\alpha \in U_{j}\ \text{and}\ |U_{j}| = i}|,
  \,k),\]
  and the label of each arc $(\alpha, \beta)$ in the digraph
  is a list of the same length, with $i\textsuperscript{th}$ entry
  \[
  (|\set{j \in \{1,\ldots,k\}}{\alpha,\beta \in U_{j}\ \text{and}\ |U_{j}| = i}|,
   \,k).
  \]
  The label of a vertex (or arc) encodes, for each size of
  subset, the number of all subsets that have that size and contain
  that vertex (or arc).
  For every $\stackS \in \Stacks{\Omega}$, we define $f_{\mathcal{U}}(\stackS) =
  [\Gamma_{\mathcal{U}}]$.
  In addition,
  for all $g \in \Sym{\Omega}$,
  we define $\mathcal{U}^{g} = \{U_{1}^{g}, \ldots, U_{k}^{g}\}$.

  Let $\mathcal{U}$ and $\mathcal{V}$ be arbitrary sets of subsets of $\Omega$.
  Since the labelled digraphs $\Gamma_{\mathcal{U}}$ and $\Gamma_{\mathcal{V}}$
  were defined so that
  $\set{g \in \Sym{\Omega}}{\mathcal{U}^{g} = \mathcal{V}} \subseteq
  \Iso{\Gamma_{\mathcal{U}}}{\Gamma_{\mathcal{V}}}$, it follows by
  Lemma~\ref{lem-refiner-nice-condition} that
  $(f_{\mathcal{U}}, f_{\mathcal{V}})$ is a refiner for the set $\set{g
  \in \Sym{\Omega}}{\mathcal{U}^{g} = \mathcal{V}}$,
  and Corollary~\ref{cor-refiner-nice-condition-subgroup} yields that
  $(f_{\mathcal{U}}, f_{\mathcal{U}})$ is a refiner for the group
  $\set{g \in \Sym{\Omega}}{\mathcal{U}^{g} = \mathcal{U}}$.

  For a specific example, we
  consider the sets of subsets
  $\mathcal{U} \coloneqq \{\{1\}, \{1,2,3\}, \{2,4\}\}$,
  and $\mathcal{V} \coloneqq \{\{5\}, \{2,3,4\}, \{3,4\}\}$.
  Both $\mathcal{U}$ and $\mathcal{V}$ contain three subsets, which have sizes
  $1$, $2$ and $3$, and so, at least superficially, it seems plausible there may
  exist elements of $\Sn{5}$ that map $\mathcal{U}$ to $\mathcal{V}$.  In order
  to search for the transporter set $\set{g \in \Sn{5}}{\mathcal{U}^{g} =
  \mathcal{V}}$, then (with all the following notation as defined above) we can
  use the refiner $(f_{\mathcal{U}}, f_{\mathcal{V}})$ to produce labelled
  digraphs $\Gamma_{\mathcal{U}}$ and $\Gamma_{\mathcal{V}}$, such that
  $\Iso{\Gamma_{\mathcal{U}}}{\Gamma_{\mathcal{V}}}$ contains the transporter
  set. These labelled digraphs are depicted in Figure \ref{fig-set-of-sets};
  although we do not give the correspondence explicitly, a pair of vertices or a
  pair of arcs have the same visual style if and only if they have the same
  label.

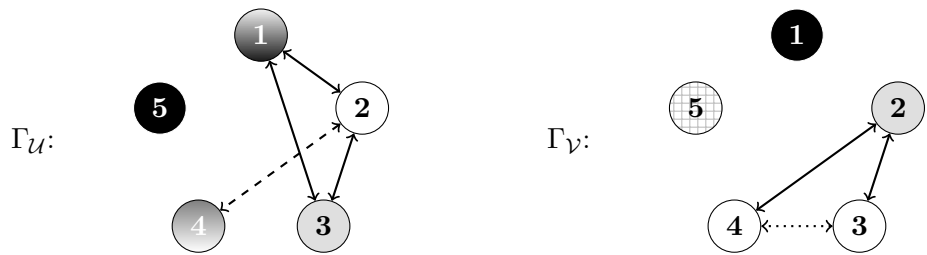
\begin{figure}[!ht]
  \begin{center}
    \begin{tikzpicture}
      \tikzstyle{white}=[circle, draw=black]
      \tikzstyle{grey}=[circle, draw=black, fill=gray!25]
      \tikzstyle{black}=[circle, draw=white, fill=black!100]
      \tikzstyle{dots}=[circle, draw=black, pattern=dots, pattern color=gray!50]

      \node[style={
            circle,
            draw=black,
            shade,
            bottom color=black!90, top color=gray!10,
            align=center}]
      (1) at (90:1.4cm)   {$\color{white}\mathbf{1}$};
      \node[white]  (2) at (18:1.4cm)   {$\mathbf{2}$};
      \node[grey]   (3) at (-54:1.4cm)  {$\mathbf{3}$};
      \node[style={
            circle,
            draw=black,
            shade,
            top color=gray!100, bottom color=white, align=center}]
      (4) at (-126:1.4cm) {$\color{white}\mathbf{4}$};
      \node[black]  (5) at (162:1.4cm)  {$\color{white}\mathbf{5}$};

      \arcSym{1}{2}
      \arcSym{2}{3}
      \arcSym{1}{3}
      \arcSymDash{2}{4}

      \node at (3, 0) {};
      \node at (-3, 0) {$\Gamma_{\mathcal{U}}$:};
    \end{tikzpicture}
    \quad
    \begin{tikzpicture}
      \tikzstyle{white}=[circle, draw=black]
      \tikzstyle{grey}=[circle, draw=black, fill=gray!25]
      \tikzstyle{black}=[circle, draw=white, fill=black!100]
      \tikzstyle{str-ne}=[circle, draw=black, pattern=grid,
                          pattern color=gray!50]
      \tikzstyle{white}=[circle, draw=black]

      \node[black]  (1) at (90:1.4cm)   {$\color{white}\mathbf{1}$};
      \node[grey]   (2) at (18:1.4cm)   {$\mathbf{2}$};
      \node[white]  (3) at (-54:1.4cm)  {$\mathbf{3}$};
      \node[white]  (4) at (-126:1.4cm) {$\mathbf{4}$};
      \node[str-ne] (5) at (162:1.4cm)  {$\mathbf{5}$};

      \arcSym{2}{4}
      \arcSym{2}{3}
      \arcSymDot{3}{4}

      \node at (3, 0) {};
      \node at (-3, 0) {$\Gamma_{\mathcal{V}}$:};
    \end{tikzpicture}
  \end{center}
  \caption[set-of-sets]{
    Demonstration of the labelled digraphs
    $\Gamma_{\mathcal{U}}$ and $\Gamma_{\mathcal{V}}$
    from
    Example~\ref{ex-set-of-sets}, for the sets of subsets
    $\mathcal{U} \coloneqq \{\{1\}, \{1,2,3\}, \{2,4\}\}$
    and $\mathcal{V} \coloneqq \{\{5\}, \{2,3,4\}, \{3,4\}\}$
    of $\{1,\ldots,5\}$.
  }\label{fig-set-of-sets}
\end{figure}

  There are many ways to show that $\Gamma_{\mathcal{U}}$ and
  $\Gamma_{\mathcal{V}}$ are non-isomorphic: for example, they have
  different numbers of arcs.  Hence no permutation in $\Sn{5}$
  maps $\mathcal{U}$ to $\mathcal{V}$.
\end{example}

\subsection{Perfect refiners}\label{sec-perfect-refiners}

Refiners differ in their ability to encode information into a pair
of labelled digraph stacks.  For some sets, there are refiners that
capture all of the information about the set. Such refiners are the
focus of this section.

\begin{lemma}\label{lem-perfect-is-refiner}
  Let $U \subseteq \Sym{\Omega}$, and let $f_{L}, f_{R}$ be
  functions from $\Stacks{\Omega}$ to itself such that
  \[
    U \cap \Iso{\stackS}{\stackT}
    =
    \Iso{\stackS \Vert f_{L}(\stackS)}
        {\stackT \Vert f_{R}(\stackT)}
  \]
  for all $\stackS, \stackT \in \Stacks{\Omega}$.
  Then $(f_{L}, f_{R})$ is a refiner for $U$.
\end{lemma}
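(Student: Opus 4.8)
The plan is to show that the hypothesis of Lemma~\ref{lem-perfect-is-refiner} immediately implies condition~\ref{item-refiner-long} of Lemma~\ref{lem-refiner-equiv-definitions}, which is one of the equivalent characterisations of being a refiner for $U$. So the work amounts to connecting the ``perfect'' equality $U \cap \Iso{\stackS}{\stackT} = \Iso{\stackS \Vert f_{L}(\stackS)}{\stackT \Vert f_{R}(\stackT)}$ to the weaker-looking equality $U \cap \Iso{\stackS}{\stackT} = U \cap \Iso{\stackS \Vert f_{L}(\stackS)}{\stackT \Vert f_{R}(\stackT)}$.

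First I would fix isomorphic stacks $\stackS, \stackT \in \Stacks{\Omega}$ (although, as remarked after Definition~\ref{defn-refiner}, the hypothesis is stated for all pairs, and the non-isomorphic case is vacuous anyway). Then I would intersect both sides of the assumed equality with $U$, obtaining
\[
  U \cap \left( U \cap \Iso{\stackS}{\stackT} \right)
  =
  U \cap \Iso{\stackS \Vert f_{L}(\stackS)}{\stackT \Vert f_{R}(\stackT)}.
\]
Since $U \cap (U \cap X) = U \cap X$ for any set $X$, the left-hand side is just $U \cap \Iso{\stackS}{\stackT}$, so this reads
\[
  U \cap \Iso{\stackS}{\stackT}
  =
  U \cap \Iso{\stackS \Vert f_{L}(\stackS)}{\stackT \Vert f_{R}(\stackT)},
\]
which is precisely condition~\ref{item-refiner-long} of Lemma~\ref{lem-refiner-equiv-definitions}. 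By that lemma, $(f_{L}, f_{R})$ is a refiner for $U$, completing the proof.

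There is essentially no obstacle here: the only subtlety worth a sentence is that the hypothesis of Lemma~\ref{lem-perfect-is-refiner} quantifies over all $\stackS, \stackT$, whereas condition~\ref{item-refiner-long} only needs isomorphic pairs, so one direction of the restriction is automatic; and one should note that the statement does not even require $f_L(\stackS)$ and $f_R(\stackT)$ to have equal length, because when $|\stackS| = |\stackT|$ the assumed equality forces the appended stacks to have compatible lengths whenever the isomorphism set is non-empty, and Remark~\ref{rmk-stack-iso-auto} handles the length bookkeeping implicitly. The argument is a one-line set-theoretic manipulation plus an appeal to Lemma~\ref{lem-refiner-equiv-definitions}.
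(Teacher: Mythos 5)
Your proposal is correct and follows essentially the same route as the paper: both proofs reduce the hypothesis to the equality $U \cap \Iso{\stackS}{\stackT} = U \cap \Iso{\stackS \Vert f_{L}(\stackS)}{\stackT \Vert f_{R}(\stackT)}$ (the paper by noting the right-hand side of the hypothesis is contained in $U$, you by intersecting both sides with $U$ and using idempotence, which is the same observation) and then invoke Lemma~\ref{lem-refiner-equiv-definitions}\ref{item-refiner-long}.
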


\begin{proof}
  Let $\stackS, \stackT \in \Stacks{\Omega}$ be isomorphic.

  The hypothesis implies that
  $\Iso{\stackS \Vert f_{L}(\stackS)}{\stackT \Vert f_{R}(\stackT)} \subseteq
  U$, and hence that
  \[
    U \cap \Iso{\stackS}{\stackT} =
    U \cap \Iso{\stackS \Vert f_{L}(\stackS)}{\stackT \Vert f_{R}(\stackT)}.
  \]
  Thus $(f_{L}, f_{R})$ is a refiner for $U$ by
  Lemma~\ref{lem-refiner-equiv-definitions}\ref{item-refiner-long}.
\end{proof}

Refiners with the property from Lemma~\ref{lem-perfect-is-refiner} are called
\emph{perfect refiners}.
Roughly speaking, a perfect refiner $(f_{L}, f_{R})$ for a subset $U \subseteq
\Sym{\Omega}$ is used during a search algorithm to take a pair of isomorphic
labelled digraph stacks $\stackS$ and $\stackT$, and refine the stacks in
such a way as to leave exactly those isomorphisms from $\stackS$ to $\stackT$
that are contained in $U$. In particular, a perfect refiner never needs to be
applied more than once in any branch of a search, because all information
about $U$ is already encoded into the stacks after its first
application.

Next we give alternative ways of proving that a pair of functions
forms a perfect refiner for a particular set.

\begin{lemma}\label{lem-perfect-nice-condition}
  Let $U \subseteq \Sym{\Omega}$, and let $f_{L}, f_{R}$ be
  functions from $\Stacks{\Omega}$ to itself such that
  \(
    U = \Iso{f_{L}(\stackS)}{f_{R}(\stackT)}
  \)
  for all $\stackS, \stackT \in \Stacks{\Omega}$.
  Then $(f_{L}, f_{R})$ is a perfect refiner for $U$.
\end{lemma}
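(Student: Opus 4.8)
The plan is to prove directly that $(f_{L}, f_{R})$ satisfies the identity that characterises perfect refiners, namely
\[
  U \cap \Iso{\stackS}{\stackT}
  = \Iso{\stackS \Vert f_{L}(\stackS)}{\stackT \Vert f_{R}(\stackT)}
  \qquad\text{for all }\stackS, \stackT \in \Stacks{\Omega}.
\]
Once this is established, Lemma~\ref{lem-perfect-is-refiner} shows both that $(f_{L}, f_{R})$ is a refiner for $U$ and that it is \emph{perfect}, which is exactly the claim. So I would fix $\stackS, \stackT \in \Stacks{\Omega}$ and verify this equation, treating the substantive case $|\stackS| = |\stackT|$ first and then clearing up the length-mismatch cases.

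For the case $|\stackS| = |\stackT|$ I would argue exactly as in the proof of Lemma~\ref{lem-refiner-nice-condition}. By Remark~\ref{rmk-stack-iso-auto}, a permutation induces an isomorphism from $\stackS \Vert f_{L}(\stackS)$ to $\stackT \Vert f_{R}(\stackT)$ if and only if it simultaneously induces an isomorphism from $\stackS$ to $\stackT$ and an isomorphism from $f_{L}(\stackS)$ to $f_{R}(\stackT)$; hence $\Iso{\stackS \Vert f_{L}(\stackS)}{\stackT \Vert f_{R}(\stackT)} = \Iso{\stackS}{\stackT} \cap \Iso{f_{L}(\stackS)}{f_{R}(\stackT)}$, and substituting the hypothesis $\Iso{f_{L}(\stackS)}{f_{R}(\stackT)} = U$ gives $\Iso{\stackS}{\stackT} \cap U = U \cap \Iso{\stackS}{\stackT}$, as required. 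Equivalently, one can give a short two-way permutation chase: for the inclusion $\subseteq$, a $g \in U \cap \Iso{\stackS}{\stackT}$ satisfies $\stackS^{g} = \stackT$, and since $g \in U = \Iso{f_{L}(\stackS)}{f_{R}(\stackT)}$ it also satisfies $f_{L}(\stackS)^{g} = f_{R}(\stackT)$, so $(\stackS \Vert f_{L}(\stackS))^{g} = \stackT \Vert f_{R}(\stackT)$; for the reverse inclusion one uses that the $\Sym{\Omega}$-action on stacks and the concatenation operation are both computed entrywise, so (since $|\stackS| = |\stackT|$ lines the halves up) an isomorphism of the concatenated stacks restricts to isomorphisms of the two halves.

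Finally I would dispose of the remaining cases. If $|\stackS| = |\stackT|$ but $\stackS \not\cong \stackT$, then $\Iso{\stackS}{\stackT} = \varnothing$, so the left-hand side is empty, and the right-hand side is empty by the containment $\Iso{\stackS \Vert f_{L}(\stackS)}{\stackT \Vert f_{R}(\stackT)} \subseteq \Iso{\stackS}{\stackT}$ of Remark~\ref{rmk-stack-iso-auto}. If $|\stackS| \neq |\stackT|$, then again $\Iso{\stackS}{\stackT} = \varnothing$, and the right-hand side is empty because $\stackS \Vert f_{L}(\stackS)$ and $\stackT \Vert f_{R}(\stackT)$ then have different lengths. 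This last point — checking that $f_{L}(\stackS)$ and $f_{R}(\stackT)$ always have the same length — is the only place that needs a little care, and is the main obstacle: it follows from the hypothesis, since if $U \neq \varnothing$ then $\Iso{f_{L}(\stackS)}{f_{R}(\stackT)} = U = \Iso{f_{L}(\stackS')}{f_{R}(\stackT')}$ for all $\stackS',\stackT'$ forces $f_{L}$ and $f_{R}$ to be constant (pick $g \in U$; then $f_{L}(\stackS)^{g}$ determines $f_{R}(\stackT)$ independently of $\stackT$, and symmetrically), while the $U = \varnothing$ case is degenerate and matches the usage in the paper's examples, where $f_{L}$ and $f_{R}$ have outputs of a fixed length. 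Everything else is routine once Lemma~\ref{lem-perfect-is-refiner} and Remark~\ref{rmk-stack-iso-auto} are in hand.
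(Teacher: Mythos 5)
Your treatment of the substantive case is exactly the paper's proof: for isomorphic $\stackS$ and $\stackT$ (so in particular $|\stackS| = |\stackT|$), Remark~\ref{rmk-stack-iso-auto} gives $\Iso{\stackS \Vert f_{L}(\stackS)}{\stackT \Vert f_{R}(\stackT)} = \Iso{\stackS}{\stackT} \cap \Iso{f_{L}(\stackS)}{f_{R}(\stackT)}$, and substituting the hypothesis $\Iso{f_{L}(\stackS)}{f_{R}(\stackT)} = U$ yields $U \cap \Iso{\stackS}{\stackT}$. That is the entire published proof, and it is all that is needed: Definition~\ref{defn-refiner} only constrains isomorphic pairs, and the paper's proof of Lemma~\ref{lem-perfect-is-refiner} (hence its working notion of a perfect refiner) likewise only invokes the displayed identity for isomorphic stacks.

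Where you go beyond the paper, in trying to verify the identity also when $|\stackS| \neq |\stackT|$, you have correctly located the obstacle ($|f_{L}(\stackS)| = |f_{R}(\stackT)|$ must hold), but your resolution is only valid when $U \neq \varnothing$, where a fixed $g \in U$ indeed forces $f_{L}$ and $f_{R}$ to be constant with outputs of equal length. Dismissing $U = \varnothing$ as degenerate is not a proof, and in fact the all-pairs identity can genuinely fail there: take $f_{L}$ constant with value $[\Gamma, \Gamma]$ and $f_{R}$ constant with value $[\Gamma]$ for a labelled digraph $\Gamma$; then $\Iso{f_{L}(\stackS)}{f_{R}(\stackT)} = \varnothing = U$ for all pairs, yet for $\stackS = [\Gamma]$ and $\stackT = [\Gamma, \Gamma]$ we get $\stackS \Vert f_{L}(\stackS) = \stackT \Vert f_{R}(\stackT) = [\Gamma, \Gamma, \Gamma]$, whose set of self-isomorphisms contains $\idOmega$, while $U \cap \Iso{\stackS}{\stackT} = \varnothing$. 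So the unequal-length case cannot be salvaged in general; the correct move is simply to restrict, as the paper does, to isomorphic $\stackS$ and $\stackT$ (which also covers $U = \varnothing$ without any separate argument), at which point your argument is complete and coincides with the paper's.
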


\begin{proof}
  Let $\stackS, \stackT \in \Stacks{\Omega}$ be isomorphic.
  Using Remark~\ref{rmk-stack-iso-auto} and the assumption on $(f_{L}, f_{R})$,
  it follows that
  \[
    U \cap \Iso{\stackS}{\stackT}
    =
    \Iso{f_{L}(\stackS)}{f_{R}(\stackT)}
    \cap
    \Iso{\stackS}{\stackT}
    =
    \Iso{\stackS \Vert f_{L}(\stackS)}
        {\stackT \Vert f_{R}(\stackT)}.
        \qedhere
  \]
\end{proof}

\begin{cor}\label{cor-perfect-nice-condition-subgroup}
  Let $G \leq \Sym{\Omega}$, and let $f$ be a function from
  $\Stacks{\Omega}$ to itself with constant value $\stackS \in \Stacks{\Omega}$, such that $G =
  \Auto{\stackS}$.  Then $(f, f)$ is a perfect refiner for $G$.
\end{cor}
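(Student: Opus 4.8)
The plan is to obtain this as an immediate consequence of Lemma~\ref{lem-perfect-nice-condition}, applied with $f_{L} = f_{R} = f$. First I would observe that, because $f$ is constant with value $\stackS$, for every pair of labelled digraph stacks $\stackU, \stackV \in \Stacks{\Omega}$ we have $f(\stackU) = f(\stackV) = \stackS$, so that
\[
  \Iso{f(\stackU)}{f(\stackV)} = \Iso{\stackS}{\stackS}.
\]
Next I would note that $\Iso{\stackS}{\stackS} = \Auto{\stackS}$: this is just the definition of the induced automorphism group of a stack (equivalently, it is the case $\stackU = \stackV$ of the formula for $\Iso{\cdot}{\cdot}$ recorded in Remark~\ref{rmk-stack-iso-auto}). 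By hypothesis $\Auto{\stackS} = G$, and hence $\Iso{f(\stackU)}{f(\stackV)} = G$ for all $\stackU, \stackV \in \Stacks{\Omega}$.

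This is precisely the hypothesis of Lemma~\ref{lem-perfect-nice-condition} taken with $U = G$ (which is a subset of $\Sym{\Omega}$, as that lemma requires, since $G$ is a subgroup). Applying Lemma~\ref{lem-perfect-nice-condition} therefore yields at once that $(f, f)$ is a perfect refiner for $G$.

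There is essentially no obstacle here: the statement is a formal specialisation of Lemma~\ref{lem-perfect-nice-condition} to constant functions. The only point requiring a little care is the minor clash of notation — the symbol $\stackS$ denotes the constant value of $f$ in the corollary, whereas in Lemma~\ref{lem-perfect-nice-condition} the symbols $\stackS$ and $\stackT$ name arbitrary inputs — which is why above I deliberately reserve $\stackU$ and $\stackV$ for the arbitrary stacks fed to $f$.
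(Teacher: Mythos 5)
Your proof is correct and matches the paper's intended derivation: the corollary is stated as an immediate consequence of Lemma~\ref{lem-perfect-nice-condition}, obtained exactly as you do by taking $f_{L} = f_{R} = f$ and observing that the constant value forces $\Iso{f(\stackU)}{f(\stackV)} = \Iso{\stackS}{\stackS} = \Auto{\stackS} = G$ for all inputs. Your care over the notational clash with the lemma's variable names is sensible but does not change the argument.
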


We have already seen a perfect refiner in Example~\ref{ex-perfect-set}; this is
particularly straightforward to verify with
Lemma~\ref{lem-perfect-nice-condition}.  We give
several further examples of perfect refiners in
Section~\ref{sec-examples-perfect}.
Not every subset of $\Sym{\Omega}$ has a perfect refiner, however.

\begin{lemma}\label{lem-when-perfect-refiners-exist}
  Let $U \subseteq \Sym{\Omega}$.
  Then there exists a perfect refiner for $U$ if and only if
  $U = \Iso{\stackS}{\stackT}$ for some $\stackS, \stackT \in \Stacks{\Omega}$.
\end{lemma}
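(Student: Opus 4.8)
The statement is an "if and only if", and both directions should follow fairly directly from the machinery already in place. The plan is to treat the two implications separately, with the interesting content lying in the "only if" direction.

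For the "if" direction, suppose $U = \Iso{\stackS_{0}}{\stackT_{0}}$ for some fixed $\stackS_{0}, \stackT_{0} \in \Stacks{\Omega}$. First I would define $f_{L}$ to be the constant function with value $\stackS_{0}$ and $f_{R}$ to be the constant function with value $\stackT_{0}$. Then for every $\stackS, \stackT \in \Stacks{\Omega}$ we have $\Iso{f_{L}(\stackS)}{f_{R}(\stackT)} = \Iso{\stackS_{0}}{\stackT_{0}} = U$, so Lemma~\ref{lem-perfect-nice-condition} immediately gives that $(f_{L}, f_{R})$ is a perfect refiner for $U$. This half is essentially a one-line appeal to an already-proved lemma.

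For the "only if" direction, suppose $(f_{L}, f_{R})$ is a perfect refiner for $U$. The obstacle here is that the defining property of a perfect refiner quantifies over \emph{all} pairs of stacks, so I need to extract from it a single witnessing pair. The natural move is to evaluate the perfect-refiner identity at the empty stack on both sides: put $\stackS = \stackT = \EmptyStack{\Omega}$. Since every permutation induces an automorphism of $\EmptyStack{\Omega}$, we have $\Iso{\EmptyStack{\Omega}}{\EmptyStack{\Omega}} = \Sym{\Omega}$, so $U \cap \Iso{\EmptyStack{\Omega}}{\EmptyStack{\Omega}} = U$. On the other side, $\EmptyStack{\Omega} \Vert f_{L}(\EmptyStack{\Omega}) = f_{L}(\EmptyStack{\Omega})$ and likewise $\EmptyStack{\Omega} \Vert f_{R}(\EmptyStack{\Omega}) = f_{R}(\EmptyStack{\Omega})$, because appending a stack to the empty stack returns that stack unchanged. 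Hence the perfect-refiner identity reads
\[
  U = \Iso{f_{L}(\EmptyStack{\Omega})}{f_{R}(\EmptyStack{\Omega})}.
\]
Setting $\stackS \coloneqq f_{L}(\EmptyStack{\Omega})$ and $\stackT \coloneqq f_{R}(\EmptyStack{\Omega})$, these are elements of $\Stacks{\Omega}$ with $U = \Iso{\stackS}{\stackT}$, as required.

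The only subtlety to double-check is the behaviour of $\Vert$ with the empty stack, which is immediate from the definition of $\Vert$ (appending a length-$0$ list changes nothing), and the value $\Iso{\EmptyStack{\Omega}}{\EmptyStack{\Omega}} = \Sym{\Omega}$, which is recorded in the text just before Remark~\ref{rmk-stack-iso-auto}. So I do not anticipate any real difficulty; the main idea is simply the observation that the empty stack is a two-sided identity for $\Vert$, which lets the universally-quantified perfect-refiner condition collapse to a single concrete statement about $f_{L}(\EmptyStack{\Omega})$ and $f_{R}(\EmptyStack{\Omega})$.
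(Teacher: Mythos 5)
Your proposal is correct and follows the paper's own argument: the paper likewise obtains the forward direction by evaluating the perfect-refiner condition of Lemma~\ref{lem-perfect-is-refiner} at $\stackS = \stackT = \EmptyStack{\Omega}$, and the converse by taking $f_{L}$ and $f_{R}$ to be the constant functions with values $\stackS$ and $\stackT$ and invoking Lemma~\ref{lem-perfect-nice-condition}. You simply spell out the empty-stack bookkeeping in a little more detail than the paper does.
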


\begin{proof}
  $(\Rightarrow)$
  Let $(f_{L}, f_{R})$ be a perfect refiner for $U$.
  The result follows by applying the condition in
  Lemma~\ref{lem-perfect-is-refiner} with $S = T = \EmptyStack{\Omega}$.

  $(\Leftarrow)$
  Let $\stackS, \stackT \in \Stacks{\Omega}$ be such that $U = \Iso{\stackS}{\stackT}$.
  Define $f_{L}$ and $f_{R}$ to be functions from $\Stacks{\Omega}$ to itself with
  constant values $\stackS$ and $\stackT$, respectively. Then
  $(f_{L}, f_{R})$ is a perfect refiner for $U$ by Lemma~\ref{lem-perfect-nice-condition}.
\end{proof}

Lemma~\ref{lem-when-perfect-refiners-exist} implies that a non-empty subset
has a perfect refiner if and only if it is a coset of the $\Sym{\Omega}$-induced
automorphism group of a labelled digraph. Note that not every subgroup
of $\Sym{\Omega}$ is the automorphism group of a labelled digraph.

\subsubsection{Examples of perfect refiners}\label{sec-examples-perfect}

In this section, we give examples of perfect refiners for subgroups and
their cosets, in order to thoroughly explain, especially in the
first example, the idea of a perfect refiner.
As we saw in Lemmas~\ref{lem-simple-refiner}
and~\ref{lem-refiner-right-coset},
the
crucial step when creating a refiner for a subgroup $G \leq \Sym{\Omega}$, or
for one of its cosets, is to define a function $f$ from $\Stacks{\Omega}$ to
itself such that $f(\stackS^{g}) = {f(\stackS)}^{g}$ for all $\stackS \in
\Stacks{\Omega}$ and $g \in G$.

\begin{example}[Perfect refiner for permutation centraliser and conjugacy]\label{ex-perfect-perm-centraliser}

  For every $g \in \Sym{\Omega}$, let $\Gamma_{g}$ be the labelled digraph on
  $\Omega$ whose set of arcs is
  \(
  \set{(\alpha, \beta) \in \Omega \times \Omega}
  {\alpha^{g} = \beta},
  \)
  and in which all labels are defined to be $0$.  For every $\stackS \in
  \Stacks{\Omega}$, define
  $f_{g}(\stackS) = [\Gamma_{g}]$.
  Let $g, h \in \Sym{\Omega}$ be arbitrary.  Then $(f_{g}, f_{g})$ is a perfect
  refiner for the centraliser of $g$ in $\Sym{\Omega}$ by
  Corollary~\ref{cor-perfect-nice-condition-subgroup}, and by
  Lemma~\ref{lem-perfect-nice-condition}, $(f_{g}, f_{h})$ is a perfect refiner
  for the set $\set{x \in \Sym{\Omega}}{g^{x} = h}$.

  We illustrate one such instance of this perfect refiner.
  Let $g = (1\,2)(3\,6\,5) \in \Sn{6}$,
  let $U$ denote the centraliser of $g$ in $\Sn{6}$, and define
  the labelled digraph $\Gamma_{g}$ and function $f_{g}$ as above.
  A diagram of $\Gamma_{g}$ is shown in Figure~\ref{fig-perm-centraliser}.
  Note that there is a loop at vertex $4$, and only at vertex
  $4$, because $4$ is the unique fixed point of $g$ on $\{1,\ldots,6\}$.

\begin{figure}[!ht]
  \begin{center}
    \begin{tikzpicture}
      \foreach \x in {1,2,3,4,5,6} {
        \node[circle, draw=black] (\x) at (-\x*60+120:1.5cm) {$\mathbf{\x}$};};

      \arcSym{1}{2}
      \arc{3}{6}
      \arc{6}{5}
      \arc{5}{3}
      \looparcL{4}

      \node at (3, 0) {};
      \node at (-3, 0) {$\Gamma_{g}$:};
    \end{tikzpicture}
  \end{center}

  \caption[Perfect refiner for permutation centraliser]{
    The labelled digraph $\Gamma_{g}$ for $g = (1\,2)(3\,6\,5)$,
    from Example~\ref{ex-perfect-perm-centraliser}.
  }\label{fig-perm-centraliser}
\end{figure}
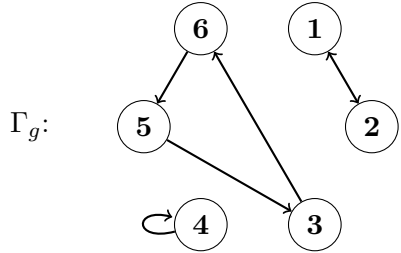

  In order to
  use Corollary~\ref{cor-perfect-nice-condition-subgroup}
  to verify that
  $(f_{g}, f_{g})$ is a perfect refiner for $U$,
  we must prove that
  $\Auto{[\Gamma_{g}]}=U$.
  Note that $\Auto{[\Gamma_{g}]} = \Auto{\Gamma_{g}}$.
  Every automorphism of $\Gamma_{g}$ fixes the unique
  vertex with a loop, and it also stabilises the connected components (because
  they have different sizes), and induces automorphisms on them.
  Therefore
  $\Auto{\Gamma_{g}}$ is contained in the subgroup $\langle (1\,2), (3\,5),
  (3\,6) \rangle$ of $\Sn{6}$.  But none of the transpositions in
  $\langle (3\,5), (3\,6) \rangle$ is an automorphism of $\Gamma_{g}$, because
  the arcs between $3$, $5$, and $6$ only go in one direction.  Hence
  $\Auto{\Gamma_{g}} = \< (1\,2), (3\,6\,5) \> = U$, as required.
\end{example}

\begin{example}[Perfect refiner for labelled digraph automorphism and
    isomorphism]\label{ex-perfect-digraph}

  For every labelled digraph $\Gamma$ on $\Omega$ and every $\stackS
    \in \Stacks{\Omega}$, let $f_{\Gamma}(\stackS) = [\Gamma]$.  Note that
  ${f_{\Gamma}(\stackS)}^{g} = [\Gamma^{g}] = [\Gamma] =
    f_{\Gamma}(\stackS)$ for all $g \in \Auto{\Gamma}$.

  Let $\Gamma$ and $\Delta$ be arbitrary labelled digraphs on $\Omega$.
  Then
  $(f_{\Gamma}, f_{\Delta})$ is a perfect refiner for $\Iso{\Gamma}{\Delta}$
  by Lemma~\ref{lem-perfect-nice-condition},
  and
  $(f_{\Gamma}, f_{\Gamma})$ is a perfect refiner for $\Auto{\Gamma}$
  by Corollary~\ref{cor-perfect-nice-condition-subgroup}.
\end{example}

\begin{example}[Perfect refiner for list of subsets stabiliser and
    transporter]\label{ex-perfect-list-of-sets}

  Whenever $k \in \N_{0}$ and $U_{i} \subseteq \Omega$ for each $i \in \{1,
  \ldots, k\}$ and $\mathcal{U} \coloneqq [U_{1}, \ldots, U_{k}]$, we let
  $\Gamma_{\mathcal{U}}$ be the labelled digraph on $\Omega$ without arcs,
  where the label of each vertex $\alpha \in \Omega$ is $\set{i \in \{1,
  \ldots, k\}}{\alpha \in U_{i}}$. For every
  $\stackS \in \Stacks{\Omega}$, define $f_{\mathcal{U}}(\stackS) =
  [\Gamma_{\mathcal{U}}]$.  If $g \in \Sym{\Omega}$, then
  $\mathcal{U}^{g} \coloneqq [U_{1}^{g}, \ldots, U_{k}^{g}]$.

  Let $\mathcal{U}$ and $\mathcal{V}$ be arbitrary lists of subsets of $\Omega$
  with all the notation as explained above.
  Then
  $(f_{\mathcal{U}}, f_{\mathcal{V}})$ is a perfect refiner for the set $\set{g
  \in \Sym{\Omega}}{\mathcal{U}^{g} = \mathcal{V}}$ by
  Lemma~\ref{lem-perfect-nice-condition},
  and
  $(f_{\mathcal{U}}, f_{\mathcal{U}})$ is a perfect refiner for the group
  $\set{g \in \Sym{\Omega}}{\mathcal{U}^{g} = \mathcal{U}}$ by
  Corollary~\ref{cor-perfect-nice-condition-subgroup}.

  To illustrate this, let
  $\Omega = \{1,\ldots,6\}$ and $\mathcal{U} = [\{1,3,6\}, \{3,5\}, \{2,4\}, \{2,3,4\}]$.
  It follows that
  \[\set{g \in \Sn{6}}{{[\{1,3,6\}, \{3,5\}, \{2,4\}, \{2,3,4\}]}^{g}
  = \mathcal{U}}
  =
  \Auto{\Gamma_{\mathcal{U}}}
  =
  \< (1\,6), (2\,4) \>.\]
\end{example}

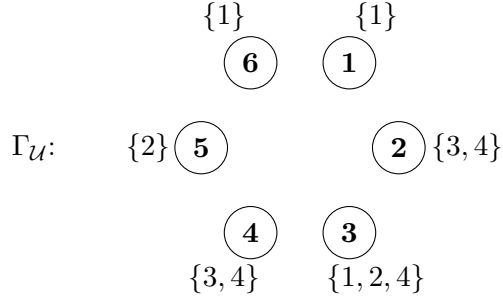
\begin{figure}[!ht]
  \begin{center}
    \begin{tikzpicture}
      \foreach \x in {1,2,3,4,5,6} {
        \node[circle, draw=black] (\x) at (-\x*60+120:1.3cm) {$\mathbf{\x}$};};

      \node at (60:2.0cm)   {$\{1\}$};
      \node at (0:2.2cm)    {$\{3,4\}$};
      \node at (-60:2.0cm)  {$\{1,2,4\}$};
      \node at (-120:2.0cm) {$\{3,4\}$};
      \node at (-180:2.0cm) {$\{2\}$};
      \node at (-240:2.0cm) {$\{1\}$};
      \node at (3.5, 0) {};
      \node at (-3.5, 0) {$\Gamma_{\mathcal{U}}$:};
    \end{tikzpicture}
  \end{center}
  \caption{
    The labelled digraph $\Gamma_{[\{1,3,6\}, \{3,5\}, \{2,4\}, \{2,3,4\}]}$ from
    Example~\ref{ex-perfect-list-of-sets}.
  }\label{fig-list-of-sets}
\end{figure}

If $n,m \in \N$ and we encode a list $[x_{1}, \ldots, x_{m}]$ in $\Omega$ as the
list of singleton
subsets $[\{x_{1}\}, \ldots, \{x_{m}\}]$, and we encode a subset $\{y_{1},
\ldots, y_{n}\} \subseteq \Omega$ as the list $[\{y_{1},
\ldots, y_{n}\}]$, then we see that Example~\ref{ex-perfect-list-of-sets} can be
used to create perfect refiners for the sets of permutations that stabilise or
transport lists in $\Omega$ or subsets of $\Omega$.

\begin{example}[Perfect refiner for set of disjoint subsets stabiliser and
  transporter;
  Figure~\ref{fig-set-of-disjoint-sets}]\label{ex-perfect-set-of-disjoint-sets}

  For every set of disjoint subsets $\mathcal{U} \coloneqq \{U_{1}, \ldots,
  U_{k}\}$,
  where $k \in \N_{0}$ and $U_{i} \subseteq \Omega$ for all $i \in \{1, \ldots,
  k\}$,
  let $\Gamma_{\mathcal{U}}$ be the labelled digraph on $\Omega$
  with arcs
  \[
  \set{(\alpha, \beta) \in \Omega \times \Omega}{\alpha \neq \beta\ \text{and}\
  \{\alpha, \beta\} \subseteq U_{i}\ \text{for some}\ i},
  \]
  where vertices in $U_{1} \cup \cdots \cup U_{k}$ have label $1$,
  and all other vertices and arcs have label $0$.
  For every $\stackS \in \Stacks{\Omega}$,
  define $f_{\mathcal{U}}(\stackS) = [\Gamma_{\mathcal{U}}]$.

  Let $\mathcal{U}$ and $\mathcal{V}$ be arbitrary sets of disjoint subsets of
  $\Omega$.  Then $(f_{\mathcal{U}}, f_{\mathcal{V}})$ is a perfect refiner for
  the set $\set{g \in \Sym{\Omega}}{\mathcal{U}^{g} = \mathcal{V}}$ by
  Lemma~\ref{lem-perfect-nice-condition}, and by
  Corollary~\ref{cor-perfect-nice-condition-subgroup}, $(f_{\mathcal{U}},
  f_{\mathcal{U}})$ is a perfect refiner for the group $\set{g \in
  \Sym{\Omega}}{\mathcal{U}^{g} = \mathcal{U}}$.

  We demonstrate this in the case of $\mathcal{U}
  \coloneqq \{\{1,2\},\{3\}\}$ and $\mathcal{V} \coloneqq \{\{3,4\},\{2\}\}$,
  with the aim of describing the
  transporter set $T \coloneqq \set{g \in \Sn{4}}{\mathcal{U}^{g} =
  \mathcal{V}}$.
  First, we build the labelled digraphs $\Gamma_{\mathcal{U}}$ and
  $\Gamma_{\mathcal{V}}$, which are shown in Figure~\ref{fig-set-of-disjoint-sets}.
  The function $f_{\mathcal{U}}$ has constant value $[\Gamma_{\mathcal{U}}]$,
  and the function $f_{\mathcal{V}}$ has constant value
  $[\Gamma_{\mathcal{V}}]$. Since $(f_{\mathcal{U}},
  f_{\mathcal{V}})$ is a perfect refiner for $T$, we can describe $T$ by
  describing $\Iso{[\Gamma_{\mathcal{U}}]}{[\Gamma_{\mathcal{V}}]}
  = \Iso{\Gamma_{\mathcal{U}}}{\Gamma_{\mathcal{V}}}$.
  It is clear that $\<(1\,2)\>$ is the automorphism group of
  $\Gamma_{\mathcal{U}}$ induced by $\Sn{4}$, and that
  $(1\,3\,2\,4)$ induces an isomorphism from $\Gamma_{\mathcal{U}}$ to
  $\Gamma_{\mathcal{V}}$.
  Therefore, $T$ is the right coset $\<(1\,2)\> \cdot (1\,3\,2\,4)$
  of $\<(1\,2)\>$ in $\Sn{4}$.
\end{example}

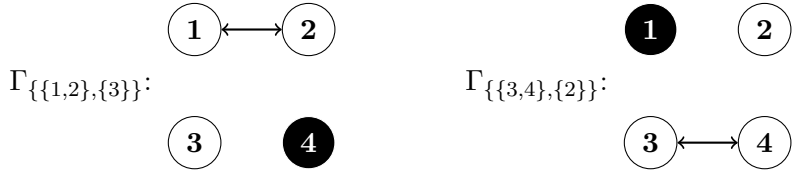
\begin{figure}[!ht]
  \begin{center}
    \begin{tikzpicture}
      \tikzstyle{white}=[circle, draw=black]
      \tikzstyle{black}=[circle, draw=white, fill=black!100]

      \node[white] (11) at (0, 1.5) {$\mathbf{1}$};
      \node[white] (12) at (1.5, 1.5) {$\mathbf{2}$};
      \node[white] (13) at (0, 0) {$\mathbf{3}$};
      \node[black] (14) at (1.5, 0) {$\color{white}\mathbf{4}$};
      \node (100) at (-1.5, 0.75) {$\Gamma_{\{\{1,2\},\{3\}\}}$:};
      \arcSym{11}{12}

      \node[black] (21) at (6,   1.5) {$\color{white}\mathbf{1}$};
      \node[white] (22) at (7.5, 1.5) {$\mathbf{2}$};
      \node[white] (23) at (6, 0) {$\mathbf{3}$};
      \node[white] (24) at (7.5, 0) {$\mathbf{4}$};
      \node (200) at (4.5, 0.75) {$\Gamma_{\{\{3,4\},\{2\}\}}$:};
      \node (300) at (9, 1) {}; 
      \arcSym{23}{24}
    \end{tikzpicture}
  \end{center}
  \caption[set-of-sets-1]{
    The labelled digraphs $\Gamma_{\{\{1,2\},\{3\}\}}$ and
    $\Gamma_{\{\{3,4\},\{2\}\}}$ from
    Example~\ref{ex-perfect-set-of-disjoint-sets}.
    The black vertices are those with label $0$.
  }\label{fig-set-of-disjoint-sets}
\end{figure}

An ordered partition of $\Omega$ is a list of subsets of $\Omega$, and an
unordered partition of $\Omega$ is a set of disjoint subsets of $\Omega$.
Therefore, Example~\ref{ex-perfect-list-of-sets}
can be used to create a perfect refiner for the set of permutations that stabilises
any particular ordered partition, or the set of permutations that transports one
ordered partition to another.
Similarly, Example~\ref{ex-perfect-set-of-disjoint-sets} can be used to create perfect
refiners for the analogous sets of permutations that involve unordered partitions.

\subsection{Refiners given by a fixed sequence of
  stacks}\label{sec-refiners-via-stack}

We have seen that when creating a refiner
for a set $U$, it is necessary to construct a
function $f$ from $\Stacks{\Omega}$ to itself that satisfies $f(\stackS^{g}) =
{f(\stackS)}^{g}$ for all $\stackS \in \Stacks{\Omega}$ and $g \in U$.  For many
of the refiners that we wish to implement, such as those given in
Section~\ref{sec-examples-perfect}, this does not cause significant difficulty,
especially when the value of the function $f$ does not depend on its input.
However, there are some refiners that we wish to define, especially
refiners for an arbitrary subgroup of $\Sym{\Omega}$ specified by a
generating set, or for a coset of such a subgroup, where satisfying the condition
described above is difficult in practice. In this section, we give an example of
this difficulty, and then present a general method for overcoming this problem.

\begin{example}\label{ex-refiner-fixed-stack}
  Let $\Omega = \{1, \ldots, 6\}$,
  let \(G = \< (1\,2), (3\,4), (5\,6), (1\,3\,5)(2\,4\,6) \>\),
  let $\Gamma$ be the labelled digraph on $\Omega$ without arcs where vertex $1$
  has label $\exLabel{black}$, vertex $2$ has label $\exLabel{grey}$, and the
  remaining vertices have label $\exLabel{white}$, and define $\stackS =
  [\Gamma]$ and $\stackT = [\Gamma^{(1\,3\,5)(2\,4\,6)}]$.

  Suppose that we are
  searching for the intersection $D$ of a number of subsets of $\Sym{\Omega}$, one
  of which is $G$, and suppose that $\Iso{\stackS}{\stackT}$ represents the
  current overestimate for the solution.
  We wish to give a refiner for $G$
  (which, by Lemma~\ref{lem-group-refiner-symmetric},
  has the form $(f, f)$ for some function $f$)
  that works by encoding relevant
  information about the orbit structure of $G$ into a new labelled digraph
  stack, since we know that the elements of $D$ respect the
  orbit structure of $G$
  (which is not immediately of much use here, because $G$ is transitive).

  However, there is further information that the refiner can use.  Since
  $\Iso{\stackS}{\stackT}$ is an overestimate for $D$, we know that every
  element of $D$ induces an isomorphism from $\stackS$ to
  $\stackT$.
  In particular, if $\fixedFunc$ is a fixed-point approximator, then every element of $D$ maps the list $\Fixed{\stackS}$ to the list $\Fixed{\stackT}$.
  A fixed-point approximator could give $\Fixed{\stackS} = [1,2]$
  and $\Fixed{\stackT} = [3,4]$; suppose that this is the case.
  Since all elements of $D$ map $[1,2]$ to $[3,4]$
  and are also contained in $G$, it follows that
  they are contained in $G_{[1,2]} \cdot h$, the right coset of the
  stabiliser of $[1,2]$ in $G$ determined by any permutation $h$ in $G$ that maps
  $[1,2]$ to $[3,4]$, such as, for example, $h\coloneqq(1\,3\,5)(2\,4\,6) \in G$.

  This means that we should be able to define $f(\stackS)$ and $f(\stackT)$ in
  terms of the orbits of the pointwise stabilisers of $[1,2]$ and $[3,4]$ in
  $G$, respectively, which are $G_{[1,2]} = \< (3\,4) (5\,6) \>$ and $G_{[3,4]}
    = \< (1\,2) (5\,6) \>$.
  Thus one option would be to define $f(\stackS) = f_{\mathcal{U}}(\stackS)$ as
  in Example~\ref{ex-perfect-set-of-disjoint-sets}, for the set of orbits
  $\mathcal{U} = \{\{1\}, \{2\}, \{3,4\}, \{5,6\}\}$ of $G_{[1,2]}$ on $\Omega$,
  and to define $f(\stackT)$ similarly for the set $\mathcal{V} = \{\{1,2\},
  \{3\}, \{4\}, \{5,6\}\}$.
  This is valid, but it is not ideal, since permutations in $\Sym{\Omega}$ can
  rearrange orbits of the same size arbitrarily while mapping $\mathcal{U}$ to
  $\mathcal{V}$, whereas elements of the right coset $G_{[1,2]} \cdot h$ can
  only map an orbit $O \in \mathcal{U}$ to the orbit $O^{h}$.  For instance,
  $(1\,4)(2\,3) \in \Sym{\Omega}$ maps $\mathcal{U}$ to $\mathcal{V}$, but this
  permutation maps the orbit $\{1\}$ of $G_{[1,2]}$ to the orbit $\{4\}$ of
  $G_{[3,4]}$, and $\{4\} \neq {\{1\}}^{h}$.  Therefore, defining the refiner
  $(f, f)$ in this way does not discard some elements that, to us, are obviously not in $D$.
  This is unsatisfactory, so we would like to define $f(\stackS) =
  f_{\mathcal{U}}(\stackS)$ as in Example~\ref{ex-perfect-list-of-sets}, for
  some \emph{ordered} list $\mathcal{U}$ of the orbits $\{1\}, \{2\}, \{3,4\},
  \{5,6\}$ of $G_{[1,2]}$ on $\Omega$.
  But then how should we choose an ordering $\mathcal{V}$ of the orbits
  $\{1,2\}, \{3\}, \{4\}, \{5,6\}$ of $G_{[3,4]}$ on $\Omega$, in order to
  define the corresponding $f(\stackT) (= f_{\mathcal{V}}(\stackT))$
  so that $\Iso{f(\stackS)}{f(\stackT)}$ does not discard elements in $D$?

  Without further techniques, we cannot answer this question.
\end{example}

To overcome this problem, we use a technique similar to that of
Leon~\cite{leon1991}, where we explicitly pre-generate a list of fixed points
and labelled digraph stacks, which are stored and then retrieved during the
search, when needed.  In essence, this allows us to make certain ordering
choices in advance, so that during the search we can consult the fixed initial
choice, and remain consistent with that.  The mathematical foundation of this
technique is described in the following lemma.

\begin{lemma}\label{lem-fixed-stack}
  Let $G \leq \Sym{\Omega}$ and let $\fixedFunc$ be a fixed-point approximator.
  For all $i \in \N_{0}$, let $\stackV_{i} \in \Stacks{\Omega}$ be a labelled
  digraph stack on $\Omega$, and let $F_{i}$ be a list of points in $\Omega$,
  such that the stabiliser $\set{g \in G}{F_{i}^{g} = F_{i}}$
  is a subgroup of $\Auto{\stackV_{i}}$.

  We define a function $f$ from $\Stacks{\Omega}$ to itself as follows.  For
  each $\stackS \in \Stacks{\Omega}$, let $a \in G$ be such that $a$ maps
  $F_{|\stackS|}$ to $\Fixed{\stackS}$ pointwise, if such an element exists,
  and otherwise let $a = \Fail$, and set
  \[
    f(\stackS) =
    \begin{cases}
      \EmptyStack{\Omega} &
        \text{if}\
        a = \Fail, \\
      {(\stackV_{|\stackS|})}^{a} &
        \text{otherwise}.
    \end{cases}
  \]
  Then $(f, f)$ is a refiner for $G$.
\end{lemma}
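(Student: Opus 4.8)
The plan is to deduce the statement from Lemma~\ref{lem-simple-refiner}. Since $G$ is a subgroup of $\Sym{\Omega}$, it contains $\idOmega$, so it will suffice to check two things: that $f$ is well-defined, and that $f(\stackS^{g}) = {f(\stackS)}^{g}$ for all $g \in G$ and $\stackS \in \Stacks{\Omega}$. (That $f$ indeed maps $\Stacks{\Omega}$ into itself is immediate, since both $\EmptyStack{\Omega}$ and ${(\stackV_{|\stackS|})}^{a}$ lie in $\Stacks{\Omega}$.)

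\emph{Well-definedness.} The element $a \in G$ chosen in the definition of $f(\stackS)$ need not be unique, so I would first argue that ${f(\stackS)}$ does not depend on this choice. If $a, a' \in G$ both map $F_{|\stackS|}$ to $\Fixed{\stackS}$ pointwise, then $a{(a')}^{-1} \in G$ fixes every entry of $F_{|\stackS|}$, and hence $F_{|\stackS|}^{a{(a')}^{-1}} = F_{|\stackS|}$; by the hypothesis on $F_{|\stackS|}$ and $\stackV_{|\stackS|}$, this forces $a{(a')}^{-1} \in \Auto{\stackV_{|\stackS|}}$, so ${(\stackV_{|\stackS|})}^{a} = {(\stackV_{|\stackS|})}^{a'}$. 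The value $\EmptyStack{\Omega}$ arises unambiguously when $a = \Fail$, so $f$ is well-defined.

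\emph{Equivariance.} Fix $g \in G$ and $\stackS \in \Stacks{\Omega}$, and write $n = |\stackS| = |\stackS^{g}|$, so that the same data $F_{n}$ and $\stackV_{n}$ are used in computing both $f(\stackS)$ and $f(\stackS^{g})$. By Definition~\ref{defn-approx-fixed}\ref{item-fixed-invariant} we have $\Fixed{\stackS^{g}} = {\Fixed{\stackS}}^{g}$. If there is no element of $G$ mapping $F_{n}$ to $\Fixed{\stackS}$ pointwise, then there is likewise none mapping $F_{n}$ to ${\Fixed{\stackS}}^{g} = \Fixed{\stackS^{g}}$ pointwise, since composing such an element with $g^{-1} \in G$ would produce a forbidden one; hence $f(\stackS^{g}) = \EmptyStack{\Omega} = {\EmptyStack{\Omega}}^{g} = {f(\stackS)}^{g}$. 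Otherwise some $a \in G$ maps $F_{n}$ to $\Fixed{\stackS}$ pointwise, so $f(\stackS) = {(\stackV_{n})}^{a}$; then $ag \in G$ maps $F_{n}$ to ${\Fixed{\stackS}}^{g} = \Fixed{\stackS^{g}}$ pointwise, so $ag$ is an admissible choice in the definition of $f(\stackS^{g})$, and by well-definedness $f(\stackS^{g}) = {(\stackV_{n})}^{ag} = {\left({(\stackV_{n})}^{a}\right)}^{g} = {f(\stackS)}^{g}$. This completes the verification.

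The only step that is not pure bookkeeping is the well-definedness argument, which is precisely where the hypothesis relating $F_{i}$ and $\stackV_{i}$ (namely that $\set{g \in G}{F_{i}^{g} = F_{i}} \leq \Auto{\stackV_{i}}$) is used; everything else follows by unwinding the definitions and invoking Lemma~\ref{lem-simple-refiner} together with the invariance property of the fixed-point approximator.
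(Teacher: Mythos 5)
Your proposal is correct and follows essentially the same route as the paper's proof: well-definedness via the hypothesis that the pointwise stabiliser of $F_{|\stackS|}$ in $G$ lies in $\Auto{\stackV_{|\stackS|}}$, then equivariance $f(\stackS^{g}) = {f(\stackS)}^{g}$ using Definition~\ref{defn-approx-fixed}\ref{item-fixed-invariant}, concluding with Lemma~\ref{lem-simple-refiner}. The only cosmetic difference is that you dispatch the non-\Fail{} case by observing $ag$ is an admissible choice and invoking well-definedness, whereas the paper compares $ag$ with an arbitrarily chosen $b$ directly; the underlying argument is identical.
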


\begin{proof}
  Note that $f$ is well-defined: if $\stackS \in \Stacks{\Omega}$ and
  $a, b \in G$ both map $F_{|\stackS|}$ to $\Fixed{\stackS}$, then
  $a b^{-1}$ stabilises $F_{|\stackS|}$, and so $a b^{-1} \in
  \Auto{\stackV_{|\stackS|}}$, and hence
  ${(\stackV_{|\stackS|})}^{a} =
  {(\stackV_{|\stackS|})}^{b}$.

  Let $\stackS \in \Stacks{\Omega}$ and $g \in G$ be arbitrary.
  First note that $|\stackS| = |\stackS^{g}|$.
  Let $a$ be an element of $G$ that maps $F_{|\stackS|}$ to
  $\Fixed{\stackS}$, if such an element exists, and set $a = \Fail$
  otherwise.  Similarly, let $b$ be an element of $G$ that maps
  $F_{|\stackS|}$ to $\Fixed{\stackS^{g}}$, if such an element exists,
  and set $b = \Fail$ otherwise.  Note that $a = \Fail$ if and
  only if $b = \Fail$, since $g \in G$ maps
  $\Fixed{\stackS}$ to $\Fixed{\stackS^{g}}$, by
  Definition~\ref{defn-approx-fixed}\ref{item-fixed-invariant}.  Thus, if $a =
    \Fail$, then $f(\stackS^{g})$ and ${f(\stackS)}^{g}$ are
  both empty.
  Otherwise $a \in G$, and so
  $f(\stackS) = {(\stackV_{|\stackS|})}^{a}$
  and $f(\stackS^{g}) = {(\stackV_{|\stackS|})}^{b}$.
  It remains to prove in this case that ${(\stackV_{|\stackS|})}^{a g} =
  {(\stackV_{|\stackS|})}^{b}$, or equivalently that $a g b^{-1} \in
    \Auto{\stackV_{|\stackS|}}$.
  The choice of $a$, $g$ and $b$ implies that $a g b^{-1}$ fixes every entry of
  $F_{|\stackS|}$, and
  so $a g b^{-1} \in \Auto{\stackV_{|\stackS|}}$ by assumption.

  Therefore $f(\stackS^{g}) = {f(\stackS)}^{g}$ for all
  $\stackS \in \Stacks{\Omega}$ and $g \in G$, and so
  $(f, f)$ is a refiner for $G$ by Lemma~\ref{lem-simple-refiner}.
\end{proof}

In isolation, Lemma~\ref{lem-fixed-stack} may seem very abstract. In particular,
the lemma does not specify how the lists $F_{i}$ and the stacks $\stackV_{i}$
should be chosen in the first place. We postpone these details until
Section~\ref{sec-details-refiner-fixed-stack}, because the real usefulness of
this technique becomes apparent with the organisation of our forthcoming algorithms.

Given two lists of points in $\Omega$ and a subgroup $G \leq
\Sym{\Omega}$ given by a generating set, there exist efficient algorithms
that either construct an element of $G$ that maps the first list of points to
the second, or determine that no such element exists.  In
\textsc{GAP}~\cite{GAP4}, this can be achieved via the function
\texttt{RepresentativeAction}.

Note that, in combination with Lemma~\ref{lem-refiner-right-coset},
Lemma~\ref{lem-fixed-stack} may be used to build a refiner for any coset
for which we have a representative.

\section{Distributing stack isomorphisms across new stacks}\label{sec-splitter}

In backtrack search, when it is not readily apparent how to further prune a
search space, it is necessary to divide the search across a number
of subproblems, each of which, being smaller, can be solved more easily. We call
this process \emph{splitting}.

In order to organise a backtrack search around stacks of labelled digraphs,
therefore, we need to be able to implement a version of splitting for labelled
digraphs stacks.  In other words, we need to be able to take a pair of labelled
digraph stacks that represents a potentially large search space, and
define new pairs of stacks that divide the search space amongst themselves in a
sensible way.  Since the search space that a pair of stacks represents is the
overestimated set of isomorphisms from the first stack to the second, it follows
that we need to be able to take one pair of stacks and define new pairs,
such that the originally estimated set of isomorphisms is subdivided across
these new pairs.
In this section, we define and discuss the notion of a splitter
for labelled digraph stacks.

\begin{definition}\label{defn-splitter}
  A \emph{splitter} for an isomorphism approximator $\approxFunc$ is a function
  $\splitFunc$ that maps one pair of labelled digraph stacks on $\Omega$ to a
  finite list of pairs of stacks,
  such that for all $\stackS,
  \stackT \in \Stacks{\Omega}$ with $|\Approx{\stackS}{\stackT}| \geq 2$,
  \[
    \Split{\stackS}{\stackT}
    =
    [
    (\stackS_{1}, \stackT_{1}),
    (\stackS_{2}, \stackT_{2}),
    \ldots,
    (\stackS_{m}, \stackT_{m})
    ]
  \]
  for some $m \in \N_{0}$ and $\stackS_{1}, \ldots, \stackS_{m}, \stackT_{1},
  \ldots, \stackT_{m} \in \Stacks{\Omega}$, and:
  \begin{enumerate}[label=\textrm{(\roman*)}]
    \item\label{item-splitter-disjoint}
      $\Iso{\stackS}{\stackT}
        =
        \Iso{\stackS \Vert \stackS_{1}}{\stackT \Vert \stackT_{1}}
        \mathop{\dot{\cup}}
        \cdots
        \mathop{\dot{\cup}}
        \Iso{\stackS \Vert \stackS_{m}}{\stackT \Vert \stackT_{m}}$
        (a disjoint union).

      \item\label{item-splitter-smaller}
      $|\Approx{\stackS \Vert \stackS_{i}}{\stackT \Vert \stackT_{i}}|
       <
       |\Approx{\stackS}{\stackT}|$ for all $i \in \{1, \ldots, m\}$.

    \item\label{item-splitter-stab-first}
      If $\stackS = \stackT$, then $\stackS_{1} = \stackT_{1}$.

    \item\label{item-splitter-invariant}
      For all $\stackU \in \Stacks{\Omega}$ with $|\Approx{\stackS}{\stackU}|
      \geq
      2$, there exists some $n \in \N$ and $\stackU_{i} \in \Stacks{\Omega}$ for
      all $i \in \{1, \ldots, n\}$ such that
      \[
        \Split{\stackS}{\stackU}
        =
        [
        (\stackS_{1}, \stackU_{1}),
        (\stackS_{1}, \stackU_{2}),
        \ldots,
        (\stackS_{1}, \stackU_{n})
        ].
      \]
  \end{enumerate}
\end{definition}

We briefly explain the purpose of the conditions in
Definition~\ref{defn-splitter}, retaining its notation.
In
Definition~\ref{defn-splitter}, we do not restrict the behaviour of a splitter
when given stacks $\stackS$ and $\stackT$ with
$|\Approx{\stackS}{\stackT}| \leq 1$, since this situation never occurs in the
algorithms of Section~\ref{sec-search}.
Throughout the
following explanation, we assume that $|\Approx{\stackS}{\stackT}| \geq 2$.

The new subproblems to which the splitter gives rise are the pairs of stacks of
the form $(\stackS \Vert \stackS_{i}, \stackT \Vert \stackT_{i})$ for each $i \in \{1,\ldots,m\}$.
Definition~\ref{defn-splitter}\ref{item-splitter-disjoint} ensures that the set of isomorphisms induced by $\Sym{\Omega}$ from $\stackS$ to $\stackT$ is
shared amongst these new pairs of stacks.  In other words, each $g \in
\Iso{\stackS}{\stackT}$ belongs to a unique set $\Iso{\stackS_{i}}{\stackT_{i}}$
for some $i\in \{1,\ldots,m\}$, and so $g$ belongs to the unique set $\Iso{\stackS \Vert
\stackS_{i}}{\stackT \Vert \stackT_{i}}$ for the same $i$.  This means that each
solution to a search problem appears in exactly one branch of the search tree, and
so we do not waste resources by repeatedly discovering the same solution.
Definition~\ref{defn-splitter}\ref{item-splitter-disjoint} also implies that no
new isomorphisms are introduced by splitting.

Definition~\ref{defn-splitter}\ref{item-splitter-smaller} ensures that each new
pair of stacks gives rise to a strictly smaller subproblem than does the
original pair of stacks, which will be required to show that our algorithms
terminate.  It would be ideal for each new approximation of the form
$\Approx{\stackS \Vert \stackS_{i}}{\stackT \Vert \stackT_{i}}$ to be
\emph{disjoint} from the other new approximations, but we do not need to require
this.

Definition~\ref{defn-splitter}\ref{item-splitter-stab-first}
and~\ref{item-splitter-invariant} are technical conditions, not of
deep mathematical importance,
but they are used in
Sections~\ref{sec-generators} and~\ref{sec-r-base}.
Definition~\ref{defn-splitter}\ref{item-splitter-stab-first}
is useful in a notational sense when it comes to describing an algorithm to
search for a generating set for a subgroup.
Definition~\ref{defn-splitter}\ref{item-splitter-invariant} implies that
\(\stackS_{1} = \stackS_{2} = \cdots = \stackS_{m}\). This is useful when it comes
to applying refiners of the kind introduced in Section~\ref{sec-refiners-via-stack}.

The following lemma shows a way of giving a splitter by specifying its behaviour
on the left stack that it is given.

\begin{lemma}\label{lem-splitter-creator}
  Let $\approxFunc$ be an isomorphism approximator, and let $f$ be any
  function from $\Stacks{\Omega}$ to itself such that, for all
  $\stackS \in \Stacks{\Omega}$:
  \[\text{if}\ |\Approx{\stackS}{\stackS}| \geq 2,
    \ \text{then}\
    |\Approx{\stackS \Vert f(\stackS)}{\stackS \Vert f(\stackS)}|
    <
    |\Approx{\stackS}{\stackS}|.
  \]
  Let $\stackS, \stackT \in \Stacks{\Omega}$. If $|\Approx{\stackS}{\stackT}|
  \leq 1$, then we define $\Split{\stackS}{\stackT}$ to be empty. Otherwise, we
  choose a fixed enumeration $\stackT_{1}, \ldots, \stackT_{m}$ of
  $\set{{f(\stackS)}^{g}}{g \in \Approx{\stackS}{\stackT}}$, in particular we set $\stackT_{1} =
  f(\stackS) = {f(\stackS)}^{\idOmega}$ if $\stackS = \stackT$, and we set
  $\Split{\stackS}{\stackT} = [(f(\stackS), \stackT_{1}), \ldots, (f(\stackS),
  \stackT_{m})]$.
  Then $\splitFunc$ is a splitter for $\approxFunc$.
\end{lemma}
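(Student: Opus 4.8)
The plan is to verify the four conditions of Definition~\ref{defn-splitter} directly, handling the two cases $|\Approx{\stackS}{\stackT}|\le 1$ and $|\Approx{\stackS}{\stackT}|\ge 2$ as in the statement. The $\le 1$ case requires nothing, since Definition~\ref{defn-splitter} imposes no constraints there; so throughout I would assume $|\Approx{\stackS}{\stackT}|\ge 2$, which in particular is non-empty, hence (by Definition~\ref{defn-approx-iso}\ref{item-approx-right-coset-of-aut}) a right coset $\Approx{\stackS}{\stackS}\cdot h$ with $|\Approx{\stackS}{\stackS}|=|\Approx{\stackS}{\stackT}|\ge 2$. First I would record that the enumeration is of a genuine finite set: $\set{f(\stackS)^{g}}{g\in\Approx{\stackS}{\stackT}}$ is finite because $\Approx{\stackS}{\stackT}$ is a coset of a finite group, so there really are finitely many pairs $(f(\stackS),\stackT_i)$, and each $\stackT_i$ has the form $f(\stackS)^g$ for some $g\in\Approx{\stackS}{\stackT}$.

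For condition~\ref{item-splitter-invariant}, note that every output pair already has first coordinate $f(\stackS)$ by construction, regardless of the second stack fed in; choosing $\stackU_i$ to be the enumeration of $\set{f(\stackS)^{g}}{g\in\Approx{\stackS}{\stackU}}$ gives exactly the required shape, so this is essentially immediate. Condition~\ref{item-splitter-stab-first} is also immediate: when $\stackS=\stackT$ we have stipulated $\stackT_1=f(\stackS)^{\idOmega}=f(\stackS)$, so $(f(\stackS),\stackT_1)=(f(\stackS),f(\stackS))$ has equal coordinates. Condition~\ref{item-splitter-smaller} is where I would invoke the hypothesis on $f$ and Remark~\ref{rmk-stack-iso-auto}: fix $i$, pick $g_i\in\Approx{\stackS}{\stackT}$ with $\stackT_i=f(\stackS)^{g_i}$, and observe that applying $g_i^{-1}$ gives $\Iso{\stackS\Vert f(\stackS)}{\stackT\Vert\stackT_i}\cdot g_i^{-1}\subseteq$ something related to $\Auto{\stackS\Vert f(\stackS)}$; more cleanly, I would argue that $\Approx{\stackS\Vert\stackS_i}{\stackT\Vert\stackT_i}$ is a right coset whose underlying group is $\Approx{\stackS\Vert f(\stackS)}{\stackS\Vert f(\stackS)}$ (using Definition~\ref{defn-approx-iso}\ref{item-approx-right-coset-of-aut} and the fact that $\stackS_i=f(\stackS)$), and this group has order strictly less than $|\Approx{\stackS}{\stackS}|=|\Approx{\stackS}{\stackT}|$ by hypothesis, using also that $|\Approx{\stackS\Vert f(\stackS)}{\stackT\Vert\stackT_i}|$ is either $0$ or equals $|\Approx{\stackS\Vert f(\stackS)}{\stackS\Vert f(\stackS)}|$.

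The main obstacle is condition~\ref{item-splitter-disjoint}, the disjoint-union decomposition of $\Iso{\stackS}{\stackT}$. The plan here: by Remark~\ref{rmk-stack-iso-auto}, $\Iso{\stackS\Vert\stackS_i}{\stackT\Vert\stackT_i}=\Iso{\stackS}{\stackT}\cap\Iso{f(\stackS)}{\stackT_i}$, so I must show the sets $\Iso{f(\stackS)}{\stackT_i}$ partition $\Iso{\stackS}{\stackT}$ as $i$ ranges over $1,\dots,m$. For \emph{covering}: given $g\in\Iso{\stackS}{\stackT}$, then $g\in\Approx{\stackS}{\stackT}$ by Definition~\ref{defn-approx-iso}\ref{item-approx-true-overestimate}, so $f(\stackS)^{g}$ appears as some $\stackT_i$ in the enumeration, whence $g\in\Iso{f(\stackS)}{\stackT_i}\cap\Iso{\stackS}{\stackT}$. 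For \emph{disjointness}: if $g\in\Iso{f(\stackS)}{\stackT_i}\cap\Iso{f(\stackS)}{\stackT_j}$ then $\stackT_i=f(\stackS)^g=\stackT_j$, and since the $\stackT_i$ are an enumeration of a \emph{set} (distinct entries) this forces $i=j$. That is the whole argument; the only subtlety is to be careful that ``enumeration of the set $\set{f(\stackS)^{g}}{g\in\Approx{\stackS}{\stackT}}$'' means a listing without repetition, which is what makes the union disjoint rather than merely a cover. Assembling these four verifications and the vacuous $|\Approx{\stackS}{\stackT}|\le 1$ case completes the proof that $\splitFunc$ is a splitter for $\approxFunc$.
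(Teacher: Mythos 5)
Your proposal is correct and follows essentially the same route as the paper: conditions (iii) and (iv) by construction, condition (ii) via Definition~2.2(iii) of an isomorphism approximator together with the hypothesis on $f$ (and emptiness handled trivially), and condition (i) by showing each $g\in\operatorname{Iso}(S,T)\subseteq\textsc{Approx}(S,T)$ lands in exactly one $\operatorname{Iso}(S\Vert f(S),T\Vert T_i)$, with disjointness forced by the $T_i$ being distinct. The only cosmetic difference is that the paper explicitly notes $|S|=|T|$ (from Definition~2.2(ii)) before invoking Remark~3.1, a detail your write-up leaves implicit but which causes no gap.
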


\begin{proof}
  Let $\stackS, \stackT \in \Stacks{\Omega}$ with
  $|\Approx{\stackS}{\stackT}| \geq 2$, and assume that
  $\Split{\stackS}{\stackT}$ and
  $[(f(\stackS), \stackT_{1}), \ldots, (f(\stackS),
  \stackT_{m})]$ are defined as in the statement of the lemma.

  We first prove that the equation in
  Definition~\ref{defn-splitter}\ref{item-splitter-disjoint} holds.  Since
  $|\stackS| = |\stackT|$ by
  Definition~\ref{defn-approx-iso}\ref{item-approx-different-lengths}, it
  follows from Remark~\ref{rmk-stack-iso-auto}
  that the left hand side of this equation contains the right hand side,
  so it remains to show the reverse inclusion, and that the right hand side is a
  disjoint union.
  Let $g \in \Iso{\stackS}{\stackT} \subseteq \Approx{\stackS}{\stackT}$ be
  arbitrary.
  Since ${f(\stackS)}^{g} = \stackT_{i}$ for some $i \in \{1, \ldots, m\}$, it
  follows that $g \in \Iso{\stackS \Vert f(\stackS)}{\stackT \Vert
  \stackT_{i}}$.
  If there exists some $h \in \Iso{\stackS \Vert f(\stackS)}{\stackT \Vert
  \stackT_{i}} \cap \Iso{\stackS \Vert f(\stackS)}{\stackT \Vert \stackT_{j}}$
  for some $i, j \in \{1, \ldots, m\}$, then in particular $\stackT_{i} =
  {f(\stackS)}^{h} = \stackT_{j}$, and so $i = j$.

  To show that Definition~\ref{defn-splitter}\ref{item-splitter-smaller} holds,
  let $i \in \{1, \ldots, m\}$.
  If $\Approx{\stackS \Vert f(\stackS)}{\stackT \Vert \stackT_{i}} =
  \varnothing$, then we are done, so suppose otherwise.  By
  Definition~\ref{defn-approx-iso}\ref{item-approx-right-coset-of-aut} and by
  assumption,
  \begin{align*}
    |\Approx{\stackS \Vert f(\stackS)}{\stackT \Vert \stackT_{i}}|
    =
    |\Approx{\stackS \Vert f(\stackS)}{\stackS \Vert f(\stackS)}|
    <
    |\Approx{\stackS}{\stackS}|
    =
    |\Approx{\stackS}{\stackT}|,
  \end{align*}
  as required.
  Definition~\ref{defn-splitter}\ref{item-splitter-stab-first}
  and~\ref{item-splitter-invariant} hold by construction.
\end{proof}

Let the notation of Lemma~\ref{lem-splitter-creator} hold.
Note that we imposed no further conditions in Lemma~\ref{lem-splitter-creator}
on the enumeration of $\set{{f(\stackS)}^{g}}{g \in \Approx{\stackS}{\stackT}}$
beyond the condition when $\stackS = \stackT$, because there is no mathematical
need to, as long as it is consistent.
This set can be computed via the orbit of $f(\stackS)$ under the action of
$\Approx{\stackS}{\stackS}$.
Indeed, if $h \in \Approx{\stackS}{\stackT}$, then
\begin{align*}
  \set{{f(\stackS)}^{g}}{g \in \Approx{\stackS}{\stackT}}
  & =
  \set{{f(\stackS)}^{g}}{g \in \Approx{\stackS}{\stackS} \cdot h} \\
  & =
  \set{{f(\stackS)}^{x}}{x \in \Approx{\stackS}{\stackS}}^{h} \\
  & =
  {\left({f(\stackS)}^{\Approx{\stackS}{\stackS}}\right)}^{h},
\end{align*}
by
Definition~\ref{defn-approx-iso}\ref{item-approx-right-coset-of-aut}.
In particular,
$|\Split{\stackS}{\stackT}| = |{f(\stackS)}^{\Approx{\stackS}{\stackS}}|$.

In the following definition, we give a specific instance of a splitter that can
be obtained with Lemma~\ref{lem-splitter-creator}.  Here, still using the
notation of Lemma~\ref{lem-splitter-creator}, appending the stack $f(\stackS)$
to the stack $\stackS$ corresponds to stabilising $f(\stackS)$ in the current
approximation of $\Auto{\stackS}$; the stacks of the form
$\stackT_{i}$ correspond to the images of $f(\stackS)$ under
$\Approx{\stackS}{\stackT}$.
\begin{definition}[Fixed point splitter]\label{defn-point-splitter}
  For all $\alpha \in \Omega$, let $\Gamma_{\alpha} = (\Omega,
  \varnothing, \labelFunc)$ be the labelled digraph on $\Omega$ where
  $\labelFunc(\alpha) =
  1$ and $\labelFunc(\beta) = 0$ for all $\beta \in \Omega \setminus
  \{\alpha\}$.
  Note that $\Gamma_{\alpha}^{g} = \Gamma_{\alpha^{g}}$ for all $g \in
  \Sym{\Omega}$.
  Let $\approxFunc$ be any isomorphism approximator such that
  $\Approx{\stackU \Vert [\Gamma_{\alpha}]}{\stackU \Vert [\Gamma_{\alpha}]}
  \leq \Approx{\stackU}{\stackU} \cap \set{g \in \Sym{\Omega}}{\alpha^{g} =
  \alpha}$
  for all $\alpha \in \Omega$ and $\stackU \in \Stacks{\Omega}$.
  We define a function $f$ from $\Stacks{\Omega}$ to itself by
  \[
    f(\stackS) =
    \begin{cases}
        \EmptyStack{\Omega}
        &
        \text{if}\ |\Approx{\stackS}{\stackS}| \leq 1, \\
        [\Gamma_{\alpha}]
        &
        \text{otherwise, where} \
        \alpha \coloneqq
        \min\{\min(\mathcal{O})\,:\, \mathcal{O}\ \text{is an orbit of} \\
        &
        \qquad\quad
        \Approx{\stackS}{\stackS}\
        \text{of minimal size, subject to}\
        |\mathcal{O}| \geq 2 \},
    \end{cases}
  \]
  for all $\stackS \in \Stacks{\Omega}$.
  Finally, define $\splitFunc$ as in Lemma~\ref{lem-splitter-creator},
  for the isomorphism approximator $\approxFunc$ and the function $f$.
\end{definition}

The following corollary holds by Lemma~\ref{lem-splitter-creator}.
The crucial step
is showing that the function $f$ has the required property; this follows by the
careful choice of isomorphism approximator in
Definition~\ref{defn-point-splitter}.

\begin{cor}\label{cor-point-splitter}
  The function $\splitFunc$ from Definition~\ref{defn-point-splitter} is a splitter for
  any isomorphism approximator that satisfies the condition in
  Definition~\ref{defn-point-splitter}.
\end{cor}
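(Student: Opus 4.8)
The plan is to verify that the function $f$ defined in Definition~\ref{defn-point-splitter} satisfies the hypothesis on $f$ in Lemma~\ref{lem-splitter-creator} with respect to the given approximator $\approxFunc$; since $\splitFunc$ is built from $\approxFunc$ and $f$ in exactly the manner of that lemma, the corollary then follows immediately.

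So I would fix $\stackS \in \Stacks{\Omega}$ with $|\Approx{\stackS}{\stackS}| \geq 2$ and first check that the point $\alpha$ appearing in the definition of $f(\stackS)$ is well-defined. As noted after Definition~\ref{defn-approx-iso}, $\Approx{\stackS}{\stackS}$ is a subgroup of $\Sym{\Omega}$ containing $\Auto{\stackS}$; since it has at least two elements, it is non-trivial, and hence has at least one orbit on $\Omega$ of size at least $2$. Because $\Omega$ is finite, there is a minimal size attained by such an orbit, the set of orbits of that size is non-empty, and $\alpha$ is the (well-defined) minimum of the union of their minima. In particular $f(\stackS) = [\Gamma_{\alpha}]$, and $\alpha$ lies in an orbit of $\Approx{\stackS}{\stackS}$ of size at least $2$.

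Next I would invoke the standing hypothesis on $\approxFunc$, applied with $\stackU = \stackS$ and this particular $\alpha$:
\[
  \Approx{\stackS \Vert [\Gamma_{\alpha}]}{\stackS \Vert [\Gamma_{\alpha}]}
  \leq
  \Approx{\stackS}{\stackS} \cap \set{g \in \Sym{\Omega}}{\alpha^{g} = \alpha},
\]
and the right-hand side is precisely the stabiliser of $\alpha$ in $\Approx{\stackS}{\stackS}$. By the orbit--stabiliser theorem, the index of this stabiliser in $\Approx{\stackS}{\stackS}$ equals the size of the orbit of $\alpha$, which is at least $2$, so the stabiliser has strictly fewer than $|\Approx{\stackS}{\stackS}|$ elements. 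Combining, $|\Approx{\stackS \Vert f(\stackS)}{\stackS \Vert f(\stackS)}| = |\Approx{\stackS \Vert [\Gamma_{\alpha}]}{\stackS \Vert [\Gamma_{\alpha}]}| < |\Approx{\stackS}{\stackS}|$, which is exactly the property of $f$ required by Lemma~\ref{lem-splitter-creator}; the other branch of $f$, where $|\Approx{\stackS}{\stackS}| \leq 1$, is irrelevant to that property. Applying Lemma~\ref{lem-splitter-creator} then yields that $\splitFunc$ is a splitter for $\approxFunc$.

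There is no substantial obstacle here: the only points that need care are that $\alpha$ is well-defined, which relies on $|\Approx{\stackS}{\stackS}| \geq 2$ forcing $\Approx{\stackS}{\stackS}$ to be non-trivial, and that the hypothesis on the approximator is applied for the specific pair $(\stackS, \alpha)$ that actually arises in the definition of $f$.
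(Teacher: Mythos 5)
Your proposal is correct and follows exactly the route the paper intends: the paper simply remarks that the corollary follows from Lemma~\ref{lem-splitter-creator}, the crucial step being that $f$ has the required property thanks to the condition imposed on the approximator in Definition~\ref{defn-point-splitter}. You have merely filled in the details of that step (well-definedness of $\alpha$ and the orbit--stabiliser argument giving the strict decrease), which is consistent with, and a fleshed-out version of, the paper's argument.
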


\section{The search algorithm}\label{sec-search}

In this section, we present our main algorithms, which combine the tools from
Sections~\ref{sec-stacks}--\ref{sec-splitter} to solve search problems in
$\Sym{\Omega}$.  A version of our algorithms is implemented in the
\textsc{GraphBacktracking}~\cite{GAPpkg} package for \textsc{GAP}~\cite{GAP4}.

Let $U_{1}, \ldots, U_{k} \subseteq \Sym{\Omega}$, and suppose that we have
a collection of refiners for these subsets, an isomorphism approximator, and a
corresponding splitter.
The section proceeds as follows:
in Section~\ref{sec-basic-search} we show how it is
possible to use these refiners, the approximator, and the splitter to perform a backtrack
search that finds one or all elements of the intersection $U_{1} \cap \cdots \cap U_{k}$.
In Section~\ref{sec-generators}, we describe how, when the result is
known to form a subgroup of $\Sym{\Omega}$, it is possible to search for a base
and strong generating set for the subgroup (see for
example~\cite[p.~101]{dixonmortimer}),
rather than for the set of all its elements. This is also useful when searching
for a coset of a subgroup.
Finally, in
Section~\ref{sec-r-base}, we explain the use of the refiners described in
Section~\ref{sec-refiners-via-stack}.

\subsection{The basic procedure}\label{sec-basic-search}

What follows is a high-level description of Algorithm~\ref{alg-search},
which is the main algorithm of this section.  This algorithm comprises the
\Call{Search}{} and \Call{Refine}{} procedures, and begins with a call to
the \Call{Search}{} procedure on line~\ref{line-start-search}.
We say that the algorithm \emph{backtracks} when it finishes executing one
recursive call to the \Call{Search}{} procedure, and goes back to the point
where it was initiated, in order to continue.

Each subset of $\Sym{\Omega}$ given as input to Algorithm~\ref{alg-search}
should be specified in such a way that it is computationally inexpensive to test
whether or not an arbitrary element of $\Sym{\Omega}$ belongs to the set.
For example, the set could be a subgroup specified by a generating set,
or it could be defined as the subset of elements of $\Sym{\Omega}$ that conjugate
the subgroup $G$ to the subgroup $H$, where $G$ and $H$ are given by generating
sets.  Note that the number of subsets given as input does not need to equal the
number of given refiners. For instance, a subset could have multiple refiners,
or none.

\begin{algorithm}[!ht]
  \caption{A recursive algorithm using labelled digraph stacks to search
    in $\Sym{\Omega}$.}\label{alg-search}

  \begin{algorithmic}[1]

    \item[\textbf{Input:}]
    a sequence of subsets
    $U_{1}, \ldots, U_{k} \subseteq \Sym{\Omega}$ in which membership is
    easily tested;

    a sequence $(f_{L,1}, f_{R,1}), \ldots, (f_{L,m}, f_{R,m})$, where
    each pair is a refiner for some $U_{j}$;

    an isomorphism approximator $\approxFunc$ and a splitter $\textsc{Split}$
    for $\approxFunc$.

    \item[\textbf{Output:}]
    all elements of the intersection $U_{1} \cap \cdots \cap U_{k}$,
    which we refer to as `solutions'.

    \vspace{2mm}
    \Procedure{Search}{$\stackS, \stackT$}
    \Comment{The main recursive search procedure.}\label{line-search-proc-start}

    \State{\((\stackS, \stackT) \gets \Call{Refine}{\stackS, \stackT}\)}
    \Comment{Refine the given stacks.}\label{line-refine}

    \Case{$\Approx{\stackS}{\stackT} = \varnothing$}
      \Comment{Nothing found in the present branch: backtrack.}\label{line-case-empty}
      \State{\Return{\(\varnothing\)}}\label{line-no-isos}
    \EndCase{}

    \Case{$\Approx{\stackS}{\stackT} = \{h\}$ for some $h$}
      \Comment{$h$ is the sole potential solution
      here.}\label{line-case-one-candidate}
      \If{
          $\stackS^{h} = \stackT$ and
          $h \in U_{1} \cap \cdots \cap U_{k}$}\label{line-check-perm}
      \State{\Return{$\{h\}$}}
      \Comment{$h$ is a solution in
        $\Iso{\stackS}{\stackT}$: backtrack.}\label{line-single-solution}
      \Else{}
      \State{\Return{$\varnothing$}}
      \Comment{$h$ is not a solution in
        $\Iso{\stackS}{\stackT}$: backtrack.}\label{line-not-solution}
      \EndIf{}
    \EndCase{}

    \Case{$|\Approx{\stackS}{\stackT}| \geq 2$}
      \Comment{Multiple potential solutions.}\label{line-multiple-candidates}

      \State{\Return{\( \hspace{-6mm}
        \displaystyle\bigcup_{(\stackS_{i}, \stackT_{i})
          \in
          \Call{Split}{\stackS, \stackT}}\hspace{-6mm}
        \Call{Search}{\stackS \Vert \stackS_{i}, \stackT \Vert \stackT_{i}}\)}}
      \Comment{Split, and search recursively.}\label{line-split}
    \EndCase{}

    \EndProcedure{}

    \vspace{1mm}
    \Procedure{Refine}{$\stackS, \stackT$}
    \Comment{Attempt to remove non-solutions from
      $\Iso{\stackS}{\stackT}$.}\label{line-refine-proc-start}

    \While{\(\Approx{\stackS}{\stackT} \neq
    \varnothing\)}
    \Comment{Proceed while there are potential
    solutions.}\label{line-while-begin}\label{line-refine-empty}

    \State{\((\stackS', \stackT') \gets
      (\stackS, \stackT)\)}
    \Comment{Save the stacks before the next round of
    refinements.}\label{line-refine-store}

    \For{\(i \in \{1, \ldots, m\}\) \textbf{and while} \(|\stackS| =
    |\stackT|\)}\label{line-refine-loop}
      \State{$(\stackS, \stackT) \gets
          (\stackS \Vert f_{L,i}(\stackS),
          \stackT \Vert f_{R,i}(\stackT))$}
      \Comment{Apply each refiner in turn.}\label{line-apply-refiner}
    \EndFor{}

    \If{\(|\Approx{\stackS}{\stackT}| \not<
      |\Approx{\stackS'}{\stackT'}|\)}\label{line-refine-not-smaller}

      \State{\Return{\((\stackS', \stackT')\)}}
      \Comment{Stop: the last refinements seemingly made no
        progress.}\label{line-refine-loop-end}

    \EndIf{}

    \EndWhile{}

    \State{\Return{\((\stackS, \stackT)\)}}\Comment{Stop:
    $\Approx{\stackS}{\stackT} = \varnothing$: no solutions in this
    branch.}\label{line-refine-return-empty}

    \EndProcedure{}

    \vspace{1mm}
    \State{\Return{\Call{Search}{$\EmptyStack{\Omega},
            \EmptyStack{\Omega}$}}}\label{line-start-search}

  \end{algorithmic}
\end{algorithm}

At any point during the execution of the algorithm, we have a pair of labelled
digraph stacks $(\stackS, \stackT)$ whose corresponding set
$\Iso{\stackS}{\stackT}$ overestimates the set of solutions to the current problem.
(The current problem might be the full problem, or it might be a subproblem
produced by a splitter.)
However, since we do not necessarily wish to calculate
\(\Iso{\stackS}{\stackT}\) exactly, in practice we only have access to
\(\Approx{\stackS}{\stackT}\), an overestimate for \(\Iso{\stackS}{\stackT}\).

The \Call{Search}{} procedure is first called
on line~\ref{line-start-search}, and later it may be called recursively on
line~\ref{line-split}.  As we prove in Lemma~\ref{lem-alg-search-is-correct},
this procedure takes a pair of labelled digraph stacks $\stackS$
and $\stackT$, and returns the set of all elements in $U_{1} \cap \cdots \cap
U_{k}$ that induce isomorphisms from $\stackS$ to $\stackT$.
It does so by first using the \Call{Refine}{} procedure to refine the pair of
stacks that it is given.

Roughly speaking, the \Call{Refine}{} procedure uses refiners to encode
information about the search problem into the stacks $\stackS$ and $\stackT$,
thereby potentially reducing the size of the remaining search space, without
losing any valid solutions. We state and prove this in a precise way in
Lemma~\ref{lem-alg-refine-is-correct}.

The \Call{Refine}{} procedure repeatedly applies each refiner in turn, until
either it determines that there are no induced isomorphisms from the current
first stack to the current second stack (and hence there are no solutions to the
current problem), or it realises that the most recent round of refiner
applications failed to lead to a smaller approximation (which we interpret as
an indication that the refiners are unable to encode further useful information
into the stacks).

The next step of the \Call{Search}{} procedure is determined by the
value of the isomorphism approximator. The algorithm has reached a leaf
of the search tree if the isomorphism approximator determines that there is at
most one solution to the current problem, in which case the \Call{Search}{}
procedure takes the appropriate behaviour, and backtracks. Otherwise, the
\Call{Search}{} procedure uses a splitter to divide the current problem into
smaller subproblems. In more detail:

\begin{itemize}
  \item
    If the approximator determines that the pair of stacks is non-isomorphic,
    then the algorithm backtracks, because it has proved that there are no
    solutions to the current problem.
  \item
    If the approximator estimates that there is a single potential isomorphism
    from the first stack to the second, then the \Call{Search}{} procedure tests
    whether this element is both an isomorphism and a solution to the search problem, and returns it
    if so. The algorithm then backtracks, since the current
    problem has been exhaustively searched.
  \item
    If the approximator estimates that there are at least two isomorphisms from
    the first stack to the second (and therefore, there are at least two
    potential solutions), then the \Call{Search}{} procedure uses a splitter to
    produce pairs of labelled digraph stacks that represent smaller subproblems,
    and the \Call{Search}{} procedure is then called recursively on these new
    pairs.  This constructs the set of solutions to each of these subproblems,
    and the union of these sets is the set of solutions to the current
    problem.
\end{itemize}

It remains to prove that, given a valid combination of inputs, and after a
finite number of steps (Lemma~\ref{lem-alg-terminates}),
Algorithm~\ref{alg-search} returns the stated output
(Theorem~\ref{thm-alg-is-correct}). First, the definition of the \Call{Refine}{}
procedure yields the following lemma:\

\begin{lemma}\label{lem-refine-gives-smaller-approx}
  Let $\approxFunc$ and the \Call{Refine}{} procedure be defined as in
  Algorithm~\ref{alg-search}. Then
  \(
  |\Approx{\Call{Refine}{\stackS, \stackT}}{}| \leq |\Approx{\stackS}{\stackT}|
  \)
  for all $\stackS, \stackT \in \Stacks{\Omega}$.
\end{lemma}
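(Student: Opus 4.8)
The plan is to follow the control flow of the \Call{Refine}{} procedure and track the quantity $|\Approx{\stackS}{\stackT}|$ as the two stacks are updated, showing that it never exceeds its value at the start of the procedure.

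First I would dispose of the trivial case: if the input pair $(\stackS, \stackT)$ already satisfies $\Approx{\stackS}{\stackT} = \varnothing$, then the \textbf{while} loop on line~\ref{line-while-begin} is never entered, \Call{Refine}{} returns the input pair unchanged on line~\ref{line-refine-return-empty}, and the claimed inequality holds with equality. Otherwise, for each $j \geq 1$ for which the \textbf{while} loop executes at least $j$ times, let $(\stackS'_{j}, \stackT'_{j})$ denote the pair saved on line~\ref{line-refine-store} during the $j$-th iteration. Note that $(\stackS'_{1}, \stackT'_{1})$ is exactly the input pair, since $(\stackS, \stackT)$ has not yet been modified when line~\ref{line-refine-store} first runs.

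The key observation --- which really amounts to reading Algorithm~\ref{alg-search} carefully --- is that in iteration $j$, after the inner \textbf{for} loop has produced a new pair $(\stackS, \stackT)$, exactly one of the following happens: the test on line~\ref{line-refine-not-smaller} succeeds and the procedure returns the \emph{saved} pair $(\stackS'_{j}, \stackT'_{j})$ on line~\ref{line-refine-loop-end}; or the test fails, so that $|\Approx{\stackS}{\stackT}| < |\Approx{\stackS'_{j}}{\stackT'_{j}}|$, and control returns to line~\ref{line-while-begin}, where either $\Approx{\stackS}{\stackT} = \varnothing$ and the procedure returns this $(\stackS, \stackT)$ on line~\ref{line-refine-return-empty}, or the loop iterates once more with $(\stackS'_{j+1}, \stackT'_{j+1}) = (\stackS, \stackT)$, whence $|\Approx{\stackS'_{j+1}}{\stackT'_{j+1}}| < |\Approx{\stackS'_{j}}{\stackT'_{j}}|$. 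Thus the saved approximation sizes form a strictly decreasing sequence of non-negative integers (which incidentally also shows that \Call{Refine}{} terminates), and the value returned by \Call{Refine}{\stackS, \stackT} is either some $(\stackS'_{j}, \stackT'_{j})$, whose approximation therefore has size at most $|\Approx{\stackS'_{1}}{\stackT'_{1}}| = |\Approx{\stackS}{\stackT}|$, or a pair with empty approximation, for which the inequality is immediate.

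I do not expect a genuine obstacle here; the only thing requiring care is pure bookkeeping. In particular, if the guard on the inner \textbf{for} loop (namely, the requirement that $|\stackS| = |\stackT|$) halts that loop prematurely, then the two stacks have unequal length, so $\Approx{\stackS}{\stackT} = \varnothing$ by Definition~\ref{defn-approx-iso}\ref{item-approx-different-lengths}; since the saved pair has non-empty approximation (we are inside the \textbf{while} loop), the test on line~\ref{line-refine-not-smaller} fails and we land in the second case above, consistently with the analysis. One must also be sure to note that line~\ref{line-refine-loop-end} returns the saved pair $(\stackS', \stackT')$ rather than the freshly refined one, which is what makes the monotonicity argument go through.
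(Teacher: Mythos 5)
Your argument is correct and is essentially the argument the paper has in mind: the paper states this lemma without proof, asserting it follows from the definition of the \Call{Refine}{} procedure, and your careful bookkeeping (the returned pair is either a saved pair whose approximation size is bounded by the initial one via the strictly decreasing sequence, or a pair with empty approximation) is exactly the implicit reasoning, including the correct handling of the early exit from the inner \textbf{for} loop via Definition~\ref{defn-approx-iso}\ref{item-approx-different-lengths} and the fact that line~\ref{line-refine-loop-end} returns the saved pair.
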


\begin{lemma}\label{lem-alg-terminates}
  Given valid input, Algorithm~\ref{alg-search} terminates after a finite
  number of steps.
\end{lemma}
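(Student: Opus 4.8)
The plan is to model an execution of Algorithm~\ref{alg-search} as a rooted tree of recursive calls to the \Call{Search}{} procedure, and to show both that this tree is finite and that the work done at each of its nodes --- in particular, inside each call to \Call{Refine}{} --- is finite.

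First I would check that every invocation of \Call{Refine}{} halts after finitely many steps. Its \textbf{while} loop (line~\ref{line-while-begin}) is controlled by the quantity $|\Approx{\stackS}{\stackT}|$, a non-negative integer bounded above by $|\Sym{\Omega}| = |\Omega|!$, since an isomorphism approximator returns either $\varnothing$ or a right coset of a subgroup of $\Sym{\Omega}$. Each iteration of that loop either returns on line~\ref{line-refine-loop-end} or ends with $|\Approx{\stackS}{\stackT}|$ strictly smaller than its value at the start of the iteration; hence the loop runs at most $|\Omega|! + 1$ times. Its inner \textbf{for} loop (line~\ref{line-refine-loop}) runs at most $m$ times, and every individual step --- an evaluation of a refiner, an evaluation of $\approxFunc$, or a test of a loop guard --- is a single finite operation. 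So \Call{Refine}{} terminates.

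Next I would set up the tree: its root is the call to \Call{Search}{} on $(\EmptyStack{\Omega}, \EmptyStack{\Omega})$ from line~\ref{line-start-search}, and the children of a node are exactly the recursive calls it makes on line~\ref{line-split}. By Definition~\ref{defn-splitter}, $\Split{\stackS}{\stackT}$ is a \emph{finite} list, so every node has finitely many children. To bound the depth, I would assign to each node $v$, corresponding to a call to \Call{Search}{} on input $(\stackS, \stackT)$, the value $n(v) \coloneqq |\Approx{\stackS}{\stackT}|$, once more a non-negative integer that is at most $|\Omega|!$. Suppose $w$ is a child of $v$. The call at $v$ first replaces $(\stackS, \stackT)$ by $(\stackS', \stackT') \coloneqq \Call{Refine}{\stackS, \stackT}$, so by Lemma~\ref{lem-refine-gives-smaller-approx} we have $|\Approx{\stackS'}{\stackT'}| \leq |\Approx{\stackS}{\stackT}| = n(v)$. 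A recursive call is made only in the case on line~\ref{line-multiple-candidates}, so here $|\Approx{\stackS'}{\stackT'}| \geq 2$, and $w$ is the call to \Call{Search}{} on $(\stackS' \Vert \stackS_{i}, \stackT' \Vert \stackT_{i})$ for some $(\stackS_{i}, \stackT_{i}) \in \Split{\stackS'}{\stackT'}$. By Definition~\ref{defn-splitter}\ref{item-splitter-smaller},
\[
  n(w) = |\Approx{\stackS' \Vert \stackS_{i}}{\stackT' \Vert \stackT_{i}}|
       < |\Approx{\stackS'}{\stackT'}|
       \leq n(v).
\]
Thus $n$ strictly decreases along every path from the root, so the tree has depth at most $|\Omega|!$.

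Finally, a finitely branching tree of finite depth is finite (König's lemma), so the tree of \Call{Search}{} calls has finitely many nodes; and at each node, besides its recursive calls, the procedure performs one call to \Call{Refine}{} (finite, by the second paragraph), at most one evaluation of $\approxFunc$ together with the associated case distinction, at most one membership test $h \in U_{1} \cap \cdots \cap U_{k}$ (finite, by the hypothesis that membership in each $U_{j}$ is easily tested), at most one evaluation of $\Split{\stackS}{\stackT}$, and the formation of a finite union on line~\ref{line-split}. Summing finitely many finite contributions shows that the whole execution terminates. The point requiring care is the choice of the monovariant $n$: since \Call{Refine}{} only guarantees that $n$ does not increase (Lemma~\ref{lem-refine-gives-smaller-approx}), the strict decrease must be supplied by the splitting step via Definition~\ref{defn-splitter}\ref{item-splitter-smaller}, and one must observe that a split is only ever carried out when the refined pair still has an approximation of size at least two.
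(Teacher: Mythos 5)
Your proof is correct and takes essentially the same approach as the paper: \Call{Refine}{} terminates because the values $|\Approx{\stackS}{\stackT}|$ form a strictly decreasing sequence of non-negative integers across iterations of its \textbf{while} loop, and \Call{Search}{} terminates because Lemma~\ref{lem-refine-gives-smaller-approx} ensures refinement never increases $|\Approx{\stackS}{\stackT}|$ while Definition~\ref{defn-splitter}\ref{item-splitter-smaller} forces a strict decrease at every split, which happens only when the approximation has size at least two. The only difference is presentational: the paper packages this as induction on $|\Approx{\stackS}{\stackT}|$, whereas you phrase it as a finitely branching recursion tree of bounded depth, which is the same decreasing-measure argument.
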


\begin{proof}
  We assume that computing the value of a refiner, the isomorphism approximator,
  or the splitter each counts as a single step in the execution of
  Algorithm~\ref{alg-search}.

  We first prove that the \Call{Refine}{} procedure, given the pair of stacks
  $(\stackS, \stackT)$, terminates after a finite number of steps.
  If $\Approx{\stackS}{\stackT} = \varnothing$, then the procedure terminates
  immediately. Otherwise, the procedure runs the loop on
  lines~\ref{line-refine-empty}--\ref{line-refine-loop-end}. We will show that
  this loop terminates.

  As in line~\ref{line-refine-store} of Algorithm~\ref{alg-search}, let
  $(\stackS', \stackT')$ denote the pair of stacks
  at the beginning of the first iteration of the loop, and let
  $(\stackS, \stackT)$ be the pair of stacks at the end of this iteration,
  after applying the refiners.
  The loop will iterate again if and only if
  $0 < |\Approx{\stackS}{\stackT}| < |\Approx{\stackS'}{\stackT'}|$.
  Note that an isomorphism approximator gives a subset of $\Sym{\Omega}$, which
  is finite by definition.  Define $t = |\Approx{\stackS}{\stackT}| \in \N_{0}$.

  In the case that the loop iterates again,
  then at the beginning of its next iteration, we redefine
  $(\stackS', \stackT') \coloneqq (\stackS, \stackT)$ on
  line~\ref{line-refine-store}, and we obtain the new pair of stacks
  $(\stackS, \stackT)$ by applying the refiners again.
  Now, the loop will iterate again if and only if
  $0 < |\Approx{\stackS}{\stackT}| < t$.
  By continuing to argue in this way, we see that for every iteration
  of the loop, we can add an entry to a strictly decreasing sequence of
  non-negative integers that begins with $t$.  Therefore, the loop iterates only
  a finite number of times.

  To complete the proof, we prove by induction that, for all $n \in \N_{0}$,
  the \Call{Search}{}
  procedure terminates when given
  stacks $\stackS, \stackT \in \Stacks{\Omega}$ with
  $|\Approx{\stackS}{\stackT}| = n$.
  Recall that an isomorphism approximator always gives a finite subset,
  by definition.

  In the inductive base case of $|\Approx{\stackS}{\stackT}| = 0$, the pair $(\stackS,
  \stackT)$ is replaced on line~\ref{line-refine} by another pair of stacks
  $(\stackS, \stackT)$ with $\Approx{\stackS}{\stackT} = \varnothing$
  (Lemma~\ref{lem-refine-gives-smaller-approx}) and the procedure terminates on
  line~\ref{line-no-isos}.
  Let $n \in \N$, assume that $\Call{Search}{\stackS, \stackT}$ terminates for
  all $\stackS, \stackT \in \Stacks{\Omega}$ with $|\Approx{\stackS}{\stackT}|
  < n$, and let $\stackS, \stackT \in \Stacks{\Omega}$ be such that
  $|\Approx{\stackS}{\stackT}| = n$.  On line~\ref{line-refine}, the pair
  $(\stackS, \stackT)$ is replaced by another pair of stacks $(\stackS,
  \stackT)$ with $|\Approx{\stackS}{\stackT}| \leq n$
  (Lemma~\ref{lem-refine-gives-smaller-approx}).  If
  $|\Approx{\stackS}{\stackT}| \in \{0,1\}$, then the \Call{Search}{} procedure
  terminates on one of lines~\ref{line-no-isos},~\ref{line-single-solution},
  or~\ref{line-not-solution}. Otherwise, on line~\ref{line-split}, the splitter
  is used to create a finite list of new pairs of labelled digraph stacks.  For
  each such pair $(\stackS_{i}, \stackT_{i})$, it follows by
  Definition~\ref{defn-splitter}\ref{item-splitter-smaller} that
  \[
  |\Approx{\stackS \Vert \stackS_{i}}{\stackT \Vert \stackT_{i}}|
  <
  |\Approx{\stackS}{\stackT}| \leq n.
  \]
  Therefore, each call $\Call{Search}{\stackS \Vert \stackS_{i}, \stackT \Vert
  \stackT_{i}}$ terminates by the inductive hypothesis. Since there are only
  finitely many of these calls to \Call{Search}{} on line~\ref{line-split}, and
  since this is the last line of the \Call{Search}{} procedure, it follows that
  $\Call{Search}{\stackS, \stackT}$ as a whole terminates.

  Therefore the call to
  \Call{Search}{} on line~\ref{line-start-search}~--~and thus
  Algorithm~\ref{alg-search}~--~terminates.
\end{proof}

\begin{lemma}\label{lem-alg-refine-is-correct}
  Let $\stackS, \stackT \in \Stacks{\Omega}$, and let the notation of
  Algorithm~\ref{alg-search} hold.  Then
  \(
  (U_{1} \cap \cdots \cap U_{k})
    \cap \Iso{\stackS}{\stackT}
  =
  (U_{1} \cap \cdots \cap U_{k})
    \cap \Iso{\Call{Refine}{\stackS, \stackT}}{}
  \).
\end{lemma}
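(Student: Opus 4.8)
The plan is to run a loop-invariant argument through the \Call{Refine}{} procedure. Write $D \coloneqq U_{1} \cap \cdots \cap U_{k}$ and let $(\stackS_{0}, \stackT_{0})$ be the pair originally passed to \Call{Refine}{}; the invariant I want is that, for the pair $(\stackS, \stackT)$ held at the top of each iteration of the outer loop and for the pair eventually returned, $D \cap \Iso{\stackS}{\stackT} = D \cap \Iso{\stackS_{0}}{\stackT_{0}}$. Since \Call{Refine}{} terminates (this is established inside the proof of Lemma~\ref{lem-alg-terminates}), showing that the returned pair satisfies this invariant is exactly the assertion of the lemma.

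The first step is a single-application fact: if $\stackS, \stackT \in \Stacks{\Omega}$ satisfy $|\stackS| = |\stackT|$ and $(f_{L,i}, f_{R,i})$ is a refiner for $U_{j}$, then $D \cap \Iso{\stackS}{\stackT} = D \cap \Iso{\stackS \Vert f_{L,i}(\stackS)}{\stackT \Vert f_{R,i}(\stackT)}$. For the inclusion ``$\subseteq$'', the refiner inclusion from Definition~\ref{defn-refiner} (whose $\stackS \cong \stackT$ hypothesis is vacuous when the stacks are non-isomorphic) gives $U_{j} \cap \Iso{\stackS}{\stackT} \subseteq U_{j} \cap \Iso{\stackS \Vert f_{L,i}(\stackS)}{\stackT \Vert f_{R,i}(\stackT)}$, and intersecting with $D$ and using $D \subseteq U_{j}$ yields the claim. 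For ``$\supseteq$'', Remark~\ref{rmk-stack-iso-auto} gives $\Iso{\stackS \Vert f_{L,i}(\stackS)}{\stackT \Vert f_{R,i}(\stackT)} \subseteq \Iso{\stackS}{\stackT}$ because $|\stackS| = |\stackT|$, whence the reverse inclusion after intersecting with $D$. The hypothesis $|\stackS| = |\stackT|$ is precisely what the \textbf{and while} $|\stackS| = |\stackT|$ guard on the inner \textbf{for} loop guarantees each time a refiner is applied.

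Next I would trace \Call{Refine}{}. If $\Approx{\stackS_{0}}{\stackT_{0}} = \varnothing$, the outer loop is never entered, the procedure returns $(\stackS_{0}, \stackT_{0})$, and both sides of the claimed equation are $\varnothing$ because $\Iso{\stackS_{0}}{\stackT_{0}} \subseteq \Approx{\stackS_{0}}{\stackT_{0}} = \varnothing$. Otherwise the loop is entered, which by Definition~\ref{defn-approx-iso}\ref{item-approx-different-lengths} forces $|\stackS_{0}| = |\stackT_{0}|$, so the invariant holds at the top of the first iteration. At the top of each iteration the current pair is copied to $(\stackS', \stackT')$; the inner loop then makes finitely many refiner applications, each at equal stack length, so by the single-application fact $D \cap \Iso{\stackS}{\stackT}$ is unchanged, and the invariant still holds for the pair produced by the inner loop. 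If the test on line~\ref{line-refine-not-smaller} fires, the procedure returns $(\stackS', \stackT')$, which satisfied the invariant at the start of this iteration; otherwise it either returns the pair produced by the inner loop (when $\Approx{\stackS}{\stackT} = \varnothing$) or carries that pair into the next iteration as the new $(\stackS, \stackT)$. In every branch the invariant is preserved, so the pair output by \Call{Refine}{} satisfies $D \cap \Iso{\stackS}{\stackT} = D \cap \Iso{\stackS_{0}}{\stackT_{0}}$, as required.

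I expect the only delicate point to be the handling of non-isomorphic input: one must observe that the inner-loop guard keeps the two stacks of equal length exactly while refiners are applied, so that Remark~\ref{rmk-stack-iso-auto} still applies and the relevant isomorphism sets stay empty even if a refiner appends stacks whose lengths differ. Everything else is routine bookkeeping about which pair is returned on which line of the procedure.
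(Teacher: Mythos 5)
Your proof is correct and takes essentially the same route as the paper: both reduce the lemma to the fact that a single refiner application to stacks of equal length (guaranteed by the inner-loop guard) leaves $(U_{1} \cap \cdots \cap U_{k}) \cap \Iso{\stackS}{\stackT}$ unchanged, which you establish via Remark~\ref{rmk-stack-iso-auto} and the refiner property — in effect re-deriving Lemma~\ref{lem-refiner-equiv-definitions}\ref{item-refiner-long}, which the paper cites directly. Your explicit loop-invariant trace of \Call{Refine}{} is simply a more detailed version of the paper's one-sentence observation that the returned pair is obtained from the input only by such applications.
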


\begin{proof}
  The \Call{Refine}{} procedure returns a pair of labelled digraph stacks that
  is obtained from the original pair $(\stackS, \stackT)$ only by the
  application of refiners on line~\ref{line-apply-refiner} to stacks of equal
  lengths.  Therefore, it suffices to show that if $i \in \{1, \ldots, m\}$ and
  $|\stackS| = |\stackT|$, then
  \begin{equation}\label{eq-alg-refine-is-correct}
    (U_{1} \cap \cdots \cap U_{k})
      \cap
    \Iso{\stackS}{\stackT}
    =
    (U_{1} \cap \cdots \cap U_{k})
      \cap
    \Iso{\stackS \Vert f_{L,i}(\stackS)}{\stackT \Vert f_{R,i}(\stackT)}.
    \tag{$\star$}
  \end{equation}
  If $\Iso{\stackS}{\stackT} = \varnothing$, then since $|\stackS| =
  |\stackT|$, Remark~\ref{rmk-stack-iso-auto}
  implies that $\Iso{\stackS \Vert f_{L,i}(\stackS)}{\stackT \Vert
  f_{R,i}(\stackT)}$ is empty, and so~\eqref{eq-alg-refine-is-correct}
  holds.  Otherwise, since there exists $j \in \{1, \ldots, k\}$ such that
  $(f_{L,i}, f_{R,i})$ is a refiner for the set $U_{j}$, it follows by
  Lemma~\ref{lem-refiner-equiv-definitions}\ref{item-refiner-long} that
  \[
  U_{j}
    \cap \Iso{\stackS}{\stackT}
  =
  U_{j}
    \cap \Iso{\stackS \Vert f_{L,i}(\stackS)}{\stackT \Vert f_{R,i}(\stackT)}.
  \]
  Since $U_{1} \cap \cdots \cap U_{k} \subseteq U_{j}$, it follows
  that~\eqref{eq-alg-refine-is-correct} holds, as required.
\end{proof}

\begin{lemma}\label{lem-alg-search-is-correct}
  Let $\stackS, \stackT \in \Stacks{\Omega}$, and let the notation of
  Algorithm~\ref{alg-search} hold. Then
  \[
  \Call{Search}{\stackS, \stackT}
  =
  (U_{1} \cap \cdots \cap U_{k}) \cap \Iso{\stackS}{\stackT}.
  \]
  In particular,
  $\Call{Search}{\stackS, \stackS} = (U_{1} \cap \cdots \cap U_{k}) \cap
  \Auto{\stackS}$, and so if $U_{1} \cap \cdots \cap U_{k}$ is a subgroup of
  $\Sym{\Omega}$, then $\Call{Search}{\stackS, \stackS}$ is a subgroup of
  $\Sym{\Omega}$, too.
\end{lemma}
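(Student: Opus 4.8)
The plan is to prove the displayed equality for arbitrary $\stackS, \stackT \in \Stacks{\Omega}$ by strong induction on $n \coloneqq |\Approx{\stackS}{\stackT}|$, in the same spirit as the termination argument of Lemma~\ref{lem-alg-terminates} (recall that an isomorphism approximator always returns a finite set, so $n$ is well-defined). On line~\ref{line-refine}, the \Call{Search}{} procedure reassigns $(\stackS, \stackT) \gets \Call{Refine}{\stackS, \stackT}$; by Lemma~\ref{lem-alg-refine-is-correct} this leaves $(U_{1} \cap \cdots \cap U_{k}) \cap \Iso{\stackS}{\stackT}$ unchanged, and by Lemma~\ref{lem-refine-gives-smaller-approx} it does not increase $|\Approx{\stackS}{\stackT}|$. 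It therefore suffices to show that the value returned by the subsequent case analysis, applied to the reassigned pair (which I continue to denote $(\stackS, \stackT)$, now with $|\Approx{\stackS}{\stackT}| \leq n$), equals $(U_{1} \cap \cdots \cap U_{k}) \cap \Iso{\stackS}{\stackT}$.

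I would then split according to the value of $\Approx{\stackS}{\stackT}$. If $\Approx{\stackS}{\stackT} = \varnothing$, then $\Iso{\stackS}{\stackT} \subseteq \varnothing$ by Definition~\ref{defn-approx-iso}\ref{item-approx-true-overestimate}, so the target set is empty and the procedure returns $\varnothing$ on line~\ref{line-no-isos}, as required. If $\Approx{\stackS}{\stackT} = \{h\}$, then $\Iso{\stackS}{\stackT} \subseteq \{h\}$, so $\Iso{\stackS}{\stackT}$ equals $\{h\}$ when $\stackS^{h} = \stackT$ and $\varnothing$ otherwise; the procedure returns $\{h\}$ precisely when $\stackS^{h} = \stackT$ and $h \in U_{1} \cap \cdots \cap U_{k}$ (line~\ref{line-check-perm}), which is exactly the condition for $(U_{1} \cap \cdots \cap U_{k}) \cap \Iso{\stackS}{\stackT}$ to equal $\{h\}$, and it returns $\varnothing$ in every other case, when that intersection is empty. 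Finally, if $|\Approx{\stackS}{\stackT}| \geq 2$, then $\Split{\stackS}{\stackT} = [(\stackS_{1}, \stackT_{1}), \ldots, (\stackS_{m}, \stackT_{m})]$, and by Definition~\ref{defn-splitter}\ref{item-splitter-smaller} each $|\Approx{\stackS \Vert \stackS_{i}}{\stackT \Vert \stackT_{i}}| < |\Approx{\stackS}{\stackT}| \leq n$; hence the inductive hypothesis applies to each recursive call on line~\ref{line-split}, giving $\Call{Search}{\stackS \Vert \stackS_{i}, \stackT \Vert \stackT_{i}} = (U_{1} \cap \cdots \cap U_{k}) \cap \Iso{\stackS \Vert \stackS_{i}}{\stackT \Vert \stackT_{i}}$ for each $i$. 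Intersecting the disjoint decomposition of Definition~\ref{defn-splitter}\ref{item-splitter-disjoint} throughout with $U_{1} \cap \cdots \cap U_{k}$ and taking the union over $i$ then shows that line~\ref{line-split} returns $(U_{1} \cap \cdots \cap U_{k}) \cap \Iso{\stackS}{\stackT}$, which completes the induction.

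For the final sentence, specialising to $\stackT = \stackS$ yields $\Call{Search}{\stackS, \stackS} = (U_{1} \cap \cdots \cap U_{k}) \cap \Iso{\stackS}{\stackS} = (U_{1} \cap \cdots \cap U_{k}) \cap \Auto{\stackS}$ by the definition of $\Auto{\stackS}$, and if $U_{1} \cap \cdots \cap U_{k}$ is a subgroup of $\Sym{\Omega}$ then its intersection with the subgroup $\Auto{\stackS}$ is again a subgroup. I expect the only genuinely delicate point to be keeping the induction well-founded — that is, ensuring that each recursive call really does occur at a strictly smaller value of $|\Approx{\cdot}{\cdot}|$ than the input pair — which is secured by combining Lemma~\ref{lem-refine-gives-smaller-approx} (so that \Call{Refine}{} cannot push the approximation size back up) with Definition~\ref{defn-splitter}\ref{item-splitter-smaller} (so that splitting strictly reduces it). Everything else reduces to elementary set algebra, together with Lemma~\ref{lem-alg-refine-is-correct} and the overestimation property of $\approxFunc$.
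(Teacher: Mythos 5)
Your proposal is correct and follows essentially the same route as the paper's own proof: strong induction on $|\Approx{\stackS}{\stackT}|$, using Lemmas~\ref{lem-refine-gives-smaller-approx} and~\ref{lem-alg-refine-is-correct} to handle the refinement step, Definition~\ref{defn-approx-iso}\ref{item-approx-true-overestimate} for the two leaf cases, and Definition~\ref{defn-splitter}\ref{item-splitter-disjoint}--\ref{item-splitter-smaller} for the recursive case. The only cosmetic difference is that you fold the base case into the strong induction and spell out the final ``in particular'' sentence, which the paper leaves implicit.
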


\begin{proof}
  We proceed by induction on
  $|\Approx{\stackS}{\stackT}|$,
  as in the proof of Lemma~\ref{lem-alg-terminates}.
  If $|\Approx{\stackS}{\stackT}| = 0$, then
  $\Iso{\stackS}{\stackT} = \varnothing$ by
  Definition~\ref{defn-approx-iso}\ref{item-approx-true-overestimate}, and so
  $(U_{1} \cap \cdots \cap U_{k}) \cap \Iso{\stackS}{\stackT} = \varnothing$.
  Furthermore, if $|\Approx{\stackS}{\stackT}| = 0$,
  then \Call{Search}{$\stackS, \stackT$} returns
  $\varnothing$ on line~\ref{line-no-isos}. Thus we have established the base
  case.

  Let $n \in \N$, assume that the statement holds
  for all $\stackS, \stackT \in \Stacks{\Omega}$ with
  $|\Approx{\stackS}{\stackT}| < n$, and let $\stackS, \stackT \in
  \Stacks{\Omega}$ with $|\Approx{\stackS}{\stackT}| = n$.  Note that
  $|\Approx{\Call{Refine}{\stackS, \stackT}{}}{}| \leq n$ by
  Lemma~\ref{lem-refine-gives-smaller-approx}, and that
  \[
  (U_{1} \cap \cdots \cap U_{k})
    \cap \Iso{\stackS}{\stackT}
  =
  (U_{1} \cap \cdots \cap U_{k})
    \cap \Iso{\Call{Refine}{\stackS, \stackT}}{}
  \]
  by
  Lemma~\ref{lem-alg-refine-is-correct}.
  On line~\ref{line-refine}, the pair $(\stackS, \stackT)$ is replaced by the
  value of $\Call{Refine}{\stackS, \stackT}$, which by the above equation leaves
  the value of $(U_{1} \cap \cdots \cap U_{k}) \cap \Iso{\stackS}{\stackT}$
  unchanged.

  If $|\Approx{\stackS}{\stackT}| = 0$, then as above, the procedure
  correctly returns $\varnothing$ on line~\ref{line-no-isos}.

  If $\Approx{\stackS}{\stackT} = \{h\}$ for some $h \in \Sym{\Omega}$, then
  $(U_{1} \cap \cdots \cap U_{k}) \cap \Iso{\stackS}{\stackT}$ is either empty
  or equal to $\{h\}$ by
  Definition~\ref{defn-approx-iso}\ref{item-approx-true-overestimate}. This
  is decided on line~\ref{line-check-perm}, and the correct answer is
  returned on line~\ref{line-single-solution} or~\ref{line-not-solution}, as
  required.

  In the final case, $|\Approx{\stackS}{\stackT}| \geq 2$, and
  on line~\ref{line-split}
  the procedure
  uses the splitter to
  produce a list of pairs of new labelled digraph stacks
  $[(\stackS_{1}, \stackT_{1}), \ldots, (\stackS_{t}, \stackT_{t})]$
  for some $t \in \N$, and it returns the union of the sets
  $\Call{Search}{\stackS \Vert \stackS_{1}, \stackT \Vert \stackT_{1}},
  \ldots,
  \Call{Search}{\stackS \Vert \stackS_{t}, \stackT \Vert \stackT_{t}}$.
  It suffices to prove that this union is equal to
  $(U_{1} \cap \cdots \cap U_{k}) \cap \Iso{\stackS}{\stackT}$.
  Recall that
  \[\Iso{\stackS}{\stackT}
    =
    \Iso{\stackS \Vert \stackS_{1}}{\stackT \Vert \stackT_{1}}
    \mathop{\dot{\cup}}
    \cdots
    \mathop{\dot{\cup}}
    \Iso{\stackS \Vert \stackS_{m}}{\stackT \Vert \stackT_{m}},
  \]
  by Definition~\ref{defn-splitter}\ref{item-splitter-disjoint}, and that
  $|\Approx{\stackS \Vert \stackS_{i}}{\stackT \Vert \stackT_{i}}| <
  |\Approx{\stackS}{\stackT}| \leq n$ for all $i \in \{1, \ldots, t\}$ by
  Definition~\ref{defn-splitter}\ref{item-splitter-smaller}.
  By the inductive hypothesis, it follows that
  \[
  \Call{Search}{\stackS \Vert \stackS_{i}, \stackT \Vert \stackT_{i}}
    =
  (U_{1} \cap \cdots \cap U_{k})
    \cap \Iso{\stackS \Vert \stackS_{i}}{\stackT \Vert \stackT_{i}}.
  \]
  for all $i \in \{1, \ldots, t\}$, and so
  \begin{align*}
    \Call{Search}{\stackS, \stackT}
      & =
      \Call{Search}{\stackS \Vert \stackS_{1}, \stackT \Vert \stackT_{1}}
        \cup \cdots \cup
      \Call{Search}{\stackS \Vert \stackS_{t}, \stackT \Vert \stackT_{t}} \\
      & \hspace{-1.5cm} =
      \big( U_{1} \cap \cdots \cap U_{k}
        \cap \Iso{\stackS \Vert \stackS_{1}}{\stackT \Vert \stackT_{1}}\!
      \big)
      \cup \cdots \cup
      \big( U_{1} \cap \cdots \cap U_{k}
        \cap \Iso{\stackS \Vert \stackS_{t}}{\stackT \Vert \stackT_{t}}\!
      \big) \\
      & \hspace{-1.5cm} =
      (U_{1} \cap \cdots \cap U_{k})
        \cap
      \big(
        \!
        \Iso{\stackS \Vert \stackS_{1}}{\stackT \Vert \stackT_{1}}
          \cup \cdots \cup
        \Iso{\stackS \Vert \stackS_{t}}{\stackT \Vert \stackT_{t}}
        \!
      \big) \\
      & \hspace{-1.5cm} =
      (U_{1} \cap \cdots \cap U_{k}) \cap \Iso{\stackS}{\stackT}.
      \qedhere
  \end{align*}
\end{proof}

\begin{theorem}\label{thm-alg-is-correct}
  Algorithm~\ref{alg-search} is correct.
  In other words, using the notation of Algorithm~\ref{alg-search},
  \(
  \Call{Search}{\EmptyStack{\Omega}, \EmptyStack{\Omega}}
  =
  (U_{1} \cap \cdots \cap U_{k})
  \).
\end{theorem}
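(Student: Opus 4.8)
The plan is to derive the theorem almost immediately from the two main results already established about Algorithm~\ref{alg-search}, namely the termination statement (Lemma~\ref{lem-alg-terminates}) and the correctness of the \Call{Search}{} procedure (Lemma~\ref{lem-alg-search-is-correct}). First I would observe that, by Lemma~\ref{lem-alg-terminates}, the call to \Call{Search}{} on line~\ref{line-start-search} terminates after finitely many steps, so the quantity $\Call{Search}{\EmptyStack{\Omega}, \EmptyStack{\Omega}}$ is well-defined; since line~\ref{line-start-search} simply returns this value, it is genuinely the output of the algorithm, and the theorem is an assertion about it.

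Next I would apply Lemma~\ref{lem-alg-search-is-correct} with the particular choice $\stackS = \stackT = \EmptyStack{\Omega}$, which gives
\[
  \Call{Search}{\EmptyStack{\Omega}, \EmptyStack{\Omega}}
  =
  (U_{1} \cap \cdots \cap U_{k}) \cap \Iso{\EmptyStack{\Omega}}{\EmptyStack{\Omega}}.
\]
It then remains only to evaluate $\Iso{\EmptyStack{\Omega}}{\EmptyStack{\Omega}}$. Since $|\EmptyStack{\Omega}| = |\EmptyStack{\Omega}|$, Remark~\ref{rmk-stack-iso-auto} gives $\Iso{\EmptyStack{\Omega}}{\EmptyStack{\Omega}} = \Auto{\EmptyStack{\Omega}}$, which is an intersection over zero entries and hence equals $\Sym{\Omega}$; equivalently, every permutation of $\Omega$ induces an automorphism of the empty stack, as noted in Section~\ref{sec-stacks}. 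Substituting this in yields
\[
  \Call{Search}{\EmptyStack{\Omega}, \EmptyStack{\Omega}}
  =
  (U_{1} \cap \cdots \cap U_{k}) \cap \Sym{\Omega}
  =
  U_{1} \cap \cdots \cap U_{k},
\]
which is the claimed equality.

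There is essentially no substantive obstacle remaining at this point: all of the real content has already been absorbed into Lemmas~\ref{lem-alg-terminates}, \ref{lem-alg-refine-is-correct} and~\ref{lem-alg-search-is-correct}, which in turn rest on the defining properties of refiners, isomorphism approximators, and splitters from Sections~\ref{sec-approximations}--\ref{sec-splitter}. The only points that require a moment's care are purely bookkeeping ones~--~that the top-level invocation is indeed \Call{Search}{\EmptyStack{\Omega}, \EmptyStack{\Omega}}, and that $\Iso{\EmptyStack{\Omega}}{\EmptyStack{\Omega}}$ is the whole of $\Sym{\Omega}$ rather than some proper subset~--~and once these are recorded the theorem follows at once.
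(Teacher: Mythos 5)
Your proposal is correct and follows essentially the same route as the paper, which simply applies Lemma~\ref{lem-alg-search-is-correct} with $\stackS = \stackT = \EmptyStack{\Omega}$; your additional remarks that $\Iso{\EmptyStack{\Omega}}{\EmptyStack{\Omega}} = \Sym{\Omega}$ and that termination is supplied by Lemma~\ref{lem-alg-terminates} just make explicit what the paper leaves implicit.
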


\begin{proof}
  The result follows by setting $\stackS = \stackT = \EmptyStack{\Omega}$ in
  Lemma~\ref{lem-alg-search-is-correct}.
\end{proof}

Note that Algorithm~\ref{alg-search} finds and returns \emph{all} elements
of its output, as proved in Theorem~\ref{thm-alg-is-correct}.
To search for a single element of the desired intersection,
one needs to modify the \Call{Search}{} procedure to terminate on
line~\ref{line-single-solution} as soon as the first solution to the search problem is found.

\begin{definition}\label{defn-search-single}
  Define \Call{SearchSingle}{} to be the procedure obtained from the
  \Call{Search}{} procedure of Algorithm~\ref{alg-search} by completely
  terminating the recursion with a solution to the search problem the first time that it finds
  one (line~\ref{line-single-solution}), and by recursively calling
  \Call{SearchSingle}{} on line~\ref{line-split} rather than \Call{Search}{}.
\end{definition}

\begin{cor}\label{cor-search-single-is-correct}
  Let the notation of Algorithm~\ref{alg-search} and
  Definition~\ref{defn-search-single} hold. Then
  \[
  \Call{SearchSingle}{\EmptyStack{\Omega}, \EmptyStack{\Omega}}
  =
  \begin{cases}
    \varnothing & \text{if}\ U_{1} \cap \cdots \cap U_{k} = \varnothing, \\
    \{g\}       & \text{if}\ U_{1} \cap \cdots \cap U_{k} \neq \varnothing,
  \end{cases}
  \]
  where $g$ is the first element of $U_{1} \cap \cdots \cap U_{k}$ found during the search, in the case that this intersection
  is non-empty.
\end{cor}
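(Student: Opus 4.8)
The plan is to follow the pattern of the proofs of Lemma~\ref{lem-alg-terminates} and Lemma~\ref{lem-alg-search-is-correct}, first establishing the relevant facts for the \Call{SearchSingle}{} procedure on an arbitrary pair of stacks and then specialising to $\EmptyStack{\Omega}$. Throughout, I would read ``completely terminating the recursion with a solution'' as the statement that \Call{SearchSingle}{} returns either $\varnothing$ or a one-element set, and that in the split case (line~\ref{line-split}) it processes the pairs $(\stackS_{i}, \stackT_{i})$ produced by the splitter \emph{in order}, returning the value of the first recursive call that returns a non-empty set, and returning $\varnothing$ if every such call returns $\varnothing$. Under this reading, termination of \Call{SearchSingle}{} is immediate from the proof of Lemma~\ref{lem-alg-terminates}, since \Call{SearchSingle}{} differs from \Call{Search}{} only by stopping some calls earlier, and never later.

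The first step would be to show, by induction on $|\Approx{\stackS}{\stackT}|$ exactly as in the proof of Lemma~\ref{lem-alg-search-is-correct}, that $\Call{SearchSingle}{\stackS, \stackT}$ is always either $\varnothing$ or a set $\{g\}$ with $g \in (U_{1} \cap \cdots \cap U_{k}) \cap \Iso{\stackS}{\stackT}$. When the refined pair has $\Approx$-value at most $1$ this is clear, since the procedure either returns $\varnothing$, or returns $\{h\}$ only after the test on line~\ref{line-check-perm} has verified that $h$ induces an isomorphism of the stacks and lies in $U_{1} \cap \cdots \cap U_{k}$ (the relevant set being preserved by the call to \Call{Refine}{} on line~\ref{line-refine}, by Lemma~\ref{lem-alg-refine-is-correct}). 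When the refined pair has $\Approx$-value at least $2$, the value returned is $\varnothing$ or the value of some recursive call $\Call{SearchSingle}{\stackS \Vert \stackS_{i}, \stackT \Vert \stackT_{i}}$; since $|\Approx{\stackS \Vert \stackS_{i}}{\stackT \Vert \stackT_{i}}| < |\Approx{\stackS}{\stackT}|$ by Definition~\ref{defn-splitter}\ref{item-splitter-smaller}, the inductive hypothesis gives that this value is $\varnothing$ or a singleton contained in $(U_{1} \cap \cdots \cap U_{k}) \cap \Iso{\stackS \Vert \stackS_{i}}{\stackT \Vert \stackT_{i}} \subseteq (U_{1} \cap \cdots \cap U_{k}) \cap \Iso{\stackS}{\stackT}$, the inclusion holding by Definition~\ref{defn-splitter}\ref{item-splitter-disjoint}.

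The second step would be to show, again by induction on $|\Approx{\stackS}{\stackT}|$, that $\Call{SearchSingle}{\stackS, \stackT} = \varnothing$ if and only if $(U_{1} \cap \cdots \cap U_{k}) \cap \Iso{\stackS}{\stackT} = \varnothing$. By Lemma~\ref{lem-alg-refine-is-correct} this set is unchanged by \Call{Refine}{}, and by Lemma~\ref{lem-refine-gives-smaller-approx} the call to \Call{Refine}{} does not increase $|\Approx{\stackS}{\stackT}|$. If the set is empty then Step~1 forces the output to be empty. If the set is non-empty then $|\Approx{\stackS}{\stackT}| \geq 1$ since $\Iso{\stackS}{\stackT} \subseteq \Approx{\stackS}{\stackT}$ by Definition~\ref{defn-approx-iso}\ref{item-approx-true-overestimate}; when the refined $\Approx$-value is $1$, say $\Approx{\stackS}{\stackT} = \{h\}$, the non-empty set must equal $\{h\}$, so the test on line~\ref{line-check-perm} succeeds and $\{h\}$ is returned; when it is at least $2$, Definition~\ref{defn-splitter}\ref{item-splitter-disjoint} exhibits $\Iso{\stackS}{\stackT}$ as a union of the sets $\Iso{\stackS \Vert \stackS_{i}}{\stackT \Vert \stackT_{i}}$, whence $(U_{1} \cap \cdots \cap U_{k}) \cap \Iso{\stackS}{\stackT}$ is the union of the sets $(U_{1} \cap \cdots \cap U_{k}) \cap \Iso{\stackS \Vert \stackS_{i}}{\stackT \Vert \stackT_{i}}$, so one of these is non-empty and, by the inductive hypothesis together with Definition~\ref{defn-splitter}\ref{item-splitter-smaller}, the corresponding recursive call, and therefore \Call{SearchSingle}{}, returns a non-empty set.

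Specialising to $\stackS = \stackT = \EmptyStack{\Omega}$ and using $\Iso{\EmptyStack{\Omega}}{\EmptyStack{\Omega}} = \Sym{\Omega}$, Steps~1 and~2 give the claimed dichotomy: $\Call{SearchSingle}{\EmptyStack{\Omega}, \EmptyStack{\Omega}} = \varnothing$ when $U_{1} \cap \cdots \cap U_{k} = \varnothing$, and otherwise it equals $\{g\}$ for some $g \in U_{1} \cap \cdots \cap U_{k}$; that this $g$ is ``the first element of $U_{1} \cap \cdots \cap U_{k}$ found during the search'' is exactly the behaviour of line~\ref{line-single-solution} under the reading of \Call{SearchSingle}{} adopted at the outset. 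I expect the one genuine obstacle to be making that reading precise — that is, justifying that ``completely terminating the recursion with a solution'' is faithfully captured by the order-respecting, first-non-empty-wins semantics used in the two inductions — after which both inductions are routine adaptations of the proof of Lemma~\ref{lem-alg-search-is-correct}.
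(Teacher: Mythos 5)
Your proposal is correct and follows essentially the same route as the paper, whose proof of this corollary simply states that termination and correctness follow almost exactly as in Lemmas~\ref{lem-alg-terminates} and~\ref{lem-alg-search-is-correct}; your two inductions are precisely the adaptation the paper has in mind, just written out in full. The only extra content is your explicit ``first non-empty recursive call wins'' reading of Definition~\ref{defn-search-single}, which is a faithful formalisation of the informal phrase ``completely terminating the recursion with a solution'' and introduces no gap.
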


\begin{proof}
  Termination and correctness follow almost exactly as in
  Lemmas~\ref{lem-alg-terminates} and~\ref{lem-alg-search-is-correct}.
\end{proof}

\subsection{Searching for a generating set of a subgroup}\label{sec-generators}

In Theorem~\ref{thm-alg-is-correct} we showed that Algorithm~\ref{alg-search}
can be used to find the set of all solutions to a given problem, and in
Corollary~\ref{cor-search-single-is-correct} we showed that a slightly adapted
version of Algorithm~\ref{alg-search} can be used to search for a single
solution in the case that one exists, and to return $\varnothing$ otherwise.

Searching for a single solution is especially useful when one wishes to find an
isomorphism from one combinatorial structure to another, or to prove that none
exists.

It is typically most efficient to compute with a permutation group when it is
specified by a base and strong generating set.  In this section, we show how it
is possible to modify Algorithm~\ref{alg-search} (resulting in
Algorithm~\ref{alg-genset-search}) to search for a base and strong generating
set, in the case that the intersection of the given subsets of $\Sym{\Omega}$ is
a subgroup of $\Sym{\Omega}$.  We also show how the partially-constructed
generating set can be used to prune the search tree as the algorithm progresses.

This algorithm is also useful when searching for an intersection of (right)
cosets.  Suppose that $k,m \in \N$ and that $U_{1}, \ldots, U_{k}$ is a list of right cosets of
subgroups of $\Sym{\Omega}$, and that $(f_{L,1},f_{R,1}), \ldots,
(f_{L,m},f_{R,m})$ is a list of refiners for some of those cosets.  In order to
compactly describe their intersection, we can first use the
\Call{SearchSingle}{} procedure as shown in
Corollary~\ref{cor-search-single-is-correct}: this either shows that $U_{1} \cap
\cdots \cap U_{k}$ is empty, or it produces a representative element $g \in
U_{1} \cap \cdots \cap U_{k}$.
In this latter case, then for all $i \in \{1, \ldots, k\}$, it follows that
$U_{i} g^{-1}$ is a subgroup of $\Sym{\Omega}$, and
Lemma~\ref{lem-refiner-right-coset} implies that $(f_{L,i},f_{L,i})$ is a
refiner for $U_{i} g^{-1}$. Note that we can easily test for membership in
$U_{i} g^{-1}$ if and only if we can easily test for membership in $U_{i}$.
Therefore we may use Algorithm~\ref{alg-genset-search} to search for a
generating set of $U_{1} g^{-1} \cap \cdots \cap U_{k} g^{-1}$.  Since
\[
  U_{1} \cap \cdots \cap U_{k}
    = (U_{1} g^{-1} g) \cap \cdots \cap (U_{k} g^{-1} g)
    = (U_{1} g^{-1} \cap \cdots \cap U_{k} g^{-1}) g,
\]
it follows that this generating set, along with the representative element $g$,
gives a compact description for the intersection $U_{1} \cap \cdots \cap U_{k}$.

The correctness of Algorithm~\ref{alg-genset-search} relies on the
following rather technical lemmas.
Algorithm~\ref{alg-genset-search} applies Lemma~\ref{lem-genset} recursively
to find a base and strong generating set.

\begin{lemma}\label{lem-pre-genset}
  Let the notation of Algorithm~\ref{alg-search} hold, suppose that $U_{1} \cap
  \cdots \cap U_{k}$ is a subgroup of $\Sym{\Omega}$, and let $\stackS \in
  \Stacks{\Omega}$ be arbitrary. Then the following hold:
  \begin{enumerate}[label=\textrm{(\roman*)}]
    \item\label{item-genset-refine-symm}
      $\Call{Refine}{\stackS, \stackS} = (\stackT, \stackT)$ for some
      $\stackT \in \Stacks{\Omega}$.

    \item\label{item-genset-approx-non-empty}
      $\{\idOmega\} \subseteq \Approx{\stackT}{\stackT}$.

    \item\label{item-genset-id}
      If $\Approx{\stackT}{\stackT} = \{\idOmega\}$, then
      $\Call{Search}{\stackS, \stackS} = \{\idOmega\}$.

    \item\label{item-genset-more}
      If $|\Approx{\stackT}{\stackT}| \geq 2$, then
      there exists $t \in \N$ and $\stackS_{1}, \ldots,
      \stackS_{t} \in \Stacks{\Omega}$ such that
      $\Split{\stackT}{\stackT} = [
        (\stackS_{1}, \stackS_{1}),
        (\stackS_{1}, \stackS_{2}),
        \ldots,
        (\stackS_{1}, \stackS_{t})]$.
  \end{enumerate}
\end{lemma}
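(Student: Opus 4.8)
The plan is to handle the four parts in order, since the later ones build on the earlier. For part~\ref{item-genset-refine-symm}, the decisive point is that, as $U_{1} \cap \cdots \cap U_{k}$ is a subgroup, $\idOmega$ lies in each $U_{j}$; hence every refiner $(f_{L,i}, f_{R,i})$ appearing in Algorithm~\ref{alg-search}, being a refiner for some such $U_{j}$, has $f_{L,i} = f_{R,i}$ by Lemma~\ref{lem-group-refiner-symmetric}. I would then prove, as a loop invariant, that every pair of stacks occurring during the execution of $\Call{Refine}{\stackS, \stackS}$ is of the form $(\stackU, \stackU)$ for some $\stackU \in \Stacks{\Omega}$: it holds initially; it is preserved by the copy on line~\ref{line-refine-store}; and it is preserved by each refiner application on line~\ref{line-apply-refiner}, because a diagonal pair $(\stackU, \stackU)$ is sent to $(\stackU \Vert f_{L,i}(\stackU),\, \stackU \Vert f_{R,i}(\stackU))$, which is again diagonal since $f_{L,i}(\stackU) = f_{R,i}(\stackU)$. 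En route I would note that the inner for-loop condition $|\stackS| = |\stackT|$ is never violated, and that the while-loop guard $\Approx{\stackU}{\stackU} \neq \varnothing$ holds throughout, because $\idOmega \in \Auto{\stackU} \subseteq \Approx{\stackU}{\stackU}$ by Definition~\ref{defn-approx-iso}\ref{item-approx-true-overestimate}. Since \Call{Refine}{} terminates (Lemma~\ref{lem-alg-terminates}) and each of its two possible return values is one of the pairs occurring in the computation, the returned pair is diagonal; write it as $(\stackT, \stackT)$.

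Part~\ref{item-genset-approx-non-empty} is then immediate: $\idOmega \in \Auto{\stackT} = \Iso{\stackT}{\stackT} \subseteq \Approx{\stackT}{\stackT}$ by Definition~\ref{defn-approx-iso}\ref{item-approx-true-overestimate}. For part~\ref{item-genset-id} I would trace $\Call{Search}{\stackS, \stackS}$: on line~\ref{line-refine} the pair is replaced by $\Call{Refine}{\stackS, \stackS} = (\stackT, \stackT)$ from part~\ref{item-genset-refine-symm}; the hypothesis $\Approx{\stackT}{\stackT} = \{\idOmega\}$ puts the execution into the case beginning at line~\ref{line-case-one-candidate} with $h = \idOmega$; the test on line~\ref{line-check-perm} succeeds, since $\stackT^{\idOmega} = \stackT$ trivially and $\idOmega \in U_{1} \cap \cdots \cap U_{k}$ because this set is a subgroup; so $\{\idOmega\}$ is returned on line~\ref{line-single-solution}. (Equivalently, one may observe that $\Auto{\stackT} \subseteq \Approx{\stackT}{\stackT} = \{\idOmega\}$ and combine Lemmas~\ref{lem-alg-refine-is-correct} and~\ref{lem-alg-search-is-correct}.)

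For part~\ref{item-genset-more}, the hypothesis $|\Approx{\stackT}{\stackT}| \geq 2$ lets the splitter conditions of Definition~\ref{defn-splitter} apply to $\Split{\stackT}{\stackT}$. Writing $\Split{\stackT}{\stackT} = [(\stackS_{1}, \stackT_{1}), \ldots, (\stackS_{m}, \stackT_{m})]$, I would use Definition~\ref{defn-splitter}\ref{item-splitter-invariant} with $\stackU := \stackT$ (valid because $|\Approx{\stackT}{\stackT}| \geq 2$) to get $\stackS_{1} = \cdots = \stackS_{m}$, and Definition~\ref{defn-splitter}\ref{item-splitter-stab-first} (the two input stacks being equal) to get $\stackT_{1} = \stackS_{1}$. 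Moreover $m \geq 1$: otherwise Definition~\ref{defn-splitter}\ref{item-splitter-disjoint} would exhibit $\Auto{\stackT} = \Iso{\stackT}{\stackT}$ as an empty disjoint union, contradicting $\idOmega \in \Auto{\stackT}$. Taking $t := m$ and relabelling $\stackT_{i}$ as $\stackS_{i}$ for $i \in \{2, \ldots, t\}$ then gives $\Split{\stackT}{\stackT} = [(\stackS_{1}, \stackS_{1}), (\stackS_{1}, \stackS_{2}), \ldots, (\stackS_{1}, \stackS_{t})]$, as required.

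The one step with real content is part~\ref{item-genset-refine-symm}; everything afterwards is a direct appeal to the relevant definitions. The mild obstacle in part~\ref{item-genset-refine-symm} is to ensure that the diagonal property survives the possibly repeated passes through the while-loop of \Call{Refine}{}, and that neither loop guard ever interferes. Phrasing the argument as a single loop invariant over all pairs that occur during the call~--~rather than reasoning about particular line numbers~--~deals uniformly with both return statements of the procedure (lines~\ref{line-refine-loop-end} and~\ref{line-refine-return-empty}).
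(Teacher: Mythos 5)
Your proof is correct and follows essentially the same route as the paper: part~\ref{item-genset-refine-symm} via Lemma~\ref{lem-group-refiner-symmetric} (the paper simply states that \Call{Refine}{} maps pairs of equal stacks to pairs of equal stacks, which your loop invariant spells out), and part~\ref{item-genset-more} via Definition~\ref{defn-splitter}\ref{item-splitter-stab-first} and~\ref{item-splitter-invariant}, where you additionally justify $t \geq 1$, a detail the paper leaves implicit. The only divergence is organisational: for parts~\ref{item-genset-approx-non-empty} and~\ref{item-genset-id} the paper derives $\{\idOmega\} \subseteq \Call{Search}{\stackS, \stackS} = (U_{1} \cap \cdots \cap U_{k}) \cap \Auto{\stackT} \subseteq \Approx{\stackT}{\stackT}$ from Lemmas~\ref{lem-alg-refine-is-correct} and~\ref{lem-alg-search-is-correct} and then obtains~\ref{item-genset-id} by turning these containments into equalities, whereas you prove~\ref{item-genset-approx-non-empty} directly from $\idOmega \in \Auto{\stackT}$ and~\ref{item-genset-id} by tracing the algorithm~--~both are valid, and you even record the paper's argument as your parenthetical alternative.
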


\begin{proof}
  \begin{enumerate}
    \item[\ref{item-genset-refine-symm}]
      Lemma~\ref{lem-group-refiner-symmetric} implies that every refiner given
      as input to Algorithm~\ref{alg-search} is a pair of equal functions.
      Therefore \Call{Refine}{} maps pairs of equal stacks to pairs of equal
      stacks.

    \item[\ref{item-genset-approx-non-empty}]
      By Lemmas~\ref{lem-alg-refine-is-correct}
      and~\ref{lem-alg-search-is-correct} and
      Definition~\ref{defn-approx-iso}\ref{item-approx-true-overestimate}, it
      follows that
      \begin{align*}
        \{\idOmega\}
          \subseteq
          \Call{Search}{\stackS, \stackS}
          & =
          (U_{1} \cap \cdots \cap U_{k}) \cap \Auto{\stackS}
          \\
          & =
          (U_{1} \cap \cdots \cap U_{k})
          \cap
          \Auto{\Call{Refine}{\stackS, \stackS}}
          \\
          & =
          (U_{1} \cap \cdots \cap U_{k})
          \cap
          \Auto{\stackT}
          \\
          & \subseteq
          \Auto{\stackT} \subseteq \Approx{\stackT}{\stackT}.
      \end{align*}

    \item[\ref{item-genset-id}]
      Here the containments in the proof
      of~\ref{item-genset-approx-non-empty} become equalities, and the result
      follows.

    \item[\ref{item-genset-more}]
      This follows by
      Definition~\ref{defn-splitter}\ref{item-splitter-stab-first}
      and~\ref{item-splitter-invariant}.
      \qedhere
  \end{enumerate}
\end{proof}

\begin{lemma}\label{lem-genset}
  Let the notation of Algorithm~\ref{alg-search} hold,
  assume that $U_{1} \cap \cdots \cap U_{k}$ is a subgroup, let $\stackS
  \in \Stacks{\Omega}$ with $|\Approx{\Call{Refine}{\stackS, \stackS}}{}| \geq
  2$,
  and define $(\stackT, \stackT) = \Call{Refine}{\stackS, \stackS}$ and
  \(\Split{\stackT}{\stackT} = [
    (\stackS_{1}, \stackS_{1}),
    \ldots
    (\stackS_{1}, \stackS_{t})
  ]\)
  as in Lemma~\ref{lem-pre-genset}.
  \begin{enumerate}[label=\textrm{(\roman*)}]
    \item\label{item-genset-stabiliser}
      $\Call{Search}{\stackT \Vert \stackS_{1}, \stackT \Vert \stackS_{1}}$ is
      the stabiliser of $\stackS_{1}$ in $\Call{Search}{\stackS, \stackS}$.

    \item\label{item-genset-cosets}
      For all $i \in \{2, \ldots, t\}$, either the set $\Call{Search}{\stackT
      \Vert \stackS_{1}, \stackT \Vert \stackS_{i}}$ is empty, or it is a
      right coset of $\Call{Search}{\stackT \Vert \stackS_{1}, \stackT
      \Vert \stackS_{1}}$ in $\Call{Search}{\stackS, \stackS}$.

    \item\label{item-genset-description}
      The subgroup $\Call{Search}{\stackS, \stackS}$ is generated by any of its
      subsets that contains a generating set for $\Call{Search}{\stackT \Vert
      \stackS_{1}, \stackT \Vert \stackS_{1}}$ and an element from each
      of the non-empty sets amongst
      \[\Call{Search}{\stackT \Vert \stackS_{1}, \stackT \Vert \stackS_{2}},
      \ldots, \Call{Search}{\stackT \Vert \stackS_{1}, \stackT \Vert
      \stackS_{t}}.\]

    \item\label{item-genset-optimisation}
      Let $\{j_{1}, \ldots, j_{l}\} \subseteq \{2, \ldots, t\}$.
      Suppose that, for each $j \in \{j_{1}, \ldots, j_{l}\}$, either
      we have fixed some element $y_{j} \in \Call{Search}{\stackT \Vert
      \stackS_{1}, \stackT \Vert \stackS_{j}}$,
      or we have determined that
      $\Call{Search}{\stackT \Vert \stackS_{1}, \stackT \Vert \stackS_{j}}$ is
      empty.
      Let $Y_{0}$ be the set of elements $y_{j}$ that we fixed
      when $\Call{Search}{\stackT \Vert \stackS_{1},
      \stackT \Vert \stackS_{j}} \neq \varnothing$,
      let $Y$ be any generating set for $\Call{Search}{\stackT \Vert
      \stackS_{1}, \stackT \Vert \stackS_{1}}$, and define
      \( X = Y \cup Y_{0} \).
      If there exists some
      $g \in \<X\>$,
      $i \in \{2, \ldots, t\} \setminus \{j_{1}, \ldots, j_{l}\}$,
      and $j \in \{j_{1}, \ldots, j_{l}\}$
      such that $\stackS_{i} = \stackS_{j}^{g}$, then
      $\Call{Search}{\stackT \Vert \stackS_{1}, \stackT \Vert \stackS_{i}}
      \subseteq \<X\>$.
  \end{enumerate}
\end{lemma}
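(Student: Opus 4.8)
The plan is to begin by eliminating every occurrence of \Call{Search}{}. By Lemma~\ref{lem-alg-search-is-correct}, the value of \Call{Search}{} on a pair of stacks is $D$ intersected with the set of induced isomorphisms between them, where $D \coloneqq U_{1} \cap \cdots \cap U_{k}$ is a subgroup by hypothesis. Combining this with Lemma~\ref{lem-alg-refine-is-correct} and Lemma~\ref{lem-pre-genset}\ref{item-genset-refine-symm} (so that $\Call{Refine}{\stackS, \stackS} = (\stackT, \stackT)$) gives $\Call{Search}{\stackS, \stackS} = D \cap \Auto{\stackS} = D \cap \Auto{\stackT}$; write $G$ for this subgroup. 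Applying Lemma~\ref{lem-alg-search-is-correct} once more and using Remark~\ref{rmk-stack-iso-auto} to separate the appended parts of each stack, for every $i \in \{1, \ldots, t\}$ I obtain
\[
  \Call{Search}{\stackT \Vert \stackS_{1}, \stackT \Vert \stackS_{i}}
  = D \cap \Auto{\stackT} \cap \Iso{\stackS_{1}}{\stackS_{i}}
  = G \cap \Iso{\stackS_{1}}{\stackS_{i}},
\]
where $\Iso{\stackS_{1}}{\stackS_{1}} = \Auto{\stackS_{1}}$. Writing $H \coloneqq G \cap \Auto{\stackS_{1}}$, part~\ref{item-genset-stabiliser} is immediate, since $H = \set{g \in G}{\stackS_{1}^{g} = \stackS_{1}}$ is the stabiliser of $\stackS_{1}$ in $G$. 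For part~\ref{item-genset-cosets}, if $G \cap \Iso{\stackS_{1}}{\stackS_{i}}$ is non-empty, I would fix $h$ in it; then any further element $g$ of it satisfies $g h^{-1} \in G$ and $\stackS_{1}^{g h^{-1}} = \stackS_{1}$, so $g h^{-1} \in H$, while conversely $H h \subseteq G \cap \Iso{\stackS_{1}}{\stackS_{i}}$, so the set equals the right coset $H h$ of $H$ in $G$.

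For part~\ref{item-genset-description}, I would apply Definition~\ref{defn-splitter}\ref{item-splitter-disjoint} to the pair $(\stackT, \stackT)$ — valid because $|\Approx{\stackT}{\stackT}| = |\Approx{\Call{Refine}{\stackS, \stackS}}{}| \geq 2$ and $\Split{\stackT}{\stackT}$ has the form fixed in the hypothesis — to conclude that $\Auto{\stackT}$ is the disjoint union of the sets $\Iso{\stackT \Vert \stackS_{1}}{\stackT \Vert \stackS_{i}}$ for $i \in \{1, \ldots, t\}$. Intersecting with $D$ yields the disjoint decomposition
\[
  G = H \mathop{\dot{\cup}} \big(G \cap \Iso{\stackS_{1}}{\stackS_{2}}\big)
      \mathop{\dot{\cup}} \cdots \mathop{\dot{\cup}} \big(G \cap \Iso{\stackS_{1}}{\stackS_{t}}\big).
\]
Let $X \subseteq G$ contain a generating set of $H$ together with one element $r_{i}$ from each non-empty term $G \cap \Iso{\stackS_{1}}{\stackS_{i}}$ with $i \geq 2$. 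Then $\langle X \rangle \supseteq H$, and by part~\ref{item-genset-cosets} each non-empty term equals $H r_{i} \subseteq \langle X \rangle$, so the decomposition forces $\langle X \rangle \supseteq G$; since $X \subseteq G$ gives $\langle X \rangle \subseteq G$, we conclude $\langle X \rangle = G$, which is part~\ref{item-genset-description}.

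Part~\ref{item-genset-optimisation} is the technical core. Given $g \in \langle X \rangle$, an index $i \in \{2, \ldots, t\} \setminus \{j_{1}, \ldots, j_{l}\}$, and an index $j \in \{j_{1}, \ldots, j_{l}\}$ with $\stackS_{i} = \stackS_{j}^{g}$, I first observe that $\langle X \rangle \subseteq G$ (because $Y \subseteq H$ and $Y_{0} \subseteq G$), so $g \in G$. If $\Call{Search}{\stackT \Vert \stackS_{1}, \stackT \Vert \stackS_{j}} = G \cap \Iso{\stackS_{1}}{\stackS_{j}}$ is empty, then $G \cap \Iso{\stackS_{1}}{\stackS_{i}}$ is empty too — any $z$ in the latter would give $z g^{-1} \in G \cap \Iso{\stackS_{1}}{\stackS_{j}}$ — and the claimed inclusion is trivial. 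Otherwise $y_{j} \in Y_{0} \subseteq X$ and, by part~\ref{item-genset-cosets}, $G \cap \Iso{\stackS_{1}}{\stackS_{j}} = H y_{j}$; then every $z \in G \cap \Iso{\stackS_{1}}{\stackS_{i}}$ satisfies $\stackS_{1}^{z g^{-1}} = \stackS_{j}$, hence $z g^{-1} \in H y_{j}$, i.e.\ $z = h y_{j} g$ with $h \in H$, so $z \in \langle X \rangle$ because $h$, $y_{j}$ and $g$ all lie in $\langle X \rangle$. This proves $\Call{Search}{\stackT \Vert \stackS_{1}, \stackT \Vert \stackS_{i}} \subseteq \langle X \rangle$, as required.

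The main difficulty I expect is bookkeeping rather than conceptual: keeping track of which of the sets $G \cap \Iso{\stackS_{1}}{\stackS_{i}}$ are empty, verifying that each appeal to Remark~\ref{rmk-stack-iso-auto} remains valid even when $\stackS_{1}$ and $\stackS_{i}$ have unequal lengths (both sides being empty in that case), and applying the coset description of part~\ref{item-genset-cosets} in the correct direction within part~\ref{item-genset-optimisation}. No machinery beyond the earlier results is needed: the only extra ingredient is the elementary fact that a group is generated by any subgroup together with a set of representatives of all of its right cosets, and the decomposition established for part~\ref{item-genset-description} provides exactly such a set.
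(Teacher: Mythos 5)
Your proof is correct and takes essentially the same route as the paper's: both reduce each \Call{Search}{} value to the intersection of $U_{1} \cap \cdots \cap U_{k}$ with a set of induced isomorphisms via Lemmas~\ref{lem-alg-refine-is-correct} and~\ref{lem-alg-search-is-correct}, separate the appended stacks using Remark~\ref{rmk-stack-iso-auto}, and then argue by coset translation for parts~\ref{item-genset-cosets}--\ref{item-genset-optimisation}. The only differences are cosmetic: you obtain the disjoint-union decomposition directly from Definition~\ref{defn-splitter}\ref{item-splitter-disjoint} rather than from the recursive structure of the \Call{Search}{} procedure, and you treat the empty case in part~\ref{item-genset-optimisation} slightly more explicitly.
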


\begin{proof}
  \begin{enumerate}
    \item[\ref{item-genset-stabiliser}]
      By Lemmas~\ref{lem-alg-refine-is-correct}
      and~\ref{lem-alg-search-is-correct}, Remark~\ref{rmk-stack-iso-auto},
      and as in the proof of
      Lemma~\ref{lem-pre-genset}\ref{item-genset-approx-non-empty},
      \begin{align*}
        \Call{Search}{\stackT \Vert \stackS_{1}, \stackT \Vert \stackS_{1}}
        & =
        (U_{1} \cap \cdots \cap U_{k})
          \cap \Auto{\stackT \Vert \stackS_{1}}
          \\
        & =
        (U_{1} \cap \cdots \cap U_{k})
          \cap \Auto{\stackT} \cap \Auto{\stackS_{1}}
          \\
        & =
        (U_{1} \cap \cdots \cap U_{k})
          \cap \Iso{\Call{Refine}{\stackS, \stackS}}{} \cap \Auto{\stackS_{1}}
          \\
        & =
        \Call{Search}{\stackS, \stackS} \cap \Auto{\stackS_{1}}.
      \end{align*}

    \item[\ref{item-genset-cosets}]
      Let $i \in \{2,\ldots,t\}$.
      By inspecting the \Call{Search}{} procedure, it is clear that
      \begin{equation*}
        \Call{Search}{\stackS, \stackS} =
            \Call{Search}{\stackT \Vert \stackS_{1}, \stackT \Vert \stackS_{1}}
            \cup \cdots \cup
            \Call{Search}{\stackT \Vert \stackS_{1}, \stackT \Vert \stackS_{t}}.
      \end{equation*}
      Hence $\Call{Search}{\stackT \Vert \stackS_{1}, \stackT \Vert
      \stackS_{i}} \subseteq \Call{Search}{\stackS, \stackS}$.
      Suppose there exists some element $x \in
      \Call{Search}{\stackT \Vert \stackS_{1}, \stackT \Vert \stackS_{i}}$.  By
      Lemma~\ref{lem-alg-search-is-correct} and the assumption that $U_{1} \cap
      \cdots \cap U_{k}$ is a subgroup of $\Sym{\Omega}$ containing $x$, it
      follows that
      \begin{align*}
        \Call{Search}{\stackT \Vert \stackS_{1}, \stackT \Vert \stackS_{1}}
          \cdot x
        & =
        \big( (U_{1} \cap \cdots \cap U_{k}) \cap
              \Auto{\stackT \Vert \stackS_{1}}\!
        \big) \cdot x
        \\
        & =
        (U_{1} \cap \cdots \cap U_{k}) \cap
        \Iso{\stackT \Vert \stackS_{1}}{\stackT \Vert \stackS_{i}}
        \\
        & =
        \Call{Search}{\stackT \Vert \stackS_{1}, \stackT \Vert \stackS_{i}}.
      \end{align*}

    \item[\ref{item-genset-description}]
      This follows from~\ref{item-genset-cosets}.

    \item[\ref{item-genset-optimisation}]
      Let $g \in \<X\>$, $i \in \{2, \ldots, t\} \setminus \{j_{1}, \ldots,
      j_{l}\}$, and $j \in \{j_{1}, \ldots, j_{l}\}$ be such that $\stackS_{i} =
      \stackS_{j}^{g}$.
      Note that $X \subseteq \Call{Search}{\stackS, \stackS}$
      by~\ref{item-genset-stabiliser} and~\ref{item-genset-cosets},
      and so in particular, $g \in (U_{1} \cap \cdots \cap
      U_{k}) \cap \Auto{\stackT}$ by Lemmas~\ref{lem-alg-refine-is-correct}
      and~\ref{lem-alg-search-is-correct}.
      Therefore
      $g \in \Iso{\stackT \Vert \stackS_{j}}{\stackT \Vert \stackS_{i}}$,
      and so $\Iso{\stackT \Vert \stackS_{1}}{\stackT
      \Vert \stackS_{j}} \cdot g = \Iso{\stackT \Vert \stackS_{1}}{\stackT \Vert
      \stackS_{i}}$.
      It then follows similarly as in the end of the proof
      of~\ref{item-genset-cosets} that
      \[
      {\Call{Search}{\stackT \Vert \stackS_{1}, \stackT \Vert \stackS_{j}}}
       \cdot g
       =
      \Call{Search}{\stackT \Vert \stackS_{1}, \stackT \Vert \stackS_{i}}.
      \]
      Thus, if $\Call{Search}{\stackT \Vert \stackS_{1}, \stackT \Vert
      \stackS_{j}} \neq \varnothing$,
      and so $y_{j} \in X \cap \Call{Search}{\stackT \Vert \stackS_{1},
      \stackT \Vert \stackS_{j}}$, then
      \begin{align*}
        \Call{Search}{\stackT \Vert \stackS_{1}, \stackT \Vert \stackS_{i}}
        & =
        {\Call{Search}{\stackT \Vert \stackS_{1}, \stackT \Vert \stackS_{j}}}
         \cdot g
        \\
        & =
        \big(
        {\Call{Search}{\stackT \Vert \stackS_{1}, \stackT \Vert \stackS_{1}}}
         \cdot y_{j}
        \big)
         \cdot g
        \\
        & = \<Y\> \cdot (y_{j} g) \subseteq \<X\>.
        \qedhere
      \end{align*}
  \end{enumerate}
\end{proof}

\begin{algorithm}[!ht]
  \caption{Search for a base and strong generating set of a
           subgroup of $\Sym{\Omega}$.}\label{alg-genset-search}

  \begin{algorithmic}[1]

    \item[\textbf{Input:}]
      as in Algorithm~\ref{alg-search}, plus the assumption that
      $U_{1} \cap \cdots \cap U_{k}$ is a subgroup.
    \item[\textbf{Output:}]
      a base and strong generating set for the subgroup $U_{1} \cap \cdots \cap
      U_{k}$.

    \vspace{2mm}

    \State{$\textsc{Base} \gets []$}
    \Comment{The base is initialised as an empty list.}

    \State{\Return{\big\{\Call{SearchGens}{$\EmptyStack{\Omega}$}},\
                     \textsc{Base}\big\}
    }

    \Procedure{SearchGens}{$\stackS$}

    \State{\((\stackT, \stackT) \gets \Call{Refine}{\stackS, \stackS}\)}
    \Comment{Refine the given stacks.}

    \Case{$\Approx{\stackT}{} = \{\idOmega\}$}\label{line-trivial}
      \State{\Return{$\{\idOmega\}$}}
      \Comment{Lemma~\ref{lem-pre-genset}\ref{item-genset-id}}
    \EndCase{}

    \Case{$|\Approx{\stackT}{}| \geq 2$}

      \State{$[(\stackS_{1}, \stackS_{1}), \ldots, (\stackS_{1}, \stackS_{t})]
             \gets \Call{Split}{\stackT, \stackT}$}
      \Comment{Lemma~\ref{lem-pre-genset}\ref{item-genset-more}
              }\label{line-genset-split}

      \State{$\textsc{Base} \gets \textsc{Base} \mathop{\Vert} [\stackS_{1}]$}
      \Comment{Add the stack $\stackS_{1}$ to the base.}

      \State{$X \gets \Call{SearchGens}{\stackT \Vert \stackS_{1}}$}
      \Comment{Recursively find generators for a
      subgroup.}\label{line-gens-recurse}

      \For{$i \in \{2, \ldots, t\}$}

        \If{$\stackS_{i} \not\in \stackS_{j}^{\<X\>}$ for any
            $j \in \{1, \ldots, i - 1\}$}
          \Comment{Pruning;
                   Lemma~\ref{lem-genset}\ref{item-genset-optimisation}}

          \State{$X \gets X\cup \Call{SearchSingle}{\stackT \Vert \stackS_{1},
                                                    \stackT \Vert \stackS_{i}}$}
          \Comment{Search for a coset rep.}
        \EndIf{}
      \EndFor{}

      \State{\Return{$X$}}\label{line-genset-return}
    \EndCase{}

    \EndProcedure{}

    \Procedure{Refine}{$\stackS, \stackT$}
    \Comment{\emph{The \Call{Refine}{} procedure from
    Algorithm~\ref{alg-search}.}}
    \EndProcedure{}

    \Procedure{SearchSingle}{$\stackS, \stackT$}
    \Comment{\emph{The procedure from
    Definition~\ref{defn-search-single}.}}
    \EndProcedure{}

  \end{algorithmic}
\end{algorithm}

Let the notation of Algorithms~\ref{alg-search} and~\ref{alg-genset-search}
hold. We briefly explain
how the \Call{SearchGens}{} procedure has been obtained from the \Call{Search}{}
procedure of Algorithm~\ref{alg-search}.  Given the validity of these
modifications, the correctness of the \Call{SearchGens}{} procedure then follows
from the correctness of the \Call{Search}{} procedure
(Lemma~\ref{lem-alg-search-is-correct}).

Lemma~\ref{lem-pre-genset}\ref{item-genset-approx-non-empty} implies that the
condition on line~\ref{line-case-empty} of the \Call{Search}{} procedure is
never satisfied when $U_{1} \cap \cdots \cap U_{k}$ is a subgroup and the stacks
in question are equal, and so it is unnecessary to include this case in
\Call{SearchGens}{}.  From the same result, it also follows that the condition
on line~\ref{line-case-one-candidate} of the \Call{Search}{} procedure can be
restated as on line~\ref{line-trivial} of \Call{SearchGens}{}, since
$|\Approx{\stackT}{}| = 1$ if and only if $\Approx{\stackT}{} = \{\idOmega\}$.
Note that $\idOmega$ is contained in $U_{1} \cap \cdots \cap
U_{k}$ by assumption and in $\Auto{\stackT}$ by definition, which explains the
remaining simplification of this case.  Finally, it follows from Lemmas~\ref{lem-pre-genset} and~\ref{lem-genset} and the
correctness of the \Call{SearchSingle}{} procedure
(Corollary~\ref{cor-search-single-is-correct}) that line~\ref{line-split} of
\Call{Search}{} can be replaced by
lines~\ref{line-genset-split}--\ref{line-genset-return} in \Call{SearchGens}{}.
Thus we have proved the following lemma:

\begin{lemma}\label{lem-alg-searchgens-is-correct}
  Let $\stackS \in \Stacks{\Omega}$ and let the notation of
  Algorithm~\ref{alg-genset-search} hold.  Then
  $\Call{SearchGens}{\stackS}$
  is a generating set for $U_{1} \cap \cdots \cap U_{k} \cap \Auto{\stackS} =
  \Call{Search}{\stackS, \stackS}$.
\end{lemma}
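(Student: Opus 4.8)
The plan is to establish the statement by strong induction on the finite non-negative integer $n = |\Approx{\Call{Refine}{\stackS, \stackS}}{}|$, which simultaneously re-proves termination of the recursion, exactly as in Lemma~\ref{lem-alg-terminates}. Throughout, write $W = U_{1} \cap \cdots \cap U_{k}$, which is a subgroup of $\Sym{\Omega}$ by hypothesis; by Lemma~\ref{lem-pre-genset}\ref{item-genset-refine-symm} we may write $\Call{Refine}{\stackS, \stackS} = (\stackT, \stackT)$ for some $\stackT \in \Stacks{\Omega}$, and then by Lemmas~\ref{lem-alg-refine-is-correct} and~\ref{lem-alg-search-is-correct} we have $\Call{Search}{\stackS, \stackS} = W \cap \Auto{\stackS} = W \cap \Auto{\stackT}$. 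By Lemma~\ref{lem-pre-genset}\ref{item-genset-approx-non-empty} we have $\idOmega \in \Approx{\stackT}{}$, so $n \geq 1$ and the two cases of the \Call{SearchGens}{} procedure are exhaustive (in particular $n = 1$ forces $\Approx{\stackT}{} = \{\idOmega\}$, which is why line~\ref{line-trivial} tests for this rather than for $|\Approx{\stackT}{}| = 1$). In the base case $\Approx{\stackT}{} = \{\idOmega\}$ the procedure returns $\{\idOmega\}$ on line~\ref{line-trivial}, which generates the trivial group, and Lemma~\ref{lem-pre-genset}\ref{item-genset-id} gives $\Call{Search}{\stackS, \stackS} = \{\idOmega\}$, so the claim holds.

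For the inductive step, suppose $|\Approx{\stackT}{}| \geq 2$, so by Lemma~\ref{lem-pre-genset}\ref{item-genset-more} we have $\Split{\stackT}{\stackT} = [(\stackS_{1}, \stackS_{1}), (\stackS_{1}, \stackS_{2}), \ldots, (\stackS_{1}, \stackS_{t})]$ for some $t \in \N$. The recursive call on line~\ref{line-gens-recurse} is made on $\stackT \Vert \stackS_{1}$, and $|\Approx{\Call{Refine}{\stackT \Vert \stackS_{1}, \stackT \Vert \stackS_{1}}}{}| \leq |\Approx{\stackT \Vert \stackS_{1}}{\stackT \Vert \stackS_{1}}| < |\Approx{\stackT}{\stackT}| = n$ by Lemma~\ref{lem-refine-gives-smaller-approx} and Definition~\ref{defn-splitter}\ref{item-splitter-smaller}, so the inductive hypothesis applies: with $X_{0}$ denoting the returned value, $\<X_{0}\> = \Call{Search}{\stackT \Vert \stackS_{1}, \stackT \Vert \stackS_{1}}$, which by Lemma~\ref{lem-genset}\ref{item-genset-stabiliser} is the stabiliser of $\stackS_{1}$ in $\Call{Search}{\stackS, \stackS}$. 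Let $X$ be the final value of the variable $X$ after the \textbf{for}-loop; since the loop only enlarges $X$, we have $X_{0} \subseteq X$. Each element added to $X$ inside the loop is a value of $\Call{SearchSingle}{\stackT \Vert \stackS_{1}, \stackT \Vert \stackS_{i}}$, which by correctness of \Call{SearchSingle}{} (cf.\ Corollary~\ref{cor-search-single-is-correct}) lies in $\Call{Search}{\stackT \Vert \stackS_{1}, \stackT \Vert \stackS_{i}}$, and this is contained in $\Call{Search}{\stackS, \stackS}$ by Lemma~\ref{lem-genset}\ref{item-genset-cosets}; together with $\<X_{0}\> \leq \Call{Search}{\stackS, \stackS}$ this yields $\<X\> \leq \Call{Search}{\stackS, \stackS}$.

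It remains to prove $\Call{Search}{\stackS, \stackS} \leq \<X\>$. Applying Definition~\ref{defn-splitter}\ref{item-splitter-disjoint} to $(\stackT, \stackT)$, intersecting with $W$, and using Lemma~\ref{lem-alg-search-is-correct} together with $\Call{Search}{\stackS, \stackS} = W \cap \Auto{\stackT}$, we get $\Call{Search}{\stackS, \stackS} = \bigcup_{i = 1}^{t} \Call{Search}{\stackT \Vert \stackS_{1}, \stackT \Vert \stackS_{i}}$ (as already noted inside the proof of Lemma~\ref{lem-genset}\ref{item-genset-cosets}), so it suffices to prove, by a secondary induction on $i \in \{1, \ldots, t\}$, that $\Call{Search}{\stackT \Vert \stackS_{1}, \stackT \Vert \stackS_{i}} \subseteq \<X\>$. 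For $i = 1$ this reads $\<X_{0}\> \subseteq \<X\>$. For $i \geq 2$: if the loop did not prune $i$, then $\Call{SearchSingle}{\stackT \Vert \stackS_{1}, \stackT \Vert \stackS_{i}}$ was added to $X$, and by Corollary~\ref{cor-search-single-is-correct} and Lemma~\ref{lem-genset}\ref{item-genset-cosets} the set $\Call{Search}{\stackT \Vert \stackS_{1}, \stackT \Vert \stackS_{i}}$ is either empty or of the form $\<X_{0}\> \cdot y_{i}$ with $y_{i} \in X$, hence contained in $\<X\>$; if instead $i$ was pruned because $\stackS_{i} \in \stackS_{j}^{\<X\>}$ for some $j \in \{1, \ldots, i - 1\}$, then $\stackS_{i} = \stackS_{j}^{g}$ for some $g$ in the value of $X$ at that stage, so $g \in \<X\> \leq \Call{Search}{\stackS, \stackS} \leq W \cap \Auto{\stackT}$, whence (arguing exactly as in the proof of Lemma~\ref{lem-genset}\ref{item-genset-optimisation}: $g \in \Iso{\stackT \Vert \stackS_{j}}{\stackT \Vert \stackS_{i}}$, so $\Iso{\stackT \Vert \stackS_{1}}{\stackT \Vert \stackS_{j}} \cdot g = \Iso{\stackT \Vert \stackS_{1}}{\stackT \Vert \stackS_{i}}$ and likewise for \Call{Search}{}) we get $\Call{Search}{\stackT \Vert \stackS_{1}, \stackT \Vert \stackS_{i}} = \Call{Search}{\stackT \Vert \stackS_{1}, \stackT \Vert \stackS_{j}} \cdot g \subseteq \<X\> \cdot g = \<X\>$, using the secondary inductive hypothesis for $j < i$ and $g \in \<X\>$. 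This closes both inductions, so $\<X\> = \Call{Search}{\stackS, \stackS}$, and since $\Call{SearchGens}{\stackS}$ returns $X$ on line~\ref{line-genset-return}, the claim follows.

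The step I expect to be the main obstacle is the pruning case: one must ensure that the element $g$ witnessing $\stackS_{i} \in \stackS_{j}^{\<X\>}$ genuinely lies in $\Call{Search}{\stackS, \stackS}$ (so that it relates the coset sets $\Call{Search}{\stackT \Vert \stackS_{1}, \stackT \Vert \stackS_{j}}$ and $\Call{Search}{\stackT \Vert \stackS_{1}, \stackT \Vert \stackS_{i}}$, and not merely the orbit elements $\stackS_{j}$ and $\stackS_{i}$), and that the secondary induction on $i$ is organised so that the required containment for index $j$ is available even when $j$ itself was pruned. Everything else is routine bookkeeping on top of Lemmas~\ref{lem-pre-genset} and~\ref{lem-genset}, Corollary~\ref{cor-search-single-is-correct}, and the correctness of \Call{Search}{} (Lemma~\ref{lem-alg-search-is-correct}).
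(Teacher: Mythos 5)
Your proof is correct and follows essentially the same route as the paper: it rests on the same ingredients (Lemma~\ref{lem-pre-genset}, Lemma~\ref{lem-genset}\ref{item-genset-stabiliser}--\ref{item-genset-optimisation}, Corollary~\ref{cor-search-single-is-correct}, and the correctness of \Call{Search}{} from Lemma~\ref{lem-alg-search-is-correct}). The only difference is presentational: the paper argues informally that \Call{SearchGens}{} is obtained from \Call{Search}{} by valid modifications, whereas you assemble the same lemmas into an explicit double induction (on the approximator size, and then over the split index, including the pruned case), which is a legitimate filling-in of the details the paper leaves implicit.
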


That Algorithm~\ref{alg-genset-search} terminates given any valid input can be
proved in a very similar way
to Lemma~\ref{lem-alg-terminates}.  Thus we present the
main result of this section.

\begin{theorem}\label{thm-alg-gens-is-correct}
  In the notation of Algorithm~\ref{alg-genset-search},
  $\Call{SearchGens}{\EmptyStack{\Omega}}$ is a strong generating set for $U_{1}
  \cap \cdots \cap U_{k}$ relative to the base \textsc{Base}. In other words,
  Algorithm~\ref{alg-genset-search} returns a base and strong generating set for
  its input.
\end{theorem}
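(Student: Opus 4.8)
The plan is to prove, by induction on the recursion depth of $\Call{SearchGens}{}$ --- equivalently, on $|\Approx{\Call{Refine}{\stackS, \stackS}}{}|$, which strictly decreases along each recursive call by Definition~\ref{defn-splitter}\ref{item-splitter-smaller} and Lemma~\ref{lem-refine-gives-smaller-approx} --- the following claim for every $\stackS \in \Stacks{\Omega}$: the invocation $\Call{SearchGens}{\stackS}$ terminates, appends to \textsc{Base} a finite list $B_{\stackS}$ of labelled digraph stacks, and returns a finite set $X_{\stackS}$, such that $B_{\stackS}$ is a base for $\Call{Search}{\stackS, \stackS}$ and $X_{\stackS}$ is a strong generating set for $\Call{Search}{\stackS, \stackS}$ relative to $B_{\stackS}$. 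Here I read ``base'' and ``strong generating set'' in the obvious generalisation of~\cite[p.~101]{dixonmortimer} from sequences of points to sequences of labelled digraph stacks: a base for $G \leq \Sym{\Omega}$ is a sequence $(\stackV_{1}, \ldots, \stackV_{r})$ of stacks with $G \cap \Auto{\stackV_{1}} \cap \cdots \cap \Auto{\stackV_{r}} = \{\idOmega\}$, and a strong generating set for $G$ relative to it is a generating set $X$ of $G$ with $\langle X \cap G^{(i)} \rangle = G^{(i)}$ for all $i$, where $G^{(i)} \coloneqq \{g \in G : \stackV_{j}^{g} = \stackV_{j}\ \text{for all}\ j < i\}$. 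The theorem is then the case $\stackS = \EmptyStack{\Omega}$, using $\Call{Search}{\EmptyStack{\Omega}, \EmptyStack{\Omega}} = U_{1} \cap \cdots \cap U_{k}$ from Theorem~\ref{thm-alg-is-correct}. Termination follows exactly as in Lemma~\ref{lem-alg-terminates}, and $\langle X_{\stackS} \rangle = \Call{Search}{\stackS, \stackS}$ is already Lemma~\ref{lem-alg-searchgens-is-correct}; so the real work is checking that $B_{\stackS}$ is a base and that the stabiliser-chain condition holds.

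Write $(\stackT, \stackT) = \Call{Refine}{\stackS, \stackS}$ and $G^{\ast} \coloneqq \Call{Search}{\stackS, \stackS}$. In the base case $\Approx{\stackT}{\stackT} = \{\idOmega\}$, the procedure returns $\{\idOmega\}$ on line~\ref{line-trivial} without modifying \textsc{Base}, and $G^{\ast} = \{\idOmega\}$ by Lemma~\ref{lem-pre-genset}\ref{item-genset-id}; the empty list and $\{\idOmega\}$ vacuously satisfy the claim. In the inductive step $|\Approx{\stackT}{\stackT}| \geq 2$, Lemma~\ref{lem-pre-genset}\ref{item-genset-more} gives $\Split{\stackT}{\stackT} = [(\stackS_{1}, \stackS_{1}), (\stackS_{1}, \stackS_{2}), \ldots, (\stackS_{1}, \stackS_{t})]$ for some $t \in \N$. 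The procedure appends $\stackS_{1}$ to \textsc{Base}, then sets $X \gets \Call{SearchGens}{\stackT \Vert \stackS_{1}}$; since $|\Approx{\stackT \Vert \stackS_{1}}{\stackT \Vert \stackS_{1}}| < |\Approx{\stackT}{\stackT}|$ by Definition~\ref{defn-splitter}\ref{item-splitter-smaller}, the inductive hypothesis applies to this recursive call. It therefore appends a list $B'$ to \textsc{Base}, and its return value is a strong generating set, relative to $B'$, for $\Call{Search}{\stackT \Vert \stackS_{1}, \stackT \Vert \stackS_{1}}$, which by Lemma~\ref{lem-genset}\ref{item-genset-stabiliser} equals $H \coloneqq \{g \in G^{\ast} : \stackS_{1}^{g} = \stackS_{1}\}$, the stabiliser of $\stackS_{1}$ in $G^{\ast}$. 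Hence $B_{\stackS} = [\stackS_{1}] \Vert B'$, and the pointwise stabiliser of $B_{\stackS}$ in $G^{\ast}$ equals the pointwise stabiliser of $B'$ in $H$, which is trivial by the inductive hypothesis; so $B_{\stackS}$ is a base for $G^{\ast}$.

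For the stabiliser-chain condition, let $Y$ be the set returned by the recursive call, so $Y \subseteq H$ and $\langle Y \rangle = H$, and let $X_{\stackS}$ be the final returned set. Every element added to $X$ in the loop over $i \in \{2, \ldots, t\}$ lies, by Corollary~\ref{cor-search-single-is-correct}, in some $\Call{Search}{\stackT \Vert \stackS_{1}, \stackT \Vert \stackS_{i}}$ with $i \geq 2$, hence maps $\stackT \Vert \stackS_{1}$ to $\stackT \Vert \stackS_{i} \neq \stackT \Vert \stackS_{1}$ and so does not fix $\stackS_{1}$; thus $X_{\stackS} \cap H = Y$, and therefore $X_{\stackS} \cap G^{(i)} = Y \cap G^{(i)}$ for every $i \geq 2$ (since $G^{(i)} \subseteq G^{(2)} = H$). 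For $i = 1$ we have $G^{(1)} = G^{\ast} = \langle X_{\stackS} \rangle$ by Lemma~\ref{lem-alg-searchgens-is-correct}; and for each $i \geq 2$, the group $G^{(i)}$ is exactly the $(i-1)$-th term of the stabiliser chain of $H$ relative to $B'$, because $\Stab_{G^{\ast}}(\stackS_{1}, \stackV'_{1}, \ldots, \stackV'_{i-2}) = \Stab_{H}(\stackV'_{1}, \ldots, \stackV'_{i-2})$ where $B' = (\stackV'_{1}, \ldots, \stackV'_{s})$, so $\langle X_{\stackS} \cap G^{(i)} \rangle = \langle Y \cap G^{(i)} \rangle = G^{(i)}$ by the inductive hypothesis applied to the recursive call. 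Hence $X_{\stackS}$ is a strong generating set for $G^{\ast}$ relative to $B_{\stackS}$, which completes the induction; the theorem is the case $\stackS = \EmptyStack{\Omega}$.

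I expect the main obstacle to be the bookkeeping that legitimises the pruning step (the test $\stackS_{i} \in \stackS_{j}^{\<X\>}$): one must confirm that whenever an index $i$ is skipped, the branch $\Call{Search}{\stackT \Vert \stackS_{1}, \stackT \Vert \stackS_{i}}$ is already contained in the group generated by the current $X$, so that the returned set still generates $G^{\ast}$. This is the content of Lemma~\ref{lem-genset}\ref{item-genset-optimisation}, but applying it requires matching the current state of $X$ to its hypothesis --- taking $\{j_{1}, \ldots, j_{l}\}$ to be the previously-processed indices at which $\Call{SearchSingle}{}$ was actually run --- and treating the case $j = 1$ of the membership test separately, by observing that $X \subseteq \Auto{\stackT}$ and transferring along the coset exactly as at the end of the proof of Lemma~\ref{lem-genset}\ref{item-genset-cosets}. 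Since this generation statement is already covered by Lemma~\ref{lem-alg-searchgens-is-correct}, the only genuinely new verification for the present theorem is that the stack-indexed stabiliser chain introduced above behaves exactly like the classical point-stabiliser chain, which is routine.
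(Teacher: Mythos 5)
Your argument is correct and essentially the paper's: both proofs rest on Lemma~\ref{lem-pre-genset}, Lemma~\ref{lem-genset}\ref{item-genset-stabiliser} and Lemma~\ref{lem-alg-searchgens-is-correct}, and both show that the stacks appended to \textsc{Base} along the leftmost recursion yield a stabiliser chain for $U_{1}\cap\cdots\cap U_{k}$; you merely package as a structural induction with a strengthened invariant what the paper does by explicitly unrolling the sequence of left-hand stacks $\stackT_{i}\Vert\stackS_{1,i}$. (Your incidental claim $X_{\stackS}\cap H=Y$ requires $\stackS_{i}\neq\stackS_{1}$ for $i\geq 2$, which does follow from the disjointness in Definition~\ref{defn-splitter}\ref{item-splitter-disjoint}, but only the containment $Y\subseteq X_{\stackS}\cap H$ is actually needed for your conclusion.)
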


\begin{proof}
  Given Lemma~\ref{lem-alg-searchgens-is-correct},
  $\Call{SearchGens}{\EmptyStack{\Omega}}$ is a generating set for the
  subgroup $U_{1} \cap \cdots \cap U_{k}$, so it remains to show that
  \textsc{Base} is a base, relative to which the generating set is strong.

  Firstly, if $\Approx{\Call{Refine}{\EmptyStack{\Omega},
  \EmptyStack{\Omega}}}{} = \{\idOmega\}$, then
  $\Call{SearchGens}{\EmptyStack{\Omega}}$ returns
  the generating set $\{\idOmega\}$ without modifying the variable
  \textsc{Base}, which is therefore still an empty list. This is a base and
  strong generating set for the trivial subgroup of $\Sym{\Omega}$, and so this
  case is complete.

  Otherwise, if
  $|\Approx{\Call{Refine}{\EmptyStack{\Omega}, \EmptyStack{\Omega}}}{}| \geq 2$,
  then we define $\stackT_{0} = \stackS_{1,0} = \EmptyStack{\Omega}$,
  and iteratively define $\stackT_{i+1}$ and $\stackS_{1,i+1}$ for
  $i = 0, 1, \ldots$, so long as
  \(
  |\Approx{\Call{Refine}{\stackT_{i} \Vert \stackS_{1,i},
                         \stackT_{i} \Vert \stackS_{1,i}}}{}| \geq 2
  \),
  via
  $(\stackT_{i + 1}, \stackT_{i + 1}) =
  \Call{Refine}{\stackT_{i} \Vert \stackS_{1,i},
                \stackT_{i} \Vert \stackS_{1,i}}$
  and
  $\Call{Split}{\stackT_{i+1}, \stackT_{i+1}} =
  [(\stackS_{1,i+1},\stackS_{1,i+1}), \ldots ]$.
  Thus $\stackT_{i + 1}$ is the stack obtained by refining $\stackT_{i} \Vert
  \stackS_{1,i}$, and $\stackS_{1,i+1}$ is the left-hand stack obtained by
  splitting $\stackT_{i+1}$.
  Let $r \in \N$ be the maximum value for which we defined $T_{r}$,
  which means that
  \(
  \Approx{\Call{Refine}{\stackT_{r} \Vert \stackS_{1,r},
                        \stackT_{r} \Vert \stackS_{1,r}}}{} = \{\idOmega\}
  \).

  It is straightforward to see that the sequence of stacks $\stackT_{0} \Vert
  \stackS_{1,0}, \stackT_{1} \Vert \stackS_{1,1}, \ldots, \stackT_{r} \Vert
  \stackS_{1,r}$ is exactly the sequence of stacks upon which the recursive
  procedure \Call{SearchGens}{} is called during the execution of
  Algorithm~\ref{alg-genset-search}, on line~\ref{line-gens-recurse}.
  Therefore
  \begin{align*}
    \{\idOmega\} = \Call{SearchGens}{\stackT_{r} \Vert \stackS_{1,r}}
              \subseteq
              \cdots
              & \subseteq
              \Call{SearchGens}{\stackT_{1} \Vert \stackS_{1,1}} \\
              & \subseteq
              \Call{SearchGens}{\stackT_{0} \Vert \stackS_{1,0}} \\
              & = \Call{SearchGens}{\EmptyStack{\Omega}},\ \text{and so}\\
    \{\idOmega\} = \< \Call{SearchGens}{\stackT_{r} \Vert \stackS_{1,r}} \>
              \leq
              \cdots
              & \leq
              \< \Call{SearchGens}{\stackT_{1} \Vert \stackS_{1,1}} \>
              \\
              & \leq
              \< \Call{SearchGens}{\stackT_{0} \Vert \stackS_{1,0}} \>
              = U_{1} \cap \cdots \cap U_{k}.
  \end{align*}
  Lemma~\ref{lem-genset}\ref{item-genset-stabiliser} and
  Lemma~\ref{lem-alg-searchgens-is-correct} imply that
  $\< \Call{SearchGens}{\stackT_{i} \Vert \stackS_{1,i}} \>$ is
  the stabiliser of $\stackS_{1,i}$ in $\< \Call{SearchGens}{\stackT_{i-1} \Vert
  \stackS_{1,i-1}} \>$
  for each $i \in \{1, \ldots, r\}$.
  In other words, Algorithm~\ref{alg-genset-search}
  constructs a stabiliser chain
  for $U_{1} \cap \cdots \cap U_{k}$ relative to
  \textsc{Base}. This proves the result.
\end{proof}

Typically, a base for a subgroup of $\Sym{\Omega}$ is assumed to be a list of
points in $\Omega$ itself, as opposed to a list of arbitrary objects upon which
the group acts.  This latter more general definition is the one that we have
used so far in this paper.  In order to use Algorithm~\ref{alg-genset-search} to
obtain a base consisting of points in $\Omega$, one can use the splitter from
Definition~\ref{defn-point-splitter}: using the notation of this definition and
an arbitrary $\alpha \in \Omega$, a permutation stabilises the stack
$[\Gamma_{\alpha}]$ if and only if it stabilises the point $\alpha$.  Therefore,
a generating set for a subgroup of $\Sym{\Omega}$ is strong with respect to the
list of stacks $[[\Gamma_{\alpha_{1}}], \ldots,
[\Gamma_{\alpha_{r}}]]$ if and only if it is strong with respect to
$[\alpha_{1}, \ldots, \alpha_{r}]$.

\subsection{Computing with a fixed sequence of left-hand
stacks}\label{sec-r-base}

In this section, we discuss a consequence of the setup of our definitions and
algorithms, which enables a significant performance optimisation, and through
which the usefulness and practicality of the refiners from
Section~\ref{sec-refiners-via-stack} becomes apparent.
This idea was inspired by, and is closely related to, the $\mathfrak{R}$-base
technique of Jeffrey Leon~\cite[Section~6]{leon1991} for partition backtrack
search, although we present the idea quite differently.

Roughly speaking, we observe that any time Algorithm~\ref{alg-search}
or~\ref{alg-genset-search} is executed to solve a problem, then the left-hand
stack of the ever-present pair is modified with the same sequence of changes in
every branch of the search. In other words, every branch of search has the same
sequence of left-hand stacks, up until the point that the branch ends (different
branches can have different lengths). This means that any entry in this fixed
sequence of left-hand stacks only ever needs to be computed once, and then
stored and recalled for later use. Furthermore, these stacks can give rise to
the fixed stacks and lists of points required by the refiners of
Section~\ref{sec-refiners-via-stack}.

This behaviour emerges, in essence,
because a refiner is a pair of functions of one variable,
rather than a single function of two variables
(Definition~\ref{defn-refiner});
because a non-empty value of an isomorphism approximator is a coset
of a subgroup, where the subgroup depends only on the given left-hand stack
(Definition~\ref{defn-approx-iso}\ref{item-approx-right-coset-of-aut});
and because the left-hand stacks produced by a splitter depend only on
the left-hand stack that it is given
(Definition~\ref{defn-splitter}\ref{item-splitter-invariant}).

\subsubsection{A performance improvement by using a fixed sequence of
left-hand stacks}

\begin{lemma}\label{lem-refine-fixed-left}
  Let the notation of Algorithm~\ref{alg-search} hold, and let $\stackS \in
  \Stacks{\Omega}$.  Then there exist $n, r \in \N$ and a fixed sequence
  of at most $mn$ modifications to $\stackS$ such that, for all $\stackV \in
  \Stacks{\Omega}$, either:
  \begin{enumerate}[label=\textrm{(\roman*)}]
    \item
      $\Call{Refine}{\stackS, \stackV}$ executes line~\ref{line-apply-refiner}
      of Algorithm~\ref{alg-search}
      some $i \in \{0, \ldots, mn\}$ times, performing the first $i$
      modifications to $\stackS$ in turn, and
      $\Approx{\Call{Refine}{\stackS, \stackV}}{} = \varnothing$; or
    \item
      $\Call{Refine}{\stackS, \stackV}$ executes line~\ref{line-apply-refiner}
      of Algorithm~\ref{alg-search}
      exactly $mn$ times, performing all $mn$ modifications to $\stackS$ in
      turn, and $|\Approx{\Call{Refine}{\stackS, \stackV}}{}| = r$.
  \end{enumerate}
\end{lemma}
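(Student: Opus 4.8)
The plan is to exploit two features of our framework. First, the left component $f_{L,i}$ of each refiner is a function of the left-hand stack only. Second, by Definition~\ref{defn-approx-iso}\ref{item-approx-right-coset-of-aut} a nonempty value of $\approxFunc$ with left-hand argument $\stackU$ is a right coset of the subgroup $\Approx{\stackU}{\stackU}$, so its cardinality equals $|\Approx{\stackU}{\stackU}|$, a quantity depending on $\stackU$ alone. Together these make the behaviour of \Call{Refine}{} \emph{on its first argument}~--~and in particular how many times it performs line~\ref{line-apply-refiner}~--~depend only on $\stackS$. To make this precise, write $\iota(j) \coloneqq ((j-1) \bmod m) + 1$, and define $\stackS^{(0)} \coloneqq \stackS$ and $\stackS^{(j)} \coloneqq \stackS^{(j-1)} \Vert f_{L, \iota(j)}(\stackS^{(j-1)})$ for $j \geq 1$; the $j$th modification is ``append $f_{L,\iota(j)}$ of the current left-hand stack''. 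Since the loop on lines~\ref{line-refine-loop}--\ref{line-apply-refiner} runs $i$ through $1, \ldots, m$ and is re-entered at $i = 1$ on each pass of the loop on lines~\ref{line-while-begin}--\ref{line-refine-loop-end}, and since $f_{L,i}$ ignores its partner, a routine check shows that whenever $\Call{Refine}{\stackS, \stackV}$ executes line~\ref{line-apply-refiner} for the $j$th time the left-hand stack passes from $\stackS^{(j-1)}$ to $\stackS^{(j)}$, for \emph{every} $\stackV$.

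Next I would fix $n$ and $r$. Put $a_j \coloneqq |\Approx{\stackS^{(jm)}}{\stackS^{(jm)}}| \in \N$ (positive since $\idOmega \in \Approx{\stackS^{(jm)}}{\stackS^{(jm)}}$). For every $\stackV$, writing $\stackV^{(jm)}$ for the right-hand stack after $jm$ modifications, the second feature above gives $|\Approx{\stackS^{(jm)}}{\stackV^{(jm)}}| \in \{0, a_j\}$; hence the emptiness test on line~\ref{line-while-begin} and the ``no-progress'' test on line~\ref{line-refine-not-smaller} are controlled by the integers $a_0, a_1, \ldots$ (and by whether the approximator has already returned $\varnothing$), independently of $\stackV$. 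As the $a_j$ cannot strictly decrease forever, I may take $n \geq 1$ minimal with $a_n \geq a_{n-1}$ and set $r \coloneqq a_{n-1}$; both depend only on $\stackS$, the given refiners, and $\approxFunc$. The fixed sequence of the lemma is $\stackS^{(0)}, \stackS^{(1)}, \ldots, \stackS^{(mn)}$, comprising $mn$ modifications.

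It then remains to trace $\Call{Refine}{\stackS, \stackV}$ for an arbitrary $\stackV$. The outer loop runs successive ``rounds'', round $j+1$ starting at $(\stackS^{(jm)}, \stackV^{(jm)})$; a round either completes the inner loop (advancing $m$ modifications, to $(\stackS^{((j+1)m)}, \stackV^{((j+1)m)})$) or aborts it once the two stacks differ in length~--~and in the latter case $\Approx = \varnothing$ by Definition~\ref{defn-approx-iso}\ref{item-approx-different-lengths}. A short induction from the preceding paragraph shows round $j+1$ is entered only when $a_0 > a_1 > \cdots > a_j$, so at most $n$ rounds occur. I then split into the two cases of the lemma. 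If the approximator ever returns $\varnothing$ in the run (in particular whenever an inner loop aborts), let $\stackS^{(i)}$ be the left-hand stack the first time it does; this happens only at, or inside, a round $j+1$ with $j \leq n$ (after the inner loop when $j = n$, possibly mid-loop when $j < n$), so $i \leq mn$, and \Call{Refine}{} then loops back once and returns $(\stackS^{(i)}, \stackV^{(i)})$, having performed exactly the first $i$ modifications: conclusion~(i). Otherwise the approximator never returns $\varnothing$, so every inner loop completes; rounds $1, \ldots, n$ all run and round $n+1$ does not (after round $n$ the test on line~\ref{line-refine-not-smaller} fails because $a_n \geq a_{n-1}$), so line~\ref{line-apply-refiner} is performed exactly $mn$ times and the procedure returns $(\stackS', \stackT') = (\stackS^{((n-1)m)}, \stackV^{((n-1)m)})$, whose approximation is nonempty and therefore has size $a_{n-1} = r$: conclusion~(ii). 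The two cases are mutually exclusive and cover all $\stackV$.

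I expect the main obstacle to be the bookkeeping in this last step: aligning ``the $j$th modification'' with the nested loop indices, correctly accounting for the rollback to $(\stackS', \stackT')$ on line~\ref{line-refine-loop-end}, and checking, in each way the run can terminate~--~$\Approx = \varnothing$ at the top of a round, $\Approx = \varnothing$ after a completed inner loop, or an aborted inner loop~--~that the number of executions of line~\ref{line-apply-refiner} stays within $\{0, \ldots, mn\}$ and that the returned approximation has the asserted size. The conceptual content, by contrast, lies entirely in the observation that $|\Approx{\stackS}{\stackV}|$ is either $0$ or a number depending only on $\stackS$.
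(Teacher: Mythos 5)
Your proposal is correct and takes essentially the same approach as the paper's proof: both arguments rest on the facts that each $f_{L,i}$ sees only the left-hand stack, that a non-empty approximation is a coset whose size is determined by the left-hand stack alone (Definition~\ref{defn-approx-iso}\ref{item-approx-right-coset-of-aut}), and that a length mismatch forces the approximation to be empty (Definition~\ref{defn-approx-iso}\ref{item-approx-different-lengths}). The only difference is organisational: you fix $n$ and $r$ in advance from the diagonal sizes $a_{j}$ of the fixed left-hand stacks, while the paper extracts them from a witness right-hand run and then checks consistency for every other right-hand stack.
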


\begin{proof}
  The \Call{Refine}{} procedure from Algorithm~\ref{alg-search} modifies its
  pair of stacks only on line~\ref{line-apply-refiner}, and the number of
  modifications that it makes is equal to the number of times that
  line~\ref{line-apply-refiner} is executed.
  If the pair of stacks given to the \Call{Refine}{} procedure is $(\stackS,
  \stackT)$, for instance, then the left-hand stack could be modified with the
  sequence of moves:
  \[
    \stackS
    \mathrel{\longrightarrow}
    \stackS \Vert f_{L,1}(\stackS)
    \mathrel{\longrightarrow}
    \big(\stackS \Vert f_{L,1}(\stackS)\big) \Vert
    f_{L,2} \big( \stackS \Vert f_{L,1}(\stackS) \big)
    \mathrel{\longrightarrow}
    \cdots
  \]
  up to some point, and the right-hand stack would be modified in the
  corresponding way:
  \[
    \stackT
    \mathrel{\longrightarrow}
    \stackT \Vert f_{R,1}(\stackT)
    \mathrel{\longrightarrow}
    \big(\stackT \Vert f_{R,1}(\stackT)\big) \Vert
    f_{R,2} \big( \stackT \Vert f_{R,1}(\stackT) \big)
    \mathrel{\longrightarrow}
    \cdots
  \]

  If the $m$-fold \textbf{for} loop on lines~\ref{line-refine-loop}
  and~\ref{line-apply-refiner} is interrupted because the condition $|\stackS| =
  |\stackT|$ fails to be satisfied at some point, then
  $\Approx{\stackS}{\stackT} = \varnothing$ by
  Definition~\ref{defn-approx-iso}\ref{item-approx-different-lengths}.  In this
  case, neither the condition on line~\ref{line-refine-not-smaller} nor the
  condition on line~\ref{line-refine-empty} is satisfied, and so the procedure
  returns its pair of stacks $(\stackS, \stackT)$ on
  line~\ref{line-refine-return-empty}, without further modification.

  Otherwise, the \Call{Refine}{} procedure returns after completing some number
  (perhaps zero) of repetitions of the full \textbf{for} loop from
  lines~\ref{line-refine-loop} and~\ref{line-apply-refiner}.  The procedure
  returns because either the condition on line~\ref{line-refine-not-smaller} is
  satisfied, or the condition on line~\ref{line-refine-empty} is not.

  Let $\stackV_{1} \in \Stacks{\Omega}$, and suppose that the \Call{Refine}{}
  procedure, when given the stacks $(\stackS, \stackV_{1})$, has completed its
  $n$\textsuperscript{th} full iteration of the \textbf{for} loop for some $n
  \in \N$, and suppose that the condition on
  line~\ref{line-refine-not-smaller} is satisfied.
  Let
  $(\stackS', \stackV_{1}')$ denote the pair of stacks immediately
  before the $n$\textsuperscript{th} iteration of the \textbf{for} loop,
  and let
  $(\stackS^{*}, \stackV_{1}^{*})$ denote the pair of stacks immediately
  after it.
  It follows that $0 < r \coloneqq |\Approx{\stackS'}{\stackV_{1}'}| \leq
  |\Approx{\stackS^{*}}{\stackV_{1}^{*}}|$, and so $\Call{Refine}{\stackS,
  \stackV_{1}} = (\stackS', \stackV_{1}')$.

  Next, let $\stackV_{2} \in \Stacks{\Omega}$ and suppose that the
  \Call{Refine}{} procedure, when given the stacks $(\stackS, \stackV_{2})$, has
  also completed its $n$\textsuperscript{th} full iteration of the \textbf{for}
  loop.  By the earlier arguments, the procedure has modified the left-hand
  stack with the exact same sequence of modifications as before, and so there
  exist stacks $\stackV_{2}^{*}, \stackV_{2}' \in \Stacks{\Omega}$ such that
  $(\stackS', \stackV_{2}')$ is the pair of stacks immediately before the
  $n$\textsuperscript{th} iteration of the \textbf{for} loop, and $(\stackS^{*},
  \stackV_{2}^{*})$ is the pair of stacks immediately after it.
  If $\Approx{\stackS^{*}}{\stackV_{2}^{*}} = \varnothing$, then the procedure
  returns on line~\ref{line-refine-return-empty}. Otherwise,
  Definition~\ref{defn-approx-iso}\ref{item-approx-right-coset-of-aut} implies
  that
  \begin{multline*}
    |\Approx{\stackS^{*}}{\stackV_{2}^{*}}| =
    |\Approx{\stackS^{*}}{}| =
    |\Approx{\stackS^{*}}{\stackV_{1}^{*}}|,
    \ \text{and that}\\
    |\Approx{\stackS'}{\stackV_{2}'}| =
    |\Approx{\stackS'}{}| =
    |\Approx{\stackS'}{\stackV_{1}'}|.
  \end{multline*}
  In particular $0 < |\Approx{\stackS'}{\stackV_{2}'}| \leq
  |\Approx{\stackS^{*}}{\stackV_{2}^{*}}|$. Thus the condition on
  line~\ref{line-refine-not-smaller} is satisfied in this case,
  and $\Call{Refine}{\stackS, V_{2}} = (\stackS', \stackV_{2}')$ with
  $|\Approx{\stackS'}{\stackV_{2}'}| = r$.
\end{proof}

\begin{lemma}\label{lem-search-fixed-left}
  Let the notation of Algorithm~\ref{alg-search} hold and let $\stackS \in
  \Stacks{\Omega}$.  Then there exists some $r \in \N \setminus \{1\}$ and fixed
  stacks $\stackS', \stackS'_{1} \in \Stacks{\Omega}$ such that, for all
  $\stackV \in \Stacks{\Omega}$, either:
  \begin{enumerate}[label=\textrm{(\roman*)}]
    \item\label{item-search-fixed-left-1}
      $|\Approx{\Call{Refine}{\stackS, \stackV}}{}| \leq 1$, and
      $\Call{Search}{\stackS, \stackV}$ backtracks on either
      line~\ref{line-no-isos},~\ref{line-single-solution},
      or~\ref{line-not-solution}
      of Algorithm~\ref{alg-search}; or
    \item\label{item-search-fixed-left-2}
      $|\Approx{\Call{Refine}{\stackS, \stackV}}{}| = r$,
      $\Call{Refine}{\stackS, \stackV} = (\stackS', \stackV')$ for some
      labelled digraph stack $\stackV'$, and $\Call{Search}{\stackS, \stackV}$
      recursively calls \Call{Search}{} on line~\ref{line-split} of Algorithm~\ref{alg-search},
      always with first argument $\stackS' \Vert \stackS'_{1}$.
  \end{enumerate}
\end{lemma}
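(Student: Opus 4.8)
The plan is to derive Lemma~\ref{lem-search-fixed-left} from Lemma~\ref{lem-refine-fixed-left} by reading off the \Call{Search}{} procedure of Algorithm~\ref{alg-search}, together with the coset structure of approximator values and the invariance of splitter outputs. First I would apply Lemma~\ref{lem-refine-fixed-left} to $\stackS$, obtaining $n \in \N$, an integer $r_{0} \in \N$, and the fixed sequence of at most $mn$ modifications to $\stackS$. Since \Call{Refine}{} alters its left-hand stack only on line~\ref{line-apply-refiner}, where the replacement $\stackS \Vert f_{L,i}(\stackS)$ depends on the left-hand stack alone, performing the full fixed sequence of $mn$ modifications to $\stackS$ produces a single stack $\stackS'$, independent of $\stackV$. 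Lemma~\ref{lem-refine-fixed-left} then yields, for every $\stackV \in \Stacks{\Omega}$, the dichotomy: either (a)~$\Approx{\Call{Refine}{\stackS, \stackV}}{} = \varnothing$, so $|\Approx{\Call{Refine}{\stackS, \stackV}}{}| = 0$; or (b)~$\Call{Refine}{\stackS, \stackV} = (\stackS', \stackV')$ for some $\stackV'$, with $|\Approx{\stackS'}{\stackV'}| = r_{0}$.

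Next I would split according to whether $r_{0} \geq 2$ and case~(b) is ever realized. If $r_{0} \leq 1$, or if case~(b) never occurs, then $|\Approx{\Call{Refine}{\stackS, \stackV}}{}| \leq 1$ for every $\stackV$, so tracing $\Call{Search}{\stackS, \stackV}$ we see that line~\ref{line-refine} replaces the pair by $\Call{Refine}{\stackS, \stackV}$ and the procedure then backtracks immediately, on line~\ref{line-no-isos} if the approximation is empty and on line~\ref{line-single-solution} or~\ref{line-not-solution} otherwise (line~\ref{line-case-one-candidate}); thus statement~\ref{item-search-fixed-left-1} holds for every $\stackV$, and I would take $r = 2$ and $\stackS'_{1} = \EmptyStack{\Omega}$, so that statement~\ref{item-search-fixed-left-2} is vacuous. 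Otherwise $r_{0} \geq 2$ and case~(b) holds for some $\stackV$; put $r = r_{0}$. For such a $\stackV'$ we have $|\Approx{\stackS'}{\stackV'}| = r \geq 2$, and since $\Approx{\stackS'}{\stackV'}$ is a right coset of the subgroup $\Approx{\stackS'}{\stackS'}$ by Definition~\ref{defn-approx-iso}\ref{item-approx-right-coset-of-aut}, also $|\Approx{\stackS'}{\stackS'}| = r \geq 2$; hence Definition~\ref{defn-splitter}\ref{item-splitter-invariant}, applied to the fixed left-hand stack $\stackS'$, furnishes a stack $\stackS'_{1}$ depending only on $\stackS'$ such that $\Split{\stackS'}{\stackU}$ is a list of pairs each with first component $\stackS'_{1}$, whenever $|\Approx{\stackS'}{\stackU}| \geq 2$. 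Then, for an arbitrary $\stackV$: in case~(a) the procedure $\Call{Search}{\stackS, \stackV}$ backtracks on line~\ref{line-no-isos} as above, giving statement~\ref{item-search-fixed-left-1}; in case~(b), line~\ref{line-refine} replaces the pair by $(\stackS', \stackV')$ with $|\Approx{\stackS'}{\stackV'}| = r \geq 2$, so control reaches the case on line~\ref{line-multiple-candidates}, whose body on line~\ref{line-split} calls \Call{Search}{} recursively once for each pair $(\stackS_{i}, \stackT_{i}) \in \Split{\stackS'}{\stackV'}$ with first argument $\stackS' \Vert \stackS_{i} = \stackS' \Vert \stackS'_{1}$, which is statement~\ref{item-search-fixed-left-2}.

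The argument is mostly bookkeeping once Lemma~\ref{lem-refine-fixed-left} is available; the one step that needs genuine care is checking that $\stackS'_{1}$ may be chosen independently of $\stackV$. The subtlety is that a splitter's left-hand output is guaranteed to be invariant only across a fixed left-hand input, so one must first observe that the split on line~\ref{line-split} is always performed on the same left-hand stack $\stackS'$, then use Definition~\ref{defn-approx-iso}\ref{item-approx-right-coset-of-aut} to see that $|\Approx{\stackS'}{\stackS'}| \geq 2$ (so that Definition~\ref{defn-splitter}\ref{item-splitter-invariant} genuinely applies), and finally dispose of the degenerate possibilities ($r_{0} \leq 1$, or the split never being reached) by taking $r = 2$ and $\stackS'_{1}$ arbitrary.
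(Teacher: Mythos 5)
Your proposal is correct and follows essentially the same route as the paper's (very terse) proof: Lemma~\ref{lem-refine-fixed-left} supplies the fixed left-hand stack $\stackS'$ and the fixed value $r$, Definition~\ref{defn-splitter}\ref{item-splitter-invariant} applied to that fixed left-hand stack supplies $\stackS'_{1}$, and your explicit handling of the degenerate possibilities ($r_{0} \leq 1$, or the split never being reached) fills in what the paper leaves implicit. One small correction: because \Call{Refine}{} returns the pair saved on line~\ref{line-refine-store} before the final (unproductive) round, the left-hand stack it returns in your case~(b) is the one obtained after the first $m(n-1)$ modifications, not after all $mn$ of them, as the paper's proof notes in passing; this changes only which fixed stack plays the role of $\stackS'$, not the validity of the existence argument.
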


\begin{proof}
  When studying
  the \Call{Search}{} procedure from Algorithm~\ref{alg-search}, it becomes clear that~\ref{item-search-fixed-left-1} is a possibility.  We see
  that~\ref{item-search-fixed-left-2} is the remaining possibility, by noticing that
  $r$ and the fixed stack $\stackS'$ exist by Lemma~\ref{lem-refine-fixed-left}
  (in the notation of Lemma~\ref{lem-refine-fixed-left}, $\stackS'$ is
  obtained by applying the first $m(n - 1)$ modifications to $\stackS$), and
  the stack $\stackS_{1}'$ is fixed by
  Definition~\ref{defn-splitter}\ref{item-splitter-invariant}.
\end{proof}

\begin{cor}\label{cor-fixed-stack}
  In each branch of search, Algorithm~\ref{alg-search} modifies its left-hand
  stack with the same sequence of moves, until the branch ends and the
  algorithm backtracks.
\end{cor}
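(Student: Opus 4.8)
The plan is to proceed by induction on the depth $d \in \N_{0}$ of a node in the recursion tree of calls to the \Call{Search}{} procedure that is generated when Algorithm~\ref{alg-search} is run on a valid input, and to show that all calls to \Call{Search}{} occurring at depth $d$ receive the same left-hand argument. Granting this, the corollary follows at once, since the sequence of modifications that a single invocation of \Call{Search}{} performs on its left-hand stack is determined by its left-hand argument alone (as explained next), so along any branch the left-hand stack evolves through a sequence of moves that depends only on the depth reached, not on the branch.

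First I would pin down the two ways in which \Call{Search}{} alters its left-hand stack: via the call to \Call{Refine}{} on line~\ref{line-refine}, and via the appending of a stack $\stackS_{1}$ on line~\ref{line-split} just before each recursive call. Lemma~\ref{lem-refine-fixed-left} tells us that the moves \Call{Refine}{} makes to the left-hand stack form a fixed sequence of at most $mn$ moves that depends only on the left-hand stack it is handed; and Lemma~\ref{lem-search-fixed-left}\ref{item-search-fixed-left-2}, together with Definition~\ref{defn-splitter}\ref{item-splitter-invariant} (which forces all the left-hand outputs of the splitter to coincide), tells us that the stack appended on line~\ref{line-split} is itself a fixed function of the left-hand argument. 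Hence the whole sequence of moves on the left-hand stack during one execution of \Call{Search}{}, up to the point where the branch either terminates or recurses, depends only on the left-hand argument.

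For the induction: at depth $0$ the sole call is $\Call{Search}{\EmptyStack{\Omega}, \EmptyStack{\Omega}}$ from line~\ref{line-start-search}, whose left-hand argument is the fixed stack $\EmptyStack{\Omega}$. Assuming that every call at depth $d$ has the same left-hand argument $\stackS$, Lemma~\ref{lem-search-fixed-left} applied to $\stackS$ gives that, for each such call, either it backtracks on line~\ref{line-no-isos},~\ref{line-single-solution}, or~\ref{line-not-solution} without recursing (case~\ref{item-search-fixed-left-1}), or else $\Call{Refine}{\stackS, \stackV} = (\stackS', \stackV')$ with $\stackS'$ the fixed stack furnished by the lemma, and every recursive call on line~\ref{line-split} has first argument $\stackS' \Vert \stackS'_{1}$ with $\stackS'_{1}$ also fixed (case~\ref{item-search-fixed-left-2}). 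Either way, any call at depth $d+1$ receives the left-hand argument $\stackS' \Vert \stackS'_{1}$, which is independent of the branch. This closes the induction.

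The main obstacle, such as it is, is not mathematical but organisational: one must match the quantifier ``in each branch of search'' against the recursion tree of a single run of the algorithm rather than against arbitrary inputs to \Call{Search}{}, and observe that distinct branches are precisely distinct root-to-leaf paths in that tree, all of which share an initial segment of the canonical sequence of left-hand stacks. Once this correspondence is made explicit, Lemmas~\ref{lem-refine-fixed-left} and~\ref{lem-search-fixed-left} supply everything needed, and there is no remaining computation to carry out.
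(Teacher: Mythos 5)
Your proposal is correct and follows essentially the same route as the paper: both arguments rest on Lemmas~\ref{lem-refine-fixed-left} and~\ref{lem-search-fixed-left} to show that the modifications to, and the recursive extension of, the left-hand stack depend only on the left-hand argument, and then conclude that at each recursion depth every call receives the same left-hand stack. Your explicit induction on depth merely spells out what the paper's proof states more briefly.
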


\begin{proof}
  A branch consists of a sequence of recursive calls to \Call{Search}{},
  beginning with the call on line~\ref{line-start-search}, and ending at some
  depth of the recursion by backtracking on one of
  lines~\ref{line-no-isos},~\ref{line-single-solution},
  or~\ref{line-not-solution}.
  Lemmas~\ref{lem-refine-fixed-left} and~\ref{lem-search-fixed-left} show that,
  if the \Call{Search}{} procedure at some depth of the recursion is given the
  left-hand stack $\stackS$ and any right-hand stack, then the sequence of
  modifications made to the left-hand stack (until either backtracking or
  recursion happens) is independent of the right-hand stack. Furthermore,
  Lemma~\ref{lem-search-fixed-left} shows that at any given depth of recursion,
  the \Call{Search}{} procedure is recursively called with the same left-hand
  stack. The result follows.
\end{proof}

By very similar arguments, Algorithm~\ref{alg-genset-search} also
modifies its left-hand stack with the exact same sequence of moves in each
branch of search, until
the algorithm backtracks.

Corollary~\ref{cor-fixed-stack} shows that we may store the modifications to
the left-hand stacks the first time that they are made, and then we can simply
recall a result whenever it is needed again.  This means that on most
occasions, when applying a refiner, we need only compute the value of the
right-hand stack under the refiner, since we can simply look up the result for
the left-hand stack. This leads to a performance speedup of roughly 50\%.

\subsubsection{Constructing and applying a refiner via the fixed sequence of
left-hand stacks}\label{sec-details-refiner-fixed-stack}

We discuss how to use Lemma~\ref{lem-fixed-stack} to build a
refiner for a group $G$ via the fixed sequence of left-hand stacks.
Using the notation
of this lemma, in order to define the function $f$ such that $(f, f)$ is a
refiner for $G$, for each $i \in \N_{0}$ we must create labelled digraph
stacks $\stackV_{i}$, and lists $F_{i}$ that consist of points in $\Omega$.
We start with $\stackV_{i}$ and $F_{i}$ being undefined for all $i \in \N_{0}$,
and we define $\stackV_{i}$ and $F_{i}$ on-demand as we apply the refiner during
the execution of Algorithm~\ref{alg-search} or~\ref{alg-genset-search}.

Let $\stackS, \stackT \in \Stacks{\Omega}$ have equal lengths.
We apply the refiner as follows.

If $\stackV_{|\stackS|}$ and $F_{|\stackS|}$ have already been defined, then we
can look up and return the stored value of $f(\stackS)$ (if it is already
known), and we can compute $f(\stackT)$ (and $f(\stackS)$, if it is not already
known) as specified in Lemma~\ref{lem-fixed-stack}.  Since we compute with the
same sequence of left-hand stacks in every branch of search, as discussed above,
then it is likely that $f(\stackS)$  has already been computed.

Otherwise, if $\stackV_{|\stackS|}$ and $F_{|\stackS|}$ are still undefined,
then we define $F_{|\stackS|} = \Fixed{\stackS}$, and we
define $\stackV_{|\stackS|}$
to be some arbitrary labelled digraph stack that is preserved by
$G_{\Fixed{\stackS}}$, the stabiliser of $\Fixed{\stackS}$ in $G$.
For example, if we want our refiner to exploit the orbit data of
$G_{\Fixed{\stackS}}$, then we could define $V_{|\stackS|}$ to be the stack
$[\Gamma]$, where $\Gamma$ is a labelled digraph on $\Omega$ without arcs in
which two vertices share a label if and only if they belong to the same orbit of
$G_{\Fixed{\stackS}}$ on $\Omega$.
Alternatively,
$V_{|\stackS|}$ could be a list of all, or some, of the orbital graphs of
$G_{\Fixed{\stackS}}$ on $\Omega$, represented as labelled digraphs.
Given $V_{|\stackS|}$ and $F_{|\stackS|}$, then $f(\stackS) = V_{|\stackS|}$
(and we store its value), and we compute $f(\stackT)$ as in
Lemma~\ref{lem-fixed-stack}.

In order to construct a refiner for the coset $G h$, for some $h \in
\Sym{\Omega}$, we construct the function $f$ as above;
the corresponding refiner is $(f, g)$,
where $g(\stackS) = f(\stackS^{h^{-1}}){}^{h}$ for all $\stackS \in
\Stacks{\Omega}$
(see Lemma~\ref{lem-refiner-right-coset}).

In essence, this technique lets us use the fixed sequence of left-hand stacks to
arbitrarily order objects like orbits and orbital graphs, for use in refiners.
This addresses the problem discussed in Example~\ref{ex-refiner-fixed-stack}, and
thus can lead to more effective refinement.

\section{Experiments}\label{sec-experiments}

In this section, we provide experimental data comparing the behaviour of our
algorithms against partition backtrack, in order to
highlight the potential of our techniques.

In particular, we repeat the experiments of~\cite[Section~6]{newrefiners} (by
the first three authors of the present paper), which showed how orbital graphs
can significantly improve the partition backtrack algorithm when computing
various kinds of set stabilisers and subgroup intersections.
We also investigate some additional challenging problems.

It would not be useful to investigate classes of problems where partition
backtrack already performs very well, and for which there is no necessary or
realistic scope for further improvement.
In addition, there are other classes of problems, such as those that involve
searching for highly-transitive groups, where we would expect all
techniques (including ours) to perform badly, and so it also makes sense to
avoid such problems.
Instead, we have chosen to investigate problems
that are interesting and important in their own right, including ones
that we expect to be hard for many search techniques.

At the time of writing, we have focused on the mathematical theory of our
algorithms, rather than on the speed of our implementations.  Because of this,
we would expect our current implementations to perhaps unfairly struggle in time
comparisons against implementations of partition backtrack, and so such
comparisons would be inappropriate at this point.

Therefore, whereas the experiments in~\cite[Section~6]{newrefiners} analyse the
\emph{time} required by an algorithm to solve a problem, here we analyse the
\emph{size of the search} required by the algorithm to solve it.
We define a \emph{search node} of a search to be an instance of the main
searching procedure being called recursively during its execution;
the size of a search is then its number of search nodes.
If an algorithm requires \(0\) search nodes to solve a problem, then
this means that the algorithm solved the problem without entering recursion.
For the algorithms that we compare, this can only be achieved with a search problem
that has either no solutions, or exactly one.

In general, a backtrack search algorithm spends effort at each
node to prune the search tree and organise the search.  The size
of a search is not obviously related to the time taken to complete it, since a
smaller search typically comes at the cost of spending more effort at each node.
However, the computations at each node of our algorithms are largely
digraph-based, and the very high performance of digraph-based computer programs
such as \bliss~\cite{bliss} and \nauty~\cite{practical2} suggests that, in
practice, such computations could potentially be cheap.
Therefore, with further development, we have reason to believe that,
for problems where our techniques require significantly smaller searches,
the increased time spent at each node could be out-weighed by the smaller number
of nodes in total, giving faster searches.

For the problems that we investigate in
Sections~\ref{sec-grid-groups}--\ref{sec-intransitive-intersection}, we compare
the following techniques:
\begin{enumerate}[label=\textrm{(\roman*)}]
  \item\textsc{Leon}:
    Standard partition backtrack search,
    as described by Jeffrey Leon~\cite{leon1997, leon1991}.

  \item\textsc{Orbital}:
    Partition backtrack search with orbital graph refiners,
    as described in~\cite{newrefiners}.

  \item\textsc{Strong}:
    Backtrack search with labelled digraphs,
    using the
    isomorphism and fixed-point approximators from
    Definition~\ref{defn-strong-approx}
    and the splitter from Definition~\ref{defn-point-splitter}.

  \item\textsc{Full}:
    Backtrack search with labelled digraphs,
    using the isomorphism and fixed-point approximators
    from Definition~\ref{defn-nauty-approx}
    and the splitter from Definition~\ref{defn-point-splitter}.
\end{enumerate}

The \textsc{Leon} technique is roughly the same as backtrack search with
labelled digraphs, where the labelled digraphs in the stack have no arcs.
The \textsc{Orbital} technique is essentially the same as backtrack search with
labelled digraphs, using the `weak equitable labelling' isomorphism and
fixed-point approximators from Definition~\ref{defn-weak-approx}.
The \textsc{Strong} technique considers all labelled digraphs in the stack
simultaneously to make its approximations, while the \textsc{Full} technique,
which completely calculates rather than just approximates, is in principle the most
expensive of the four methods.

We performed our experiments using the \textsc{GraphBacktracking}~\cite{GAPpkg}
and \textsc{BacktrackKit}~\cite{BTpkg} packages for \textsc{GAP}~\cite{GAP4}.
\textsc{BacktrackKit} provides a simple implementation of the algorithms in
\cite{newrefiners,leon1991}, and provides a base for
\textsc{GraphBacktracking}. Where we reproduce experiments from
\cite{newrefiners}, we ensure that we find the same sized searches.

\subsection{Set stabilisers and partition stabilisers in grid
groups}\label{sec-grid-groups}

We first explore the behaviour of our techniques on stabiliser problems in grid
groups. This setting was previously considered
in~\cite[Section~6.1]{newrefiners}, and as mentioned there, these kinds of
problems arise in numerous real-world situations.

\begin{definition}[\mbox{Grid
  group~\cite[Definition~36]{newrefiners}}]\label{def-gridgroup}
  Let \(n \in \N\) and \(\Omega = \{1, \ldots, n\}\).  The direct product
  \(\Sym{\Omega} \times \Sym{\Omega}\) acts faithfully on the Cartesian product
  \(\Omega \times \Omega\) via
  \({(\alpha, \beta)}^{(g, h)} = (\alpha^{g}, \beta^{h})\)
  for all \(\alpha, \beta \in \Omega\) and \(g, h \in \Sym{\Omega}\).
  The \emph{\(n \times n\) grid group} is the image of the embedding of
  \(\Sym{\Omega} \times \Sym{\Omega}\) into \(\Sym{\Omega \times \Omega}\)
  defined by this action.
\end{definition}

Let \(n \in \N\) and \(\Omega = \n\), and let \(G \leq \Sym{\Omega
\times \Omega}\) be the \(n \times n\) grid group.  If we consider \(\Omega
\times \Omega\) to be an \(n \times n\) grid, where the sets of the form
\(\set{(\alpha, \beta)}{\beta \in \Omega}\) and \(\set{(\beta, \alpha)}{\beta
\in \Omega}\) for each \(\alpha \in \Omega\) are the rows and columns of the
grid, respectively, then \(G\) is the subgroup of \(\Sym{\Omega \times \Omega}\)
that preserves the set of rows and the set of columns of the grid.
Note that the $n \times n$ grid group is $2$-closed, which  means that it is
well suited to the techniques of this paper.

The experiments in~\cite{newrefiners} solved two kinds of set stabiliser
problems in grid groups.  We repeat these problems here,
along with an unordered partition stabiliser problem:

\begin{enumerate}[label=\textrm{(\roman*)}]
  \item\label{exp-set-stab-grid-1}
    Compute the stabiliser in $G$ of a subset of \(\Omega \times \Omega\) of
    size \(\lfloor n^{2} / {2} \rfloor\).

  \item\label{exp-set-stab-grid-2}
    Compute the stabiliser in $G$ of a subset of \(\Omega \times \Omega\) that
    has \(\lfloor n / {2} \rfloor\) entries from each grid-row.

  \item\label{exp-part-stab-grid-1}
    If \(2 \mathop{\vert} n\), then compute the stabiliser in $G$ of an unordered
    partition of \(\Omega \times \Omega\) that has two cells, each of size
    \(n^{2} / 2\).

\end{enumerate}

As in~\cite[Section~6.1]{newrefiners}, we compute with the \(n \times n\) grid
group as a subgroup of \(\Sn{n^{2}}\) rather than as a subgroup of $\Sym{\n
\times \n}$.
The algorithms have no prior knowledge of the grid structure that
the group preserves. Tables~\ref{tab-exp-set-stab-grid}
and~\ref{tab-exp-part-stab-grid} show the results concerning the search size
required by the different techniques to solve 50 random problems each of
types~\ref{exp-set-stab-grid-1},~\ref{exp-set-stab-grid-2},
and~\ref{exp-part-stab-grid-1}
in a grid group.
An entry in the `Zero\%' column shows the percentage of problems that
an algorithm solved with a search of size zero.
These columns are omitted when they are all-zero.

\begin{table}[!ht]
  \centering
  \small
  \begin{tabular}{r r r r r r r r r r r r}
    \hline
                      &&&& \multicolumn{2}{c}{\textsc{Orbital},}
                     &&&&& \multicolumn{2}{c}{\textsc{Orbital},} \\
     && {\textsc{Leon}} && \multicolumn{2}{c}{\textsc{Strong}, \textsc{Full}}
    &&& {\textsc{Leon}} && \multicolumn{2}{c}{\textsc{Strong}, \textsc{Full}}
    \\
    \cline{3-3}
    \cline{5-6}
    \cline{9-9}
    \cline{11-12}
    \(n\) &&
    {\scriptsize Median} && {\scriptsize Median} & {\scriptsize Zero\%} &&&
    {\scriptsize Median} && {\scriptsize Median} & {\scriptsize Zero\%}
    \\
    \hline
     3 &&    4   && 2 &  22   &&&     7   && 2 &   0 \\
     4 &&    8   && 0 &  50   &&&     8   && 2 &   0 \\
     5 &&   16   && 2 &  44   &&&    13   && 2 &   0 \\
     6 &&   23   && 0 &  68   &&&    34   && 2 &  20 \\
     7 &&   34   && 0 &  74   &&&    41   && 0 &  54 \\
     8 &&   46   && 0 &  90   &&&    92   && 0 &  68 \\
     9 &&   58   && 0 &  92   &&&   108   && 0 &  54 \\
    10 &&   75   && 0 &  88   &&&   290   && 0 &  86 \\
    11 &&  107   && 0 &  94   &&&   262   && 0 &  90 \\
    12 &&  124   && 0 & 100   &&&  1085   && 0 &  92 \\
    13 &&  155   && 0 & 100   &&&   788   && 0 &  98 \\
    14 &&  185   && 0 &  96   &&& 21774   && 0 &  96 \\
    15 &&  216   && 0 &  98   &&&  2471   && 0 & 100 \\
    \cline{1-1}
    \cline{3-6}
    \cline{9-12}
     && \multicolumn{4}{c}{Problem~\ref{exp-set-stab-grid-1}}
    &&& \multicolumn{4}{c}{Problem~\ref{exp-set-stab-grid-2}}
    \\
    \hline
  \end{tabular}
  \caption{
    Search sizes for 50 instances of
    Problems~\ref{exp-set-stab-grid-1} and~\ref{exp-set-stab-grid-2}
    in the $n \times n$ grid group.
  }\label{tab-exp-set-stab-grid}
\end{table}

\begin{table}[!ht]
  \centering
  \small
  \begin{tabular}{r r r r r r r r}
    \hline
    && \multicolumn{1}{c}{\textsc{Leon}}
    && \multicolumn{1}{c}{\textsc{Orbital}}
    && \multicolumn{2}{c}{\textsc{Strong}, \textsc{Full}}
    \\
    \cline{3-3}
    \cline{5-5}
    \cline{7-8}
    \(n\)
    && {\scriptsize Median} && {\scriptsize Median}
    && {\scriptsize Median} & {\scriptsize Zero\%}
    \\
    \hline
    4   &&  16   &&  16   && 5 &  24 \\
    6   &&  44   &&  36   && 0 &  66 \\
    8   &&  82   &&  64   && 0 &  82 \\
    10  && 129   && 100   && 0 &  88 \\
    12  && 206   && 144   && 0 &  96 \\
    14  && 317   && 196   && 0 & 100 \\
    16  && 504   && 256   && 0 & 100 \\
    18  && 664   && 324   && 0 &  98 \\
    \hline
  \end{tabular}
  \caption{
    Search sizes for 50 instances of
    Problem~\ref{exp-part-stab-grid-1}
    in the $n \times n$ grid group.
  }\label{tab-exp-part-stab-grid}
\end{table}

In~\cite[Section~6.1]{newrefiners}, the \textsc{Orbital} algorithm was much
faster than the classical \textsc{Leon} algorithm at solving
problems of types~\ref{exp-set-stab-grid-1} and~\ref{exp-set-stab-grid-2}.  In
Table~\ref{tab-exp-set-stab-grid}, we see why: \textsc{Orbital} typically
requires no search for these problems. \textsc{Leon} used a total of 65,834
nodes to solve all problems in Problem~\ref{exp-set-stab-grid-1}, and 37,882,616
nodes for Problem~\ref{exp-set-stab-grid-2}, while \textsc{Orbital} required 567
for Problem~\ref{exp-set-stab-grid-1} and 1073 for
Problem~\ref{exp-set-stab-grid-2}.  The same numbers of nodes were also required
for both \textsc{Strong} and \textsc{Full}, since there is no possible
improvement.

In Table~\ref{tab-exp-part-stab-grid}, however, we clearly see the
benefits of our new techniques with unordered partition stabilisers.  For these
problems, partition backtrack~--~\textsc{Leon} and \textsc{Orbital}~--~takes an
increasing number of search nodes, with 140,177 nodes required for \textsc{Leon}
and 57,120 nodes for \textsc{Orbital} to solve all instances of
Problem~\ref{exp-part-stab-grid-1}.  The \textsc{Strong} algorithm, on the other
hand, is powerful enough in almost all cases to solve these same problems
without search, requiring only 450 nodes to solve all problem instances.

\subsection{Intersections of primitive groups with symmetric wreath
products}\label{sec-primitive-intersection}

As in~\cite[Section~6.2]{newrefiners}, we next investigate the behaviours that
the various search techniques have when intersecting primitive groups with
wreath products of symmetric groups.  This gives difficult but
interesting examples of subgroup intersections.  To construct these problems, we
use the primitive groups library, which is included in the
\textsc{PrimGrp}~\cite{primgrp} package for \textsc{GAP}.

For a given a composite \(n \in \{6,\ldots,80\}\), we create the
following problems: for each primitive subgroup \(G \leq \Sn{n}\) that is
neither the symmetric group nor the natural alternating subgroup of \(\Sn{n}\), and for
each proper divisor $d$ of $n$, we construct the wreath product
$\Sn{n/d} \wr \Sn{d}$ as a subgroup of
\(\Sn{n}\), which we then conjugate by a randomly chosen element of
\(\Sn{n}\).  Finally, we use each of the algorithms in turn to compute the
intersection of \(G\) with the conjugated wreath product.
We create 50 such intersection problems for each $n$, $G$, and $d$.

For each \(k \in \{6,\ldots,80\}\), we record the cumulative number of search
nodes that each algorithm requires to solve all of the intersection problems for
all composite \(n \in \{6,\ldots,k\}\).
We display these cumulative totals in Figures~\ref{fig-prim1trans}
and~\ref{fig-prim2trans}.
As in~\cite[Section~6.2]{newrefiners}, we separate the 2-transitive groups from
those primitive groups that are not 2-transitive.  Note that there exist quite a
few values of \(n \in \{6, \ldots, 80\}\) for which every primitive subgroup of
$\Sn{n}$ is 2-transitive.

\begin{figure}[!ht]
  \centering
  \includegraphics[width=0.65\textwidth,keepaspectratio,clip=true]{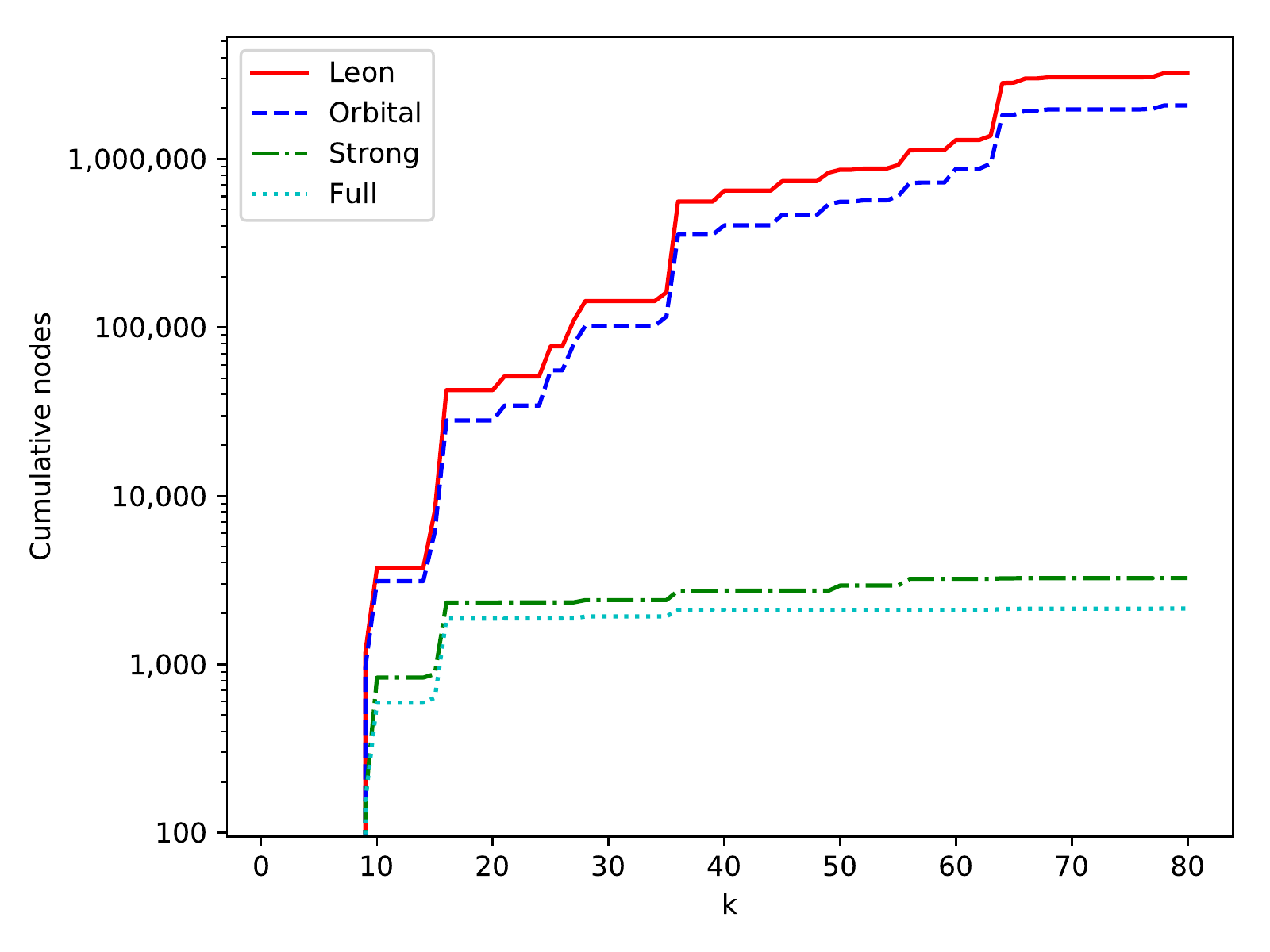}
  \caption{
    Cumulative nodes required to intersect primitive (but
    not 2-transitive) groups with wreath products of symmetric groups, for all
    problems with \(n \in \{6, \ldots, k\}\).
  }\label{fig-prim1trans}
\end{figure}

For the primitive but not 2-transitive groups, the total number of search nodes
required by the \textsc{Leon} algorithm is
3,239,403.
The \textsc{Orbital} algorithm reduces this total search size by approximately
35\%, to
2,079,356,
but the cumulative search size for \textsc{Strong} is much smaller, at only
3,248 nodes, and for \textsc{Full} the cumulative search size is even smaller,
at only 2,140 nodes.

This huge reduction in search size is because the \textsc{Strong} and
\textsc{Full} algorithms solve almost every problem without any search.  Out of
40,150 experiments, the \textsc{Strong} algorithm required search for only 703,
and the \textsc{Full} algorithm required search for only 654. On the other hand,
the \textsc{Leon} and \textsc{Orbital} algorithms required search for every
single problem.

\begin{figure}[!ht]
  \centering
  \includegraphics[width=0.65\textwidth,keepaspectratio,clip=true]{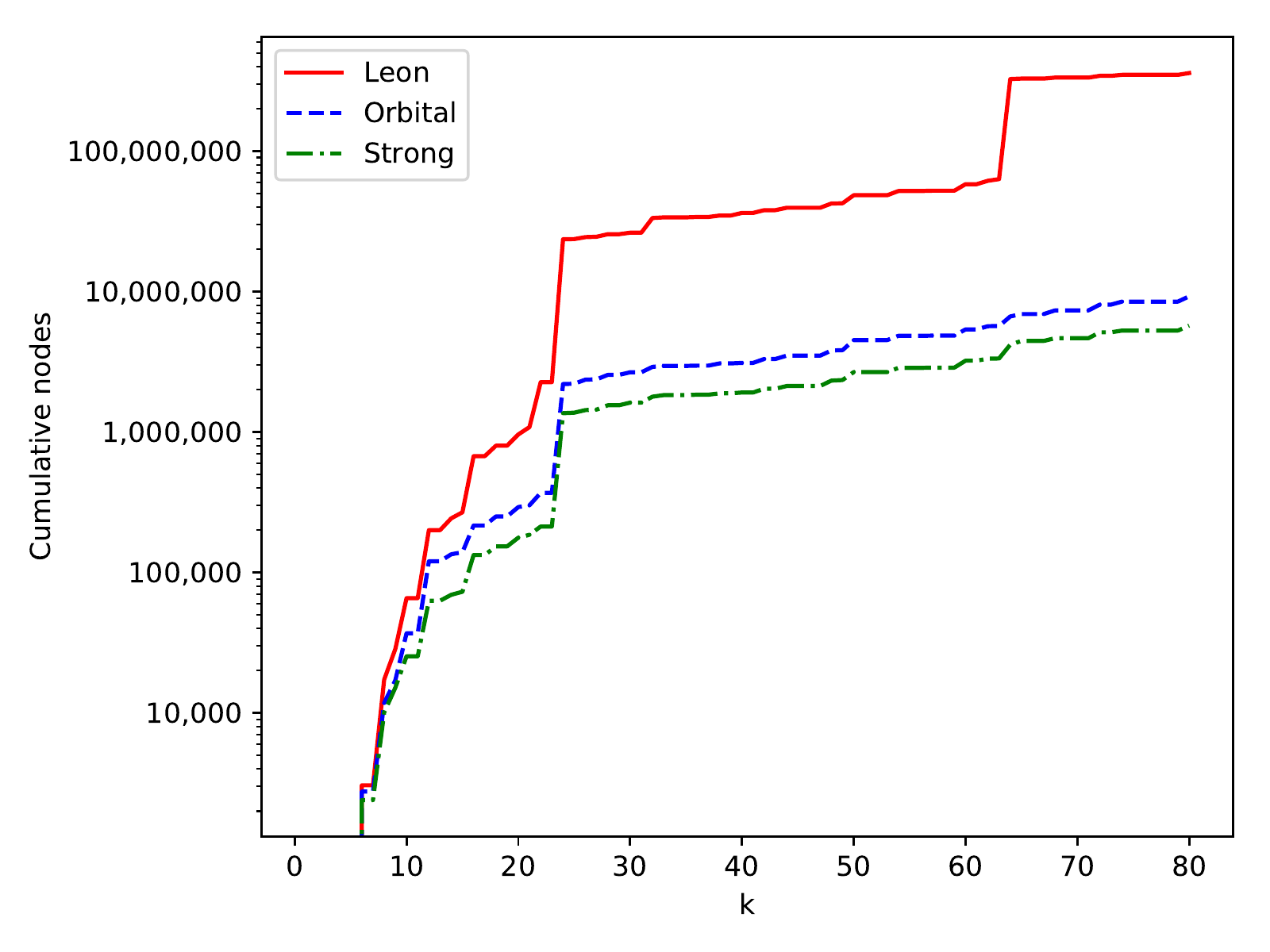}
  \caption{
    Cumulative nodes required to intersect 2-transitive groups
    with wreath products of symmetric groups, for all problems
    with \(n \in \{6, \ldots, k\}\).
    The line for \textsc{Full} is omitted, since at this scale, it is
    indistinguishable from the line for \textsc{Strong}.
  }\label{fig-prim2trans}
\end{figure}

For the intersection problems involving groups that
are at least 2-transitive, the improvement of the new techniques over the
partition backtrack algorithms is much smaller, and all of the algorithms
require a non-zero search size to solve every problem.  This stems from the fact
that a 2-transitive group has a unique orbital graph, which is a complete digraph.

The \textsc{Leon} algorithm needs roughly 359
million search nodes,
while the \textsc{Orbital} algorithm requires roughly 9.23 million,
the \textsc{Strong} algorithm requires roughly 5.72 million,
and the \textsc{Full} algorithm requires roughly 5.59 million.
Therefore
the \textsc{Strong} and \textsc{Full} algorithms still require almost 40\%
fewer nodes than the \textsc{Orbital} algorithm.
Out of the 25,600 total experiments, \textsc{Orbital} is better than
\textsc{Leon} in
22,178 instances.  Of these, \textsc{Strong} is better than \textsc{Orbital} in
2,554 instances, and of these, \textsc{Full} is better than \textsc{Strong} 236
times.  This shows that for 2-transitive groups, there are a relatively small
number of problems where \textsc{Strong} and \textsc{Full} improve upon
\textsc{Orbital}.

\subsection{Intersections of cosets of intransitive
groups}\label{sec-intransitive-intersection}

In this section, we go beyond the experiments of~\cite[Section~6]{newrefiners},
and investigate the behaviour of the
algorithms when intersecting
cosets of intransitive groups that have identical orbits, and where all orbits
have the same size.
We chose these kinds of problems because there should be many instances
that all of the algorithms find difficult,
because of this regularity of orbit structure.

We intersect right cosets of subdirect products of
transitive groups of equal degree.  Although we create them in a random way, we
do not make any claims about their distribution.
Given \(k, n \in \N\),
we randomly choose \(k\) transitive subgroups of \(\Sn{n}\) from the transitive
groups library \textsc{TransGrp}~\cite{transgrp}, each of which we conjugate by a
random element of \(\Sn{n}\), and we create their direct product, $G$, which we
regard as a subgroup of \(\Sn{k n}\).
Then, we randomly sample elements of \(G\) until the subgroup that they generate
is a subdirect product of \(G\).  If this subdirect product is equal to \(G\),
then we abandon the process and start again. Otherwise, the result is a
generating set for what we call a \emph{proper $(k,n)$-subdirect product}.

In our experiments, for various \(k,n \in \N\), we explore the search space
required to determine whether the intersections of pairs of right cosets of
different \((k,n)\)-subdirect products are empty.
To make the problems as hard as possible, we choose coset
representatives that preserve the orbit structure of the
\((k,n)\)-subdirect product.

We performed 50 random instances for each pair $(k,n)$, for all
$k, n \in \{2,\ldots,10\}$, and
we show a representative sample of this data in
Tables~\ref{tab-exp-subdirect-n} and~\ref{tab-exp-subdirect}
and Figure~\ref{fig-77nodes}.
Table~\ref{tab-exp-subdirect-n} shows all results for each $n$,
and Table~\ref{tab-exp-subdirect} gives a more in-depth view for two values of $k$.
The tables omit data for the \textsc{Full} algorithm, because it was mostly
identical to the data for the \textsc{Strong} algorithm,
and it varied in only one instance by more than 1\%.

\begin{table}[!ht]
  \small
  \centering
  \begin{tabular}{r r r r r r r r r r r r r r}
    &&\multicolumn{2}{c}{\textsc{Leon}}
     &&\multicolumn{3}{c}{\textsc{Orbital}}
     &&\multicolumn{3}{c}{\textsc{Strong}}\\
    \cline{3-4} \cline{6-8} \cline{10-12}
    \(n\)
    && {\scriptsize Mean} & {\scriptsize Median}
    && {\scriptsize Mean} & {\scriptsize Median} & {\scriptsize Zero\%}
    && {\scriptsize Mean} & {\scriptsize Median} & {\scriptsize Zero\%} \\
    \hline
    2   &&     3  &  2  &&     2& 2  &14  &&    2 & 2 & 14 \\
    3   &&  1418  &  7  &&    19& 0  &58  &&   19 & 0 & 59 \\
    4   &&  1250  & 12  &&    71& 0  &69  &&   62 & 0 & 70 \\
    5   &&  37924 & 30  && 15576& 10 &14  && 8803 & 0 & 54 \\
    6   &&   584  & 12  &&   254& 6  &36  &&  139 & 0 & 86 \\
    7   && 53612  & 28  && 43555& 14 &0   && 8982 & 0 & 70 \\
    8   &&  1142  &  8  &&   997& 8  &15  &&    4 & 0 & 98 \\
    9   &&  6547  &  9  &&  5562& 9  &2   &&    7 & 0 & 95 \\
    10  &&  8350  & 10  &&  6959& 10 &1   &&    7 & 0 & 97 \\
    \hline
\end{tabular}
  \caption{
    Search sizes for $(k,n)$-subdirect product
    coset intersection problems, where for each $n$,
    we ran 50 experiments for each $k \in \{2,\dots,10\}$.
  }\label{tab-exp-subdirect-n}
\end{table}

\begin{table}[!ht]
  \small
  \centering
  \begin{tabular}{r r r r r r r r r r r r r r}
    &&&\multicolumn{2}{c}{\textsc{Leon}}
     &&\multicolumn{3}{c}{\textsc{Orbital}}
     &&\multicolumn{3}{c}{\textsc{Strong}}\\
    \cline{4-5} \cline{7-9} \cline{11-13}
    \(k\) & \(n\)
    && {\scriptsize Mean} & {\scriptsize Median}
    && {\scriptsize Mean} & {\scriptsize Median} & {\scriptsize Zero\%}
    && {\scriptsize Mean} & {\scriptsize Median} & {\scriptsize Zero\%} \\
    \hline
    4  & 5  && 13683  & 30  && 6356  & 11 & 8  && 6176  & 5 & 40   \\
    4  & 6  && 376    & 18  && 335   & 6  & 8  && 87    & 0 & 76   \\
    4  & 7  && 8612   & 49  && 7065  & 43 & 0  && 6494  & 0 & 54   \\
    4  & 8  && 1133   & 8   && 365   & 8  & 14 && 0     & 0 & 100  \\
    4  & 9  && 1947   & 9   && 621   & 9  & 0  && 0     & 0 & 96   \\
    4  & 10 && 458    & 10  && 410   & 10 & 2  && 0     & 0 & 98   \\
    \hline
    8  & 5  && 119561 & 130 && 42885 & 30 & 17 && 36888 & 0 & 58   \\
    8  & 6  && 70     & 12  && 25    & 0  & 56 && 67    & 0 & 98   \\
    8  & 7  && 19731  & 49  && 11154 & 43 & 0  && 167   & 0 & 86   \\
    8  & 8  && 209    & 8   && 58    & 8  & 12 && 0     & 0 & 100  \\
    8  & 9  && 152    & 9   && 144   & 9  & 2  && 0     & 0 & 100  \\
    8  & 10 && 138    & 10  && 64    & 10 & 2  && 0     & 0 & 100  \\
    \hline
  \end{tabular}
  \caption{
    Search sizes for 50 $(k,n)$-subdirect product
    coset intersection problems.
  }\label{tab-exp-subdirect}
\end{table}

The \textsc{Strong} algorithm solved a large proportion
of problems with zero search.
As $n$ and $k$ increase, we find that
\textsc{Strong} is also able to solve almost all problems without search,
and the remaining problems with very little search. The only problems
where \textsc{Strong} does not perform significantly better are those
involving orbits of size 2 ($n = 2$).
This is not surprising as there are very few possible
orbital graphs for such groups.
We note that the problems with $n = 5$ and $7$ seem particularly difficult.
This is because transitive groups of prime degree are primitive,
and sometimes even 2-transitive,
in which case they do not have useful orbital graphs.

On the other hand, \textsc{Orbital} solved a lot fewer problems without
search, and \textsc{Leon} solved none in this way.
Although the relatively low medians show that all of the algorithms performed
quite small searches for many of the problems, we see a much starker
difference in the mean search sizes.  These means are typically dominated
by a few problems; see Figure~\ref{fig-77nodes}.

\begin{figure}[!ht]
  \centering
  \includegraphics[width=0.65\textwidth,keepaspectratio,clip=true]{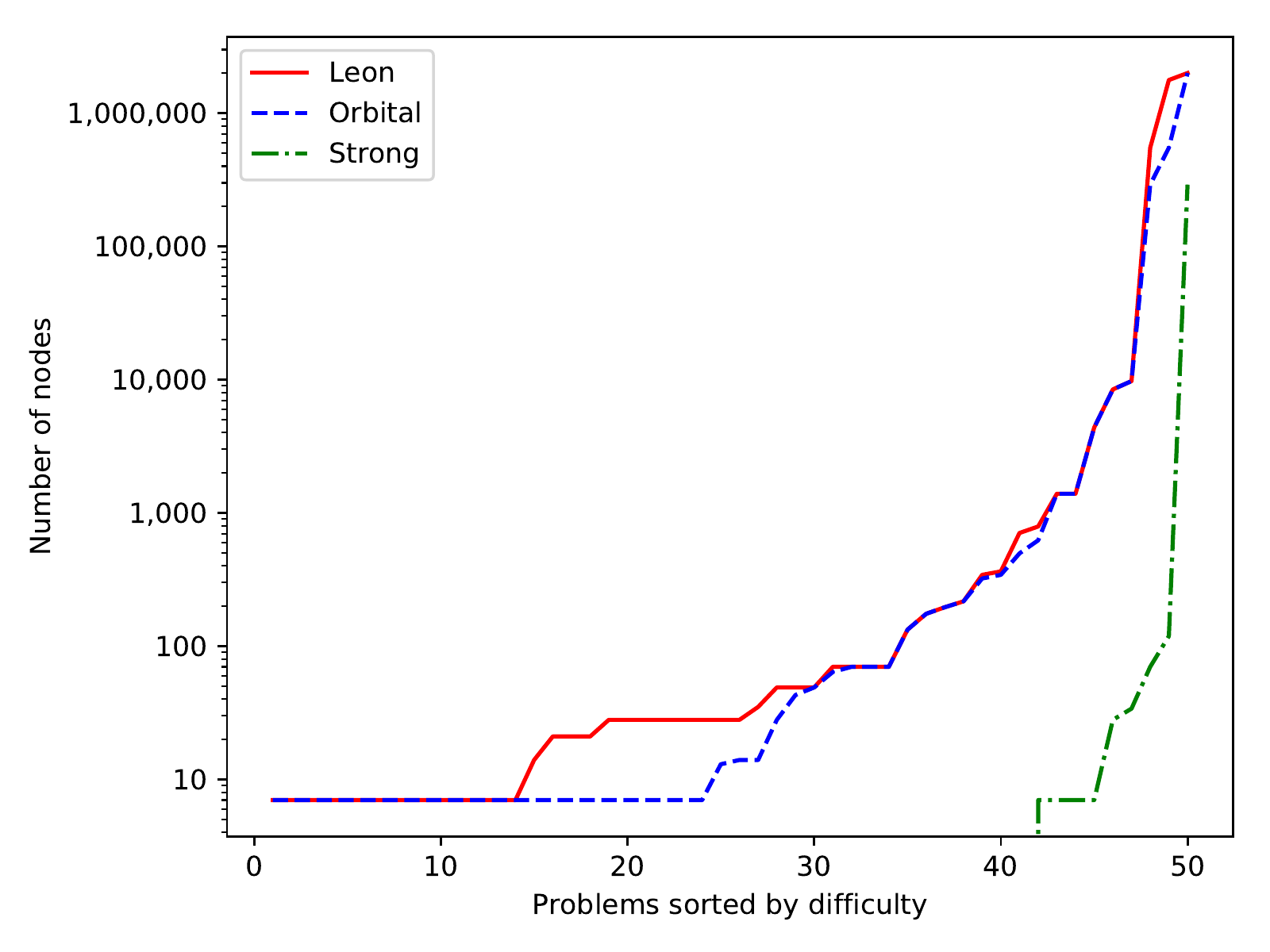}
  \caption{
    Search sizes for 50 (7,7)-subdirect product coset intersection
    problem instances.
    The data for \textsc{Full} is almost identical to the data for
    \textsc{Strong}, and is omitted.
  }\label{fig-77nodes}
\end{figure}

To give a more complete picture of how the algorithms perform,
Figure~\ref{fig-77nodes} shows the search sizes for all 50 intersections problem
that we considered for $n = k = 7$, sorted by difficulty. The data that we
collected in this case was fairly typical.
Figure~\ref{fig-77nodes} shows that
\textsc{Strong} solves almost all problems with very little or no search, and it
only requires more than 50 search nodes for the three hardest problems. On the
other hand, \textsc{Leon} and \textsc{Orbital} need more than 50 nodes for the
18 hardest problems.
All algorithms found around 30\% of the problems easy to solve.
This is because our problem generator randomly produces
easy problems, sometimes.

\section{Conclusions and directions for further work}\label{sec-end}

We have introduced and discussed new data structures and algorithms, using
labelled digraphs, which can be used to substantially reduce the size of a
search required to solve a large range of group and coset problems in
$\Sym{\Omega}$. This work
builds on the earlier partition backtrack framework of
Leon~\cite{leon1997,leon1991}, and also provides an alternative way of viewing
that earlier work.

Our new algorithms often reduce problems that previously involved searches of
hundreds of thousands of nodes into problems that require no search, and can
instead be solved by applying strong equitable labelling to a pair of stacks.
There already exists a significant body of work on efficiently implementing
equitable partitioning and automorphism finding on
digraphs~\cite{bliss,practical2}, which we believe can be generalised to work
incrementally with labelled digraph stacks that grow in length.

We therefore believe there is room for significant performance improvement over
the current state of the art, if time is spent on optimising the implementation
of the algorithms that we have presented here. In future work, we will show how the
algorithms described in this paper can be implemented efficiently, and compare
the speed of various methods for hard search problems.  In particular, we aim
for a better understanding of when partition backtrack is already the best
method available, and when it is worth using our methods. Further, earlier work
which used orbital graphs~\cite{newrefiners} showed that there are often
significant practical benefits to using only some of the possible orbital graphs
in a problem, rather than all of them. We will investigate whether a similar
effect occurs in our methods.

Another direction of research is the development and analysis of new types of
refiners, along with an extension of our methods. For example, we could allow
more substantial changes to the digraphs, such as adding new vertices outside of
$\Omega$. One obvious major area not addressed in this paper is normaliser and
group conjugacy problems, and we plan to look for new refiners for normaliser
calculations.

While the step from ordered partitions to labelled digraphs already adds some
difficulty,
we still think that it is worth considering even more intricate structures.
Why not generalise our ideas to stacks of more general combinatorial structures
defined on a set $\Omega$?  The definitions of a splitter, of an isomorphism
approximator, and of a refiner were essentially independent of the notion of a
labelled digraph, and so they~--~and therefore the algorithms~--~could work for
more general objects around which a search method could be organised.


\end{document}